\DeclareMathAlphabet{\mathpzc}{OT1}{pzc}{m}{it}
\newcommand{\spacesubsec}[1]{\medskip\subsec{{#1}}\medskip}
\newcommand{\id}{{\mathrm{id}}}
\newcommand{\vbl}{-}
\newcommand{\tn}{\otimes}           
\newcommand{\mathbold}{\bf}
\newcommand{\red}[1]{{#1}_{\mathrm{red}}}
\def\longisomap{{\,\buildrel \sim\over\longrightarrow\,}} 
\newcommand{\pr}{{\mathrm{pr}}}
\newcommand{\subsec}[1]{{\begin{trivlist}\item\em\large#1\end{trivlist}}}
\newcommand{\longlabelmap}[1]{{\,\buildrel #1\over\longrightarrow\,}}
\newcommand{\longlabelmaps}[2]{{\rightrightarrows}}
\newcommand{\xright}[1]{\xrightarrow{\;#1\;}}
\newcommand{\longmap}{{\,\longrightarrow\,}}
\newcommand{\Hom}{{\mathrm{Hom}}}
\DeclareMathOperator{\Spec}{Spec}
\DeclareMathOperator{\underSpec}{\Spec}
\DeclareMathOperator{\Spf}{Spf}
\newcommand{\sO}{{\mathcal{O}}}
\newcommand{\sB}{{\mathcal{B}}}
\newcommand{\bF}{{\mathbold F}}
\newcommand{\bA}{{\mathbold A}}
\newcommand{\bP}{{\mathbold P}}
\newcommand{\bQ}{{\mathbold Q}}
\newcommand{\bZ}{{\mathbold Z}}
\newcommand{\bN}{{\mathbold N}}
\newcommand{\m}{{\mathfrak{m}}}
\newcommand{\p}{\mathfrak{p}}
\newcommand{\q}{\mathfrak{q}}
\def\longisomap{{\,\buildrel \sim\over\longrightarrow\,}} 
\newcommand{\comment}[1]{}
\renewcommand{\leq}{\leqslant}
\renewcommand{\geq}{\geqslant}
\DeclareMathOperator{\colimm}{colim}
\DeclareMathOperator{\limm}{lim}
\newcommand{\colim}{\mathop{\colimm}}
\newcommand{\coeq}{\mathrm{coeq}}
\newcommand{\smcoprod}{{\,\scriptstyle\amalg\,}}
\newcommand{\smsmcoprod}{{\,\scriptscriptstyle\amalg\,}}
\newcommand{\eps}{\varepsilon}
\DeclareMathOperator{\depth}{depth}
\newcommand{\labeleq}[1]{{\,\buildrel #1\over=\,}}
\newcommand{\rightlabelxyarrows}[2]{{\ar@<0.7ex>^-{#1}[r]\ar@<-0.7ex>_-{#2}[r]}}
\newcommand{\displaylabelfork}[6]{{	\entrymodifiers={+!!<0pt,\fontdimen22\textfont2>}
	\def\objectstyle{\displaystyle}
\xymatrix{{#1} \ar^-{#2}[r] & {#3} \ar@<0.7ex>^-{#4}[r]\ar@<-0.7ex>_-{#5}[r] & {#6}}}}
\newcommand{\displaylonglabelcofork}[6]{{	\entrymodifiers={+!!<0pt,\fontdimen22\textfont2>}
	\def\objectstyle{\displaystyle}
\xymatrix@C=40pt{{#1} \ar@<0.7ex>^-{#2}[r]\ar@<-0.7ex>_-{#3}[r] & {#4} \ar^-{#5}[r] & {#6}}}}
\newcommand{\displaylabelcofork}[6]{{	\entrymodifiers={+!!<0pt,\fontdimen22\textfont2>}
	\def\objectstyle{\displaystyle}
\xymatrix{{#1} \ar@<0.7ex>^-{#2}[r]\ar@<-0.7ex>_-{#3}[r] & {#4} \ar^-{#5}[r] & {#6}}}}
\newcommand{\displaycofork}[3]{{\displaylabelcofork{#1}{}{}{#2}{}{#3}}}
\newcommand{\displayfork}[3]{{\displaylabelfork{#1}{}{#2}{}{}{#3}}}
\newcommand{\displaylabelrightarrows}[4]{{\entrymodifiers={+!!<0pt,\fontdimen22\textfont2>}
	\def\objectstyle{\displaystyle}
\xymatrix{{#1} \ar@<0.7ex>^-{#2}[r]\ar@<-0.7ex>_-{#3}[r] & {#4}}}}
\DeclareMathOperator{\Gr}{Gr}
\newcommand{\bcp}{\odot}
\newcommand{\ptst}{E}
\newcommand{\pind}{\alpha}
\newcommand{\gh}[1]{w_{#1}}
\newcommand{\rgh}[1]{{\bar{w}_{#1}}}
\newcommand{\cgh}[1]{{\kappa_{#1}}}
\newcommand{\rcgh}[1]{{\bar{\kappa}_{#1}}}
\newcommand{\aff}{\mathsf{Aff}}
\newcommand{\affrel}{\mathsf{AffRel}}
\newcommand{\Space}{\mathsf{Sp}}
\newcommand{\AlgSp}{\mathsf{AlgSp}}
\newcommand{\nset}{{\bN^{(\ptst)}}}
\newcommand{\inclmap}[1]{r_{{#1}}}
\newcommand{\projmap}[1]{s_{{#1}}}
\newcommand{\wnus}[1]{{W_{#1}^{*}}}
\newcommand{\wnls}[1]{W_{{#1*}}}
\newcommand{\wius}{{\wnus{}}}
\newcommand{\wils}{{\wnls{}}}
\newcommand{\cat}{{\mathsf{C}}}
\newtheoremstyle{mythm}{}{}%
  {\itshape}
  {}
  {\bfseries}
  {}
  { }
  {\thmnumber{#2.\hspace{1.5mm}}\thmname{#1}\thmnote{ #3}.}
\newtheoremstyle{intro}{}{}%
  {\itshape}
  {}
  {\bfseries}
  {}
  { }
  {\thmname{#1}\thmnumber{ #2}\thmnote{ #3}.}
\newtheoremstyle{myrmk}{}{}%
  {}
  {}
  {}
  {}
  { }
  {{\bfseries\thmnumber{#2.\hspace{1.5mm}}}{\itshape\thmname{#1}}\thmnote{ #3}.}
\numberwithin{equation}{subsection}
\theoremstyle{mythm}
\newtheorem{theorem}[subsection]{Theorem}
\newtheorem{proposition}[subsection]{Proposition}
\newtheorem{lemma}[subsection]{Lemma}
\newtheorem{corollary}[subsection]{Corollary}
\theoremstyle{myrmk}
\theoremstyle{intro}
\newtheorem*{thmintro}{Theorem}
\theoremstyle{plain}
\newtheorem*{prop*}{Proposition}
\newtheorem*{cor*}{Corollary}
\newtheorem*{conj*}{Conjecture}
\theoremstyle{definition}
\def\@seccntformat#1{\@ifundefined{#1@cntformat}%
{\csname the#1\endcsname\quad}
{\csname #1@cntformat\endcsname}
}
\def\section@cntformat{\thesection.\enspace}
\def\subsection@cntformat{\thesubsection.}
\begin{document}

\title[The basic geometry of Witt vectors, II]{The basic geometry of Witt vectors, II\\ Spaces}
\author[J.~Borger]{James Borger}
\address{Australian National University}

\email{james.borger@anu.edu.au}
\thanks{{\em Mathematics Subject Classification (2010):} 13F35, 14F30, 14F20, 18F20}
\thanks{This work was partly supported by Discovery Project DP0773301, 
a grant from the Australian Research Council.}

\begin{abstract}
This is an account of the algebraic geometry of Witt vectors and arithmetic jet spaces. The 
usual, ``$p$-typical'' Witt vectors of $p$-adic schemes of finite type are already reasonably
well understood. The main point here is to generalize this theory in two ways. We allow
not just $p$-typical Witt vectors but those taken with respect to any set of
primes in any ring of integers in any global field, for example. This includes the ``big'' Witt
vectors. We also allow not just $p$-adic schemes of finite type but arbitrary algebraic spaces over
the ring of integers in the global field. We give similar generalizations of Buium's formal
arithmetic jet functor, which is dual to the Witt functor. We also give concrete 
geometric descriptions of Witt spaces and arithmetic jet spaces and investigate whether a number
of standard geometric properties are preserved by these functors.
\end{abstract}

\maketitle

\section*{Introduction}
Let $p$ be a prime number. For any integer $n\geq 0$ and any (commutative) ring $A$,
let $W_n(A)$ denote the ring of $p$-typical Witt vectors of length $n$ with entries in $A$.
This construction gives a functor $W_n$ from the category of rings to itself.
It is an important tool in number theory,
especially in the cohomology of varieties over $p$-adic fields.
For example, it is used in the definition of Fontaine's period 
rings~\cite{Fontaine:Corps-des-periodes-p-adiques}
and in the definition of the de~Rham--Witt complex, which is an explicit complex that computes
crystalline cohomology~\cite{Illusie:dRW-1016}. 

The functor $W_n$ has a left adjoint, which we denote by $A\mapsto \Lambda_{n}\bcp A$:
	\begin{equation}
		\label{eq:intro-1}
		\Hom(\Lambda_n\bcp A, B) \cong \Hom(A,W_n(B)).
	\end{equation}
This adjunction was first considered by Greenberg \cite{Greenberg:I}\cite{Greenberg:II}, but he
restricted himself to the case where $B$ is an $\bF_p$-algebra, and so he only constructed the
special fiber $\bF_p\tn_{\bZ}(\Lambda_n\bcp A)$. The construction of the full functor had to wait
until Joyal~\cite{Joyal:Witt} and, independently, Buium~\cite{Buium:p-jets}. It also has
applications in number theory, most notably in the study of $p$-adic points on
varieties. For example, see Buium \cite{Buium:p-jets} and Buium--Poonen
\cite{Buium-Poonen:Special-points-II} (as well as Buium's earlier
work~\cite{Buium:Intersections-in-jet-spaces} for applications of analogous constructions
in complex algebraic
geometry). These two adjoint constructions see different sides of the arithmetic of $A$---the
ring $W_n(A)$ sees certain maps into $A$, and the ring $\Lambda_n\bcp A$ sees certain maps out of 
it.

This paper is part of a general program to analyze varieties over global fields using global
analogues of these functors, such as the ``big'' Witt functors. The first issue one
faces is that, even in the $p$-typical case above, the schemes $\Spec W_n(A)$ and $\Spec
\Lambda_n\bcp A$ are not familiar geometric constructions, and it is important that we be able to
handle them with ease. The purpose of this paper is to demonstrate that
this is possible. The first part~\cite{Borger:BGWV-I} developed the affine theory, and this part 
extends it to arbitrary schemes and algebraic spaces.

\bigskip

Let us go over the contents in more detail. We will work throughout with certain generalizations of
the classical $p$-typical and big Witt functors. These are the $\ptst$-typical Witt functors
$W_{R,\ptst,n}$ defined in \cite{Borger:BGWV-I}. These functors depend on a ring $R$, a set $\ptst$
of finitely presented maximal ideals $\m$ of $R$ with the property that each localization $R_{\m}$
is discrete valuation ring with finite residue field, and an element $n\in\nset=\bigoplus_E \bN$.
Then $W_{R,\ptst,n}$ is a functor from the category of $R$-algebras to itself. We recover the
$p$-typical Witt functor when $\ptst$ consists of the single maximal ideal $p\bZ$ of $\bZ$, and
we recover the big Witt functor when $\ptst$ consists of all the maximal ideals of $\bZ$. When
$\ptst$ consists of the maximal ideal of the valuation ring of a local field, we recover a variant
of the $p$-typical Witt functor due to Drinfeld~\cite{Drinfeld:symmetric-domains} and to
Hazewinkel~\cite{Hazewinkel:book}, (18.6.13). While the general $\ptst$-typical functors are
necessary for future applications, all phenomena in this paper occur already in the $p$-typical
case. This is because for foundational questions, the general methods of \cite{Borger:BGWV-I}
usually allow one to reduce matters to the case where $\ptst$ consists of a single principal ideal,
and in this case, the classical $p$-typical functor is a representative example.

As in the $p$-typical case above, the functor $W_{R,\ptst,n}$
has a left adjoint, which in general we will denote by $A\mapsto \Lambda_{R,\ptst,n}\bcp A$.
But let us write $\Lambda_n\bcp A=\Lambda_{R,\ptst,n}\bcp A$ and $W_n=W_{R,\ptst,n}$, for short.
The first concern of this paper is to extend both of these functors to the category of algebraic 
spaces over $R$, including for example all $R$-schemes. 
In fact, as explained in Grothendieck--Verdier (SGA 4, exp.\ III \cite{SGA4.1}), there is 
a general way of doing this---we only need to verify that $W_n$ satisfies certain 
properties. The method is as follows.
Let $\aff_S$ denote the category of affine schemes over $S=\Spec R$. 
Then $W_n$ induces a functor $\aff_S\to\aff_S$, which we also denote by $W_n$.
So we have $W_n(\Spec A) = \Spec W_n(A)$.
This functor has two important properties. First, if $U\to X$ and $V\to X$ are
\'etale maps in $\aff_S$, then the induced map
	\[
	W_n(U\times_X V) \longmap W_n(U)\times_{W_n(X)} W_n(V)
	\]
is an isomorphism. Second, if $(U_i\to X)_{i\in I}$ is a covering family of \'etale maps, then so
is the induced family $\bigl(W_n(U_i)\to W_n(X)\bigr)_{i\in I}$. 
Both of these are consequences of van der
Kallen's theorem for $\ptst$-typical Witt functors,
which says that $W_n$ preserves \'etale maps of $R$-algebras (\cite{Borger:BGWV-I}, theorem B). 

It then follows from general sheaf theory
that if $X$ is a sheaf of sets on $\aff_S$ in the \'etale topology, then
so is the functor $\wnls{n}(X)=X\circ W_n$, thus giving a functor $\wnls{n}$ from the category
$\Space_{S}$ of sheaves of sets on $\aff_{S}$ to itself. By another general theorem, 
this functor $\wnls{n}\:\Space_S\to\Space_S$ has a left adjoint $\wnus{n}$ satisfying
 	\[
	\wnus{n}(X) = \colim_{U} W_n(U),
	\]
where $U$ runs over the category of affine schemes equipped with a map to $X$ and where
we identify the affine scheme $W_n(U)$ with the object of $\Space_S$ it represents. These
functors extend the affine functors $W_n$ and $\Lambda_n\bcp\vbl$ to $\Space_S$:
	\begin{equation}
		\label{eq:intro-extension-of-affine-functors}
	\wnus{n}(\Spec A) = \Spec W_n(A), \quad\quad \wnls{n}(\Spec A) = \Spec \Lambda_n\bcp A.
	\end{equation}
They are the extensions we will consider. In fact, by the discussion above, they are the unique 
extensions satisfying certain natural properties.

\begin{thmintro}[A]\label{thm:intro-B}
	If $X\in\Space_{S}$ is an algebraic space, then so are $\wnus{n}(X)$
	and $\wnls{n}(X)$. If $X$ is a scheme, then so are $\wnus{n}(X)$ and $\wnls{n}(X)$.
\end{thmintro}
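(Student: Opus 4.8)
The plan is to isolate four formal properties of a functor $\Phi\colon\Space_S\to\Space_S$ from which both assertions follow by standard gluing, and then to check these for $\Phi=\wnus{n}$ and $\Phi=\wnls{n}$ separately. The four properties are: (i) $\Phi$ preserves colimits; (ii) $\Phi$ carries affine schemes to affine schemes; (iii) $\Phi$ carries representable \'etale maps to representable \'etale maps, open immersions to open immersions, and \'etale covering families to \'etale covering families; and (iv) for a representable \'etale map $U\to X$ and any $V\to X$ the canonical map $\Phi(U\times_X V)\to\Phi(U)\times_{\Phi(X)}\Phi(V)$ is an isomorphism. Granting (i)--(iv), the theorem is a gluing argument. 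If $X$ is a scheme, choose an affine Zariski covering $\{U_i\hookrightarrow X\}$ and write $X$ as the coequalizer of $\coprod_{i,j}U_i\cap U_j\rightrightarrows\coprod_i U_i$; applying $\Phi$ and using (i) exhibits $\Phi(X)$ as the corresponding coequalizer, where by (ii) each $\Phi(U_i)$ is affine, by (iii) the $\Phi(U_i)$ cover $\Phi(X)$ by open immersions, and by (iv) the overlaps $\Phi(U_i\cap U_j)$ are identified with $\Phi(U_i)\times_{\Phi(X)}\Phi(U_j)$, so $\Phi(X)$ is glued from affines along open subschemes and is a scheme. If $X$ is an algebraic space, choose an \'etale presentation $R\rightrightarrows U$ with $U=\coprod_i U_i$ a disjoint union of affines and $R=U\times_X U$ a scheme with \'etale projections; then $\Phi(U)=\coprod_i\Phi(U_i)$ is a scheme by (ii), $\Phi(R)$ is a scheme by the scheme case just proved, the projections $\Phi(R)\to\Phi(U)$ are \'etale and $\Phi(U)\to\Phi(X)$ is an \'etale surjection by (iii), and $\Phi(R)=\Phi(U\times_X U)\cong\Phi(U)\times_{\Phi(X)}\Phi(U)$ by (iv). Thus $\Phi(R)\rightrightarrows\Phi(U)$ is an \'etale equivalence relation in schemes with quotient $\Phi(X)$, so $\Phi(X)$ is an algebraic space.

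\emph{Properties for $\wnus{n}$.} Here (i) is automatic, since $\wnus{n}$ is a left adjoint, and (ii) holds by construction, $\wnus{n}(\Spec A)=\Spec W_n(A)$. Properties (iii) and (iv) hold for maps of affine schemes by the two displayed properties of $W_n$ recalled above (van der Kallen's theorem). The real content is to promote them from affines to an arbitrary $X\in\Space_S$: writing $X$ as a colimit of affines and using that $\wnus{n}$ preserves colimits, property (iv) reduces to a descent statement for fiber products along \'etale maps. I expect this descent — reconciling the \'etale-local fiber products of $W_n$ on affines with the colimit presentation of a general space — to be the main obstacle for $\wnus{n}$.

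\emph{Properties for $\wnls{n}$.} Here the difficulty is transposed. Property (iv), and indeed the preservation of all finite limits, is immediate because $\wnls{n}$ is a right adjoint. Property (ii) holds because $W_n$ preserves limits on $R$-algebras and therefore admits a left adjoint $J_n$, the arithmetic jet algebra functor, giving $\wnls{n}(\Spec B)=\Spec J_n(B)$. The crux is property (i), preservation of colimits, which does not follow from the adjunction defining $\wnls{n}$. I would obtain it by showing that $W_n\colon\aff_S\to\aff_S$ is cocontinuous for the \'etale topology, i.e.\ that every \'etale covering of $\Spec W_n(A)$ is refined by one of the form $\{\Spec W_n(A_i)\}$ arising from an \'etale covering of $\Spec A$; this upgrades van der Kallen's theorem to an equivalence of small \'etale sites $(\Spec A)_{\et}\simeq(\Spec W_n A)_{\et}$. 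A cocontinuous functor induces a morphism of topoi whose inverse-image functor is exactly $\wnls{n}$, and inverse-image functors preserve all colimits; property (iii) likewise follows from this equivalence of sites. Establishing the cocontinuity is the main obstacle for $\wnls{n}$.
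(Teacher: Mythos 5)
Your division of labor between $\wnus{n}$ and $\wnls{n}$ correctly locates the hard step for $\wnus{n}$, but the $\wnls{n}$ half of your plan rests on properties that are false, with counterexamples contained in the paper itself. First, $\wnls{n}$ does not preserve colimits, not even finite coproducts: one computes $\Lambda_1\bcp(\bZ\times\bZ)\cong\bZ\times\bZ\times\bZ[1/p]\times\bZ[1/p]$ (\ref{subsec:counterexamples-for-preservation-by-wnls}), so $\wnls{1}(\Spec\bZ\smcoprod\Spec\bZ)$ has four components while $\wnls{1}(\Spec\bZ)\smcoprod\wnls{1}(\Spec\bZ)$ has two; in general $\coprod_i\wnls{n}(X_i)$ is only a summand of $\wnls{n}(\coprod_i X_i)$ (\ref{pro:wnls-of-a-disjoint-union-of-pairs}). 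Second, $\wnls{n}$ does not preserve covering families, only covering maps: away from $\m$ one has $\wnls{1}(X)=X\times X$, and $\coprod_i U_i\times U_i$ does not cover $X\times X$ (\ref{subsec:wnls-chart}). This kills your properties (i) and (iii) simultaneously, and with them the topos-theoretic mechanism: an inverse-image functor preserves coproducts, so $\wnls{n}$ cannot be one, and the proposed cocontinuity of $W_n$ fails concretely in mixed characteristic. For instance $\Spec W_1(\bZ)=\Spec\bZ[x]/(x^2-px)$ is two copies of $\Spec\bZ$ glued over $p$, and a cover of it that separates the two branches away from the glued point admits no refinement of the form $\{\Spec W_1(A_i)\}$: any such member either meets the glued locus or splits into two pieces lying in different branches, so it factors through no single member. (The small \'etale site of $\Spec W_n(A)$ is governed by $A^{[0,n]}$ via the ghost map, not by $A$.) The paper's actual route for $\wnls{n}$ is quite different: being a right adjoint, it preserves equivalence relations for free; one then proves directly that it preserves epimorphisms (\ref{pro:wnls-preserves-epis-and-monos}, via the single-prime reduction and checking \'etale affine covers separately over $\m$ and away from it), hence quotients by \'etale equivalence relations (\ref{cor:W-lower-s-preserves-equ-relations}); disjoint unions of affines need the separate Lemma \ref{lem:wnls-of-componentwise-affine}, using the co-ghost map to $X^{[0,n]}$ to cut $\wnls{n}(\coprod_i X_i)$ into affine pieces; and the scheme case requires a two-fiber argument, covering the generic fiber $X'^{[0,n]}$ by products of affine opens and the fiber over $\m$ by the $\wnls{n}(V_i)$, which there are genuinely open (\ref{cor:naive-gluing-ok-in-char-p-2}).

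For $\wnus{n}$ your scaffold (i)--(iv) does match the paper's architecture, and (i), (ii) are fine, but you have flagged rather than proved exactly the load-bearing step: (iii)--(iv) do not propagate from affines to general spaces by writing $X$ as a colimit, because a left adjoint does not commute with the fiber products appearing in (iv). The paper devotes Section \ref{sec:the-inductive-lemma-for-wnus} to this, proving Lemma \ref{lem:main-geometric-finite-length} by a simultaneous induction on the geometricity $m$, and the engine is arithmetic rather than formal: after reducing to a single prime via (\ref{eq:geom-witt-iterate-upper-s}), every comparison map in sight is a map of \'etale spaces over a common base, and such a map is an isomorphism iff it is one after applying $\red{(\vbl\times_S S_0)}$ and after restricting to $S-S_0$ (\ref{lem:isom-criterion-for-etale-maps}); over $S-S_0$ the ghost map identifies $\wnus{n}$ with $\coprod_{[0,n]}$, while modulo $\m$ the map $\gh{0}\:X\to\wnus{n}(X)$ is a closed immersion defined by a square-zero ideal (Lemma \ref{lem:main-geometric-finite-length}(\ref{lem-mgfl-part2}), globalizing \ref{pro:W-nilpotent-at-p}). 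Without this two-fiber mechanism, or some substitute for it, your ``descent statement'' is not a routine verification but the entire content of the theorem on the $\wnus{n}$ side.
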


We call $\wnus{n}(X)$ the $\ptst$-typical Witt space of $X$ of length $n$, and we call
$\wnls{n}(X)$ the $\ptst$-typical arithmetic jet space of $X$ of length $n$. In certain cases, they
have been constructed before. In their appendix, Langer and Zink~\cite{Langer-Zink:dRW} constructed
the $p$-typical Witt space of a general $\bZ_p$-scheme $X$. For earlier work see
Bloch~\cite{Bloch:dRW}, Lubkin~\cite{Lubkin:bounded-Witt}, and Illusie~\cite{Illusie:dRW}.
Buium~\cite{Buium:p-jets} has constructed the $p$-typical arithmetic jet space of a formal
$\bZ_p$-scheme, extending Greenberg's construction of the special
fiber~\cite{Greenberg:I}\cite{Greenberg:II}. When $R$ is $\bZ$ and $\ptst$ is arbitrary, Buium and
Simanca have constructed the arithmetic jet spaces for affine schemes and have constructed certain
approximations to it for general schemes~\cite{Buium-Simanca:Arithmetic-Laplacians} (Defintion
2.16).

For the reader who does not have a mind for abstract sheaf theory, let us reinterpret theorem A in
the language of covers. The most obvious way of defining the Witt space of a separated scheme $X$
is to choose an affine open cover $(U_i)_{i\in I}$ of $X$ and to define $\wnus{n}(X)$ to be the
result of gluing the affine schemes $W_n(U_i)$ along the affine schemes $W_n(U_i\times_X U_j)$. It
is not hard to check that this gives a scheme which is independent of the cover. (If $X$ is
arbitrary, then $U_i\times_X U_j$ is separated, and so we can define $\wnus{n}(X)$ in general by
doing this procedure twice.) This is in Langer--Zink~\cite{Langer-Zink:dRW} in the $p$-typical
case, and the general $\ptst$-typical case is no harder. When $X$ is an algebraic space and
$(U_i)_{i\in I}$ is an \'etale cover, we need to know that $\coprod_{i,j}W_n(U_i\times_X U_j)$ is
an \'etale equivalence relation on $\coprod_i W_n(U_i)$. This requires van der Kallen's theorem and
a more sophisticated gluing argument, but the principle is the same. Instead the approach of this
paper is to define $\wnus{n}(X)$ as an object of $\Space_S$ and to prove later that it is a scheme
or an algebraic space. If one cares about $\wnus{n}$ only for schemes and algebraic spaces, then
the difference is mostly a matter of organization.

This method does not work as well with $\wnls{n}$, because it is rarely the case that the
$\wnls{n}(U_i)$ cover $\wnls{n}(X)$. Indeed, generically over $\Spec R$, the space $\wnls{n}(X)$
agrees with a certain cartesian power $X^N$, and of course one cannot usually construct $X^N$ by
gluing the $U_i^N$ together. For $p$-adic formal schemes in the $p$-typical case, the generic fiber
is empty and this method does actually work, but in general it does not. Instead we must use the
total space $U=\coprod_i U_i$ of the cover. We will prove below that $\wnls{n}(U\times_X U)$ is an
\'etale equivalence relation on $\wnls{n}(U)$, and the quotient is $\wnls{n}(X)$. If $X$ is
quasi-compact and separated, we can assume $U$ and $U\times_X U$ are affine, and then
$\wnls{n}(X)$ becomes the quotient of a known affine scheme by a known affine \'etale equivalence
relation. And so we could avoid abstract sheaf theory for such $X$ by
taking this to be the definition of $\wnls{n}(X)$, although it would still take a small argument to
prove that $\wnls{n}$ is the right adjoint of $\wnus{n}$ and that it sends schemes to
schemes, rather than just algebraic spaces. It would also take some work to remove the assumption
that $X$ is quasi-compact and separated, but of course it could be done. Instead
we will define $\wnls{n}(X)$ in one stroke as an object of $\Space_S$ and then prove the
representability properties later.

Another benefit to working with the whole category $\Space_S$ is that is allows us 
to make the infinite-length constructions
	\begin{equation}
		\label{eq:intro-infinite-length-spaces}
	\wius(X)=\colim_n \wnus{n}(X), \quad\quad \wils(X)=\lim_n\wnls{n}(X).
	\end{equation}
These constructions are ind-algebraic spaces (resp.\ pro-algebraic spaces) but are generally not
algebraic spaces. While it would be possible to remain in the category of schemes or algebraic 
spaces by treating them as inductive systems
(resp.\ projective systems), it is convenient to be able to pass to the limit in $\Space_S$.
We will only consider the finite-length constructions in this
paper, but it is in fact the infinite-length ones that are of ultimate interest. Further, we will
eventually want to consider iterated constructions, such as $\wius\wius(X)$, and so
it is convenient to have $\wius(X)$ defined when $X$ ind-algebraic, and 
to have $\wils(X)$ defined when $X$ is pro-algebraic. At this point, it becomes easier just to
let $X$ be any object of $\Space_S$.

\begin{table}[t]
\begin{tabular}{|p{43mm}|c|c|}
\hline
Property of algebraic spaces	& Preserved 			& Reference or \\
								& by $\wnus{n}$?		& counterexample \\
\hline\hline
quasi-compact					& yes	& \ref{pro:W-preserves-qcom} \\
quasi-separated					& yes	& \ref{pro:wnus-preserves-separatedness} \\
\hline
affine							& yes	& \ref{pro:W_n-of-affine-is-affine} \\
a scheme						& yes	& \ref{cor:W-preserves-open-immersions}\\
of Krull dimension $d$			& yes	& \ref{pro:W-preserves-etale-local-properties}	 \\
separated						& yes	& \ref{pro:wnus-preserves-separatedness} \\
reduced and flat over $S$		& yes	& \ref{pro:W-preserves-etale-local-properties} \\
reduced							& no	& $W_1(\bF_p)$ \\
regular, normal					& no	& $W_1(\bZ)$ \\
\hline
(locally) noetherian 			& $\text{yes}^b$
								& \ref{subsec:W-does-not-preserve-relative-fg} +
								\ref{pro:W-preserves-etale-local-properties}  \\
$\text{S}_k$ (Serre's property)	& $\text{yes}^b$	& \ref{pro:W-preserves-depth-bound}\\
Cohen--Macaulay					& $\text{yes}^b$	& \ref{pro:W-preserves-depth-bound}\\
Gorenstein						& no	& $W_2(\bZ)$ \\
local complete intersection		& no	& $W_2(\bZ)$ \\
\hline
\end{tabular}
\vspace{\abovecaptionskip}
\caption[Absolute properties]{This table indicates whether the given property of algebraic spaces
$X$ over $S$ is preserved by $\wnus{n}$ in general. The superscript \emph{b} means that $X$ is
assumed to be locally of finite type over $S$ and that $S$ is assumed to be noetherian. 
In the counterexamples, $W_n$
denotes the $p$-typical Witt vectors over $\bZ$ of length $n$ (traditionally denoted
$W_{n+1}$).}
\end{table}

\spacesubsec{Preservation of properties by $\wnus{n}$}

We will spend some time looking at whether common properties of algebraic spaces and maps
are preserved by $\wnus{n}$. Rather than state the results formally, I have arranged them into
tables 1 and 2. (Note that we use normalized indexing throughout. So our $p$-typical Witt functor
$W_n$ is what is traditionally denoted $W_{n+1}$. The reasons for this are explained
in~\cite{Borger:BGWV-I},~2.5.)

Several results in the $p$-typical case are folklore or have appeared elsewhere. See,
for example, Bloch~\cite{Bloch:dRW}, Illusie~\cite{Illusie:dRW}, or
Langer--Zink~\cite{Langer-Zink:dRW}. Perhaps the most interesting of them is that
while smoothness over $S$ and regularity are essentially never preserved by $\wnus{n}$, being
Cohen--Macaulay always is. As with the work of Ekedahl and Illusie on $p$-typical Witt vectors of
$\bF_p$-schemes \cite{Ekedahl:dRW-I}\cite{Ekedahl:dRW-II}\cite{Illusie:dRW-1016}, this has
implications for Grothendieck duality and de\ Rham--Witt theory, but we will not consider them 
here.

\begin{table}[t]
\begin{tabular}{|p{3.7cm}|c|c|c|c|}
\hline
Property $P$ of maps 			& \multicolumn{2}{c|}{Must $\wnus{n}(f)$ have}
								& \multicolumn{2}{c|}{When $Y=S$, must} \\
$f\:X\to Y$ of algebraic 		& \multicolumn{2}{c|}{property $P$?}
								& \multicolumn{2}{c|}{$\wnus{n}(X)\to S$ have} \\
spaces 							& \multicolumn{2}{c|}{}
								& \multicolumn{2}{c|}{property $P$?} \\
\hline\hline 
\'etale							& yes & \ref{thm:W-main-geometric-finite-length(2)}
 								& no & $\bZ$ \\
an open immersion				& yes & \ref{cor:W-preserves-open-immersions}	
								& no & $\bZ$ \\

\hline 
quasi-compact					& yes & \ref{pro:W-preserves-target-local-properties} 
								& yes & \ref{cor:qcom-and-ftype-over-S} \\
quasi-separated					& yes & \ref{pro:W-preserves-target-local-properties}	
								& yes & \ref{pro:wnus-preserves-separatedness} \\

\hline 
affine							& yes & \ref{pro:W-preserves-affine-local-properties}	
								& yes & + \ref{pro:W(S)-properties} \\
integral						& yes & \ref{pro:W-preserves-affine-local-properties}	
								& yes & + \ref{pro:W(S)-properties} \\
a closed immersion				& yes & \ref{pro:W-preserves-affine-local-properties}
								& no  & $\bZ$ \\
finite \'etale					& yes & \ref{pro:W-preserves-affine-local-properties}
								& no  & $\bZ$ \\

\hline 
separated						& yes & \ref{pro:W-preserves-target-local-properties}	
								& yes & \ref{pro:wnus-preserves-separatedness} \\
surjective						& yes & \ref{pro:W-preserves-target-local-properties}	
								& yes & + \ref{pro:W(S)-properties} \\
universally closed				& yes & \ref{pro:W-preserves-target-local-properties}	
								& yes & + \ref{pro:W(S)-properties} \\

\hline 
locally of finite type		& $\text{yes}^a$
								& \ref{pro:W-preserves-relative-fin-type}	+
								\ref{subsec:W-does-not-preserve-relative-fg}
								& yes & \ref{pro:W-preserves-etale-local-properties} \\
of finite type					& $\text{yes}^a$
								& \ref{pro:W-preserves-relative-fin-type}	+
								\ref{subsec:W-does-not-preserve-relative-fg}
								& yes & \ref{cor:qcom-and-ftype-over-S} \\
finite							& $\text{yes}^a$
								& \ref{pro:W-preserves-relative-fin-type}	+
								\ref{subsec:W-does-not-preserve-relative-fg}
								& yes & \ref{pro:W-preserves-target-local-rel-to-S} \\
proper							& $\text{yes}^a$
								& \ref{pro:W-preserves-relative-fin-type} +
								\ref{subsec:W-does-not-preserve-properness}
								& yes & + \ref{pro:W(S)-properties} \\

\hline 
flat							& no & $\bZ[x]$
								& yes & \ref{pro:W-preserves-etale-local-properties} \\
faithfully flat					& no & $\bZ[x]$
								& yes & \ref{pro:W-preserves-target-local-rel-to-S}\\
Cohen-Macaulay					& no & $\bZ[x]$
								& $\text{yes}^b$ & \ref{pro:W-preserves-depth-bound}\\
$\text{S}_k$ (Serre's property)		& no & $\bZ[x]$
								& $\text{yes}^b$ & \ref{pro:W-preserves-depth-bound}\\
smooth							& no & $\bZ[x]$
								& no & $\bZ$\\
finite flat						& no & $\bZ[\sqrt{p}]$	
								& yes & \ref{pro:W-preserves-target-local-rel-to-S} \\

\hline
\end{tabular}
\vspace{\abovecaptionskip}

\caption[Relative properties]{This table indicates whether the given property $P$ of morphisms 
of algebraic spaces over $S$ is preserved by $\wnus{n}$ in general. The
central two columns indicate whether $P$ is preserved by $\wnus{n}$ and give either a
reference to the main text or a counterexample. The right columns indicate whether
the structure map $\wnus{n}(X)\to S$ must satisfy $P$ when the structure map $X\to S$
does. The superscript \emph{a} means that $X$ and $Y$ are assumed to be locally of finite type over 
$S$; and \emph{b} means that also $S$ is assumed to be noetherian.
The counterexamples are for $\wnus{1}$, the $p$-typical Witt functor of length 1,
with $X$ the spectrum of the given ring and $Y=\Spec\bZ$.}

\end{table}

\spacesubsec{Preservation of properties by $\wnls{n}$}

Preservation results for $\wnls{n}$ are typically easier to establish. This is because many common
properties of morphisms are naturally expressed in terms of the functor of points, and the functor
of points of $\wnls{n}(X)$ is described simply in terms of that of $X$. For the same reason, many
of these results extend readily beyond algebraic spaces to the category $\Space_S$; this is unlike
with $\wnus{n}$, where we usually need to make representability assumptions.

A number of the results are displayed in table 3. Because we have $\wnls{n}(S)=S$, the preservation
of properties relative to $S$ is a special case of the preservation of properties of morphisms.
This is unlike the case with $\wnus{n}$, where we have the right-hand pair of columns in table 2.
I have mostly ignored whether absolute properties, such as regularity, are preserved by $\wnls{n}$.
This is because such properties are usually not preserved by products over $S$, and in that case
they would fail to be preserved by $\wnls{n}$ for the trivial reason that $\wnls{n}$ is a product
functor away from the ideals of $\ptst$. This is like the case with $\wnus{n}$: properties that are
not preserved by disjoint unions, such as connectedness, are not listed in table 1.

\spacesubsec{Geometric descriptions}

As explained above, both $\wnus{n}(X)$ and $\wnls{n}(X)$ can be described 
in terms of the case where $X$ is affine by using charts. 
But under some flatness restrictions on $X$, it is
possible to construct $\wnus{n}(X)$ and $\wnls{n}(X)$ in purely geometric terms without mentioning
Witt vectors or arithmetic jet spaces at all. I will give the descriptions here in the $p$-typical
case when $n=1$; the general case is in the body of the paper.

Let us first consider the Witt space $\wnus{1}(X)$. Assume that $X$ is flat over $\bZ$ locally at 
$p$. Let $X_0$ denote the special fiber $X\times_{\Spec\bZ}\Spec\bF_p$. Then 
the theorem is that $\wnus{1}(X)$ is the 
coequalizer in the category of algebraic spaces of the two maps
	\[
		\displaylabelrightarrows{X_0}{i_1\circ F}{i_2}{X\smcoprod X,}
	\]
where $i_j\:X_0\to X \smcoprod X$ denotes the canonical closed immersion into the $j$-th component
of $X\smcoprod X$ and where $F$ is the absolute Frobenius endomorphism of $X_0$. For general $n$,
the space $\wnus{n}(X)$ can be constructed by gluing $n+1$ copies of $X$ together in a similar but
more complicated way along their fibers modulo $p$,\dots,$p^n$. See
\ref{thm:presentation-of-W-of-alg-space}.

For the arithmetic jet space $\wnls{1}(X)$, we need to assume that $X$ is smooth over $\bZ$ locally
at $p$. Let $I$ denote the ideal sheaf on $X\times X$ defining the graph of the Frobenius map on
the special fiber $X_0$, and let $\sB$ denote the sub-$\sO_{X\times X}$-algebra of $\sO_{X\times
X}[1/p]$ generated by the subsheaf $p^{-1}I$. Then the theorem is that $\wnls{1}(X)$ is naturally
isomorphic to the relative spectrum $\underSpec (\sB)$ over $X\times X$. (One might hope that it is
also worth studying the full blow up of $X\times X$ along $I$.) In particular, the map
$\wnls{1}(X)\to X\times X$ is affine and is an isomorphism outside the fiber over $p$. For general
$n$, the space $\wnls{n}(X)$ can be constructed by taking a similar but more complicated affine
modification of $X^{n+1}$. See \ref{thm:blow-up-description-of-w-lower-s}.

\begin{table}[t]
\begin{tabular}{|p{5cm}|c|c|}
\hline
Property of maps of				& Preserved 		& Reference or\\								
algebraic spaces 				& by $\wnls{n}$?	& counterexample\\

\hline\hline 
(formally) \'etale, smooth, unram.	& yes & \ref{pro:wnls-preserves-formally-etale-etc} \\
a monomorphism					& yes	& \ref{pro:wnls-preserves-epis-and-monos} \\
an open immersion				& yes	& \ref{pro:wnls-preserves-formally-etale-etc} +
											\ref{pro:wnls-preserves-epis-and-monos}\\

\hline 
quasi-compact					& yes	& \ref{pro:wnls-preserves-qcom-qsep} \\
quasi-separated					& yes	& \ref{pro:wnls-preserves-qcom-qsep} \\
epimorphism in $\Space_S$		& yes	& \ref{pro:wnls-preserves-epis-and-monos} \\

\hline 
affine							& yes	& \ref{pro:wnls-preserves-target-local-properties}\\
a closed immersion				& yes	& \ref{pro:wnls-preserves-target-local-properties}\\
integral, finite				& no 	& $\bZ\times\bZ$ \\
finite \'etale, finite flat		& no 	& $\bZ\times\bZ$ \\

\hline 
(locally) of finite type/pres.	& yes	& \ref{pro:wnls-preserves-target-local-properties}\\
separated						& yes	& \ref{pro:wnls-preserves-target-local-properties}\\
smooth and surjective			& yes	& \ref{pro:wnls-preserves-target-local-properties}\\
surjective						& no	& $\bZ[\sqrt{p}]$ \\
proper, universally closed		& no	& $\bZ\times\bZ$ \\
smooth and proper				& no	& $\bZ\times\bZ$ \\

\hline 
flat							& no	& $\bZ[x]/(x^2-px)$ \\
faithfully flat					& no	& $\bZ[x]/(x^2-px)$ \\
Cohen-Macaulay					& no	& $\bZ[x]/(x^2-px)$ \\
$\text{S}_k$ (Serre's property)	& no	& $\bZ[x]/(x^2-px)$ \\

\hline
\end{tabular}
\vspace{\abovecaptionskip}

\caption[Relative properties]{This table indicates whether the given property of maps of algebraic
spaces over $S$ is preserved by $\wnls{n}$ in general. The counterexamples are for the $p$-typical 
jet functor $\wnls{1}$ applied to the map $\Spec A\to \Spec\bZ$, where $A$ is
the given ring. See \ref{subsec:counterexamples-for-preservation-by-wnls}.}

\end{table}

\spacesubsec{Absolute algebraic geometry}

Let us end with a few words on how the Witt and jet functors relate
to the philosophy of absolute algebraic geometry. The first hope of this
philosophy is that there exists a category whose relationship to the category of schemes over $\bZ$
is analogous to the relationship of $\bF_p$ to 
$\bF_p[t]$. It is sometimes called the category of absolute schemes, or schemes over $\bF_1$. The
second hope is that this category would suggest ways of transporting results in algebraic
geometry over $\bF_p(t)$ to $\bQ$.

There are a number of proposed definitions of this category. One of the general themes is that an
absolute scheme could be defined to be a scheme together with some additional structure, which
should be interpreted as descent data from $\bZ$ to $\bF_1$. One precise proposal for this
structure is a so-called $\Lambda$-structure~\cite{Borger:LRFOE}. If $X$ is a flat scheme over
$\bZ$, then a $\Lambda$-structure is equivalent to a commuting family of maps $\psi_p\:X\to X$,
where $p$ runs over the prime numbers, such that each $\psi_p$ agrees with the Frobenius map on the
fiber of $X$ over $p$. And if $X$ is affine, then a $\Lambda$-structure is equivalent to a
(special) $\lambda$-ring structure on the corresponding ring, in the sense of Grothendieck's
Riemann--Roch theory \cite{Grothendieck:Chern}.

From this point of view, the functor that forgets the $\Lambda$-structure should be thought of as
base change from $\bF_1$ to $\bZ$. Therefore its left adjoint should be thought of as the
base-forgetting functor, and its right adjoint the Weil restriction of scalars. In fact, it is
possible to say explicitly what these adjoints are. Let $\ptst$ be the set of all maximal ideals of
$\bZ$. Then for any space $X\in\Space_{\bZ}$, the infinite-length Witt and jet spaces $\wius(X)$
and $\wils(X)$ of (\ref{eq:intro-infinite-length-spaces}) carry natural $\Lambda$-structures, and
hence give functors from spaces over $\bZ$ to those over $\bF_1$. The first is the left adjoint of
base change and the second is the right adjoint. Thus it is natural to interpret the Witt space
$\wius(X)\in\Space_S$ as $X\times_{\bF_1}\Spec\bZ$ and the arithmetic jet space
$\wils(X)\in\Space_S$ as the base change to $\bZ$ of the Weil restriction of scalars of $X$ to
$\bF_1$. One would interpret the truncated versions $\wnus{n}(X)$ and $\wnls{n}(X)$ as
approximations.

This theme is discussed in more detail in the preprint~\cite{Borger:LRFOE} and will
developed in forthcoming work.

\bigskip

I thank William Messing, Amnon Neeman, and Martin Olsson for some helpful
discussions on technical points, and Alexandru Buium and Lance Gurney for making some helpful
comments on earlier versions of this paper. In particular, Buium's questions led
to~\ref{subsec:relation-to-Greenberg-and-Buium} and all of
section~\ref{sec:geometry-of-arithmetic-jet-spaces}.

\tableofcontents

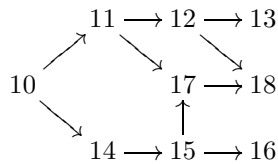
\begin{figure}[h]
\[
\xymatrix@C=30pt@R=25pt@!0{
	&			
	& \text{\ref{sec:sheaf-theoretic-properties-of-W-lower-star}} \ar[r]\ar[dr]
	& \text{\ref{sec:wnls-and-algebraic-spaces}} \ar[r]\ar[dr]
	& \text{\ref{sec:preservation-of-properties-for-wnls}}\\
	& \text{\ref{sec:Basic-functoriality}} \ar[ur]\ar[dr]
	&
	& \text{\ref{sec:ghost-descent-and-the-geometry-of-Witt-spaces}} \ar[r]
	& \text{\ref{sec:geometry-of-arithmetic-jet-spaces}} \\
	&			
	& \text{\ref{sec:the-inductive-lemma-for-wnus}} \ar[r]
	& \text{\ref{sec:wnus-and-algebraic-spaces}} \ar[r]	\ar[u]
	& \text{\ref{sec:geometric-properties-of-W-upper-star}}  \\
}
\]
\caption[Dependencies]{Dependence between sections}
\end{figure}

\section*{Conventions}
This paper is a continuation of~\cite{Borger:BGWV-I}. When we need to refer to results
in~\cite{Borger:BGWV-I}, we will generally not mention the paper itself and instead simply refer to
the subsection or equation number. There is no risk of confusion because the numbering of
this paper is a continuation of the numbering of~\cite{Borger:BGWV-I} (see Fig.\ 1).
We will also keep the general conventions of~\cite{Borger:BGWV-I}.

\setcounter{section}{9}

\section{Sheaf-theoretic foundations} \label{sec:Basic-functoriality}

The purpose of this section is to set up the basic global definitions.
The approach is purely sheaf theoretic in the style of SGA 4~\cite{SGA4.1}.

\subsection{} \emph{Spaces.}
Let $\aff$ denote the category of affine schemes equipped with the \'etale topology: a family
$(X_i\to X)$ is a covering family if each $X_i\to X$ is \'etale and their images cover $X$. Let
$\Space$ denote the category of sheaves of sets on $\aff$. We will call its objects
\emph{spaces}.\footnote{As always with large sites, there are set-theoretic subtleties. So,
precisely, let $\Upsilon$ be a universe containing the universe of discourse. The term
\emph{presheaf} will mean a functor from $\affrel_S$ to the category of $\Upsilon$-small sets, and
the term \emph{sheaf} will mean a presheaf satisfying the sheaf condition. Because $\affrel_S$ is
an $\Upsilon$-small category, we can sheafify presheaves. On the other hand, the categories of
sheaves and presheaves are not true categories because their hom-sets are not necessarily true
sets, but only $\Upsilon$-small sets. A possible way of avoiding set-theoretic issues would be to
consider only sheaves subject to certain set-theoretic smallness conditions, but to my knowledge,
no one has pursued this.} Any scheme represents a contravariant set-valued functor on $\aff$, and
this functor is a sheaf. In this way, the category of schemes can be identified with a full
subcategory of $\Space$.

For any object $S\in\Space$, let $\Space_S$ denote the subcategory of objects over $S$ and let
$\aff_S$ denote the full subcategory of $\Space_S$ consisting of objects $X$ over $S$, where $X$ is
affine. When $S$ is a scheme, define $\affrel_S$ to be the full subcategory of $\aff_S$ consisting
of objects $X$ whose structure map $X\to S$ factors through an affine open subscheme of $S$.
Observe that the inclusion $\affrel_S\to \Space_S$ induces an equivalence between $\Space_S$ and
the category of sheaves of sets on $\affrel_S$, and for convenience, we will typically identify the
two. (The reason for using the site $\affrel_S$, rather than the more common one $\aff_S$, is that
the $\ptst$-typical Witt functors $W_{R,\ptst,n}$ are defined in terms
of the base $R$; so it is more 
convenient to use a generating site in which the objects have an affine base available.)
In the important special case where $S$ itself is affine, $S=\Spec R$, we will often write
$\affrel_R$ and $\Space_R$, and of course we have $\affrel_S=\aff_S$.

\subsection{} \emph{Supramaximal ideals.}
\label{subsec:supramaximal-def-global-base-S}
For the rest of this paper, $S$ will denote a separated scheme.
(In all applications, $S$ will be an arithmetic curve. The extra generality we work in
will not create any more work.) 
Define a supramaximal ideal
on $S$ to be a finitely presented (\cite{EGA-no.4}, 0 (5.3.1))
ideal sheaf $\m$ in $\sO_S$ corresponding to either
	\begin{enumerate}
		\item 
			a closed point whose local
			ring $\sO_{S,\m}$ is a discrete valuation ring with finite residue field, or
		\item the empty subscheme.
	\end{enumerate}
When $S$ is affine, this agrees with the earlier definition 
in~1.2. 

We will generally fix the following notation: Let $(\m_\pind)_{\pind\in\ptst}$ denote a family of
supramaximal ideals of $S$ which are pairwise coprime, that is, for all $\alpha,\beta\in\ptst$ with
$\alpha\neq\beta$, we have $\m_{\alpha}+\m_{\beta} = \sO_S$. For each $\pind$, let $q_\pind$ be the
cardinality of the ring $\sO_{S,\m_\pind}/\m_\pind$. Finally, $n$ will be an element of 
$\nset=\bigoplus_{\ptst}\bN$.

\subsection{} {\em Definition of $\wnus{S,\ptst,n}(X)$ and $\wnls{S,\ptst,n}(X)$.}
\label{subsec:def-of-wnus-and-wnls}
Let $X=\Spec A$ be an object of $\affrel_S$, and 
let $\Spec R$ be an affine open subscheme of $S$ which contains the image of 
the structure map $X\to S$. Then
$W_{R,\ptst,n}(A)$ is independent of $R$, up to a coherent family of canonical isomorphisms. 
(Here we abusively conflate $\ptst$ and $n$ with their restrictions to $R$.)
Indeed, let $\Spec R'$ be another
such subscheme of $S$. Since $S$ is separated, we can assume $\Spec R'\subseteq \Spec R$
and can then apply~(2.6.2). Thus we can safely define 
	\[
	W_{S,\ptst,n}(X)= \Spec W_{R,\ptst,n}(A).
	\]

Now we will pass from $X\in\affrel_S$ to $X\in\Space_S$.
The functor $W_{S,\ptst,n}$ preserves \'etale fiber products.
Indeed, let $f\:\Spec A'\to \Spec A$ and $g\:\Spec A''\to \Spec A$ be \'etale maps in 
$\affrel_S$, and let $\Spec R$
be an affine open subscheme of $S$ containing the image of $\Spec A$, and hence those of $\Spec A'$
and $\Spec A''$; then by~9.4, we have
	\[
	W_{R,\ptst,n}(A'\tn_{A} A'') = W_{R,\ptst,n}(A')\tn_{W_{R,\ptst,n}(A)} W_{R,\ptst,n}(A'').
	\]
Similarly, $W_{S,\ptst,n}$ preserves covering families,
by~9.2 and~6.9.
It  follows from general sheaf theory (see the footnote to SGA 4 III 1.6~\cite{SGA4.1}, say) 
that for any sheaf $X$, the presheaf $X\circ W_{S,\ptst,n}$ is a sheaf. Let us write 
	\[
	\wnls{S,\ptst,n}\:\Space_S\longmap\Space_S
	\] 
for the functor $X\mapsto X\circ W_{S,\ptst,n}$. 
Again by general sheaf theory (SGA 4 III 1.2~\cite{SGA4.1}), the functor
$\wnls{S,\ptst,n}$ has a left adjoint 
	\[
	\wnus{S,\ptst,n}\:\Space_S\longmap\Space_S
	\]
constructed in the usual way.
For any affine open subscheme $\Spec R$ of $S$ and any $R$-algebra $A$, it satisfies 
	\begin{equation} \label{eq:wnus-of-affine-is-W}
		\wnus{S,\ptst,n}(\Spec A) = W_{S,\ptst,n}(\Spec A) = \Spec W_{R,\ptst,n}(A).
	\end{equation}
By the adjunction between $W_n$ and $\Lambda_n\bcp\vbl$, we further have 
	\begin{equation} \label{eq:wnls-of-affine-is-lambda-circle}
		\wnls{S,\ptst,n}(\Spec A) = \Spec (\Lambda_{R,\ptst,n}\bcp A),	
	\end{equation}
for $R$ and $A$ as above.

We call $\wnus{S,\ptst,n}(X)$ the {\em $\ptst$-typical Witt space of $X$ of length $n$} and
$\wnls{S,\ptst,n}(X)$ the {\em $\ptst$-typical (arithmetic) jet space of $X$ of length $n$}. We
will often use shortened forms such as $\wnus{S,n}$, $\wnus{n}$, and so on.

(Note that $\wnus{S,\ptst,n}$ does not generally commute with finite products. For example,
see~9.5. So, despite the notation,
$\wnus{S,\ptst,n}$ is essentially never the inverse-image functor in a map of toposes.)

\subsection{} \emph{Restriction of $S$.}
\label{subsec:change-of-S}
Let $j\:S'\to S$ be a flat monomorphism of schemes (especially an open immersion
or a localization at a point).
There are certain isomorphisms of functors
	\begin{align}
			\label{eq:change-of-S-1}
		\wnus{S',n}\circ j^* &\longisomap j^* \circ \wnus{S,n} \\
			\label{eq:change-of-S-2}
		\wnls{S,n}\circ j_* &\longisomap j_* \circ \wnls{S',n} \\ 
			\label{eq:change-of-S-3}
		\wnus{S,n}\circ j_! &\longisomap j_! \circ \wnus{S',n} \\
			\label{eq:change-of-S-4}
		\wnls{S',n} \circ j^* &\longisomap j^* \circ \wnls{S,n} \\
			\label{eq:change-of-S-5}
		j_! \circ \wnls{S',n} &\longisomap \wnls{S,n}\circ j_!
	\end{align}
which we will find useful. The map~(\ref{eq:change-of-S-1}) restricted to the site $\affrel_S$ was
constructed in~(2.6.3); and it was shown to be an isomorphism in
6.1. It therefore induces an isomorphism~(\ref{eq:change-of-S-2}) on
the whole sheaf category $\Space_{S'}$ and, by adjunction,~(\ref{eq:change-of-S-1}) on $\Space_S$.
Similarly,~(\ref{eq:change-of-S-3}) was constructed on $\affrel_{S'}$
in~(2.6.2); and this induces~(\ref{eq:change-of-S-4}) on
the sheaf category $\Space_S$ and, by adjunction,~(\ref{eq:change-of-S-3}) on $\Space_{S'}$.

Finally (\ref{eq:change-of-S-5}) is defined to be the composition
	\[
	j_!\circ\wnls{S',n} \longisomap j_!\circ \wnls{S',n}\circ j^*\circ j_! 
		\xright{(\ref{eq:change-of-S-1})} j_!\circ j^*\circ \wnls{S,n}\circ j_! 
		\longmap \wnls{S,n} \circ j_!
	\]
where the first map is induced by the unit of the adjunction $j_! \dashv j^*$ and the last by the
counit. Let us show the last map isomorphism. It is enough to show that, for any $X'\in\Space_{S'}$,
the structure map $\wnls{S,n}(j_!(X'))\to S$ factors through $S'$. To do this, it is enough to
assume $X'=S'$, and in this case, we will show $\wnls{S,n}(S')=j_!(\wnls{S',n}(S'))$. It suffices to
show this locally on $S$, by~(\ref{eq:change-of-S-4}), and so we can assume $S$ is affine. Since $S$
is separated, $S'$ is also affine, in which case we can
apply~(2.6.4).

It will be convenient to refer to the following simplified expressions of the isomorphisms above:
	\begin{align}
			\label{eq:localization-of-S-commutes-with-wnus}
		\wnus{S',n}(S'\times_S X) &= S'\times_S \wnus{S,n}(X) \\
		\wnls{S,n}\bigl( j_*(X')\bigr) &= j_* \bigl(\wnls{S',n}(X')) \\ 
			\label{eq:wnus-is-independent-of-S}
		\wnus{S,n}(X') &= \wnus{S',n}(X') \\
			\label{eq:global-base-jet-localization}
		\wnls{S',n}(S'\times_S X) &= S'\times_S \wnls{S,n}(X) \\
			\label{eq:wnls-is-independent-of-S}
		\wnls{S',n}(X') &= \wnls{S,n}(X')
	\end{align}
for $X\in\Space_S$, $X'\in\Space_{S'}$.

\subsection{} \emph{Restriction of $\ptst$.}
Observe that if we let $\ptst'$ denote the support in $\ptst$ of $n\in\nset$,
then we have $W_{S,\ptst',n}=W_{S,\ptst,n}$, and hence $\wnus{S,\ptst',n}=\wnus{S,\ptst,n}$ and 
$\wnls{S,\ptst',n}=\wnls{S,\ptst,n}$. So without loss of generality, we can assume
that $\ptst$ equals the support of $n$ and hence that $\ptst$ is finite.
(This is no longer true in the infinite-length case, but that does not appear in this paper.)

\subsection{} {\em Natural maps.}
\label{subsec:natural-maps}
Natural transformations between Witt vector functors for rings
extend naturally to natural transformations of their sheaf-theoretic variants and,
by adjunction, of the arithmetic jet spaces.

For example, for any partition $\ptst=\ptst'\smsmcoprod\ptst''$, the natural 
isomorphism~(5.4.2) induces natural
isomorphisms
	\begin{gather}
			\label{eq:geom-witt-iterate-upper-s}
		\wnus{S,\ptst'',n''}(\wnus{S,\ptst',n'}(X)) \longisomap \wnus{S,\ptst,n}(X), \\
			\label{eq:geom-witt-iterate-lower-s}
		\wnls{S,\ptst,n}(X) \longisomap \wnls{S,\ptst',n'}(\wnls{S,\ptst'',n''}(X)),
	\end{gather}
for any $X\in\Space_S$.

Similarly, the natural projections $W_{n+i}(A)\to W_{n}(A)$, induced by the inclusion 
$\Lambda_{n}\subseteq\Lambda_{n+i}$, induce natural maps
	\begin{align}
		\wnus{n}(X) &\xright{\inclmap{n,i}} \wnus{n+i}(X),
			\label{map:wnus-change-of-n-inclusion-map} \\
		\wnls{n+i}(X) &\xright{\projmap{n,i}} \wnls{n}(X),
			\label{map:wnls-change-of-n-projection-map}
	\end{align}
which we usually just call the natural inclusion and projection;
and the natural transformations $\psi_i$ of~(2.4.8) induce
natural maps
	\begin{align}
		\wnls{n+i}(X) &\xright{\psi_i} \wnls{n}(X),\\
		\wnus{n}(X) &\xright{\psi_i} \wnus{n+i}(X).
	\end{align}

The affine ghost maps $\gh{i}\:W_n(A) \to A$ (for $i=0,\dots,n$) 
and $\gh{\leq n}\:W_n(A)\to A^{[0,n]}$
of~(2.4.3) and~(2.4.4) induce general
ghost maps
	\begin{align} 
			\label{eq:individual-ghost-map-geometric}
		X &\xright{\gh{i}} \wnus{n}(X), \\
			\label{eq:total-ghost-map-geometric}
		\coprod_{[0,n]} X &\xright{\gh{\leq n}} \wnus{n}(X),
	\end{align}
and, by adjunction, the co-ghost maps 
	\begin{align} 
			\label{eq:individual-co-ghost-map-geometric}
		\wnls{n}(X) &\xright{\cgh{i}} X, \\
			\label{eq:total-co-ghost-map-geometric}
		\wnls{n}(X) &\xright{\cgh{\leq n}} X^{[0,n]}.
	\end{align}
Observe that if every ideal in $\ptst$ is the unit ideal,
then $\gh{\leq n}$ and $\cgh{\leq n}$ are isomorphisms,
simply because they are induced by isomorphisms between the site maps,
by for example~2.7.

When $\ptst$ consists of a single ideal $\m$,
the reduced affine ghost maps $\rgh{n}\:W_n(A)\to A/\m^{n+1}A$ 
of~(4.6.1) extend similarly to natural maps
	\begin{equation} \label{map:general-reduced-ghost-map}
		S_n \times_S X \xright{\rgh{n}} \wnus{n}(X),
	\end{equation}
where $S_n=\Spec \sO_S/\m^{n+1}$.
Indeed, both sides commute with colimits in $X$, and so, since every $X\in\Space_S$
is the colimit of the objects of $\affrel_S$ mapping to it, the maps in the affine
case naturally induce maps in general.

Let $\rcgh{n}$ denote the \emph{reduced co-ghost map}
	\begin{equation} \label{map:general-reduced-co-ghost-map}
		\rcgh{n}\:S_n \times_S \wnls{n}(X) \xright{\rgh{n}}
		 	\wnus{n}\wnls{n}(X) \xright{\varepsilon} X,
	\end{equation}
where $\varepsilon$ is the counit of the evident adjunction.

Finally, we have natural plethysm and co-plethysm maps
	\begin{align} 
			\label{map:geometric-plethysm}
		\wnus{m}\wnus{n}(X) &\xright{\mu_X} \wnus{m+n}(X),\\
			\label{map:geometric-coplethysm}
		\wnls{m+n}(X) &\longmap \wnls{n}\wnls{m}(X),
	\end{align}
which are induced by~(2.4.5).

\begin{proposition}\label{pro:W_n-of-affine-is-affine}
	If $X$ is affine, then so is $\wnus{n}(X)$.
\end{proposition}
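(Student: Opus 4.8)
The plan is to realise $\wnus{n}(X)$ as a scheme carrying $X$ as an infinitesimal thickening, and then to use that affineness of a scheme is insensitive to nilpotents; this lets me deduce the absolute statement from the hypothesis on $X$ rather than on the base.

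First I would check that $\wnus{n}(X)$ is a quasi-compact, quasi-separated scheme. Since $X$ is affine and $S$ is separated, the map $X\to S$ is affine; covering $S$ by affine opens $S_i$ gives affine schemes $X_i=X\times_S S_i$, each lying in $\affrel_S$, and the overlaps $X\times_S(S_i\cap S_j)$ are again affine and in $\affrel_S$ because $S_i\cap S_j$ is affine. By (\ref{eq:wnus-of-affine-is-W}) each $\wnus{n}(X_i)$ is affine, and the transition maps are open immersions: the inclusion $X\times_S(S_i\cap S_j)\hookrightarrow X_i$ is an \'etale monomorphism, so by preservation of \'etale fiber products it is carried to a monomorphism, which is \'etale by van der Kallen's theorem (verified locally on $S$ through the change-of-$S$ isomorphism (\ref{eq:localization-of-S-commutes-with-wnus})), hence an open immersion. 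As $\wnus{n}$ preserves colimits, these affine opens cover $\wnus{n}(X)$, so $\wnus{n}(X)$ is a qcqs scheme.

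The key point is then the $0$-th ghost map $\gh{0}\:X\to\wnus{n}(X)$ of (\ref{eq:individual-ghost-map-geometric}). On each affine chart it is $\Spec$ of the surjection $W_{R_i,n}(A_i)\to A_i$ onto the $0$-th ghost component, whose kernel is the augmentation ideal of a truncated Witt ring and is therefore nilpotent of bounded order, the bound depending only on $n$. Thus $\gh{0}$ is a closed immersion whose ideal sheaf $\mathcal I$ satisfies $\mathcal I^N=0$ globally; that is, $X\hookrightarrow\wnus{n}(X)$ is a finite-order infinitesimal thickening with underlying closed subscheme $X$. Since affineness of a qcqs scheme depends only on any such thickening---one filters an arbitrary quasi-coherent sheaf by the powers of $\mathcal I$ and applies Serre's criterion to the $\sO_X$-module subquotients, using that $X$ is affine---the affineness of $X$ forces $\wnus{n}(X)$ to be affine.

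I expect the genuine obstacle to be precisely the non-affine case of $S$: when $S$ is not affine (for instance a projective arithmetic curve), an affine $X$ need not lie in $\affrel_S$, and affineness of the structure map $\wnus{n}(X)\to S$---which also follows from the first paragraph---is \emph{not} enough to conclude that $\wnus{n}(X)$ is affine. The thickening argument of the second paragraph is exactly what bypasses this. The only mildly delicate inputs are the uniform nilpotence bound for the Witt augmentation ideal and the open-immersion claim underlying scheme-ness; both are local on $S$ and reduce to the affine theory of~\cite{Borger:BGWV-I}.
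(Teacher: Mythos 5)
Your first paragraph is essentially sound and in fact parallels the paper's own argument for the scheme-ness step: one covers $S$ by affine opens and uses (\ref{eq:localization-of-S-commutes-with-wnus}) to see that $\wnus{n}(X)$ is a scheme covered by the affine charts $\wnus{n}(X\times_S S_i)$. The fatal problem is your second paragraph. The kernel of $\gh{0}\:W_n(A)\to A$ is \emph{not} nilpotent, and $\gh{0}\:X\to\wnus{n}(X)$ is \emph{not} an infinitesimal thickening. Away from the ideals of $\ptst$ the total ghost map is an isomorphism, so there $\wnus{n}(X)$ is a disjoint union of copies of $X$ and $\gh{0}$ is the inclusion of one summand: its ideal is idempotent, not nilpotent. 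Concretely, for $p$-typical Witt vectors of length $1$ one has $W_1(\bZ)=\bZ[x]/(x^2-px)$, the kernel of $\gh{0}$ is $(x)$ with $x^k=p^{k-1}x\neq 0$ for all $k$, and $\Spec W_1(\bZ)$ has two irreducible components while $\Spec\bZ$ has one, so no thickening relation is possible. What is true (\ref{pro:W-nilpotent-at-p}, and \ref{lem:main-geometric-finite-length}(\ref{lem-mgfl-part2})) is that $\gh{0}$ becomes a square-zero closed immersion only after base change to $S_0=\Spec\sO_S/\m$. The ``uniform nilpotence bound for the Witt augmentation ideal'' that you yourself flag as the delicate input simply does not exist integrally, so the Serre-criterion filtration has nothing to filter by and the deduction of affineness collapses.

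The correct nilpotence statement concerns the \emph{total} ghost map: by \ref{lem:gh-integral-and-surj-multi-prime}, the kernel $I$ of $\gh{\leq n}\:W_n(A)\to A^{[0,n]}$ satisfies $I^{2^N}=0$, and $\gh{\leq n}$ is integral and surjective --- but its source is $\coprod_{[0,n]}X$, not $X$. So what one actually has is an integral surjection onto $\wnus{n}(X)$ from an affine scheme, not a nilpotent embedding of $X$ into it, and the arrow points the wrong way for your argument. To conclude affineness of the target of an integral surjection with affine source one needs Chevalley's theorem, in the non-noetherian generality due to Rydh (Theorem (8.1)), and that is exactly the paper's proof: it verifies that $\wnus{n}(X)$ is a scheme (your paragraph one), checks integrality and surjectivity of $\gh{\leq n}$ locally on $S$ via \ref{lem:gh-integral-and-surj-multi-prime}, and applies Chevalley. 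Your schematic step survives, but the affineness step must be replaced by the Chevalley--Rydh argument; there is no elementary thickening shortcut of the kind you propose.
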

\begin{proof}
	First observe that when $S$ is affine, this was established already 
	in~(\ref{eq:wnus-of-affine-is-W}).
	
	For general $S$, we will apply Chevalley's theorem (in the final form, due to 
	Rydh~\cite{Rydh:Noetherian-approximation},
	Theorem (8.1)), to the ghost map $\gh{\leq n}$ of~(\ref{eq:total-ghost-map-geometric}).
	To apply it, it is enough to verify that $\coprod_{[0,n]}X$ is affine,
	that $\wnus{n}(X)$ is a scheme, and that $\gh{\leq n}$ is integral and surjective.
	
	The first statement is clear. Let us check the second.
	Let $(S_i)_{i\in I}$ be an open affine cover of $S$.
	Since $\wnus{n}(X)$ is the quotient of $\coprod_i S_i\times_S \wnus{n}(X)$ by the equivalence
	relation $\coprod_{j,k}S_j\times_S S_k\times_S \wnus{n}(X)$, it is enough to show that
	each of the summands in each expression
	is affine. And since $S$ is separated, it is enough to show the single statement that, 
	for any affine open subscheme $S'$ of $S$, the space $S'\times_S \wnus{n}(X)$ is affine.
	By~(\ref{eq:localization-of-S-commutes-with-wnus}), we have
	$S'\times_S\wnus{n}(X)=\wnus{S',n}(S'\times_S X)$.
	Further, because $X$ and $S'$ are affine and $S$ is separated, $S'\times_S X$ is affine. 
	Therefore $\wnus{S',n}(S'\times_S X)$ is affine, by the case mentioned in the beginning,
	and hence so is $S'\times_S\wnus{n}(X)$.
	
	Now let us check that $\gh{\leq n}$ is integral and surjective.
	It is enough to show this for each base-change 
	map $S_i\times_S \gh{\leq n}$. By~(\ref{eq:localization-of-S-commutes-with-wnus}) again,
	this map can be identified with the ghost map
	$\coprod_{[0,n]}(S_i\times_S X) \to \wnus{S_i,n}(S_i\times_S X)$.
	In other words, we may assume $S$ is affine, in which case we can conclude by 
	applying~\ref{lem:gh-integral-and-surj-multi-prime} below.
\end{proof}

\begin{lemma}\label{lem:gh-integral-and-surj-multi-prime}
	Suppose $S$ is affine, $S=\Spec R$. For any $R$-algebra $A$,
	the map $\gh{\leq n}\:W_{R,n}(A)\to A^{[0,n]}$ is integral, and its kernel $I$
	satisfies $I^{2^N}=0$, where $N=\sum_{\m}n_{\m}$.
\end{lemma}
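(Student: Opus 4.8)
The plan is to induct on $N=\sum_{\m}n_{\m}$, peeling off one unit of length at a single prime at each stage by means of the iteration isomorphisms, so that everything reduces to the case of length $1$ at one prime. For $N=0$ we have $n=0$, so $W_{R,n}=\id$ and $\gh{\leq n}$ is the identity; there is nothing to prove. For $N\geq 1$, choose $\m\in\ptst$ with $n_{\m}\geq 1$ and let $n'$ be $n$ with its $\m$-component lowered by one. The generalized iteration isomorphism~(\ref{map:witt-iterate-generalized-finite-length}) (in its mixed plethysm/distinct-prime form) gives a natural identification $W_{R,n}(A)=W_{R,\m,1}\big(W_{R,n'}(A)\big)$, under which the total ghost map factors, after the matching identification $A^{[0,n]}=(A^{[0,n']})^{[0,1]}$, as
$$ W_{R,n}(A)\xright{g} B^{[0,1]}\xright{h} A^{[0,n]}, \qquad B\dfn W_{R,n'}(A), $$
where $g$ is the length-$1$ ghost map of $W_{R,\m,1}(B)$ and $h$ applies the ghost map $\gh{\leq n'}\:B\to A^{[0,n']}$ in each of the two slots. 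That $\gh{\leq n}=h\circ g$ is precisely the compatibility of the ghost maps with the iteration isomorphism; I would record this as a short lemma, as it is immediate from the way both sides are assembled from the site maps in~\cite{Borger:BGWV-I} but is notationally fiddly.

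Granting this, the inductive step is formal. Integrality of $\gh{\leq n'}$ (inductive hypothesis) gives integrality of $h$, a finite product of integral maps, and integrality of $g$ is the base case; hence $\gh{\leq n}=h\circ g$ is integral. For the kernel, write $I=\ker(h\circ g)$ and $J=\ker\gh{\leq n'}$, so that $\ker h=J^{[0,1]}$ and, by the inductive hypothesis $J^{2^{N-1}}=0$, also $(\ker h)^{2^{N-1}}=0$. The base case will give $(\ker g)^2=0$. Now $g(I^{2^{N-1}})\sub(\ker h)^{2^{N-1}}=0$, so $I^{2^{N-1}}\sub\ker g$, and therefore $I^{2^{N}}=(I^{2^{N-1}})^2\sub(\ker g)^2=0$. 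This is exactly the doubling that produces the exponent $2^N$ from the exponent $2$ at each prime-length.

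It remains to treat the base case $N=1$: the length-$1$ ghost map $\gh{\leq 1}\:W_{R,\m,1}(C)\to C^{[0,1]}=C\times C$ for an arbitrary $R$-algebra $C$. Here I would argue directly from the explicit description of length-$1$ Witt vectors in~\cite{Borger:BGWV-I}. The image of $\gh{\leq 1}$ contains the Teichm\"uller images $(c,\ast)$ (arbitrary first coordinate) and the Verschiebung images $\{0\}\times\m C$. The idempotents $e_0,e_1\in C\times C$ are integral, being idempotent; then $(c,0)=e_0\cdot(c,\ast)$ lies in $\im(\gh{\leq 1})[e_0]$ and is integral, while $(0,c)$ satisfies $T^2-(0,c)^2=0$ over $\im(\gh{\leq 1})$ because the product of two Verschiebung elements carries a factor from $\m$, putting $(0,c)^2\in\{0\}\times\m C\sub\im(\gh{\leq 1})$; summing, $(c,c')=(c,0)+(0,c')$ is integral, so $\gh{\leq 1}$ is integral. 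For the kernel, the observation that $\gh{\leq 1}$ becomes an isomorphism once $\m$ is the unit ideal (see~\ref{pro:ghost-map-inj}) shows $\ker\gh{\leq 1}$ is $\m$-torsion on its nontrivial coordinate, and the same Verschiebung-product relation shows that the product of any two elements of $\ker\gh{\leq 1}$ acquires an $\m$-factor that annihilates it; hence $(\ker\gh{\leq 1})^2=0$.

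I expect the main obstacle to be this base case, and specifically pinning down the length-$1$ multiplication in the general supramaximal setting, where $\m$ need not be principal: the assertions ``a product of two Verschiebung elements lands in $\{0\}\times\m C$'' and ``$\ker\gh{\leq 1}$ is $\m$-torsion on its second coordinate'' have to be extracted from the constructions of~\cite{Borger:BGWV-I} rather than from the familiar $p$-typical formulas. A secondary, purely bookkeeping, point is the compatibility $\gh{\leq n}=h\circ g$ with the iteration isomorphism, which underpins the whole induction.
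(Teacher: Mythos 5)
Your plan has a fatal gap at its very first step: the ``mixed plethysm/distinct-prime'' identification $W_{R,n}(A)=W_{R,\m,1}\big(W_{R,n'}(A)\big)$, where $n'$ is $n$ with its $\m$-component lowered by one, does not exist. The iteration isomorphism~(\ref{map:witt-iterate-generalized-finite-length}) applies only to partitions $\ptst=\ptst'\smsmcoprod\ptst''$ into \emph{disjoint} sets of primes; at a single prime the relation between $W_{\m,1}\circ W_{\m,n_\m-1}$ and $W_{\m,n_\m}$ is governed by the plethysm/coplethysm maps of (\ref{map:geometric-plethysm})--(\ref{map:geometric-coplethysm}), which are not isomorphisms. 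You can see the failure already in your own ghost-ring matching: $A^{[0,n]}$ has $\prod_\m(n_\m+1)$ factors while $(A^{[0,n']})^{[0,1]}$ has $2n_\m\prod_{\m'\neq\m}(n_{\m'}+1)$, and these agree only when $n_\m=1$ (generically, away from $\m$, the $p$-typical $W_{\m,1}(W_{\m,1}(A))$ is $A^{4}$ whereas $W_{\m,2}(A)$ is $A^{3}$). So your induction collapses as soon as some $n_\m\geq 2$. The paper's proof is structured precisely to avoid this: it inducts on the \emph{cardinality} of $\ptst$, peeling off the entire $\m$-component at once via the genuine iteration isomorphism and using that $W_{\ptst',n'}$ commutes with the finite product $A^{[0,n_\m]}$; the remaining single-prime map $\gh{\leq n_\m}$ is then handled by a second induction on the integer $n_\m$ using \ref{pro:alpha-map}(\ref{part:alpha-map-a})--(\ref{part:alpha-map-b}), where the map $\alpha\:W_{n_\m}(A)\to W_{n_\m-1}(A)\times A$ is \emph{not} an isomorphism but is integral with square-zero kernel --- which is all your (correct) doubling bookkeeping $I^{2^N}=(I^{2^{N-1}})^2\sub(\ker)^2=0$ actually requires. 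In other words, the inductive skeleton can be salvaged, but the length-lowering step must go through the non-isomorphic $\alpha$-map rather than a purported isomorphism.

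Your base case is also broken as written. For arbitrary $c\in C$ the element $(0,c)$ of the ghost ring is not the ghost vector of a Verschiebung element --- those have the form $(0,\pi z)$ (locally, with $\m=(\pi)$) --- so the product-of-two-Verschiebungs fact gives you nothing about $(0,c)^2=(0,c^2)$: this lies in $\{0\}\times\m C$ only when $c^2\in\m C$, which fails in general. Hence integrality of $(0,c')$ for arbitrary $c'$, and with it integrality of $\gh{\leq 1}$, does not follow from your argument; a correct treatment (e.g.\ noting $(c,c')=\gh([c])+(0,c'-c^{q})$ and producing a genuine monic equation for $(0,d)$) essentially amounts to reproving \ref{pro:alpha-map}(\ref{part:alpha-map-a}), which the paper simply cites from \cite{Borger:BGWV-I}. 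Your kernel analysis at length $1$ is morally right (it is the content of \ref{pro:alpha-map}(\ref{part:alpha-map-b})), but as it stands neither the base case nor the inductive step of your proposal is valid.
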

\begin{proof}
	We may assume that 
	$\ptst$ equals the support of $n$, which is finite, and then reason by induction
	on the cardinality of $\ptst$. 
	When $\ptst$ is empty, $\gh{\leq n}$ is an isomorphism.
	Now suppose $\ptst$ contains an element $\m$.  Write $\ptst'=\ptst-\{\m\}$
	and let $n'$ denote the restriction of $n$ to $\ptst'$.
	Then the map $\gh{\leq n}$ factors as follows:
		\[
		\xymatrix{
		W_{\ptst,n}(A) \ar^{\sim}_{(5.4.2)}[d] 
				\ar^{\gh{\leq n}}[rr]
			& & \bigl(A^{[0,n_{\m}]}\bigr)^{[0,n']}\\
		W_{\m,n_{\m}}(W_{\ptst',n'}(A)) \ar^-{\gh{\leq n_{\m}}}[r] 
			& W_{\ptst',n'}(A)^{[0,n_{\m}]} \ar^{\sim}[r]
			& W_{\ptst',n'}(A^{[0,n_{\m}]}). \ar^{\gh{\leq n'}}[u] \\
		}
		\]
	So it is enough to show $\gh{\leq n_{\m}}$ and $\gh{\leq n'}$ are integral and 
	their kernels are nilpotent of the appropriate degree.
	For $\gh{\leq n'}$, it follows by induction on $\ptst$. For $a$, it follows by induction on 
	the integer $n_{\m}$, 
	using~8.1(a)--(b).
\end{proof}

\subsection{} {\em Algebraic spaces.}
\label{subsec:algebraic-spaces}
We will define the category of algebraic spaces to be the smallest full subcategory of
$\Space_{\bZ}$ which contains $\aff_{\bZ}$ and which is closed under arbitrary (set indexed)
disjoint unions and quotients by \'etale equivalence relations. This obviously exists. It also
agrees with the category of algebraic spaces as defined in
To\"en--Vaqui\'e~\cite{Toen-Vaquie:algebrisation}, section 2. This follows from
To\"en--Vezzosi~\cite{Toen-Vezzosi:HAGII}, Corollary 1.3.3.5, as does the fact that this category
is closed under finite limits. (Note that~\cite{Toen-Vezzosi:HAGII} is written in the homotopical
language of higher stacks, but it is possible to translate the arguments by substituting the word
\emph{space} for \emph{stack} and so on.) Indeed, as mentioned in their Remark 2.6, their category 
has the defining closure property of ours.

It will be convenient to use their concept of an algebraic space $X$ being $m$-geometric,
$m\in\bZ$. For $m\leq -1$, the space $X$ is $m$-geometric if and only if it is affine, it is
$0$-geometric if and only if its diagonal map is affine, and every algebraic space is $m$-geometric
for $m\geq 1$. (Again, see~\cite{Toen-Vaquie:algebrisation}, Remark 2.6. Note that they require
$m\geq -1$, but it will be convenient for us to allow $m<-1$.) In particular, for $m\geq 0$, every
$m$-geometric algebraic space is the quotient of a disjoint union of affine schemes by an \'etale
equivalence relation which is a disjoint union of $(m-1)$-geometric algebraic spaces. (Note however
that the converse is not true: over a field of characteristic $0$, the quotient group
$\mathbold{G}_{\mathrm{a}}/\bZ$ is $1$-geometric but not $0$-geometric.) Let $\AlgSp_m$ denote the
full subcategory of $\Space_{\bZ}$ consisting of all disjoint unions of $m$-geometric algebraic
spaces. A map $X\to Y$ of spaces is said to be $m$-representable if for every affine scheme $T$ and
every map $T\to Y$, the pull back $X\times_Y T$ is an $m$-geometric algebraic space.

This definition of algebraic space also agrees with that of Raynaud--Gruson~\cite{Raynaud-Gruson}.
Indeed, the category of Raynaud--Gruson algebraic spaces contains affine schemes and is closed
under disjoint unions and quotients by \'etale equivalence relations. (See Conrad--Lieblich--Olsson
\cite{Conrad-Lieblich-Olsson}, A.1.2.) And conversely, any algebraic space in the sense of
Raynaud--Gruson is the quotient of a disjoint union of affine schemes by an \'etale equivalence
relation which is a scheme, necessarily separated; therefore it is $1$-geometric.

The difference between these two approaches is that Raynaud--Gruson~\cite{Raynaud-Gruson} use
schemes as the intermediate class of algebraic spaces, and To\"en--Vaqui\'e use algebraic spaces
with affine diagonal. The advantage of the second approach is that the two steps---going
from affine schemes to the intermediate category, and going from that to general
algebraic spaces---are two instances of a single procedure. Thus we can prove results by induction
on the geometricity $m$, and so we do not need to consider the intermediate case separately. Note
however that the induction is rather meager in that it terminates after two steps.

Finally, a space is algebraic in the sense of Knutson~\cite{Knutson:Alg-spaces} if and only if it
is quasi-separated and algebraic in the sense above.

\subsection{} {\em Smoothness properties of maps.}
\label{subsec:smoothness-properties-of-maps}
Let us say a map $f\:X\to Y$ in $\Space$ is formally \'etale (resp.\ formally unramified, resp.\
formally smooth) if the usual nilpotent lifting properties (EGA IV 17.1.2 (iii)~\cite{EGA-no.32})
hold: for all nilpotent closed immersions $\bar{T}\to T$ of affine schemes, the induced map
	\begin{equation} \label{map:def-formally-etale}
		X(T) \longmap X(\bar{T})\times_{Y(\bar{T})} Y(T)
	\end{equation}
is a bijection (resp.\ injection, resp.\ surjection).

Also, let us say that $f$ is locally of finite presentation if
for any cofiltered system $(T_i)_{i\in I}$ of $Y$-schemes, each of which is affine, the induced map
	\begin{equation} \label{map:def-loc-of-finite-pres}
		\colim_i \Hom_Y(T_i,X) \longmap \Hom_Y(\lim_i T_i, X)
	\end{equation}
is a bijection.
This definition agrees with the usual one if $X$ and $Y$ are schemes
(EGA IV 8.14.2.c~\cite{EGA-no.28}).

We call a map \'etale (resp.\ unramified, resp.\ smooth) if it is locally of finite presentation
and formally \'etale (resp.\ formally unramified, resp.\ formally smooth).
When $X$ and $Y$ are schemes, all these definitions agree with the usual ones.

\subsection{} \emph{$\ptst$-flat and $\ptst$-smooth algebraic spaces.}
\label{subsec:definition-of-E-flat}
Let us say that an algebraic space $X$ over
$S$ is $\ptst$-flat (resp.\ $\ptst$-smooth) if,
for all maximal ideals $\m\in\ptst$, the algebraic space
$X\times_S\Spec\sO_{S,\m}$ is flat (resp.\ smooth)
over $\Spec\sO_{S,\m}$, where $\sO_{S,\m}$ is the local ring
of $S$ at $\m$. If $\ptst=\{\m\}$, we will often write $\m$-flat instead of $\ptst$-flat.

\section{Sheaf-theoretic properties of $\wnls{n}$}
\label{sec:sheaf-theoretic-properties-of-W-lower-star}

We continue with the notation of~\ref{subsec:supramaximal-def-global-base-S}.

\begin{proposition}\label{pro:wnls-preserves-formally-etale-etc}
	The functor $\wnls{n}\:\Space_S\to\Space_S$ preserves the following properties of maps:
		\begin{enumerate}
			\item locally of finite presentation,
			\item formally \'etale, formally unramified, formally smooth,
			\item \'etale, unramified, smooth.
		\end{enumerate}
\end{proposition}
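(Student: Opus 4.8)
The plan is to exploit the fact that $\wnls{n}$ is a right adjoint and that all three classes of properties are characterized by the functor-of-points conditions in \ref{subsec:smoothness-properties-of-maps}. Since the definition of \'etale (resp.\ unramified, smooth) is ``locally of finite presentation plus formally \'etale (resp.\ formally unramified, formally smooth),'' part~(c) follows immediately from parts~(a) and~(b), so I would reduce to proving those two. The key observation driving everything is the adjunction $\wnus{n}\dashv\wnls{n}$, which gives for every space $T$ a natural bijection $\wnls{n}(X)(T)=X(\wnus{n}(T))=X(W_{S,n}(T))$ when $T$ is affine. Thus the functor of points of $\wnls{n}(X)$ is simply the functor of points of $X$ precomposed with $W_{S,n}$ on affines.

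For the formal properties (part~(b)), the plan is to test the lifting condition~(\ref{map:def-formally-etale}) directly. Let $\bar{T}\to T$ be a nilpotent closed immersion of affine schemes; I want to show that
	$$
	\wnls{n}(X)(T)\longmap \wnls{n}(X)(\bar{T})\times_{\wnls{n}(Y)(\bar{T})}\wnls{n}(Y)(T)
	$$
is a bijection (resp.\ injection, surjection) whenever the same holds for $f\:X\to Y$. Rewriting each term via the adjunction, this map is identified with
	$$
	X(W_{S,n}(T))\longmap X(W_{S,n}(\bar{T}))\times_{Y(W_{S,n}(\bar{T}))}Y(W_{S,n}(T)).
	$$
So it suffices to know that $W_{S,n}$ sends the nilpotent closed immersion $\bar{T}\to T$ to another nilpotent closed immersion $W_{S,n}(\bar{T})\to W_{S,n}(T)$; once that is established, the desired bijection/injection/surjection is exactly the hypothesis on $f$ applied to this new nilpotent thickening. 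This reduces to the affine, ring-theoretic statement that if $A\to \bar A$ is a surjection of $R$-algebras with nilpotent kernel, then $W_{R,n}(A)\to W_{R,n}(\bar A)$ is again surjective with nilpotent kernel, which is a basic property of the Witt functor established in~\cite{Borger:BGWV-I} (and is closely related to \ref{lem:gh-integral-and-surj-multi-prime} above).

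For the finite-presentation property (part~(a)), I would again pass through the adjunction and test the colimit condition~(\ref{map:def-loc-of-finite-pres}). Given a cofiltered system $(T_i)$ of affine $Y$-schemes with limit $T=\lim_i T_i$, I must compare $\colim_i\Hom_{\wnls{n}(Y)}(T_i,\wnls{n}(X))$ with $\Hom_{\wnls{n}(Y)}(\lim_i T_i,\wnls{n}(X))$. After unwinding the adjunction, these become Hom-sets into $X$ over $Y$ with the affine test objects $T_i$ replaced by $W_{S,n}(T_i)$, and the crucial input is that $W_{S,n}$ commutes with cofiltered limits of affine schemes, i.e.\ $W_{S,n}(\lim_i T_i)=\lim_i W_{S,n}(T_i)$; ring-theoretically this is that $W_{R,n}$ commutes with filtered colimits of $R$-algebras, which holds because $W_{R,n}$ is a finitely presented functor (each ghost/Witt component is a polynomial). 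Granting that, the two Hom-sets match precisely because $f$ is assumed locally of finite presentation.

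\textbf{The main obstacle} I anticipate is not any single deep step but the bookkeeping of carefully matching each functor-of-points condition through the adjunction and confirming that the relevant preservation statements for $W_{S,n}$ on affines---preservation of nilpotent thickenings and commutation with cofiltered limits---are indeed available from~\cite{Borger:BGWV-I} in the $\ptst$-typical generality. The one point requiring genuine care is that the test objects in~(\ref{map:def-formally-etale}) and~(\ref{map:def-loc-of-finite-pres}) are affine schemes over the base, so that $W_{S,n}$ is computed by the concrete ring-theoretic functor $W_{R,n}$ via~(\ref{eq:wnus-of-affine-is-W}); since every formal/finite-presentation condition is phrased purely in terms of affine test objects, this reduction is legitimate and the whole proposition descends to the affine facts about $W_{R,n}$ already known.
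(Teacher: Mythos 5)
Your proposal is correct and follows essentially the same route as the paper's proof: adjunction $\wnus{n}\dashv\wnls{n}$, affineness of $\wnus{n}$ on affine test objects~(\ref{pro:W_n-of-affine-is-affine}), preservation of nilpotent thickenings by $W_{R,n}$ (the paper cites~\ref{cor:W-of-nilpotent-ideal}) for part~(b), commutation of $W_{R,n}$ with filtered colimits (\ref{pro:W-preserves-filtered-colimits}) for part~(a), and (c) from (a) and (b) by definition. The only detail you wave at rather than carry out is the reduction to affine $S$ --- a test affine over $S$ need not factor through an affine open, so the paper localizes via an affine open cover and~(\ref{eq:localization-of-S-commutes-with-wnus}), using that $S$ is separated --- but this is exactly the bookkeeping you flagged, and it goes through as you expect.
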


\begin{proof}
	(a): 
	Let  $(T_i)_{i\in I}$ be a cofiltered system in $\aff_S$ mapping to $\wnls{n}(Y)$,
	as in~(\ref{map:def-loc-of-finite-pres}).
	The following chain of equalities, which we will justify below, constitutes the proof:
		\begin{align*}
			\Hom_{\wnls{n}(Y)}(\limm_i T_i, \wnls{n}(X)) &= \Hom_Y(\wnus{n}(\limm_i T_i),X) \\
				&\labeleq{\text{1}} \Hom_Y(\limm_i \wnus{n}(T_i),X) \\
				&\labeleq{\text{2}} \colimm_i \Hom_Y(\wnus{n}(T_i),X) \\
				&= \colimm_i \Hom_{\wnls{n}(Y)} (T_i, \wnls{n}(X)).
		\end{align*}
	Equality 2 holds because each $\wnus{n}(T_i)$ is affine (\ref{pro:W_n-of-affine-is-affine})
	and because $f\:X\to Y$ is locally of finite presentation.	
		
	To show equality 1, it is enough to show 
		\begin{equation} \label{eq:W-commutes-with-affine-limits}
			\wnus{n}(\lim_i T_i)=\lim_i \wnus{n}(T_i).
		\end{equation}
	Let $(S_j)_{j\in J}$ be an affine open cover of $S$. 
	Then it is enough to show~(\ref{eq:W-commutes-with-affine-limits}) after applying each
	functor $S_j\times_S\vbl$.
	We can then reduce to the case $S=S_j$, by 
	using~(\ref{eq:localization-of-S-commutes-with-wnus}), 
	the fact that $S_j\times_S\vbl$ commutes with limits,
	and the fact that each $S_j\times_S T_i$ is affine ($S$ being separated).
	In other words, we may assume $S$ is affine,
	in which case~(\ref{eq:W-commutes-with-affine-limits}) follows 
	from~6.10 and~(\ref{eq:wnus-of-affine-is-W}).
	
	(b): By definition,	the map $\wnls{n}(X) \to \wnls{n}(Y)$ is formally \'etale 
	(resp.\ formally unramified, resp.\ formally smooth) if 
	for any closed immersion $\bar{T}\to T$ of affine schemes defined by
	a nilpotent ideal sheaf, the induced map
	\[
		\wnls{n}(X)(T) \longmap
		 \wnls{n}(X)(\bar{T})\times_{\wnls{n}(Y)(\bar{T})} \wnls{n}(Y)(T).
	\]
	is bijective (resp.\ injective, resp.\ surjective).
	But this map is the same, by adjunction, as the map
	\[
		X(\wnus{n}(T)) \longmap X(\wnus{n}(\bar{T}))\times_{Y(\wnus{n}(\bar{T}))}Y(\wnus{n}(T)).
	\]
	Because $f\:X\to Y$ is formally \'etale (resp. \dots), to show this map is bijective 
	(resp. \dots), it is enough to check that the induced map $W_n(\bar{T})\to W_n(T)$ is also a 
	nilpotent immersion 
	of affine schemes. Affineness follows from~\ref{pro:W_n-of-affine-is-affine}.
	On the other hand, to show nilpotence, we may work locally. So 
	by~(\ref{eq:localization-of-S-commutes-with-wnus}),
	we may assume $S$ and $T$ are affine (since $S$ is separated). 
	We can then apply~6.4. 

	(c): This follows from (a) and (b) by definition.
\end{proof}

\begin{proposition}\label{pro:wnls-mod-p}
	Suppose $\ptst$ consists of one ideal $\m$, and set $S_0=\Spec\sO_S/\m$.
 	Let $f\:X\to Y$ be a map in $\Space_{S}$ which
	is formally \'etale (resp.\ formally unramified, resp.\ formally smooth).
	Then the map
	\begin{equation} \label{eq:wnls--mod-p}
		\xymatrix@C=75pt{
		S_0\times_S \wnls{n}(X) \ar^-{\id_{S_0}\times(\wnls{n}(f),\cgh{0})}[r] 
			& S_0 \times_S \wnls{n}(Y) \times_{\cgh{0},Y} X		
		}
	\end{equation}
	is an isomorphism (resp.\ monomorphism, resp.\ presheaf epimorphism).
\end{proposition}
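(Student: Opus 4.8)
The plan is to reduce the whole statement to the functor of points on affine test objects and then recognize the resulting map of sections as an instance of the lifting property~(\ref{map:def-formally-etale}) that defines formal \'etaleness (resp.\ unramifiedness, smoothness). First I note that isomorphisms and monomorphisms of sheaves on $\affrel_S$ are detected on sections over all $T\in\affrel_S$, and that ``presheaf epimorphism'' means precisely surjectivity on all such sections; so it is enough to control the map~(\ref{eq:wnls--mod-p}) on $T$-points for every $T\in\affrel_S$. Since $S_0\to S$ is a closed immersion, hence a monomorphism, for any $T$ whose structure map does not factor through $S_0$ both the source and the target of~(\ref{eq:wnls--mod-p}) have empty $T$-points, and the map is trivially bijective. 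I may therefore assume $\m\sO_T=0$ throughout.

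For such $T$ the adjunction $\wnus{n}\dashv\wnls{n}$ identifies $\wnls{n}(X)(T)$ with $X(\wnus{n}(T))$, and a short computation with the counit shows that under this identification $\cgh{0}$ acts by precomposition with the ghost map $\gh{0}\colon T\to\wnus{n}(T)$, while $\wnls{n}(f)$ acts by postcomposition with $f$. Hence, after the evident reordering of the fibre product, the $T$-points of~(\ref{eq:wnls--mod-p}) form the map
$$
X(\wnus{n}(T)) \longmap X(T)\times_{Y(T)} Y(\wnus{n}(T)), \qquad x\longmapsto (x\circ\gh{0},\; f\circ x).
$$
This is exactly the map~(\ref{map:def-formally-etale}) attached to $f$ and to the map of affine schemes $\gh{0}\colon T\to\wnus{n}(T)$ (taking $\bar T=T$ and ambient scheme $\wnus{n}(T)$). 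Consequently, once $\gh{0}$ is shown to be a nilpotent closed immersion, formal \'etaleness (resp.\ unramifiedness, smoothness) of $f$ yields at once that this map is bijective (resp.\ injective, surjective) for every such $T$, which is precisely the assertion that~(\ref{eq:wnls--mod-p}) is an isomorphism (resp.\ monomorphism, presheaf epimorphism).

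It therefore remains to establish the one local claim: if $\m\sO_T=0$, then $\gh{0}\colon T\to\wnus{n}(T)$ is a nilpotent closed immersion of affine schemes. Affineness of $\wnus{n}(T)$ is~\ref{pro:W_n-of-affine-is-affine}, and on rings the map $\gh{0}\colon W_{R,n}(A)\to A$ (with $A=\sO_T$) is surjective, since $\gh{0}$ carries the Teichm\"uller representative of $a$ to $a$; so $\gh{0}$ is a closed immersion. For nilpotence of the kernel I would invoke the ghost-component congruences modulo $\m$ from~\cite{Borger:BGWV-I}: because $\m A=0$, each higher component becomes an exact power $\gh{i}(x)=\gh{0}(x)^{\,q_\m^{\,i}}$ (or more weakly, a function of $\gh{0}(x)$ that vanishes with it), so that $\gh{0}(x)=0$ forces $\gh{\leq n}(x)=0$ and hence $\ker(\gh{0})=\ker(\gh{\leq n})$. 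The latter is nilpotent by~\ref{lem:gh-integral-and-surj-multi-prime}, which finishes the claim.

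The hard part is this last step, and it is exactly where the single-prime hypothesis and the passage to $S_0$ are indispensable. Away from the condition $\m\sO_T=0$ the kernel of $\gh{0}$ is the full Verschiebung ideal and need not be nilpotent, so $\gh{0}$ is no longer a thickening and the clean identification of~(\ref{eq:wnls--mod-p}) with the lifting property~(\ref{map:def-formally-etale}) collapses; it is precisely the reduction modulo $\m$ that collapses all ghost components onto $\gh{0}$ and rescues the argument. Everything else is formal manipulation of the adjunction and of functors of points.
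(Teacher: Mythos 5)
Your proposal is correct and follows essentially the same route as the paper's proof: reduce to points valued in affine test schemes over $S_0$, use the adjunction $\wnus{n}\dashv\wnls{n}$ to identify the map~(\ref{eq:wnls--mod-p}) on such points with the lifting map~(\ref{map:def-formally-etale}) associated to $f$ and to $\gh{0}\:T\to\wnus{n}(T)$, and conclude once $\gh{0}$ is known to be a nilpotent closed immersion of affine schemes. The only difference is that where you re-derive this last fact from the mod-$\m$ ghost congruences together with~\ref{lem:gh-integral-and-surj-multi-prime}, the paper simply cites~\ref{pro:W-nilpotent-at-p} from the companion paper (together with~(\ref{eq:wnus-of-affine-is-W})), which packages exactly that statement.
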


\begin{proof}
	Let $Z$ be an affine $S_0$-scheme. Then $Z$ is an object of $\affrel_S$.
	We have the following commutative diagram:
	\[
	\xymatrix{
		\Hom_S(Z,\wnls{n}(X)) \ar^{\sim}_{c}[d]\ar^-a[r]
			& \Hom_S(Z, \wnls{n}(Y)\times_Y X) \ar^{\sim}_{d}[d] \\
		\Hom_S(\wnus{n}(Z),X) \ar^-b[r]
			& \Hom_S(\wnus{n}(Z), Y) \times_{\Hom_S(Z, Y)} \Hom_S(Z,X), \\
	}
	\]
	where $a$ is induced by~(\ref{eq:wnls--mod-p}), and $c$ and $d$ are given
	by the evident universal properties.
	The map $\gh{0}\:Z\to W_n(Z)$ is a closed immersion defined by a nilpotent ideal,
	by~6.8 and~(\ref{eq:wnus-of-affine-is-W}).
	Therefore, since $f$ is formally \'etale (resp.\ formally unramified, resp.\ formally smooth),
	$b$ is bijective (resp.\ injective, resp.\ surjective), and hence so is $a$.
	In other words, the map in question is an isomorphism (resp.\ monomorphism, resp.\
	presheaf epimorphism).
\end{proof}

\begin{corollary}\label{cor:naive-gluing-ok-in-char-p-2}
	Let $\ptst$ and $S_0$ be as in~\ref{pro:wnls-mod-p}.
	Let $(U_i\to X)_{i\in I}$ be an epimorphic family of \'etale maps in $\Space_S$.
	Then the induced family 
		\[
		\Bigl(S_0\times_S\wnls{n}(U_i)\longmap S_0\times_S\wnls{n}(X)\Bigr)_{i\in I}
		\] 
	is an epimorphic family of \'etale maps.
	
\end{corollary}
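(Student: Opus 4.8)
The plan is to deduce both halves of the statement from the two preceding results, reducing everything to formal stability properties of covering families under base change. First I would dispose of \'etaleness. Since each $U_i\to X$ is \'etale, Proposition~\ref{pro:wnls-preserves-formally-etale-etc}(c) shows that $\wnls{n}(U_i)\to\wnls{n}(X)$ is \'etale, and the map $S_0\times_S\wnls{n}(U_i)\to S_0\times_S\wnls{n}(X)$ is simply its base change along $S_0\times_S\wnls{n}(X)\to\wnls{n}(X)$. Being \'etale---that is, locally of finite presentation together with formally \'etale, in the sense of~\ref{subsec:smoothness-properties-of-maps}---is manifestly stable under base change, so every map in the induced family is \'etale.

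The epimorphy is where the hypothesis $\ptst=\{\m\}$ enters, through Proposition~\ref{pro:wnls-mod-p}. Applying it to the map $f=(U_i\to X)$, which is \'etale and hence formally \'etale, produces an isomorphism
$$
S_0\times_S\wnls{n}(U_i)\isomap S_0\times_S\wnls{n}(X)\times_{\cgh{0},X}U_i .
$$
By the naturality of the co-ghost map $\cgh{0}$ of~\eqref{eq:individual-co-ghost-map-geometric}, this isomorphism carries the structure map to $S_0\times_S\wnls{n}(X)$ over to the first projection of the fibre product. Hence, up to this isomorphism, the family in question is exactly the base change of the family $(U_i\to X)_{i\in I}$ along the composite $S_0\times_S\wnls{n}(X)\to\wnls{n}(X)\xright{\cgh{0}}X$.

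It then remains to invoke the fact that covering families are stable under base change: in the topos $\Space_S$ colimits are universal, so $\coprod_i(U_i\times_X Z)=(\coprod_i U_i)\times_X Z$ for any $Z\to X$, and epimorphisms are preserved by pullback. Since $(U_i\to X)_{i\in I}$ is epimorphic, its base change along the composite above is again epimorphic, which is the assertion.

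I expect the only genuinely delicate point to be the identification in the second paragraph---verifying that the isomorphism of Proposition~\ref{pro:wnls-mod-p} really does intertwine the functorial map $\wnls{n}(f)$ with the first projection. But this is precisely the commutativity expressing naturality of $\cgh{0}$, namely $f\circ\cgh{0}=\cgh{0}\circ\wnls{n}(f)$, so I anticipate no serious obstacle.
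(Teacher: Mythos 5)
Your proposal is correct and follows essentially the same route as the paper: \'etaleness from Proposition~\ref{pro:wnls-preserves-formally-etale-etc} plus stability under base change, and epimorphy by using Proposition~\ref{pro:wnls-mod-p} to identify each map $S_0\times_S\wnls{n}(U_i)\to S_0\times_S\wnls{n}(X)$ as the pullback of $U_i\to X$ along $\cgh{0}\circ\pr_2$, then invoking stability of epimorphic families under base change in $\Space_S$. The paper phrases this as the cartesianness of the squares with vertical arrows $\cgh{0}\circ\pr_2$, which is exactly the identification you verify via naturality of $\cgh{0}$.
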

\begin{proof}
	By~\ref{pro:wnls-preserves-formally-etale-etc}, we only need to show
	the family is epimorphic. By~\ref{pro:wnls-mod-p}, each square
	\[
	\xymatrix{
	S_0\times_S \wnls{n}(U_i)\ar[r]\ar^{\cgh{0}\circ\pr_2}[d]
		& S_0\times_S \wnls{n}(X) \ar^{\cgh{0}\circ\pr_2}[d] \\
	U_i \ar[r]
		& X
	}
	\] 
	is cartesian. The lower arrows are assumed to form a covering family indexed by $i\in I$, and
	hence so do the upper arrows.
\end{proof}

\begin{proposition}\label{pro:wnls-preserves-epis-and-monos}
	The functor $\wnls{n}\:\Space_S\to\Space_S$ preserves epimorphisms
	and monomorphisms.
\end{proposition}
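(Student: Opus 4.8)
The monomorphism part is immediate and I would dispose of it in one line: since $\wnls{n}$ is right adjoint to $\wnus{n}$ it preserves all limits, in particular fibre products, and a map is a monomorphism exactly when its diagonal is an isomorphism. Equivalently, monomorphisms of sheaves of sets are detected pointwise, and the value of $\wnls{n}(f)$ at an object $T$ of $\affrel_S$ is just the value of $f$ at $W_{S,\ptst,n}(T)$, which is injective whenever $f$ is. All the effort then goes into epimorphisms.

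For epimorphisms the adjunction gives no help, precisely because $\wnus{n}$ is not left exact (it does not preserve finite products, as noted in \ref{subsec:def-of-wnus-and-wnls}); so $(\wnus{n},\wnls{n})$ is not a geometric morphism, and there is no formal reason for the right adjoint $\wnls{n}$ to preserve the local epimorphisms that are the epimorphisms of $\Space_S$. The plan is to first reduce a general epimorphism to a manageable shape and then verify local surjectivity by a fibrewise analysis over $S$. For the reduction, given an epimorphism $f\colon X\to Y$ I would choose an affine \'etale covering $(U_\gamma\to Y)_\gamma$ and, using that $f$ is a local epimorphism, refine it so that each $U_\gamma\to Y$ lifts through $f$; writing $P=\coprod_\gamma U_\gamma$, the map $P\to Y$ is then an epimorphism which factors through $f$. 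Since $\wnls{n}(P\to Y)$ factors through $\wnls{n}(f)$, it suffices to show $\wnls{n}$ sends the single map $P\to Y$ to an epimorphism. \emph{It is essential to keep the disjoint union inside the functor}: $\wnls{n}$ does not commute with coproducts, and this is exactly the point, since $\wnls{n}(\coprod_\gamma U_\gamma)$ mixes the summands in a way the na\"ive family $(\wnls{n}(U_\gamma)\to\wnls{n}(Y))_\gamma$ does not — the latter is genuinely not a covering family away from the primes of $\ptst$.

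Next I would pass to the local situation, reducing to $S=\Spec R$ affine and $\ptst=\{\m\}$ a single prime via the change-of-base isomorphisms \ref{eq:wnls-is-independent-of-S} and \ref{eq:global-base-jet-localization} together with the iteration isomorphism \ref{eq:geom-witt-iterate-lower-s}, and then testing local surjectivity of $\wnls{n}(P)\to\wnls{n}(Y)$ on a section $\eta\in\wnls{n}(Y)(T)=Y(W_n(T))$ over an affine $T=\Spec B$. Here the geometry of $\Spec W_n(B)$ drives everything. Over $S\setminus\{\m\}$ the co-ghost map $\cgh{\leq n}$ is an isomorphism (the ideal $\m$ becomes the unit ideal), so there $\wnls{n}$ is the $(n+1)$-fold product and the summand-mixing noted above produces a lift of the ghost components $\eta_0,\dots,\eta_n$ after a common \'etale refinement of $T$. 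Over $\m$ the closed fibre of $\Spec W_n(B)$ is a nilpotent thickening of $T\times_S S_0$ (as in the proof of \ref{pro:wnls-mod-p}, via \ref{pro:W-nilpotent-at-p}), so by topological invariance of the \'etale site the covering descends, exactly as packaged in \ref{cor:naive-gluing-ok-in-char-p-2}.

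The hard part is patching these two analyses into a single \'etale cover of $T$ over which $\eta$ lifts, and this is where I expect the real work to lie. The difficulty is structural: $\Spec W_n(B)$ is connected across the fibre over $\m$ — the $n+1$ generic ghost sheets are glued along their common reduction — so a lift built sheet-by-sheet away from $\m$ need not be compatible with the single lift forced on the merged closed fibre. The mechanism I would have to make precise, and which saves the argument, is that the ghost components of any section are congruent to one another modulo $\m$ (this is what underlies \ref{pro:wnls-mod-p}): the common reduction $\bar\eta$ is well defined, its lift over $T\times_S S_0$ propagates compatibly to each sheet near $\m$, and this congruence is exactly what lets the generic and special lifts be assembled, after one further refinement of $T$, into a section of $X$ over $W_n(T)$. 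The two fibrewise statements are routine given the cited results; carrying out this gluing is the crux.
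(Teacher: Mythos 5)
Your monomorphism argument is fine and coincides with the paper's (one line from the adjunction). The epimorphism half, however, has a genuine gap, and it sits exactly where you yourself locate the crux: the gluing you defer is not a technical refinement but the whole problem, and the mechanism you sketch would not deliver it. Concretely, after transposing a test section $\eta\in\wnls{n}(Y)(T)=Y(\wnus{n}(T))$, the hypothesis that $f$ is an epimorphism gives a lift of $\eta$ only after some \'etale cover $Z\to\wnus{n}(T)$; what you need is an \'etale cover $T'\to T$ such that $\eta$ lifts over $\wnus{n}(T')$. Passing from the first to the second is precisely the failure of $\wnls{n}$ to preserve covering families recorded in~\ref{subsec:wnls-chart}: away from $\m$, the space $\wnus{n}(T)$ is a disjoint union of $n+1$ copies of $T$, a cover $Z$ may refine each ghost sheet differently, while covers of the form $\wnus{n}(T')$ impose the \emph{same} refinement on every sheet. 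The congruence of ghost components modulo $\m$ constrains sections, not covers, so it cannot repair this; no fibrewise analysis over $s$ and $S-s$ assembles sheet-by-sheet lifts into a lift over some $\wnus{n}(T')$. (Your opening reduction also buys nothing for general sheaves $Y$: maps from affines to an arbitrary object of $\Space_S$ are not \'etale, so $P=\coprod_\gamma U_\gamma\to Y$ is again just an arbitrary epimorphism from a coproduct of affines, and the original difficulty is untouched.)

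The paper's proof avoids the gluing altogether by splitting the problem differently. First it treats the special case of an \'etale epimorphism between objects of $\affrel_S$: there $\wnls{n}(f)$ is an \'etale map of affine schemes (\ref{pro:wnls-preserves-formally-etale-etc} and~(\ref{eq:wnls-of-affine-is-lambda-circle})), so being an epimorphism is equivalent to being surjective, and surjectivity is a pointwise condition that can be checked separately over $S-s$ (where it is the product map $X^{[0,n]}\to Y^{[0,n]}$, by~(\ref{eq:global-base-jet-localization})) and over $s$ (where $\wnls{n}(f)$ is a base change of $f$, by~\ref{pro:wnls-mod-p}) --- no patching of lifts across the two loci is required. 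For a general epimorphism $f$, given $a\:V\to\wnls{n}(Y)$ with $V\in\affrel_S$, one uses that $\wnus{n}(V)$ is affine, hence quasi-compact (\ref{pro:W_n-of-affine-is-affine}), to lift the transpose $\wnus{n}(V)\to Y$ through $f$ over a single affine \'etale cover $c\:Z\to\wnus{n}(V)$; one then applies $\wnls{n}$ to $c$ --- legitimate by the special case --- and base-changes along the unit $V\to\wnls{n}\wnus{n}(V)$ to obtain an \'etale epimorphism onto $V$ over which $a$ lifts to $\wnls{n}(X)$. This use of the unit of the adjunction is the idea missing from your sketch: it converts the cover of $\wnus{n}(T)$ that the hypothesis hands you into a cover of $T$, which is exactly what your gluing was trying, and failing, to produce.
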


\begin{proof}
	The statement about monomorphisms follows for general reasons from the
	fact that $\wnls{n}$ has a left adjoint.
	
	Let us now consider the statement about epimorphisms. By~(\ref{eq:geom-witt-iterate-lower-s}),
	it suffices to assume $\ptst$ consists of one maximal ideal $\m$.
	Let $f\:X\to Y$ be an epimorphism in $\Space_S$.

	First consider the case where $X$ and $Y$ lie in $\affrel_S$ and $f$ is \'etale.
	The map $\wnls{n}(f)\:\wnls{n}(X)\to\wnls{n}(Y)$ is also an \'etale map between objects
	of $\affrel_S$, by~\ref{pro:wnls-preserves-formally-etale-etc} 
	and~(\ref{eq:wnls-of-affine-is-lambda-circle}).
	Being epimorphic is then equivalent to being surjective, which can be checked after 
	base change to $s=\Spec\sO_S/\m$ and to $S-s$.	For $S-s$, the base change
	of $\wnls{n}(X)\to\wnls{n}(Y)$ agrees, by~(\ref{eq:global-base-jet-localization}), with that of 
	$X^{[0,n]}\to Y^{[0,n]}$, which is surjective.
	On the other hand, over $s$ the map $\wnls{n}(f)\:\wnls{n}(X)\to\wnls{n}(Y)$ is
	a base change of the map $f\:X\to Y$, by~\ref{pro:wnls-mod-p}, which is also surjective.

	Now consider the general case, where $f\:X\to Y$ is any epimorphism of spaces.
	It is enough to show that for any object $V\in\affrel_S$,
	any given map $a\:V\to \wnls{n}(Y)$ lifts locally on $V$ to $\wnls{n}(X)$.
	Since $f$ is an epimorphism and
	$\wnus{n}(V)$ is affine, and hence quasi-compact, there exists a commutative
	diagram
		\begin{equation}\label{eq:internal-diagram-3}
		\xymatrix@C=40pt{
		X \ar^f[d]			& Z \ar_b[l]\ar@{->>}^c[d] \\
		Y 					& \wnus{n}(V), \ar_a[l]
		}			
		\end{equation}
	where $Z\in\affrel_S$, $c$ is an \'etale cover, and $b$ is some map.
 	Consider the commutative diagram
		\begin{equation}
		\xymatrix@C=40pt{
		\wnls{n}(X) \ar_{\wnls{n}(f)}[d]			
			& \wnls{n}(Z) \ar_-{\wnls{n}(b)}[l]\ar@{->}^{\wnls{n}(c)}[d] 
			& \wnls{n}(Z)\times_{\wnls{n}\wnus{n}(V)} V \ar_-{\pr_1}[l]\ar^{\pr_2}[d] \\
		\wnls{n}(Y) 					
			& \wnls{n}\wnus{n}(V) \ar_-{\wnls{n}(a)}[l]
			& V, \ar_-{\eta}[l]
		}						
		\end{equation}
	where $\eta$ is the unit of the evident adjunction.
	By the argument above, $\wnls{n}(c)$ is an \'etale epimorphism
	between objects of $\affrel_S$.
	Therefore $\pr_2$ is the same, and hence the map $V\to\wnls{n}(Y)$ lifts
	locally to $\wnls{n}(X)$.  
\end{proof}

\begin{corollary}\label{cor:W-lower-s-preserves-equ-relations}
	Let $(\pi_1,\pi_2)\:\Gamma\to X\times_S X$ 
	be an equivalence relation on a space $X\in\Space_{S}$.  
	Then the map
		\[
		(\wnls{n}(\pi_1),\wnls{n}(\pi_2))\:\wnls{n}(\Gamma)\to\wnls{n}(X)\times\wnls{n}(X)
		\] 
	is an equivalence relation on $\wnls{n}(X)$, and
	the induced map
		\[
		\wnls{n}(X)/\wnls{n}(\Gamma) \longmap \wnls{n}(X/\Gamma)
		\]
	is an isomorphism.
\end{corollary}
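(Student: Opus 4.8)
The plan is to verify the two assertions in turn: that $(\wnls{n}(\pi_1),\wnls{n}(\pi_2))$ is again an equivalence relation, and that the natural map on quotients is an isomorphism.

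First I would handle the equivalence relation claim. By definition, $(\pi_1,\pi_2)\:\Gamma\to X\times_S X$ being an equivalence relation means the map $\Gamma\to X\times_S X$ is a monomorphism and that the usual reflexivity, symmetry, and transitivity diagrams hold (the transitivity condition being that $\Gamma\times_{\pi_2,X,\pi_1}\Gamma\to X\times_S X$ factors through $\Gamma$). By~\ref{pro:wnls-preserves-epis-and-monos}, $\wnls{n}$ preserves monomorphisms, so the map $\wnls{n}(\Gamma)\to\wnls{n}(X\times_S X)$ is a monomorphism; since $\wnls{n}$ is a right adjoint, it preserves all limits, and in particular $\wnls{n}(X\times_S X)=\wnls{n}(X)\times_{\wnls{n}(S)}\wnls{n}(X)=\wnls{n}(X)\times_S\wnls{n}(X)$ (using $\wnls{n}(S)=S$). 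The same limit-preservation turns the reflexivity, symmetry, and transitivity diagrams for $\Gamma$ directly into the corresponding diagrams for $\wnls{n}(\Gamma)$, because each is built from finite products and fiber products over $X$. So this part is essentially formal once one invokes that $\wnls{n}$ is continuous and preserves monos.

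Next I would address the isomorphism on quotients. The quotient $X/\Gamma$ is by definition the coequalizer (equivalently the colimit of the groupoid), and applying $\wnls{n}$ gives a comparison map $\wnls{n}(X)/\wnls{n}(\Gamma)\to\wnls{n}(X/\Gamma)$ induced by the universal property. The difficulty here is that $\wnls{n}$, being a right adjoint, does \emph{not} commute with colimits in general, so one cannot simply say that $\wnls{n}$ preserves the coequalizer. The key observation that rescues the argument is that a quotient by an \emph{étale} equivalence relation is not merely a coequalizer: the quotient map $X\to X/\Gamma$ is itself an étale epimorphism, and $\Gamma$ recovers the fiber product, i.e. $\Gamma\cong X\times_{X/\Gamma}X$. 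This is the defining shape of an effective epimorphism presenting $X/\Gamma$.

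The main obstacle, then, is to show that $\wnls{n}$ preserves this particular colimit, and the plan is to exploit descent. Writing $Q=X/\Gamma$, I would show that $X\to Q$ remains a covering (étale epimorphic) family after applying $\wnls{n}$: epimorphy is preserved by~\ref{pro:wnls-preserves-epis-and-monos} and étaleness by~\ref{pro:wnls-preserves-formally-etale-etc}, so $\wnls{n}(X)\to\wnls{n}(Q)$ is again an étale epimorphism. Because $\wnls{n}$ preserves the fiber product $\Gamma=X\times_Q X$ (limit preservation again), the pair $(\wnls{n}(\pi_1),\wnls{n}(\pi_2))$ realizes $\wnls{n}(\Gamma)=\wnls{n}(X)\times_{\wnls{n}(Q)}\wnls{n}(X)$. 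Now in the topos $\Space_S$ every epimorphism is effective, so $\wnls{n}(Q)$ is the coequalizer of $\wnls{n}(\Gamma)\rightrightarrows\wnls{n}(X)$, which is exactly $\wnls{n}(X)/\wnls{n}(\Gamma)$. Comparing this with the universal comparison map shows it is an isomorphism. The one point requiring care is confirming that $\Gamma=X\times_QX$ genuinely holds for an effective étale equivalence relation in $\Space_S$ — but this is precisely the content of effectivity of epimorphisms in a topos, so it is available to us.
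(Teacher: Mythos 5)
Your proposal is correct and, at its core, is the paper's own argument: the paper likewise notes that $\wnls{n}(X)\to\wnls{n}(X/\Gamma)$ is an epimorphism by \ref{pro:wnls-preserves-epis-and-monos}, uses limit preservation (from the adjunction with $\wnus{n}$) to identify $\wnls{n}(X)\times_{\wnls{n}(X/\Gamma)}\wnls{n}(X)=\wnls{n}(X\times_{X/\Gamma}X)=\wnls{n}(\Gamma)$, and concludes by effectivity of epimorphisms in $\Space_S$, with the equivalence-relation structure on $\wnls{n}(\Gamma)$ then coming for free as a kernel pair rather than via your (also valid) diagram-by-diagram check. One correction: the ``\'etale'' hypothesis you introduce is neither in the statement nor used anywhere in your argument---what you invoke is effectivity of arbitrary equivalence relations and epimorphisms in the topos---so the adjective and the appeal to \ref{pro:wnls-preserves-formally-etale-etc} should simply be deleted, lest the proof appear to establish only a special case.
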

\begin{proof}
	By~\ref{pro:wnls-preserves-epis-and-monos}, 
	the map $\wnls{n}(X)\to\wnls{n}(X/\Gamma)$ is an epimorphism
	of spaces.  On the other hand, since $\wnls{n}$ has a left adjoint, we have
		\[
		\wnls{n}(X)\times_{\wnls{n}(X/\Gamma)} \wnls{n}(\Gamma) = \wnls{n}(X\times_{X/\Gamma} X)
			= \wnls{n}(\Gamma),
		\]
	and so the equivalence relation inducing the quotient $\wnls{n}(X/\Gamma)$
	is $\wnls{n}(\Gamma)$.
\end{proof}

\subsection{} {\em Remark.} 
\label{subsec:wnls-chart}
These results allow us to present $\wnls{n}(X)$ using charts, but not in the sense that might first
come to mind. For while $\wnls{n}$ preserves covering maps
(by \ref{pro:wnls-preserves-epis-and-monos}), it does not generally preserve covering
families. That is, if $(U_i)_{i\in I}$ is an \'etale covering family of $X$, 
then the space $\wnls{n}(\coprod_i U_i)$ covers $\wnls{n}(X)$, but it is usually
not true that $\coprod_i \wnls{n}(U_i)$ covers it. For example, consider the $p$-typical case with
$n=1$. On the generic fiber, $\wnls{1}(X)$ is just $X\times X$; and of course $\coprod_i U_i\times
U_i$ does not generally cover $X\times X$. In particular, $\wnls{n}(X)$ cannot be constructed using
charts by gluing the spaces $\wnls{n}(U_i)$ together along the overlaps $\wnls{n}(U_{j}\times_X
U_k)$. This is just a general property of products and not a particular property of Frobenius lifts.

On the other hand, if $X$ is an algebraic space over $\Spec \sO_S/\m^j$, for some integer $j\geq
0$, this naive gluing method does work. This is because for any \'etale cover $(U_i)_{i\in I}$ of
$X$, the family $\bigl(\wnls{n}(U_i)\bigr)_{i\in I}$ is an \'etale cover of $\wnls{n}(X)$. This is
true by \ref{pro:wnls-preserves-formally-etale-etc} and \ref{cor:naive-gluing-ok-in-char-p-2}.
See~\ref{subsec:relation-to-Greenberg-and-Buium} for the implications this has for Buium's $p$-jet
spaces.

\begin{proposition}\label{pro:wnls--commutes-with-filtered-colimits}
	The functor $\wnls{n}\:\Space_S\to\Space_S$ commutes with filtered colimits.
\end{proposition}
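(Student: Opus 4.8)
The plan is to exploit the fact that, by its very definition in~\ref{subsec:def-of-wnus-and-wnls}, the functor $\wnls{n}$ is precomposition with the site endofunctor $W_{S,\ptst,n}\:\affrel_S\to\affrel_S$; that is, $\wnls{n}(X)(T)=X(W_{S,\ptst,n}(T))$ for every $T\in\affrel_S$. As an operation on \emph{presheaves} this is computed sectionwise, and so it commutes with all presheaf colimits. Consequently the entire problem reduces to comparing the sheaf-theoretic filtered colimit in $\Space_S$ with the naive, sectionwise (presheaf) one, applied both to the diagram $(X_\lambda)$ and to the diagram $(\wnls{n}(X_\lambda))$.

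The key step, which I expect to be the main obstacle, is the following site-theoretic fact: on $\affrel_S$ the \'etale topology is generated by a pretopology consisting of \emph{finite} families $(U_i\to T)_i$ of \'etale maps of affine schemes, and for such a family the fiber products $U_i\times_T U_j$ are again affine objects of $\affrel_S$. This holds because every object of $\affrel_S$ is affine, hence quasi-compact (so any covering family admits a finite subfamily that still covers) and quasi-separated (so the relevant intersections are affine and factor through an affine open of $S$). Granting this, the sheaf condition relative to the generating pretopology is, for each such finite cover, the finite limit
\[
	F(T)=\limm\Big(\textstyle\prod_i F(U_i)\rightrightarrows\prod_{i,j}F(U_i\times_T U_j)\Big)
\]
of finite products. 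Since filtered colimits commute with finite limits, the sectionwise filtered colimit of a diagram of sheaves again satisfies this condition, and is therefore already a sheaf. In particular the filtered colimit in $\Space_S$ is computed sectionwise on every object of $\affrel_S$.

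With this in hand the proof is formal. To check that the canonical comparison map $\colim_\lambda\wnls{n}(X_\lambda)\to\wnls{n}(\colim_\lambda X_\lambda)$ is an isomorphism of sheaves, it suffices to evaluate on an arbitrary $T\in\affrel_S$. On the one hand, by the sectionwise computation of the colimit, $\big(\colim_\lambda\wnls{n}(X_\lambda)\big)(T)=\colim_\lambda X_\lambda(W_{S,\ptst,n}(T))$. On the other hand, since $W_{S,\ptst,n}(T)=\wnus{n}(T)$ is again affine by~\ref{pro:W_n-of-affine-is-affine} (so that the sectionwise computation of the colimit applies to it as well), we get $\wnls{n}(\colim_\lambda X_\lambda)(T)=\big(\colim_\lambda X_\lambda\big)(W_{S,\ptst,n}(T))=\colim_\lambda X_\lambda(W_{S,\ptst,n}(T))$. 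The two sides agree compatibly with the comparison map, which is therefore an isomorphism on all of $\affrel_S$ and hence an isomorphism in $\Space_S$.
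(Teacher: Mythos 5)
Your proof is correct, but it takes a genuinely different route from the paper's. The paper argues by adjunction: $\wnls{n}$ preserving filtered colimits is equivalent to $\Hom(\wnus{n}(T),\vbl)$ doing so for each $T\in\affrel_S$, and since $\wnus{n}(T)$ is affine, hence quasi-compact and quasi-separated, this is exactly SGA~4 VI 1.23(ii). You instead never invoke the adjunction: you use that $\wnls{n}$ is precomposition with the site endofunctor $W_{S,\ptst,n}$, and you prove directly that filtered colimits in $\Space_S$ are computed sectionwise on $\affrel_S$ (finite \'etale covers of affines suffice, their pairwise fiber products stay in the site, and filtered colimits commute with the resulting finite limits). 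In effect you have inlined the proof of the compactness statement the paper cites: both arguments ultimately rest on affine objects being compact in the topos, but yours is self-contained where the paper's is a one-line citation, and your sectionwise-colimit lemma is a reusable, strictly stronger statement (it applies to any functor given by precomposition along a site endomorphism, not just $\wnls{n}$). Two small points of hygiene: what you actually need is that $W_{S,\ptst,n}(T)$ is an object of $\affrel_S$, not merely affine --- this is immediate from the definition in \ref{subsec:def-of-wnus-and-wnls}, since $W_{S,\ptst,n}(\Spec A)=\Spec W_{R,\ptst,n}(A)$ lies over the affine open $\Spec R\subseteq S$, so the citation of \ref{pro:W_n-of-affine-is-affine} is overkill; and the affineness of $U_i\times_T U_j$ follows simply because $U_i$, $U_j$, $T$ are all affine, with no appeal to quasi-separatedness needed. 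Finally, note that your sectionwise presheaf colimit, once known to be a sheaf, is automatically the colimit in $\Space_S$ because the sheaf category sits fully faithfully in presheaves and sheafification fixes it --- you use this implicitly and it is worth a word.
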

\begin{proof}
	By adjunction,
	this is equivalent to the statement that for any filtered system $(X_i)_{i\in I}$
	and any $T\in\affrel_S$ the map
		\[
		\colimm_i \Hom\bigl(\wnus{n}(T),X_i\bigr) \longmap \Hom\bigl(\wnus{n}(T),\colimm_i X_i\bigr)
		\]
	is an isomorphism.  Because $\wnus{n}(T)$ is affine (\ref{eq:wnus-of-affine-is-W}),
	it is quasi-compact and quasi-separated,
	and so the proposition follows from SGA 4 VI 1.23(ii)~\cite{SGA4.2}.
\end{proof}

\begin{lemma}\label{lem:wnls-good-cover}
	Let $(S_t)_{t\in T}$ be an open cover of $S$,
	let $X$ be an object of $\Space_{S}$, and let $(U_i)_{i\in I}$ be a cover of $X$
	by objects of $\Space_S$.
	Let $J$ denote the set of finite subsets of $I$, and
	for each $j\in J$, write $V_j=\coprod_{i\in j} U_i$.
	Then $(S_t\times_S V_j)_{(t,j)\in T\times J}$ is a cover of $X$ with the property
	that $\bigl(\wnls{n}(S_t\times_S V_j)\bigr)_{(t,j)\in T\times J}$ is a cover of $\wnls{n}(X)$.
\end{lemma}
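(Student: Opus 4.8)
The plan is to treat the three assertions in turn, since the first and last are essentially formal and the covering property of the $\wnls{n}$-family is the only substantial point. Throughout I would set $U=\coprod_{i\in I}U_i$, so that $U\to X$ is the given covering map, and I would observe that $U=\colim_{j\in J}V_j$ is a \emph{filtered} colimit, the transition maps being the canonical summand inclusions $V_j\hookrightarrow V_{j'}$ for $j\subseteq j'$; this is the decomposition of an arbitrary coproduct into its finite subcoproducts, and it is the device that makes everything work.

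For the claim that $(S_t\times_S V_j)_{(t,j)}$ covers $X$, I would note that the singletons $j=\{i\}$ already recover the covering family $(U_i\to X)_i$, so $(V_j\to X)_j$ is a cover; base-changing this cover along the cover $\coprod_t S_t\to S$ and composing with $\coprod_t S_t\times_S X\to X$ shows that $\coprod_{(t,j)}S_t\times_S V_j\to X$ is an epimorphism of spaces. This is a routine manipulation of base-changed covers and epimorphisms in the topos $\Space_S$. For the final sentence, I would write $S_t\times_S V_j=\coprod_{i\in j}(S_t\times_S U_i)$, a finite coproduct in which each $S_t\times_S U_i$ is affine because $S_t$ and $U_i$ are affine and $S$ is separated, exactly as in the proof of~\ref{pro:W_n-of-affine-is-affine}; hence $S_t\times_S V_j$ is affine, and its structure map to $S$ visibly factors through the affine open $S_t$, so $S_t\times_S V_j\in\affrel_S$.

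The heart of the matter is that $\big(\wnls{n}(S_t\times_S V_j)\big)_{(t,j)}$ covers $\wnls{n}(X)$. First I would identify $\wnls{n}(S_t\times_S V_j)=S_t\times_S\wnls{n}(V_j)$ by applying~(\ref{eq:wnls-is-independent-of-S}) to the object $S_t\times_S V_j\in\Space_{S_t}$ and then~(\ref{eq:global-base-jet-localization}). Next I would produce the covering statement for the $V_j$ alone: since $U\to X$ is an epimorphism, $\wnls{n}(U)\to\wnls{n}(X)$ is an epimorphism by~\ref{pro:wnls-preserves-epis-and-monos}; and since $\wnls{n}$ commutes with filtered colimits (\ref{pro:wnls--commutes-with-filtered-colimits}), we have $\wnls{n}(U)=\colim_j\wnls{n}(V_j)$, so $\colim_j\wnls{n}(V_j)\to\wnls{n}(X)$ is an epimorphism. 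Now given a section of $\wnls{n}(X)$ over an affine scheme, it lifts after an \'etale refinement into $\colim_j\wnls{n}(V_j)$; but a section over an affine (hence quasi-compact and quasi-separated) scheme into a filtered colimit factors through a single term, by SGA 4 VI 1.23(ii)~\cite{SGA4.2}, exactly as invoked in~\ref{pro:wnls--commutes-with-filtered-colimits}. Thus the refined section factors through some single $\wnls{n}(V_{j_0})$, which shows $\big(\wnls{n}(V_j)\to\wnls{n}(X)\big)_j$ is an epimorphic family. Finally I would base-change this family along the cover $\coprod_t S_t\to S$ and combine with the open cover of $S$, using the identification above, to conclude that $\big(\wnls{n}(S_t\times_S V_j)\big)_{(t,j)}$ is epimorphic over $\wnls{n}(X)$.

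The step I expect to be the main obstacle is precisely the passage from the \emph{single} epimorphism $\wnls{n}(U)\to\wnls{n}(X)$ to the finite pieces $V_j$. This is where the failure recorded in~\ref{subsec:wnls-chart}---that the individual $\wnls{n}(U_i)$ need \emph{not} cover $\wnls{n}(X)$---must be circumvented: one cannot work with the $\wnls{n}(U_i)$ directly, and the resolution is to replace them by the finite coproducts $\wnls{n}(V_j)$ and to extract a factorization through a single term from quasi-compactness of the affine test objects. Everything else is bookkeeping with base change and the standard behaviour of epimorphic families in a topos.
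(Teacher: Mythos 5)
Your proposal is correct and follows essentially the same route as the paper: the same decomposition $\coprod_{i\in I}U_i=\colim_{j\in J}V_j$ into its filtered system of finite subcoproducts, the same two key inputs (\ref{pro:wnls-preserves-epis-and-monos} for epimorphisms and \ref{pro:wnls--commutes-with-filtered-colimits} for filtered colimits), and the same reduction of the $(S_t)$-part via (\ref{eq:global-base-jet-localization}) and (\ref{eq:wnls-is-independent-of-S}). The only local difference is your last step: where you refine a section over an affine and invoke SGA 4 VI 1.23(ii) to factor it through a single $\wnls{n}(V_{j_0})$, the paper concludes more directly from the canonical epimorphism $\coprod_{j}\wnls{n}(V_j)\surjmap\colim_{j}\wnls{n}(V_j)$ (a colimit in a topos is a quotient of the coproduct), which renders the quasi-compactness argument unnecessary, though your version is also valid.
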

\begin{proof}
	It is clear that $(S_t\times_S V_j)_{t,j}$ is a cover of $X$. Let us show that
	$\bigl(\wnls{n}(S_t\times_S V_j)\bigr)_{t,j}$ is a cover of $\wnls{n}(X)$.
	By~(\ref{eq:global-base-jet-localization}) and (\ref{eq:wnls-is-independent-of-S}),
	it is enough to consider the case where $(S_t)_{t\in T}$ is the trivial cover consisting
	of $S$ itself. Thus it is enough to	show that $\bigl(\wnls{n}(V_j)\bigr)_{j}$ is a cover of 
	$\wnls{n}(X)$.

	Observe that we have a natural isomorphism
		\begin{equation} \label{eq:filterization-colimit}
			\coprod_{i\in I} U_i = \colimm_{j\in J} V_j;
		\end{equation}
	indeed, both sides have the same universal property.	
	Now consider the commutative diagram
			\[
			\entrymodifiers={+!!<0pt,\fontdimen22\textfont2>}
			\def\objectstyle{\displaystyle}
			\xymatrix@C=115pt@R=60pt@!0{
				\coprod_{j\in J} \wnls{n} (V_j) \ar^-a[rr]\ar@{->>}[d] 	
					&
					& \wnls{n} (X) \\
				\colimm_{j\in J}\wnls{n} (V_j) 
	\ar^-{\sim}_-{\text{(\ref{pro:wnls--commutes-with-filtered-colimits})}}[r]
					& \wnls{n}\bigl(\colimm_{j\in J}V_j\bigr)
						\ar^-{\sim}_-{\text{(\ref{eq:filterization-colimit})}}[r]  
					& \wnls{n} \big(\coprod_{i\in I} U_i\big),  \ar@{->}_b[u]
			}
			\]
	where $a$ and $b$ are the maps induced by the covering maps $V_j\to X$ and $U_i\to X$.
	Then $a$ is an epimorphism because $b$ is,
	which is true by~\ref{pro:wnls-preserves-epis-and-monos}. 
\end{proof}

\subsection{} \emph{$\wnls{n}$-stable covers.} 
\label{rem:wnls-good-cover}
It is useful to have covers $(X_k)_{k\in K}$ of $X$ that are $\wnls{n}$-stable, meaning that
$(\wnls{n}(X_k))_{k\in K}$ is a cover of $\wnls{n}(X)$. While general covers are not
$\wnls{n}$-stable (see \ref{subsec:wnls-chart}), some are. Any singleton cover is,
by~\ref{pro:wnls-preserves-epis-and-monos}, but it is not always enough to have this because there
can fail to be singleton covers with desirable properties. For instance, if $X$ is not
quasi-compact, it cannot be covered by a single affine scheme. But it often suffices to know only
that $\wnls{n}$-stable covers with certain desirable properties exist, and we can sometimes use
\ref{lem:wnls-good-cover} to make them. For instance, we can produce a $\wnls{n}$-stable cover with
each $X_k$ affine by taking $K=T\times J$ and $X_{(t,j)}=S_t\times_S V_j$ in
\ref{lem:wnls-good-cover}, where the $U_i$ and $S_t$ are affine. If we refine the cover
$(S_t)_{t\in T}$ so that each ideal in the support of $n$ is principal on each $S_t$, then we
further have that the image of each $X_k$ in $S$ is contained in an affine open subscheme of $S$ on
which each ideal in the support of $n$ is principal. If $X$ is an algebraic space, we can even
further arrange for each $X_k$ to be \'etale over $X$ by taking $(U_i)_{i\in I}$ to be an \'etale
cover of $X$.

\begin{proposition}\label{pro:wnls-preserves-qcom-qsep}
	The functor $\wnls{n}\:\Space_S\to\Space_S$ preserves
   \begin{enumerate}
	   	\item quasi-compactness of objects,
	   	\item quasi-separatedness of objects,
	   	\item quasi-compactness of maps,
		\item quasi-separatedness of maps.
   \end{enumerate}
\end{proposition}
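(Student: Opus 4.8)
The plan is to prove the quasi-compactness statement for objects (a) directly from the good-cover lemma, to deduce the quasi-compactness statement for maps (c) from (a), and then to obtain both quasi-separatedness statements (b) and (d) formally from (c), using that $\wnls{n}$, being a right adjoint, preserves all fiber products in $\Space_S$.

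For (a), suppose $X\in\Space_S$ is quasi-compact. Choosing a finite affine atlas of $X$, its image in $S$ is quasi-compact, hence contained in a quasi-compact open subscheme $S^\circ\subseteq S$ (a finite union of affine opens), through which $X\to S$ factors. By~(\ref{eq:wnls-is-independent-of-S}) we may replace $S$ by $S^\circ$ and so assume $S$ itself is quasi-compact. Now take a finite affine open cover $(S_t)_{t\in T}$ of $S$ and a finite cover $(U_i)_{i\in I}$ of $X$ by affine schemes. Feeding these finite data into Lemma~\ref{lem:wnls-good-cover}, we conclude that $\big(\wnls{n}(S_t\times_S V_j)\big)_{(t,j)\in T\times J}$ covers $\wnls{n}(X)$, where $V_j=\coprod_{i\in j}U_i$ and $J$ is the now finite set of subsets of $I$. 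By the last clause of that lemma each $S_t\times_S V_j$ is an object of $\affrel_S$, so each $\wnls{n}(S_t\times_S V_j)$ is affine by~(\ref{eq:wnls-of-affine-is-lambda-circle}), hence quasi-compact. As $T\times J$ is finite, $\wnls{n}(X)$ is the image of a finite coproduct of quasi-compact spaces and is therefore quasi-compact.

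For (c), let $f\:X\to Y$ be a quasi-compact map. Applying Lemma~\ref{lem:wnls-good-cover} to an affine \'etale atlas of $Y$ yields an \'etale cover $(U_i\to Y)_{i\in I}$ with $U_i\in\affrel_S$ such that $\big(\wnls{n}(U_i)\to\wnls{n}(Y)\big)_i$ is again a cover; by~\ref{pro:wnls-preserves-formally-etale-etc} these maps are \'etale. Since quasi-compactness of a morphism is \'etale-local on the target, it suffices to show each base change $\wnls{n}(X)\times_{\wnls{n}(Y)}\wnls{n}(U_i)\to\wnls{n}(U_i)$ is quasi-compact. Because $\wnls{n}$ preserves fiber products, the source is $\wnls{n}(X\times_Y U_i)$; as $U_i$ is affine and $f$ is quasi-compact, $X\times_Y U_i$ is quasi-compact, so $\wnls{n}(X\times_Y U_i)$ is quasi-compact by part (a). The target $\wnls{n}(U_i)$ is affine by~(\ref{eq:wnls-of-affine-is-lambda-circle}), and a morphism with quasi-compact source and affine target is quasi-compact: for any affine $T\to\wnls{n}(U_i)$ the map $T\to\wnls{n}(U_i)$ is affine, hence quasi-compact, so its base change to $\wnls{n}(X\times_Y U_i)$ is quasi-compact, and its source is quasi-compact since $\wnls{n}(X\times_Y U_i)$ is. Thus each piece, and hence $\wnls{n}(f)$, is quasi-compact.

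Finally, (b) and (d) are formal consequences of (c). Since $\wnls{n}$ preserves products in $\Space_S$, it sends the relative diagonal $\Delta_{X/S}\:X\to X\times_S X$ (resp.\ $\Delta_{X/Y}\:X\to X\times_Y X$) to the diagonal of $\wnls{n}(X)$ over $S$ (resp.\ over $\wnls{n}(Y)$). Quasi-separatedness of $X$ (resp.\ of $f$) means exactly that this diagonal is quasi-compact, and applying (c) to it shows the diagonal of $\wnls{n}(X)$ (resp.\ of $\wnls{n}(f)$) is quasi-compact, which is the desired conclusion. The only genuine obstacle is part (a): one must ensure that the good cover of Lemma~\ref{lem:wnls-good-cover} can be chosen finite, and this is precisely what the preliminary reduction to a quasi-compact base $S^\circ$ provides. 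Everything else is a routine manipulation of adjunctions, fiber products, and \'etale locality on the target.
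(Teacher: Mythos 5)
Your proof is correct, and its overall skeleton matches the paper's, but you reroute two of the four parts. For (a), the paper does not reduce to quasi-compact $S$ at all: it takes a finite family $(U_i)_{i\in I}$ in $\affrel_S$ covering $X$, sets $U=\coprod_i U_i$ (affine), and applies \ref{pro:wnls-preserves-epis-and-monos} directly to get an epimorphism $\wnls{n}(U)\to\wnls{n}(X)$ from an affine, hence quasi-compact, source (SGA 4 VI 1.3). That is shorter than your route through \ref{lem:wnls-good-cover}, though your version has the merit of making explicit the $\affrel_S$ bookkeeping that the paper glosses over: the affineness of $\wnls{n}(U)$ when $U$ need not lie over a single affine open of $S$ really rests on (the finite case of) \ref{lem:wnls-of-componentwise-affine}, whereas your reduction to quasi-compact $S^\circ$ plus the finite-data form of \ref{lem:wnls-good-cover} keeps everything inside $\affrel_S$ where (\ref{eq:wnls-of-affine-is-lambda-circle}) applies verbatim; your factorization of $X\to S$ through $S^\circ$ is legitimate (the atlas factors, and in a topos a map that is both mono and epi is an isomorphism, so $X\times_S S^\circ\to X$ is invertible), and (\ref{eq:wnls-is-independent-of-S}) lets you change base as claimed. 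For (b), the paper argues directly from SGA 4 VI 1.17 using a good cover from \ref{cor:wnls-good-cover} together with (a), while you deduce it from (c) applied to the diagonal $\Delta_{X/S}$; this is fine, but note it silently uses the equivalence \emph{quasi-separated $\Leftrightarrow$ $\Delta_{X/S}$ quasi-compact}, which holds here because finite products of quasi-compact objects of $\Space_S$ are quasi-compact ($S$ being a separated scheme, products of affines in $\affrel_S$ are affine), and also that $\wnls{n}(S)=S$ so that $\wnls{n}(\Delta_{X/S})$ is indeed the diagonal of $\wnls{n}(X)$ over $S$ --- both true, but worth a line. Your (c) and (d) coincide with the paper's (good cover of $Y$, SGA 4 VI 1.16, preservation of fiber products by the right adjoint $\wnls{n}$, then the diagonal for (d)); the \'etaleness of the covering maps you invoke in (c) is harmless but not needed, since an epimorphic family of affines suffices for the quasi-compactness criterion, and your quasi-compact-source-to-affine-target check is exactly the paper's SGA 4 VI 1.14 citation.
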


\begin{proof}
	Let $X$ be an object in $\Space_S$.
	
	(a): Suppose $X$ is quasi-compact. Let
	$(U_i)_{i\in I}$ be a finite family in $\affrel_S$ which covers $X$. 
	(Such a family exists, because the large family of all morphisms from objects
	of $\affrel_S$ to $X$ covers
	to $X$, and therefore has a finite subcover, because $X$ is quasi-compact.)
	Then the space $U=\coprod_{i\in I}U_i$ is affine.
   	By~\ref{pro:wnls-preserves-epis-and-monos}, 
	the map $\wnls{n} (U)\to\wnls{n} (X)$ is an epimorphism.
	Since $\wnls{n} (U)$ is affine, it is quasi-compact.
	It follows that $\wnls{n} (X)$ is quasi-compact (SGA 4 VI 1.3\cite{SGA4.2}).
   
	(b): Suppose $X$ is quasi-separated. Then for any cover $(U_i)_{i\in I}$ of $X$, with each
	$U_i\in\affrel_S$, each space $U_i\times_X U_j$ is 
	quasi-compact. Therefore, by (a), each space
	$\wnls{n} (U_i) \times_{\wnls{n} (X)} \wnls{n} (U_j) = \wnls{n}(U_i\times_X U_j)$ is 
	quasi-compact.  By SGA 4 VI 1.17~\cite{SGA4.2}, this implies that $\wnls{n}(X)$ is 
	quasi-separated
	as long as we can choose the cover $(U_i)_{i\in I}$ such that $(\wnls{n}(U_i))_{i\in I}$
	is a cover of $\wnls{n}(X)$. This is possible by~\ref{rem:wnls-good-cover}.

	(c): Let $f\:X\to Y$ be a quasi-compact map of spaces.
	As above, by~\ref{rem:wnls-good-cover}, there exists a cover $(U_i)_{i\in I}$ of $Y$, with each 
	$U_i\in\affrel_S$, such that $(\wnls{n} (U_i))_{i\in I}$ is an affine cover of $\wnls{n} (Y)$. 
	It is then enough
	to show that each $\wnls{n} (U_i) \times_{\wnls{n} (Y)} \wnls{n} (X)$ is quasi-compact
	(SGA 4 VI 1.16~\cite{SGA4.2}), but this agrees with $\wnls{n}(U_i\times_Y X)$.  Now apply (a).

	(d): Let $f\:X\to Y$ be a quasi-separated map of spaces. By definition, its diagonal map
	$\Delta_f$ is quasi-compact. By (c), so is the map $\wnls{n}(\Delta_f)$, and this
	agrees with the diagonal map of $\wnls{n}(f)$.
\end{proof}

\section{$\wnls{n}$ and algebraic spaces}
\label{sec:wnls-and-algebraic-spaces}

We continue with the notation of~\ref{subsec:supramaximal-def-global-base-S}.

\begin{theorem}\label{thm:wnls--alg-space}
	Let $X$ be an algebraic space over $S$. Then $\wnls{n}(X)$ is an algebraic space.
	If $X$ is a scheme, then so is $\wnls{n}(X)$.
\end{theorem}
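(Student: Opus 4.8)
The plan is to exhibit $\wnls{n}(X)$ directly as a quotient of a scheme by an equivalence relation of the appropriate kind, using the analogous presentation of $X$ itself. The three tools that make this work are Corollary~\ref{cor:W-lower-s-preserves-equ-relations}, which carries a quotient $U/\Gamma$ by an equivalence relation to the quotient $\wnls{n}(U)/\wnls{n}(\Gamma)$; Proposition~\ref{pro:wnls-preserves-formally-etale-etc} together with~\ref{pro:wnls-preserves-epis-and-monos}, which say that $\wnls{n}$ preserves \'etale maps and open immersions; and the fact that $\wnls{n}(\Spec A)$ is affine~(\ref{eq:wnls-of-affine-is-lambda-circle}). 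The one genuine difficulty, discussed at the end, is that $\wnls{n}$ is a right adjoint and so does \emph{not} commute with disjoint unions.

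First I would record a disjoint-union lemma: if $U=\coprod_i U_i$ is a disjoint union of affine schemes, then $\wnls{n}(U)$ is a scheme. Writing $U$ as the filtered colimit $\colim_j V_j$ of its finite sub-disjoint-unions $V_j=\coprod_{i\in j}U_i$, each $V_j$ is affine, the transition maps $V_j\hookrightarrow V_{j'}$ are open immersions, and $\wnls{n}$ commutes with filtered colimits~(\ref{pro:wnls--commutes-with-filtered-colimits}) and preserves open immersions; hence $\wnls{n}(U)=\colim_j\wnls{n}(V_j)$ is a filtered colimit of affine schemes along open immersions, which is a scheme. The same argument, after embedding a finite disjoint union of quasi-affine schemes as an open subscheme of a finite disjoint union of affines, shows that $\wnls{n}$ also carries a disjoint union of quasi-affine schemes to a scheme.

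For the scheme statement I would then fix an affine open cover $(U_i)$ of $X$ and use the standard Zariski presentation $X=U/\Gamma$, where $U=\coprod_iU_i$ and $\Gamma=\coprod_{i,i'}(U_i\cap U_{i'})$, the two projections $\pi_1,\pi_2\colon\Gamma\to U$ being open immersions into the respective components. By the disjoint-union lemma $\wnls{n}(U)$ is a scheme, and $\wnls{n}(\Gamma)$ is a scheme since each component $\wnls{n}(U_i\cap U_{i'})$ is an open subscheme of the affine $\wnls{n}(U_i)$. Because $\wnls{n}$ preserves open immersions, $\wnls{n}(\pi_1)$ and $\wnls{n}(\pi_2)$ are again open immersions, so by Corollary~\ref{cor:W-lower-s-preserves-equ-relations} the space $\wnls{n}(X)=\wnls{n}(U)/\wnls{n}(\Gamma)$ is the quotient of a scheme by an open (Zariski) equivalence relation, hence a scheme. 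For the algebraic-space statement I would argue the same way, but by induction on the geometricity $m$ of~\ref{subsec:algebraic-spaces}: an $m$-geometric space is $U/\Gamma$ with $U$ a disjoint union of affines and $\Gamma$ a disjoint union of $(m-1)$-geometric spaces serving as an \emph{\'etale} equivalence relation. Here $\wnls{n}(U)$ is a scheme; $\wnls{n}(\Gamma)$ is an algebraic space, by the inductive hypothesis applied to the finite sub-disjoint-unions (which remain $(m-1)$-geometric) together with the filtered-colimit-along-open-immersions argument; the maps $\wnls{n}(\pi_1),\wnls{n}(\pi_2)$ are \'etale by~\ref{pro:wnls-preserves-formally-etale-etc}; and Corollary~\ref{cor:W-lower-s-preserves-equ-relations} identifies $\wnls{n}(X)$ with $\wnls{n}(U)/\wnls{n}(\Gamma)$, an algebraic space by the very closure property defining algebraic spaces.

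The step I expect to be the main obstacle is precisely the handling of disjoint unions. Because $\wnls{n}$ is a right adjoint it does not commute with coproducts, and as Remark~\ref{subsec:wnls-chart} warns it does not even preserve covering \emph{families} (only covering maps); consequently one cannot present $\wnls{n}(X)$ by naively gluing the affine opens $\wnls{n}(U_i)$, since these need not cover $\wnls{n}(X)$. The device that circumvents this—realizing a disjoint union as a filtered colimit of its finite parts and exploiting that the transition maps are open immersions, which $\wnls{n}$ \emph{does} preserve—is what lets the equivalence-relation presentation go through, and it is the only place where the failure of $\wnls{n}$ to be a left adjoint must be confronted.
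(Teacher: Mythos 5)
Your algebraic-space argument is essentially sound and is close in spirit to the paper's: the paper likewise inducts on geometricity, using \ref{cor:W-lower-s-preserves-equ-relations} and \ref{pro:wnls-preserves-formally-etale-etc} to present $\wnls{n}(X)$ as the quotient of $\wnls{n}(U)$ by the \'etale equivalence relation $\wnls{n}(U\times_X U)$; your filtered-colimit-along-open-immersions device plays the role of its Lemma~\ref{lem:wnls-of-componentwise-affine}. One small repair is needed even there: when $S$ is not affine, a finite disjoint union $V_j$ of objects of $\affrel_S$ is affine but need not lie in $\affrel_S$ (its components may map to different affine opens of $S$), so (\ref{eq:wnls-of-affine-is-lambda-circle}) does not directly make $\wnls{n}(V_j)$ affine; you must either first localize on $S$ via (\ref{eq:global-base-jet-localization}) and glue, or use the ghost-component decomposition as the paper does.

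The scheme case, however, has a genuine gap. The projections $\pi_1,\pi_2\:\Gamma\to U$ of the Zariski presentation are \emph{not} open immersions---only their restrictions to the summands $U_i\cap U_{i'}$ are---so the fact that $\wnls{n}$ preserves open immersions does not apply to them. Indeed $\wnls{n}(\pi_1)$ is genuinely not a monomorphism: away from $\ptst$ it is the product map $\pi_1^{[0,n]}\:\Gamma^{[0,n]}\to U^{[0,n]}$, which identifies distinct mixed components (already for $X=\bP^1$ with its standard two-chart cover). Nor can the componentwise statement be pushed through $\wnls{n}$, for exactly the reason you flag yourself elsewhere: $\wnls{n}$ does not commute with disjoint unions, so $\wnls{n}(\Gamma)$ is strictly larger than $\coprod_{i,i'}\wnls{n}(U_i\cap U_{i'})$, and its extra ``mixed-ghost'' components, lying over the summands $\prod_m(U_{h(m)}\cap U_{h'(m)})$ of $\Gamma^{[0,n]}$ with $h\neq h'$, are touched by none of the open-immersion statements you have available. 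As written, your construction therefore exhibits $\wnls{n}(X)$ only as the quotient of a scheme by an \'etale equivalence relation, i.e.\ a priori an algebraic space---which is the first half of the theorem over again, not the scheme statement. The assertion you actually need---that each component of $\wnls{n}(\Gamma)$ maps by an open immersion into the corresponding ghost component of $\wnls{n}(U)$---happens to be true, but proving it requires the decomposition of \ref{pro:wnls-of-a-disjoint-union-of-pairs} and \ref{lem:wnls-of-componentwise-affine} together with the surjectivity of the ghost maps (\ref{lem:gh-integral-and-surj-multi-prime}), used to show that the relevant equalizer, a clopen subspace of $\wnus{n}(T)$, is everything; none of this appears in your proposal. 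The paper sidesteps the issue entirely: it localizes so that $\m=(\pi)$ is principal, observes that away from $\m$ the space $\wnls{n}(X)$ is just $(X')^{[0,n]}$, covered by the affine opens $V'_{i_0}\times_S\cdots\times_S V'_{i_n}$, and over the special fiber uses \ref{pro:wnls-mod-p} and \ref{cor:naive-gluing-ok-in-char-p-2} to show the fiber is contained in $\bigcup_i\wnls{n}(V_i)$, each of which is an affine open subspace of $\wnls{n}(X)$ because it is an affine \'etale monomorphism into it.
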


For the proof, see~\ref{subsec:proof-of-wnls-of-alg-space} below. Observe that when $X$ is
quasi-compact and has affine diagonal (e.g.\ is separated), as is often the case in applications,
the algebraicity of $\wnls{n}(X)$ follows immediately
from~\ref{cor:W-lower-s-preserves-equ-relations} and~\ref{pro:wnls-preserves-formally-etale-etc}.
Thus, for the part of the theorem asserting that $\wnls{n}(X)$ is an algebraic space, all the work
below is in removing these assumptions.

\begin{proposition}\label{pro:wnls-of-a-disjoint-union-of-pairs}
	For any spaces $X,Y\in\Space_S$, the diagram 
		\[
		\entrymodifiers={+!!<0pt,\fontdimen22\textfont2>}
		\def\objectstyle{\displaystyle}
		\xymatrix@C=100pt@R=45pt@!0{
		\wnls{n}(X) \ar^-{\wnls{n}(j)}[r]\ar^{\cgh{\leq n}}[d] 
			& \wnls{n}(X\smcoprod Y) \ar^{\cgh{\leq n}}[d] \\
		X^{[0,n]} \ar^-{j^{[0,n]}}[r] 
			& (X\smcoprod Y)^{[0,n]},
		}
		\]
	where $j\:X\to X\smcoprod Y$ denotes the canonical summand inclusion, is cartesian. 
\end{proposition}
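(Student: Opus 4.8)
The plan is to evaluate the square on the generating site $\affrel_S$ and reduce cartesianness to an elementary fact about idempotents in Witt vector rings. Since $\Space_S$ is the category of sheaves on $\affrel_S$ and limits (in particular fibre products) of sheaves are formed objectwise, the square is cartesian if and only if for every $T\in\affrel_S$ the square of sets obtained by evaluating at $T$ is cartesian. So fix $T=\Spec B$ in $\affrel_S$, let $\Spec R$ be an affine open of $S$ through which $T\to S$ factors, and put $W=\wnus{n}(T)=\Spec W_{R,n}(B)$; this is affine by~\ref{pro:W_n-of-affine-is-affine}. By the definition of $\wnls{n}$ together with the adjunction $\wnus{n}\dashv\wnls{n}$ we have a natural identification $\wnls{n}(Z)(T)=Z(W)$ for every $Z\in\Space_S$, and a short diagram chase with the ghost sections $\gh{i}\:T\to W$ of~(\ref{eq:individual-ghost-map-geometric}) (using the counit of the adjunction) shows that under this identification the co-ghost map $\cgh{\leq n}$ of~(\ref{eq:total-co-ghost-map-geometric}) becomes the map $Z(W)\to Z(T)^{[0,n]}$ sending a $W$-point $\xi$ to the family $\big(\xi\circ\gh{i}\big)_{i\in[0,n]}$ of its restrictions. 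Thus the evaluated square has corners $X(W)$, $(X\smcoprod Y)(W)$, $X(T)^{[0,n]}$ and $(X\smcoprod Y)(T)^{[0,n]}$, with horizontal maps given by post-composition with $j$ and vertical maps given by restriction along the $\gh{i}$.

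I then claim that the canonical map from $X(W)$ to the fibre product is a bijection. \emph{Injectivity} is immediate: the summand inclusion $j\:X\to X\smcoprod Y$ is a monomorphism in the topos $\Space_S$, so $X(W)\to(X\smcoprod Y)(W)$ is injective, and the comparison map factors through it. For \emph{surjectivity} it suffices to prove the following: if $\eta\in(X\smcoprod Y)(W)$ has the property that each restriction $\eta\circ\gh{i}$ lies in the image of $X(T)\to(X\smcoprod Y)(T)$, then $\eta$ itself lies in the image of $X(W)\to(X\smcoprod Y)(W)$. Indeed, once $\eta=j\circ\xi$ for some $\xi\in X(W)$, the matching second coordinate $(\xi\circ\gh{i})_i$ is forced, again because $j$ is a monomorphism.

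To prove this, recall that coproducts in the topos $\Space_S$ are disjoint and universal, so the two summands $X,Y$ are complementary subobjects of $X\smcoprod Y$. Pulling these back along $\eta\:W\to X\smcoprod Y$ produces a decomposition of the affine scheme $W=\Spec W_{R,n}(B)$ into two complementary clopen pieces, that is, a pair of complementary idempotents; let $e\in W_{R,n}(B)$ be the idempotent cutting out the $Y$-part. Then $\eta$ factors through $X$ precisely when $e=0$. Pulling back instead along each ghost section $\gh{i}\:T\to W$ replaces $e$ by its image $\gh{i}(e)\in B$, and the hypothesis that $\eta\circ\gh{i}$ factors through $X$ says exactly that $\gh{i}(e)=0$ for every $i$. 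In other words, $e$ lies in the kernel of the total ghost map $\gh{\leq n}\:W_{R,n}(B)\to B^{[0,n]}$. By~\ref{lem:gh-integral-and-surj-multi-prime} this kernel is a nil ideal, so the idempotent $e$ is nilpotent and therefore $e=0$. Hence $\eta$ factors through $X$, as required.

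The main obstacle is this surjectivity step, and specifically the passage between the topos-theoretic and the ring-theoretic pictures: one must recognise the image of $X(W)$ inside $(X\smcoprod Y)(W)$ as the locus where the ``$Y$-idempotent'' $e$ vanishes, and then exploit that $e$ is detected faithfully by the ghost components because the kernel of $\gh{\leq n}$ is nil (\ref{lem:gh-integral-and-surj-multi-prime}). Once this dictionary is in place the argument is short; the only points requiring real care are the naturality identification $\wnls{n}(Z)(T)=Z(W)$ together with the description of $\cgh{\leq n}$ as restriction along the $\gh{i}$, and the appeal to disjointness and universality of coproducts to produce the idempotent $e$.
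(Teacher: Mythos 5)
Your proof is correct and follows essentially the same route as the paper's: both evaluate the square on $\affrel_S$, use the adjunction $\wnus{n}\dashv\wnls{n}$ to reduce cartesianness to the statement that any $\wnus{n}(T)$-point of $X\smcoprod Y$ whose ghost restrictions all land in $X$ must itself land in $X$ (equivalently, $\wnus{n}(T)\times_{X\smcoprod Y}Y=\emptyset$), and deduce this from Lemma~\ref{lem:gh-integral-and-surj-multi-prime}. The only difference is the finishing move: the paper pulls a test map $U\to\wnus{n}(T)\times_{X\smcoprod Y}Y$ back along the ghost map $\gh{\leq n}$ and uses its surjectivity, whereas you kill the idempotent cutting out the $Y$-locus using the nilpotence of $\ker\gh{\leq n}$ --- two interchangeable consequences of the same lemma.
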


\begin{proof}
	It is enough to show that for any object $T\in\affrel_S$, the functor $\Hom(T,\vbl)$
	takes the diagram above to a cartesian diagram.
	By adjunction, this is equivalent to the existence
	of a unique dashed arrow making the diagram 
		\[
		\entrymodifiers={+!!<0pt,\fontdimen22\textfont2>}
		\def\objectstyle{\displaystyle}
		\xymatrix@C=80pt@R=45pt@!0{
		\coprod_{[0,n]} T \ar^{\gh{\leq n}}[r]\ar[d] 
			& \wnus{n}(T) \ar[d]\ar_{\exists!?}@{-->}[dl] \\
		X \ar^j[r] & X \smcoprod Y
		}
		\] 
	commute, for any given vertical arrows making the square commute.
	It is therefore enough to show $\wnus{n}(T)\times_{X\smsmcoprod Y} Y=\emptyset$.
	
	To do this, we will show that if there exists a map
	$U\to\wnus{n}(T)\times_{X\smsmcoprod Y} Y$, where $U$ is an affine scheme, then $U$
	is empty. Pulling back such a map by $\gh{\leq n}$, we get a map
		\[
		\bigl(\coprod_{[0,n]}T\bigr)\times_{\wnus{n}(T)} U \longmap
		\bigl(\coprod_{[0,n]}T\bigr)\times_{\wnus{n}(T)}\wnus{n}(T)
			\times_{(X\smsmcoprod Y)} Y.
		\]
	By the commutativity of the square above, the right-hand side is empty.
	Therefore the left-hand side is empty. But since $\gh{\leq n}$ is a surjective
	map of (affine) schemes (by \ref{lem:gh-integral-and-surj-multi-prime} and 
	(\ref{eq:wnls-of-affine-is-lambda-circle})), $U$ must be empty.
\end{proof}

\subsection{} \emph{Remark.} It follows from~\ref{pro:wnls-of-a-disjoint-union-of-pairs}
that $\coprod_i\wnls{n}(X_i)$ is a summand of $\wnls{n}(\coprod_i X_i)$. In all but the most
trivial cases, the two will not be equal. See~\ref{subsec:wnls-chart}, for example.

\begin{lemma}\label{lem:wnls-of-componentwise-affine}
	If $X$ is a disjoint union in $\Space_S$ of objects in $\affrel_S$,
	then so is $\wnls{n}(X)$.
\end{lemma}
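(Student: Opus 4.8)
The plan is to decompose $\wnls{n}(X)$ along the total co-ghost map $\cgh{\leq n}\colon \wnls{n}(X)\to X^{[0,n]}$ of~(\ref{eq:total-co-ghost-map-geometric}), using that $\Space_S$, being a category of sheaves, has coproducts that are disjoint and stable under base change. Write $X=\coprod_{i\in I}X_i$ with each $X_i\in\affrel_S$. Since fiber products over $S$ distribute over coproducts, $X^{[0,n]}=\coprod_{\phi\in I^{[0,n]}}X_\phi$, where $X_\phi=X_{\phi(0)}\times_S\cdots\times_S X_{\phi(n)}$. Each $X_\phi$ lies in $\affrel_S$: the structure map of each $X_{\phi(k)}$ factors through an affine open $U_k\subseteq S$, and because $S$ is separated (\ref{subsec:supramaximal-def-global-base-S}) the intersection $U_\phi:=\bigcap_k U_k$ is an affine open through which $X_\phi$ factors, while $X_\phi$ is a finite fiber product over $U_\phi$ of affine schemes and so is affine. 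Pulling $\cgh{\leq n}$ back along the summand inclusions $X_\phi\hookrightarrow X^{[0,n]}$ and invoking universality of coproducts, I obtain
$$
	\wnls{n}(X)=\coprod_{\phi\in I^{[0,n]}}Z_\phi,\qquad Z_\phi:=\wnls{n}(X)\times_{X^{[0,n]}}X_\phi.
$$
It therefore suffices to prove that each $Z_\phi$ belongs to $\affrel_S$.

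Next I would cut $Z_\phi$ down to the finite index set $\im\phi$. Put $\bar{X}_\phi=\coprod_{i\in\im\phi}X_i$, a summand of $X$. Applying~\ref{pro:wnls-of-a-disjoint-union-of-pairs} to the decomposition $X=\bar{X}_\phi\smcoprod\big(\coprod_{i\notin\im\phi}X_i\big)$ gives $\wnls{n}(\bar{X}_\phi)=\wnls{n}(X)\times_{X^{[0,n]}}\bar{X}_\phi^{[0,n]}$. Since the inclusion $X_\phi\hookrightarrow X^{[0,n]}$ factors through $\bar{X}_\phi^{[0,n]}$, transitivity of fiber products yields $Z_\phi=\wnls{n}(\bar{X}_\phi)\times_{\bar{X}_\phi^{[0,n]}}X_\phi$, an expression that now involves only the finitely many summands indexed by $\im\phi$.

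Finally I would prove $Z_\phi$ affine by localizing to the affine open $U_\phi$ through which $X_\phi$, and hence $Z_\phi$, factors. Because $U_\phi\to S$ is a monomorphism we have $Z_\phi=U_\phi\times_S Z_\phi$; combining this with the base-change isomorphism~(\ref{eq:global-base-jet-localization}) and the compatibility of fiber powers with base change gives
$$
	Z_\phi=\wnls{U_\phi,n}\big(U_\phi\times_S\bar{X}_\phi\big)\times_{(U_\phi\times_S\bar{X}_\phi)^{[0,n]}}X_\phi,
$$
now entirely over the affine base $U_\phi$. There $U_\phi\times_S\bar{X}_\phi=\coprod_{i\in\im\phi}(U_\phi\times_S X_i)$ is a finite disjoint union of affine $U_\phi$-schemes, hence affine, so $\wnls{U_\phi,n}(U_\phi\times_S\bar{X}_\phi)$ is affine by~(\ref{eq:wnls-of-affine-is-lambda-circle}). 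As $Z_\phi$ is then a fiber product of affine schemes over the affine scheme $(U_\phi\times_S\bar{X}_\phi)^{[0,n]}$, it is affine and factors through the affine open $U_\phi$; that is, $Z_\phi\in\affrel_S$.

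I expect the main obstacle to be bookkeeping rather than anything conceptual. The essential input is~\ref{pro:wnls-of-a-disjoint-union-of-pairs}, which simultaneously produces the summand decomposition (through universality of coproducts in $\Space_S$) and allows me to replace $X$ by the finite union $\bar{X}_\phi$; the remaining effort is to verify, repeatedly using that $S$ is separated, that the various fiber products of affine schemes stay affine and continue to factor through a common affine open of $S$.
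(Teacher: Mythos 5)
Your proof is correct and follows essentially the same route as the paper's: decompose $X^{[0,n]}$ into the summands $X_\phi$, pull back along $\cgh{\leq n}$ using universality of coproducts in $\Space_S$, reduce to the finitely many summands indexed by $\im\phi$ via \ref{pro:wnls-of-a-disjoint-union-of-pairs}, and then conclude affineness over an affine open of $S$ using (\ref{eq:global-base-jet-localization}) and (\ref{eq:wnls-of-affine-is-lambda-circle}) --- the paper merely orders these steps slightly differently, proving the finite-index case first and then reducing to it. The only point to tighten is your claim that $X_\phi$ is a ``finite fiber product over $U_\phi$ of affine schemes'': the factors $X_{\phi(k)}$ map through $U_k$ rather than $U_\phi$, so one should first replace each by $X_{\phi(k)}\times_S U_\phi$, which is affine precisely because $S$ is separated, as you acknowledge.
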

\begin{proof}
	Let $(X_i)_{i\in I}$ be a family of objects in $\affrel_S$ such 
	that $X\cong \coprod_{i\in I} X_i$.
	For any function $h\:[0,n]\to I$, write
		\[
		X^h = \prod_{m\in [0,n]} X_{h(m)}.
		\]
	Then we have 
		\[
		X^{[0,n]} = \coprod_{h} X^h,
		\]
	where $h$ runs over all maps $[0,n]\to I$. Therefore it is enough to show that the pre-image of 
	each $X^h$ under the map $\cgh{\leq n}\: \wnls{n}(X)\to X^{[0,n]}$
	is a disjoint union in $\Space_S$ of objects in $\affrel_S$.
	
	Since this preimage lies over $X^h$, and hence over $X_{h(0)}$, it lies over an
	affine open subscheme $S'$ of $S$. Therefore it is enough to show that this preimage is affine.
	
	Let us first do this when $I$ is finite. Because $X^h$ lies over $S'$, we have
		\begin{equation} \label{eq:5395}
		X^h \times_{X^{[0,n]}}\wnls{n}(X) = X^h\times_{X^{[0,n]}}\bigl(\wnls{n}(X)\times_S S'\bigr).
		\end{equation}
	Since $I$ is finite, $X$ is affine, and hence so is $X^h$.
	On the other hand, $\wnls{n}(X)\times_S S'$ is affine, 
	by~(\ref{eq:global-base-jet-localization}) and~(\ref{eq:wnls-of-affine-is-lambda-circle}).
	Therefore the left-hand side of (\ref{eq:5395}) is affine.

	Now suppose $I$ is arbitrary.
	Let $J$ denote the image of $h$, and write $Y=\coprod_{i\in J} X_i$.
	Then the map $X^h\to X^{[0,n]}$ factors through the map
	$j^{[0,n]}\:Y^{[0,n]}\to X^{[0,n]}$ induced by the summand inclusion 
	$j\:Y \to X$.
	Therefore by~\ref{pro:wnls-of-a-disjoint-union-of-pairs}, the right-hand
	square in the digram
		\[
		\entrymodifiers={+!!<0pt,\fontdimen22\textfont2>}
		\def\objectstyle{\displaystyle}
		\xymatrix@C=90pt@R=45pt@!0{
		X^h\times_{Y^{[0,n]}}\wnls{n}(Y) \ar^-{\pr_2}[r]\ar^{\pr_1}[d] 
			& \wnls{n}(Y) \ar^{\wnls{n}(j)}[r]\ar^{\cgh{\leq n}}[d] 
			& \wnls{n}(X) \ar^{\cgh{\leq n}}[d] \\
		X^h \ar[r]
			& Y^{[0,n]} \ar^{j^{[0,n]}}[r] 
			& X^{[0,n]}
		}
		\]
	is cartesian. Thus $X^h\times_{X^{[0,n]}} \wnls{n}(X)$ agrees with
	$X^h\times_{Y^{[0,n]}}\wnls{n}(Y)$, which is affine by the case proved above
	and the fact that $Y$ is affine.
\end{proof}

\subsection{} \emph{Proof of~\ref{thm:wnls--alg-space}.}
\label{subsec:proof-of-wnls-of-alg-space}
	It is enough to show that $S'\times_S\wnls{n}(X)$ is an algebraic space, or a scheme
	when $X$ is,
	for all sufficiently small affine open subschemes $S'$ of $S$. 
	Therefore by~(\ref{eq:global-base-jet-localization}), we 
	may assume that $S=\Spec R$ for some ring $R$,
	and that the ideal $\m$ of $R$ is generated by a single element $\pi$. 
	
	Let us first show that $\wnls{n}(X)$ is an algebraic space.
	We will show by induction on $m$ that if $X$ is $m$-geometric, then $\wnls{n}(X)$ is 
	an algebraic space. If $m=-1$, then $X$ is affine and so we can	
	apply~(\ref{eq:wnus-of-affine-is-W}). Now assume $m\geq 0$.
	
	Let $(U_i\to X)_{i\in I}$ be an affine \'etale cover for which
	each map $U_i\to X$ is $(m-1)$-representable. Write 
	$U=\coprod_{i\in I} U_i$. Consider the diagrams
		\[
		\displaycofork{U\times_X U}{U}{X.}
		\]	
	and
		\[
		\displaycofork{\wnls{n}(U\times_X U)}{\wnls{n}(U)}{\wnls{n}(X).}
		\]
	By~\ref{cor:W-lower-s-preserves-equ-relations} 
	and~\ref{pro:wnls-preserves-formally-etale-etc}, the space $\wnls{n}(U\times_X U)$
	is an \'etale equivalence relation on $\wnls{n}(U)$ with quotient $\wnls{n}(X)$.
	Since the category of algebraic spaces is closed under quotients by \'etale
	equivalence relations, it is sufficient to show that $\wnls{n}(U)$ and $\wnls{n}(U\times_X U)$
	are algebraic spaces. This holds because of two facts.
	First, $U$ (resp.\ $U\times_X U$) is a disjoint union of 
	$-1$-geometric (resp.\ $(m-1)$-geometric) algebraic spaces. Second,
	for $k\leq m-1$, the functor $\wnls{n}$ applied to a disjoint union of
	$k$-geometric algebraic spaces is an algebraic space.
	Indeed, if $k=-1$, this follows from~\ref{lem:wnls-of-componentwise-affine}; if $k\geq 0$, then
	a disjoint union of $k$-geometric algebraic spaces is itself
	a $k$-geometric algebraic space, and so it follows by induction.

	Now suppose $X$ is a scheme. To show that $\wnls{n}(X)$ is a scheme,
	we may assume, by (\ref{eq:geom-witt-iterate-lower-s}), that
	$\ptst$ consists of a single ideal $\m$.
	Since $\wnls{n}(X)$ is an algebraic space, it is enough to show it has an affine open cover.
	
	Let $(V_i)_{i\in I}$ be an affine open cover of $X$. 
	For any $S$-space $Y$, write $Y'=Y\times_S \Spec R[1/\pi]$. Then each $V'_i$ is an affine
	open subscheme of $X$. Further, the schemes $V'_{i_0}\times_S\cdots\times_S V'_{i_n}$
	cover the product $(X')^{[0,n]}=X'\times_S\dots\times_S X'$, which agrees with
	$\wnls{n}(X')$ and hence $\wnls{n}(X)'$ 
	(by~(\ref{eq:global-base-jet-localization})--(\ref{eq:wnls-is-independent-of-S})).
	So all that remains is to show the fiber of $\wnls{n}(X)$ over $\m$ can be covered
	by open subspaces that are affine schemes.

	Since each $V_i$ is affine, each $\wnls{n}(V_i)$ is 
	affine, by \ref{eq:wnls-of-affine-is-lambda-circle}. Since each
	map $V_i\to X$ is an \'etale monomorphism, each
	map $\wnls{n}(V_i)\to \wnls{n}(X)$ is an \'etale monomorphism, 
	by \ref{pro:wnls-preserves-formally-etale-etc} and
	\ref{pro:wnls-preserves-epis-and-monos}. Therefore these maps
	are open immersions, and by \ref{cor:naive-gluing-ok-in-char-p-2} they cover the fiber of 
	$\wnls{n}(X)$ over $\m$.
\qed

\begin{corollary}\label{cor:smooth-implies-wnls-flat}
	Let $X$ be an $\ptst$-smooth (\ref{subsec:definition-of-E-flat}) algebraic space over $S$.
	Then $\wnls{n}(X)$ is an $\ptst$-smooth algebraic space over $S$.  In particular, it is 
	$\ptst$-flat.
\end{corollary}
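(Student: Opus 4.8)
The plan is to verify $\ptst$-smoothness of $\wnls{n}(X)$ directly from the definition in~\ref{subsec:definition-of-E-flat}, by base-changing to the local ring at each prime of $\ptst$ and there invoking that $\wnls{n}$ preserves smooth maps. First, since $X$ is an algebraic space, so is $\wnls{n}(X)$ by Theorem~\ref{thm:wnls--alg-space}, so the question of $\ptst$-smoothness makes sense. By definition it then suffices to show, for each $\m\in\ptst$, that $\wnls{n}(X)\times_S\Spec\sO_{S,\m}$ is smooth over $\Spec\sO_{S,\m}$.

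Fix $\m\in\ptst$ and set $S'=\Spec\sO_{S,\m}$, with $j\:S'\to S$ the canonical map, which is a localization at a point and hence a flat monomorphism. Thus the isomorphisms of~\ref{subsec:change-of-S} apply, and in particular~(\ref{eq:global-base-jet-localization}) gives
	$$
	\wnls{n}(X)\times_S S' = \wnls{S',n}(X\times_S S').
	$$
The whole theory applies verbatim over the separated (indeed affine) base $S'$. Since $\wnls{S',n}$ is a right adjoint, it preserves the terminal object, so $\wnls{S',n}(S')=S'$; consequently the structure map of $\wnls{S',n}(X\times_S S')$ over $S'$, being the unique map to the terminal object $S'$, coincides with $\wnls{S',n}$ applied to the structure map $X\times_S S'\to S'$.

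By hypothesis $X\times_S S'$ is smooth over $S'$, and by Proposition~\ref{pro:wnls-preserves-formally-etale-etc}(c) the functor $\wnls{S',n}$ preserves smoothness; hence the structure map $\wnls{S',n}(X\times_S S')\to S'$ is smooth. This is exactly the assertion that $\wnls{n}(X)\times_S S'$ is smooth over $S'$, and as $\m\in\ptst$ was arbitrary we conclude $\wnls{n}(X)$ is $\ptst$-smooth. The final claim follows since smooth morphisms are flat, so $\ptst$-smooth implies $\ptst$-flat.

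The argument is formal once the base-change identity~(\ref{eq:global-base-jet-localization}) is available, so I do not expect a serious obstacle. The only points needing a little care are recognizing that localization at the closed point $\m$ lies within the scope of the restriction-of-$S$ isomorphisms of~\ref{subsec:change-of-S}---which that subsection explicitly permits---and identifying the structure map of the jet space with $\wnls{S',n}$ of the structure map, which rests on $\wnls{S',n}$ preserving the terminal object $S'$.
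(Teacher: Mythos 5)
Your proof is correct and follows essentially the same route as the paper's: reduce via the localization isomorphism~(\ref{eq:global-base-jet-localization}) to the case where $X$ is smooth over the local base, then apply~\ref{pro:wnls-preserves-formally-etale-etc} and the fact that smooth maps of algebraic spaces are flat. Your explicit justification that the structure map of the jet space is $\wnls{S',n}$ of the structure map (via $\wnls{S',n}(S')=S'$, since right adjoints preserve terminal objects) is a detail the paper leaves implicit, but it is the same argument.
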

\begin{proof}
	By~\ref{thm:wnls--alg-space}, we know $\wnls{n}(X)$ is an algebraic space.
	Now let us show it is smooth locally at all maximal ideals of $\ptst$.
	By~(\ref{eq:global-base-jet-localization}), we may assume that $X$ is smooth over $S$.
	Then apply~\ref{pro:wnls-preserves-formally-etale-etc} and the
	fact that smoothness for a map of algebraic	spaces implies flatness.
\end{proof}

\subsection{} \emph{$\wnls{n}$ does not generally preserve flatness.}
\label{subsec:wnls-does-not-preserve-flatness}
For example, consider the $p$-typical jets of length $1$.
Let $A= \bZ[x]/(x^2-px)$, which is flat over $\bZ$ (and happens to be isomorphic to $W_1(\bZ)$).
Then
	\[
	\bZ[1/p]\tn_{\bZ}(\Lambda_1\bcp A) = \Lambda_1\bcp(\bZ[1/p]\tn A)
		= (\bZ[1/p]\tn A)^{\tn 2} = \bZ[1/p]^{2\times 2}.
	\]
But a short computation using 3.4 shows
	\[
	\Lambda_1\bcp A = \bZ[x,\delta]/(x^2-px,2x^p\delta+p\delta^2-x^p-p\delta+p^{p-1}x^p),
	\]
and hence
	\[
	\bF_p\tn_{\bZ}(\Lambda_1\bcp A) = \bF_p[x,\delta]/(x^2).
	\]
So $\Spec(\Lambda_1\bcp A)$ has one irreducible component lying over $\Spec\bF_p$.  In particular,
it is not flat locally at $p$.

It would be interesting to find a reasonable condition on $X$ that is
weaker than smoothness over $S$ but still implies flatness of $\wnls{n}(X)$ over $S$.

\subsection{} \emph{Relation to Greenberg's and Buium's spaces.} 
\label{subsec:relation-to-Greenberg-and-Buium}
In the case $S=\Spec\bZ_p$ and $\ptst=\{p\bZ_p\}$,
our arithmetic jet space is closely related to 
previously defined spaces, the Greenberg transform and Buium's $p$-jet space.

Let $X$ be a scheme locally of finite type over $\bZ/p^{n+1}\bZ$. Then the Greenberg transform
$\Gr_{n+1}(X)$ is a scheme over $\bF_p$. (See~\cite{Greenberg:I}\cite{Greenberg:II}, or for a
summary in modern language,~\cite{BLR:Neron-Models} p.\ 276.) It is related to
$\wnls{n}(X)$ by the formula
\begin{equation}
	\Gr_{n+1}(Y) = \wnls{n}(Y)\times_{\Spec\bZ_p}\Spec\bF_p.	
\end{equation}
This is simply because they represent, almost by definition, the same functor.

On the other hand, for smooth schemes $Y$ over the completion $\tilde{R}$ of the maximal
unramified extension of $\bZ_p$, Buium has defined \emph{$p$-jet spaces} $J^n(Y)$, which
are formal schemes over $\tilde{R}$. (See \cite{Buium:p-jets}, section 2, or
\cite{Buium:Arithmetic-diff-equ}, section 3.1.) His jet space is related to ours by the
formula
\begin{equation} \label{eq:general-buium-wnls-relation}
	J^n(Y) = \wnls{n}(\hat{Y}), 
\end{equation}
where $\hat{Y}$ denotes the colimit of the schemes $Y\times_{\Spec\bZ}\Spec\bZ/p^m\bZ$, taken over
$m$ and in the category $\Space_S$. Indeed it is true when $Y$ is affine,
by~3.4, and it holds when $Y$ is any smooth scheme over
$\tilde{R}$ by gluing. As mentioned in~\ref{subsec:wnls-chart}, the gluing methods used to define
$\wnls{n}$ and $J^n$ are not the same in general, but here $\hat{Y}$ is $p$-adically formal; so they
agree by the discussion in \ref{subsec:wnls-chart}.

The following consequence of~(\ref{eq:general-buium-wnls-relation}) is also worth
recording: if $X$ is a smooth scheme over $\bZ_p$, then we have
\begin{equation}
		J^n(X\times_{\Spec\bZ_p}\Spec\tilde{R}) 
			= \wnls{n}(\hat{X})\times_{\Spf\bZ_p}\Spf\tilde{R}.
\end{equation}

\section{Preservation of geometric properties by $\wnls{n}$}
\label{sec:preservation-of-properties-for-wnls}

We continue with the notation of~\ref{subsec:supramaximal-def-global-base-S}.

\subsection{} \emph{\'Etale-local properties.}
Recall that a property $P$ of algebraic spaces over $S$ is said to be \emph{\'etale-local} if the
following hold: whenever $X$ satisfies $P$, then so does any algebraic space $Y$ which admits an
\'etale map to $X$; and if $(U_i)_{i\in I}$ is an \'etale cover of $X$ such that each $U_i$
satisfies $P$, then so does $X$.

A property $P$ of maps of algebraic spaces is said to be \emph{\'etale-local on the
target} if for any map $f\:X\to Y$ the following hold: whenever $f$ satisfies $P$, then so does any
base change $f_V\:X\times_Y V\to V$ with $V\to Y$ \'etale; and if $(V_j)_{j\in J}$ is an \'etale
cover of $Y$ such that each base change $f_{V_j}$ satisfies $P$, then so does $f$.

Such a property is said to be \emph{\'etale-local on the source} if, in addition, the following
hold: whenever $f$ satisfies $P$, then so does any composition $U\to X\to Y$ with $U\to X$ \'etale;
and if $(U_i)_{i\in I}$ is an \'etale cover of $X$ such that each composition $U_i\to X\to Y$
satisfies $P$, then so does $f$.

\begin{proposition}\label{pro:maps-local-permanence-for-wnls}
	Let	$P$ be a property of maps $f\:X\to Y$ of algebraic spaces which is \'etale-local on the 
	target. For $\wnls{n}$ to preserve property $P$, it is sufficient that it do so
	when $\ptst$ consists of one principal ideal, $S$ is affine, and $Y$ is affine. 
	
	If property $P$ is also \'etale-local on the source, then 
	we may further restrict to the case where $X$ is affine.
\end{proposition}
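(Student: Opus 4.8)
The plan is a fourfold dévissage, peeling away generality first in $\ptst$, then in $S$, then in $Y$, and finally in $X$. At every stage the engine is that $P$ is étale-local on the target, coupled with the good-cover lemma~\ref{lem:wnls-good-cover} (equivalently~\ref{cor:wnls-good-cover}), which is precisely what guarantees that $\wnls{n}$ carries a suitable cover to a cover. Recall from~\ref{subsec:wnls-chart} that $\wnls{n}$ does \emph{not} preserve arbitrary covering families, so this routing is essential.

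First I would reduce $\ptst$ to a single ideal. Writing $\ptst=\ptst'\smsmcoprod\{\m\}$, the iteration isomorphism~(\ref{eq:geom-witt-iterate-lower-s}) factors $\wnls{\ptst,n}(f)$ as $\wnls{\ptst',n'}\big(\wnls{\{\m\},n_\m}(f)\big)$. Arguing by induction on the cardinality of $\ptst$: granting the single-ideal case, $\wnls{\{\m\},n_\m}(f)$ has $P$ and is again a map of algebraic spaces over $S$ by~\ref{thm:wnls--alg-space}, so the inductive hypothesis for $\wnls{\ptst',n'}$ applies. Note this uses only iterated preservation of $P$, never any stability of $P$ under composition. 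Hence we may assume $\ptst=\{\m\}$.

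Next I would reduce to $S$ affine with $\m$ principal. Choose an affine open cover $(S_t)_t$ of $S$ fine enough that $\m|_{S_t}$ is principal for each $t$; this is possible because $\m$, being supramaximal, is principal on a neighbourhood of its discrete-valuation point and is the unit ideal elsewhere. The open immersions $S_t\times_S\wnls{n}(Y)\to\wnls{n}(Y)$ form an étale cover of $\wnls{n}(Y)$, and by~(\ref{eq:global-base-jet-localization}) the base change of $\wnls{S,n}(f)$ along the $t$-th one is $\wnls{S_t,n}(S_t\times_S f)$. Since $S_t\times_S f$ is a base change of $f$ along the étale map $S_t\times_S Y\to Y$ it has $P$, so étale-locality on the target reduces us to each such map, i.e.\ to $S$ affine with $\m$ principal. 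Now I would reduce $Y$ to affine: take an étale affine cover of $Y$ and pass, via~\ref{lem:wnls-good-cover}, to the finite-disjoint-union objects $V_j\in\affrel_S$, which are affine, with $\big(\wnls{n}(V_j)\big)_j$ covering $\wnls{n}(Y)$ and each $\wnls{n}(V_j)\to\wnls{n}(Y)$ étale by~\ref{pro:wnls-preserves-formally-etale-etc}. Because $\wnls{n}$ is a right adjoint it preserves fibre products, so the base change of $\wnls{n}(f)$ along $\wnls{n}(V_j)\to\wnls{n}(Y)$ is $\wnls{n}(f_{V_j})$; here $f_{V_j}$ has $P$ (étale base change of $f$) and affine target $V_j$, so the restricted hypothesis gives $P$ for $\wnls{n}(f_{V_j})$, and étale-locality on the target lifts $P$ to $\wnls{n}(f)$. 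This proves the first assertion.

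For the second assertion, assuming $P$ is also étale-local on the source, I would, in the situation already reached (with $\m$ principal and $S,Y$ affine), reduce $X$ to affine by the same device: a good étale affine cover $(W_k)$ of $X$ with $\big(\wnls{n}(W_k)\big)$ covering $\wnls{n}(X)$ and each $\wnls{n}(W_k)\to\wnls{n}(X)$ étale. Each composite $g_k\:W_k\to X\to Y$ has $P$ by source-locality and has affine source and target, so $\wnls{n}(g_k)$—which is the composite $\wnls{n}(W_k)\to\wnls{n}(X)\to\wnls{n}(Y)$ by functoriality—has $P$ by the restricted hypothesis; source-locality of $P$ then transfers $P$ up to $\wnls{n}(f)$. \emph{The main obstacle} is not any single step but the uniform requirement to route every reduction through the good covers of~\ref{lem:wnls-good-cover}, since $\wnls{n}$ destroys general covering families; the real content lies in verifying that the resulting families $\big(\wnls{n}(\vbl)\big)$ are genuine étale covers, which is exactly where~\ref{pro:wnls-preserves-formally-etale-etc} and the right-adjointness of $\wnls{n}$ do the work.
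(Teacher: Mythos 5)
Your proof is correct and takes essentially the same route as the paper, whose own proof is just the instruction to repeat the argument of \ref{pro:maps-local-permanence}, substituting the good covers of \ref{cor:wnls-good-cover} for the covers preserved by $\wnus{n}$ and using the easy right-adjoint preservation of fiber products in place of \ref{thm:W-main-geometric-finite-length(2)}(c). Your explicit d\'evissage through (\ref{eq:geom-witt-iterate-lower-s}), (\ref{eq:global-base-jet-localization}), and \ref{pro:wnls-preserves-formally-etale-etc} is exactly the argument the paper leaves to the reader.
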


The argument is the same as the one given below for \ref{pro:maps-local-permanence}, except that
one takes affine \'etale covering families of the kind given by~\ref{rem:wnls-good-cover},
and one uses the easy fact that $\wnls{n}$
preserves fiber products instead of the more
difficult~\ref{thm:W-main-geometric-finite-length(2)}(c). Since the details are given
in~\ref{pro:maps-local-permanence}, let us omit them here.

\begin{proposition}\label{pro:wnls-preserves-target-local-properties}
	The following properties of maps (\'etale-local on the target) are preserved by $\wnls{n}$:
		\begin{enumerate}
			\item \label{part:affine:wnls-target-local} affine,
			\item \label{part:cl-imm:wnls-target-local} a closed immersion,
			\item \label{part:lft:wnls-target-local} locally of finite type,
			\item \label{part:lfp:wnls-target-local} locally of finite presentation,
			\item \label{part:fin-type:wnls-target-local} of finite type,
			\item \label{part:fin-pres:wnls-target-local} of finite presentation,
			\item \label{part:separated:wnls-target-local} separated,
			\item \label{part:smooth-surj:wnls-target-local} smooth and surjective.
		\end{enumerate}
\end{proposition}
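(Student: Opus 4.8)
The unifying idea is that, by~\ref{pro:maps-local-permanence-for-wnls}, each of these target-local properties may be verified after reducing to the case where $\ptst$ consists of a single principal ideal, $S=\Spec R$ is affine, and $Y=\Spec B$ is affine; in that situation $\wnls{n}$ is given on objects of $\affrel_S$ by the explicit formula $\wnls{n}(\Spec A)=\Spec(\Lambda_{R,\ptst,n}\bcp A)$ of~(\ref{eq:wnls-of-affine-is-lambda-circle}). The key structural fact I would exploit is that $\Lambda_{R,\ptst,n}\bcp\vbl$ is the left adjoint of $W_{R,\ptst,n}$, so it preserves all colimits of $R$-algebras; in particular it carries the initial object to the initial object, giving $\Lambda\bcp R=R$ (consistent with $\wnls{n}(S)=S$). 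Most statements then follow by translating the geometric property of $\wnls{n}(f)$ into an algebraic property of the ring map $\Lambda\bcp B\to\Lambda\bcp A$ and invoking colimit-preservation.

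For (a), once $Y$ is affine and $f$ is affine, $X=\Spec A$ is itself affine, so $\wnls{n}(f)$ is a map of affine schemes and hence affine. For (b), a closed immersion over an affine $Y=\Spec B$ is $\Spec(B/I)\to\Spec B$; writing the surjection $B\surjmap B/I$ as the coequalizer of two maps $B[x_i]\rightrightarrows B$ (one sending $x_i\mapsto 0$, the other $x_i\mapsto g_i$ for ideal generators $g_i$) and applying the colimit-preserving functor $\Lambda\bcp\vbl$ exhibits $\Lambda\bcp B\to\Lambda\bcp(B/I)$ as a surjection, so $\wnls{n}(f)$ is a closed immersion. Statement (d) is already contained in~\ref{pro:wnls-preserves-formally-etale-etc}(a). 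For (c) I would reduce further (now using source-locality) to $f\:\Spec A\to\Spec B$ with $A=B[x_1,\dots,x_m]/(g_1,\dots,g_k)$; since $\Lambda\bcp\vbl$ preserves coproducts in $\Ring_R$, which are tensor products over $R$, and $\Lambda\bcp R=R$, one gets $\Lambda\bcp B[x_1,\dots,x_m]=(\Lambda\bcp B)\tn_R(\Lambda\bcp R[x_1,\dots,x_m])$, and $\Lambda\bcp A$ is a quotient of it. The one genuinely arithmetic input is that $\Lambda\bcp R[x]$ is of finite type over $R$, which I would take from the explicit affine computation of Part~I~\cite{Borger:BGWV-I}; granting this, $\Lambda\bcp A$ is of finite type over $\Lambda\bcp B$.

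The remaining three reduce formally to the cases already treated. Item (e) is locally of finite type together with quasi-compactness, so it follows by combining (c) with~\ref{pro:wnls-preserves-qcom-qsep}(c); similarly (f) is locally of finite presentation plus quasi-compact plus quasi-separated, so it follows from (d) together with~\ref{pro:wnls-preserves-qcom-qsep}(c) and~(d). For (g) I would use that $\wnls{n}$ is a right adjoint and hence preserves the fiber product $X\times_Y X$, so it carries the diagonal $\Delta_f\:X\to X\times_Y X$ to the diagonal $\Delta_{\wnls{n}(f)}$; separatedness of $f$ means $\Delta_f$ is a closed immersion, and by (b) so is $\Delta_{\wnls{n}(f)}$, whence $\wnls{n}(f)$ is separated. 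Finally, for (h), smoothness is preserved by~\ref{pro:wnls-preserves-formally-etale-etc}(c), while for a smooth map surjectivity is equivalent to being an epimorphism in $\Space_S$, which is preserved by~\ref{pro:wnls-preserves-epis-and-monos}; combining these gives that smooth surjective maps remain smooth surjective.

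I expect the main obstacle to be item (c): every other item is either pure adjunction formalism or a direct appeal to colimit-preservation, whereas finite type genuinely requires the finite generation of $\Lambda\bcp R[x]$ over $R$—that is, the finite-dimensionality of the affine jet bundle. This is the only point where the internal structure of the Witt/jet construction, rather than its abstract adjointness, must be used.
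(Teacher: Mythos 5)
Your proposal is correct, and its overall skeleton --- reduction via \ref{pro:maps-local-permanence-for-wnls} to a single principal ideal with $S$ and $Y$ affine, then exploiting that $\Lambda_n\bcp\vbl$ is a colimit-preserving left adjoint, with finite generation of $\Lambda_n$ over $R$ as the one nontrivial arithmetic input for (c), and with (e)--(g) obtained formally from (c), (d), (b) and \ref{pro:wnls-preserves-qcom-qsep} --- is exactly the paper's. The divergences are local and all legitimate. For (b) you supply a coequalizer presentation $B[x_i]\rightrightarrows B \to B/I$ where the paper simply asserts that $\Lambda_n\bcp\vbl$ preserves surjections; your justification is sound. (In (c), note that ``locally of finite type'' only yields a surjection $B[x_1,\dots,x_m]\surjmap A$, not finitely many relations $(g_1,\dots,g_k)$ --- a harmless slip, since your argument only uses the surjection.) For (d) you short-circuit via \ref{pro:wnls-preserves-formally-etale-etc}(a); within the paper's conventions, where ``locally of finite presentation'' is \emph{defined} by the functorial limit criterion of \ref{subsec:smoothness-properties-of-maps}, this is immediate, though if you want the standard notion for algebraic spaces you are leaning on a comparison the paper records only for schemes; the paper instead reproves (d) self-containedly by applying $\Lambda_n\bcp\vbl$ to a coequalizer of finitely generated $A$-algebras. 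The genuinely different step is (h): you deduce surjectivity from smoothness plus preservation of sheaf epimorphisms (\ref{pro:wnls-preserves-epis-and-monos}), using that a smooth map of algebraic spaces is surjective if and only if it is an epimorphism in $\Space_S$; the paper instead checks surjectivity in two pieces, over $S_0=\Spec\sO_S/\m$ via \ref{pro:wnls-mod-p} and away from $S_0$ via the identification of $\wnls{n}(f)$ with the product map $X^{[0,n]}\to Y^{[0,n]}$. Your route is cleaner and avoids the fiberwise analysis; the paper's buys explicit information about $\wnls{n}(f)$ on and off the special fiber. Reassuringly, both arguments use smoothness in an essential way, as they must: bare surjectivity is not preserved, per the $\bZ[\sqrt{p}]$ example of \ref{subsec:counterexamples-for-preservation-by-wnls}.
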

\begin{proof}
	Let $f\:X\to Y$ be such a map.
	By~\ref{pro:maps-local-permanence-for-wnls}, we may assume $S=\Spec R$, $Y=\Spec A$.
	
	(\ref{part:affine:wnls-target-local})--(\ref{part:cl-imm:wnls-target-local}): 
	These are affine properties; so we have $X=\Spec B$, for some $A$-algebra $B$.
	By (\ref{eq:wnls-of-affine-is-lambda-circle}), we have $\wnls{n}(X)=\Spec\Lambda_n\bcp B$, 
	which is affine. This proves (\ref{part:affine:wnls-target-local}).
	If the structure map $A\to B$ is surjective, then so is the induced map
	$\Lambda_n\bcp A\to\Lambda_n\bcp B$, which proves (\ref{part:cl-imm:wnls-target-local}).
	
	(\ref{part:lft:wnls-target-local})--(\ref{part:lfp:wnls-target-local}):
	These properties are  \'etale-local on the source, and so we may assume
	$X=\Spec B$, where $B$ is a finitely generated $A$-algebra.	
	Take an integer $m\geq 0$ such that there
	exists a surjection $A[x]^{\tn m}\to B$. Then the induced map
		\[
		(\Lambda_{n}\bcp A) \tn_R (\Lambda_{n}^{\tn m}) 
			= \Lambda_{n}\bcp (A\tn_R R[x_1,\dots,x_m]) \longmap \Lambda_{n} \bcp B
		\]
	is surjective. Therefore it is enough to show that $\Lambda_n$ is finitely generated
	as an $R$-algebra. This was proved in 6.10.
	
	Now suppose that $B$ is finitely presented.
	Then there exist finitely generated $A$-algebras $A'$ and $A''$ and a coequalizer diagram
		\[
		\displaycofork{A''}{A'}{B.}
		\]
	This then induces a coequalizer diagram
		\[
		\displaycofork{\Lambda_n \bcp A''}{\Lambda_n\bcp A'}{\Lambda_n\bcp B.}
		\]
	By (\ref{part:lft:wnls-target-local}), the first two terms are finitely generated $A$-algebras;
	therefore the last term is finitely presented. This proves (\ref{part:lfp:wnls-target-local}).
	
	(\ref{part:fin-type:wnls-target-local})--(\ref{part:fin-pres:wnls-target-local}):
	These follow from (\ref{part:lft:wnls-target-local})--(\ref{part:lfp:wnls-target-local})
	and \ref{pro:wnls-preserves-qcom-qsep}(c), by definition.
	
	(\ref{part:separated:wnls-target-local}): 
	Since the diagonal map $\Delta_f\:X\to X\times_Y X$ is a closed immersion,
	(\ref{part:cl-imm:wnls-target-local}) implies $\wnls{n}(\Delta_f)$ is a closed immersion.
	This map can be identified with the diagonal map
	$\wnls{n}(X)\to \wnls{n}(X)\times_{\wnls{n}(Y)}\wnls{n}(X)$, and so the result follows.
	
	(\ref{part:smooth-surj:wnls-target-local}): By~\ref{pro:wnls-preserves-formally-etale-etc},
	the map $\wnls{n}(f)$ is 
	smooth. By~\ref{pro:wnls-mod-p}, it is surjective over $S_0=\Spec \sO/\m$, and
	by (\ref{eq:wnus-is-independent-of-S}), it can be 
	identified away from $S_0$ with the product map $X^{[0,n]}\to Y^{[0,n]}$,
	which is surjective. Therefore $\wnls{n}(f)$ is, too.
\end{proof}

\subsection{} \emph{Counterexamples.}
\label{subsec:counterexamples-for-preservation-by-wnls}
Consider the $p$-typical case: $R=\bZ$, $\ptst=\{p\bZ\}$, where $p$ is a prime number.
Then a short computation using 3.4 shows
	\[
	\Lambda_1\bcp(\bZ\times\bZ) \cong \bZ \times \bZ \times \bZ[1/p] \times \bZ[1/p].
	\]
So $\wnls{n}$ does not generally preserve any property which the map $\bZ\to\bZ\times\bZ$
has and which is at least as strong as integrality: integral, 
finite, finite flat, finite \'etale,\dots.

Also, $\wnls{n}$ does not generally preserve surjectivity (of schemes), because we have
	\begin{align*}
		\Lambda_1\bcp \bZ_p[x]/(x^2-p) &= \bZ_p[x,\delta]/(x^2-p, p\delta^2+2x^p\delta+p^{p-1}-1)\\
			&\cong \bQ_p(\sqrt{p})\times\bQ_p(\sqrt{p})
	\end{align*}
(by 3.4 again) but also $\Lambda_{1}\bcp \bZ_p=\bZ_p$.

Finally, as shown in \ref{subsec:wnls-does-not-preserve-flatness}, the map $\bZ\to\bZ[x]/(x^2-px)$
becomes non-flat after the application of $\Lambda_1\bcp\vbl$. So $\wnls{n}$ does not
generally preserve any property which the map $\bZ\to\bZ[x]/(x^2-px)$ has and which
is at least as strong as flatness: flat, faithfully flat, Cohen--Macaulay, $\text{S}_k$,\dots.

\section{The inductive lemma for $\wnus{n}$}
\label{sec:the-inductive-lemma-for-wnus}

We continue with the notation of~\ref{subsec:supramaximal-def-global-base-S},
but we restrict to the case where $\ptst$ consists of only one ideal $\m$.
The purpose of this section is to establish the following lemma:

\begin{lemma}\label{lem:main-geometric-finite-length}
Let $X$ be an object of $\AlgSp_m$, with $m\in\bZ$. 
	\begin{enumerate}
		\item \label{lem-mgfl-part1}
			$\wnus{n}(X)$ is an algebraic space; and for any $(m-1)$-representable \'etale 
			surjection $g\:U\to X$, where $U$ is a disjoint union of affine schemes, the space
			$\wnus{n}(U\times_X U)$	is an \'etale equivalence relation on $\wnus{n}(U)$ with
			respect to the map
						\begin{equation} \label{map:main-lemma-1}
							(\wnus{n}(\pr_1),\wnus{n}(\pr_2))\: \wnus{n}(U\times_X U) \longmap
								\wnus{n}(U)\times_{\wnus{n}(X)} \wnus{n}(U),
						\end{equation}
					and the induced map
							\[
							\wnus{n}(U)/\wnus{n}(U\times_X U) \longmap \wnus{n}(X)
							\]
					is an isomorphism. In particular, (\ref{map:main-lemma-1})
					is an isomorphism.
		\item \label{lem-mgfl-part1.5}
			For any map $g$ as in (\ref{lem-mgfl-part1}), the diagram
					\[
					\entrymodifiers={+!!<0pt,\fontdimen22\textfont2>}
					\def\objectstyle{\displaystyle}
					\xymatrix@C=40pt{
						U \ar^{g}[r]\ar_{\gh{0}}[d]		& X \ar^{\gh{0}}[d] \\
						\wnus{n}(U) \ar^{\wnus{n}(g)}[r]			& \wnus{n}(X)
						}
					\]
					is cartesian.
		\item \label{lem-mgfl-part2}
			The map 
			\begin{equation} \label{map:main-geometric-finite-length-1}
				\xymatrix@C=33pt{
					X\times_S S_0 \ar^-{\gh{0}\times \id}[r]
						& \wnus{n}(X)\times_S S_0
				}
			\end{equation}
			is a closed immersion defined by a square-zero ideal sheaf,
			where $S_0$ denotes $\Spec \sO_S/\m$.
		\item \label{lem-mgfl-part3}
			For any object $X'\in\AlgSp_{m}$ and any \'etale map $f\:X'\to X$, the map 
				\[
				\wnus{n}(f)\:\wnus{n} (X') \to \wnus{n} (X)
				\]
			is \'etale; and for any algebraic space $Y$ over $X$, the map
				\begin{equation} \label{map:main-lemma-2}
	   			(\wnus{n}(\pr_1),\wnus{n}(\pr_2))\: \wnus{n}(X'\times_X Y) 
	   				\longlabelmap{} \wnus{n}(X')\times_{\wnus{n}(X)}\wnus{n}(Y)
				\end{equation}
	   		is an isomorphism.
	\end{enumerate}
\end{lemma}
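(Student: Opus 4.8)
The plan is to prove all four parts \emph{simultaneously} by induction on the geometricity index $m$, since they are tightly interdependent: part~(\ref{lem-mgfl-part3}) feeds the equivalence-relation assertion of~(\ref{lem-mgfl-part1}), while~(\ref{lem-mgfl-part1.5}) and~(\ref{lem-mgfl-part2}) control the behaviour of $\wnus{n}$ near the ghost section $\gh{0}$, which is what ultimately forces~(\ref{lem-mgfl-part1}) and~(\ref{lem-mgfl-part3}). The base case is $m\leq -1$, where $X$ is a disjoint union of affine schemes. Here everything collapses to ring theory: since $\wnus{n}$ is a left adjoint it commutes with the disjoint unions, and on each affine piece $\wnus{n}=\Spec\circ W_n$ by~(\ref{eq:wnus-of-affine-is-W}). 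The \'etale assertion of~(\ref{lem-mgfl-part3}) is then van der Kallen's theorem~\ref{thm:witt-etale-general}, and the fiber-product isomorphism~(\ref{map:main-lemma-2}) is the affine \'etale base change~\ref{cor:affine-W-etale-base-change} (for general $Y$ one writes $Y$ as a colimit of affines over $X$ and uses that both $\wnus{n}$ and the base change $\wnus{n}(X')\times_{\wnus{n}(X)}\vbl$ commute with colimits). The equivalence-relation and quotient statements of~(\ref{lem-mgfl-part1}) follow by combining these with colimit-preservation, and the square-zero statement~(\ref{lem-mgfl-part2}) is the corresponding ring-theoretic fact about the kernel of the ghost component, proved in Part~I (see~\ref{pro:W-nilpotent-at-p} and~\ref{lem:gh-integral-and-surj-multi-prime}).

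For the inductive step, fix $m\geq 0$ and assume the lemma for $m-1$. As $\wnus{n}$ commutes with disjoint unions, we may assume $X$ is $m$-geometric, and we fix the \'etale cover $g\:U\to X$ with $U$ a disjoint union of affine schemes and $R\dfn U\times_X U\in\AlgSp_{m-1}$; the two projections $\pr_i\:R\to U$ are \'etale, being base changes of $g$. First I would feed the level $m-1$ data into this presentation. Applying the inductive form of~(\ref{lem-mgfl-part3}) to the maps $\pr_i$ shows each $\wnus{n}(\pr_i)\:\wnus{n}(R)\to\wnus{n}(U)$ is \'etale, and applying it to $R\times_U R$ (with base $U\in\AlgSp_{m-1}$) identifies $\wnus{n}(R\times_U R)$ with $\wnus{n}(R)\times_{\wnus{n}(U)}\wnus{n}(R)$, so that $\wnus{n}(R)\rightrightarrows\wnus{n}(U)$ is at least an \'etale groupoid. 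Because $\wnus{n}$ preserves colimits, $\wnus{n}(X)$ is the coequalizer of this diagram in $\Space_S$.

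The crux is to upgrade this groupoid to an \'etale equivalence relation, equivalently to prove that~(\ref{map:main-lemma-1}) is an isomorphism and that the anchor $\wnus{n}(R)\to\wnus{n}(U)\times_S\wnus{n}(U)$ is a monomorphism: only then is the coequalizer a quotient by an \'etale equivalence relation, hence an algebraic space, which by colimit-preservation is $\wnus{n}(X)$. I would verify both by comparing the two regimes over $\m$ and away from it. Away from $\m$ the ghost map is an isomorphism (the case of unit ideals), so $\wnus{n}$ becomes $\coprod_{[0,n]}(\vbl)$ and all the maps in sight are disjoint unions of $n{+}1$ copies of the corresponding maps among $R,U,X$; there~(\ref{map:main-lemma-1}) reads $\coprod_{[0,n]}R\isomap\coprod_{[0,n]}R$ and the anchor is a disjoint union of copies of the monomorphism $R\to U\times_S U$, so both claims are immediate. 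Over $S_0=\Spec\sO_S/\m$, the inductive~(\ref{lem-mgfl-part2}) says $\gh{0}$ is a square-zero nilimmersion for $U$ and for $R$; since $\gh{0}$ is a natural transformation and $\wnus{n}(\pr_i)$ is \'etale by the inductive~(\ref{lem-mgfl-part3}), the resulting square of thickenings is cartesian over $S_0$, because an \'etale map between square-zero thickenings is automatically the pullback of the base thickening—and the \'etale site ignores these nilpotents. This pins down the two sides of~(\ref{map:main-lemma-1}) and the anchor over $S_0$, giving the equivalence relation, the algebraicity of $\wnus{n}(X)$, and the quotient assertion of~(\ref{lem-mgfl-part1}). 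With the \'etale cover $\wnus{n}(U)\to\wnus{n}(X)$ now in hand, the remaining statements at level $m$ follow by descent along it: (\ref{lem-mgfl-part1.5}) and~(\ref{lem-mgfl-part2}) are \'etale-local on $\wnus{n}(X)$ and pull back to the already-known statements over $U$; and for~(\ref{lem-mgfl-part3}), the \'etaleness of $\wnus{n}(f)$ is checked over $S-s$ (a disjoint union of copies of $f$) and over $S_0$ (where~(\ref{lem-mgfl-part1.5}) and~(\ref{lem-mgfl-part2}) realize it as a pullback thickening of the \'etale $f\times\id_{S_0}$), while~(\ref{map:main-lemma-2}) reduces, by the colimit argument of the base case, to the affine case plus descent along the cover.

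The main obstacle is exactly the effectivity step making~(\ref{map:main-lemma-1}) an isomorphism. The functor $\wnus{n}$ is only a left adjoint, so it preserves neither monomorphisms nor fiber products, and there is no formal reason for $\wnus{n}(R)\rightrightarrows\wnus{n}(U)$ to be an equivalence relation rather than a mere groupoid; what rescues the argument is the geometry of the ghost section—its being a square-zero nilimmersion over $\m$ by~(\ref{lem-mgfl-part2}) and the inclusion of one summand away from $\m$, together with the insensitivity of the \'etale topology to the resulting nilpotents. Keeping the bookkeeping of the two regimes coherent throughout the induction, and deriving the cartesian square over $S_0$ for the \emph{non-surjective} maps $\pr_i$ from naturality of $\gh{0}$ plus the inductive~(\ref{lem-mgfl-part2})--(\ref{lem-mgfl-part3}) rather than from~(\ref{lem-mgfl-part1.5}) itself, is where the real care is needed.
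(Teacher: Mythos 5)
Your overall architecture is exactly the paper's---simultaneous induction on $m$, base case from the affine results of Part I, the two-regime method (ghost components away from $\m$, square-zero thickenings over $S_0$ via \ref{lem-mgfl-part2}), \'etale equivalence relation plus coequalizer for \ref{lem-mgfl-part1}, and descent along $\wnus{n}(U)\to\wnus{n}(X)$ for the rest---and several of your fillings-in (the colimit argument for general $Y$ in the base case, transitivity via $(\text{\ref{lem-mgfl-part3}})_{m-1}$ applied to $R\times_U R$, the cartesian-thickening observation for the non-surjective $\pr_i$) are correct. But two steps fail as written. First, the \'etaleness of $\wnus{n}(f)$ in \ref{lem-mgfl-part3} cannot be ``checked over $S-s$ and over $S_0$'': \'etaleness involves flatness and is not a regime-wise condition. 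For instance, $\Spec\bF_p\to\Spec\bZ_{(p)}$ restricts to an \'etale map over each regime (empty over $\Spec\bQ$, the identity over $\Spec\bF_p$) yet is not \'etale. Even granting your cartesian thickening square over $S_0$, that square only says the pullback of $\wnus{n}(X')_0$ along the nilimmersion $X_0\hookrightarrow\wnus{n}(X)_0$ is $X'_0$; it does not make $\wnus{n}(X')_0$ \'etale over $\wnus{n}(X)_0$. The two-regime criteria (\ref{lem:monomorphism-criterion-for-etale-spaces}, \ref{lem:isom-criterion-for-etale-maps}) are valid only for maps \emph{already known} to be \'etale over a common base, and only for concluding mono/iso---which is precisely how the paper deploys them. \'Etaleness itself is obtained structurally: lift $f$ to $h\:U'\to U$ between disjoint unions of affines, where $\wnus{n}(h)$ is \'etale by van der Kallen (\ref{thm:witt-etale-general}, \ref{cor:affine-W-etale-base-change}), then use that $\wnus{n}(U')\to\wnus{n}(X')$ and $\wnus{n}(U)\to\wnus{n}(X)$ are \'etale covers ($(\text{\ref{lem-mgfl-part1}})_m$) together with source/target locality of \'etaleness.

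Second, your treatment of $(\text{\ref{lem-mgfl-part1.5}})_m$ is circular. Pulling the square back along the cover $\wnus{n}(U)\to\wnus{n}(X)$ leaves you having to identify $U\times_{\wnus{n}(X)}\wnus{n}(U)$ with $U\times_X U$, which is essentially the statement being proved; it is not an instance of $(\text{\ref{lem-mgfl-part1.5}})_{m-1}$, since $\pr_1\:U\times_X U\to U$ has the wrong shape (source not a disjoint union of affines, map not surjective). And the regime route for $(\text{\ref{lem-mgfl-part1.5}})_m$ would need $\red{(X_0\to\wnus{n}(X)_0)}$ to be an isomorphism, i.e.\ $(\text{\ref{lem-mgfl-part2}})_m$---which you derive \emph{from} $(\text{\ref{lem-mgfl-part1.5}})_m$. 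The paper's resolution is a separate idea your sketch is missing: base change along $U\to X$ upstairs (not along the Witt-level cover), reducing the claim to the equality $U\times_X U = U\times_{\wnus{n}(U)}\wnus{n}(U\times_X U)$, which is then proved by introducing a further $(m-2)$-representable cover $V\to U\times_X U$ with $V\in\AlgSp_{-1}$, applying $(\text{\ref{lem-mgfl-part1.5}})_{m-1}$ to it and $(\text{\ref{lem-mgfl-part1.5}})_{-1}$ componentwise to the perimeter, and using that $\wnus{n}(V)\to\wnus{n}(U\times_X U)$ is an \'etale cover by $(\text{\ref{lem-mgfl-part1}})_{m-1}$. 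Relatedly, do not aim to prove the isomorphism (\ref{map:main-lemma-1}) directly by regimes: its target $\wnus{n}(U)\times_{\wnus{n}(X)}\wnus{n}(U)$ is not yet known to be algebraic at that stage, so \ref{lem:isom-criterion-for-etale-maps} does not apply. As in the paper, prove that the anchor into $\wnus{n}(U)\times_S\wnus{n}(U)$ is a monomorphism and that the groupoid is transitive (both of which your two-regime data legitimately give), conclude the quotient assertion, and let the isomorphism (\ref{map:main-lemma-1}) fall out of effectivity of the quotient.
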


\begin{proof} 
	We will prove all parts at once by induction on $m$. For clarity, write
	$\text{(\ref{lem-mgfl-part1})}_{m}$ for the	statement $\text{(\ref{lem-mgfl-part1})}$ above,
	and so on. 
	
	First consider the case where $m\leq -1$. Here
	we use the fact that $\wnus{n}$ preserves coproducts
	together with the analogous affine results:
	$\text{(\ref{lem-mgfl-part1})}_{m}$	follows from~(\ref{eq:wnus-of-affine-is-W}),
	9.2, 6.9,
	and~9.4;
	$\text{(\ref{lem-mgfl-part2})}_{m}$ follows from~6.8; and
	$\text{(\ref{lem-mgfl-part3})}_{m}$ follows from 9.2
	and~9.4. 	
	
	It remains to prove $\text{(\ref{lem-mgfl-part1.5})}_{m}$.
	It is enough to assume $U$ and $X$ are affine.
	Consider the map
		\[
		a=(g,\gh{0})\: U\longmap X\times_{\wnus{n}(X)}\wnus{n}(U).
		\]
	By assumption, the source is \'etale over $X$, and by~9.2
	so is the target. Therefore $a$ itself is \'etale, and so
	to show it is an isomorphism,
	it is enough by~\ref{lem:isom-criterion-for-etale-maps} below to show that the maps 
	$\red{(S'\times_S a)}$ and $\red{(S_0\times_S a)}$ are isomorphisms, where $S'=S-S_0$.
	On the one hand, $S'\times_S a$ agrees,
	by~(\ref{eq:localization-of-S-commutes-with-wnus}),
	with $S'\times_S\vbl$ applied to the evident map
		\[
		U\longmap X \times_{(\coprod_{[0,n]} X)} \coprod_{[0,n]} U,
		\]
	which is an isomorphism. On the other hand, by~6.8,
	the map $\red{(S_0\times a)}$ agrees with $\red{b}$, where $b$ is the evident map 
	$S_0\times_S U \to S_0\times_S (X\times_X U)$.
	Since $b$ is an isomorphism, so is $\red{(S_0\times a)}$.
	This proves $\text{(\ref{lem-mgfl-part1.5})}_{m}$ and hence the lemma for $m\leq -1$.

	From now on, assume $m\geq 0$.  

	$\text{(\ref{lem-mgfl-part1})}_m$:
	First, observe that it follows from the rest of (\ref{lem-mgfl-part1}) that
	$\wnus{n}(X)$ is an algebraic space. Indeed, because $X$ is in $\AlgSp_m$, 
	there exists a map $g$ as in (\ref{lem-mgfl-part1}). Assuming
	the rest of (\ref{lem-mgfl-part1}), we have
		\[
		\wnus{n}(X) \cong \wnus{n}(U)/\wnus{n}(U\times_X U),
		\]
	and so it is enough
	to show that $\wnus{n}(U)$ and $\wnus{n}(U\times_XU)$ are algebraic spaces.
	This follows from $\text{(\ref{lem-mgfl-part1})}_{m-1}$ because 
	$U,U\times_X U\in \AlgSp_{m-1}$.
	
	The diagram
		\[
		\displaylabelcofork{U\times_X U}{\pr_1}{\pr_2}{U}{f}{X}
		\]
	is a coequalizer diagram and, since
	$\wnus{n}$ commutes with colimits, so is
		\begin{equation} \label{diag:internal-110}
		\displaylabelcofork{\wnus{n}(U\times_X U)}{\wnus{n}(\pr_1)}{\wnus{n}(\pr_2)}
			{\wnus{n}(U)}{\wnus{n}(f)}{\wnus{n}(X).}
		\end{equation}
	Thus, all that remains is to show that $\wnus{n}(U\times_X U)$ is an \'etale equivalence 
	relation on $\wnus{n}(U)$ under the structure map (\ref{map:main-lemma-1}).
	By $\text{(\ref{lem-mgfl-part3})}_{m-1}$, the projections $\wnus{n}(\pr_i)$ in 
	(\ref{diag:internal-110}) are \'etale.
	Let us now show that $\wnus{n}(U\times_X U)$ is an equivalence relation.

	Let $t$ denote the map (\ref{map:main-lemma-1}).
	Let us first show that $t$ is a monomorphism.
	We can view this as a map of algebraic spaces
	over $\wnus{n}(U)$ by projecting onto the first factor, say.
	Then since $\wnus{n}(U\times_X U)$ is \'etale over $\wnus{n}(U)$, 
	it is enough, by~\ref{lem:monomorphism-criterion-for-etale-spaces} below, 
	to show that $t\times_S S'$ and $\red{(t\times_S S_0)}$,
	are monomorphisms.
	For $t\times_S S'$, we may assume $\m$ is the unit ideal, 
	by~(\ref{eq:localization-of-S-commutes-with-wnus});
	then $t$ agrees with the evident map
		\[
		\coprod_{[0,n]}U\times_X U \longmap  \Bigl(\coprod_{[0,n]}U\Bigr) 
			\times_{S} \Bigl(\coprod_{[0,n]} U\Bigr),
		\]
	which is a monomorphism. On the other hand, by $\text{(\ref{lem-mgfl-part2})}_{m-1}$,
	the map $\red{(t\times_S S_0)}$ can be identified with $\red{u}$, where
	$u$ is the evident map
		\[
		U\times_X U\times_S S_0 \longmap U\times_S U\times_S S_0,
		\]
	which is a monomorphism. 
	This proves $t$ is a monomorphism.

 	Now let us show that $\wnus{n}(U\times_X U)$ 
	is an equivalence relation on $\wnus{n}(U)$. It is reflexive and symmetric, simply because
	$\wnus{n}$ is a functor and $U\times_X U$ is reflexive and symmetric relation on $U$.
	Let us show transitivity. 
	
	Write 
		\[
		\Gamma_1=U\times_X U \quad\text{and} \quad 
			\Gamma_2=\wnus{n}(U)\times_{S}\wnus{n}(U).
		\]
	Then by definition, $\wnus{n}(\Gamma_1)$ is transitive if and only if there exists a map $c'$
	making the right-hand square in the diagram
		\begin{equation*}\label{diag:interal-113}
			\xymatrix{
			\wnus{n}(\Gamma_1\times_U\Gamma_1) \ar^-{t'}[r] \ar_{\wnus{n}(c_1)}[dr]
				& \wnus{n}(\Gamma_1)\times_{\wnus{n}(U)}\wnus{n}(\Gamma_1) 
					\ar^-{t\times t}[r] \ar@{-->}^{c'}[d]
				& \Gamma_2 \times_{\wnus{n}(U)}\Gamma_2 \ar^{c_2}[d] \\
				& \wnus{n}(\Gamma_1) \ar@{>->}^{t}[r]
				& \Gamma_2
			}
		\end{equation*}
	commute,
	where each $c_i$ is the transitivity map for the equivalence relation $\Gamma_i$.
	If we define $t'=(\wnus{n}(\pr_1),\wnus{n}(\pr_2))$,
	then the perimeter commutes. Therefore it is enough to show that
	$t'$ is an isomorphism. 
	This follows from $\text{(\ref{lem-mgfl-part3})}_{(m-1)}$, which we can
	apply because we have $\Gamma_1,U\in\AlgSp_{m-1}$, as discussed above.

	$\text{(\ref{lem-mgfl-part1.5})}_{m}$:
	To show that the map
		\[
		(g,\gh{0})\:U \longmap X\times_{\wnus{n}(X)}\wnus{n}(U)
		\]
	is an isomorphism, it suffices to do so after applying $U\times_X \vbl$.
	We can do that as follows:
		\begin{align*}
			U\times_X U &\labeleq{1} U\times_{\wnus{n}(U)} \wnus{n}(U\times_X U) \\
						&\labeleq{2} U\times_{\wnus{n}(U)} 
							\wnus{n}(U)\times_{\wnus{n}(X)}\wnus{n}(U)\\
						&= U\times_{\wnus{n}(X)} \wnus{n}(U).
		\end{align*}
	Equality 2 follows from $\text{(\ref{lem-mgfl-part1})}_{m}$.
	Thus it suffices to show equality 1.

	Let $h\:V\to U\times_X U$ be an $(m-2)$-representable \'etale cover, 
	where $V\in\AlgSp_{-1}$; this exists because $U\times_XU\in\AlgSp_{m-1}$.
	Consider the following diagram:
		\[
		\xymatrix@C=40pt{
		V \ar^-h[r]\ar^{\gh{0}}[d] &
			U\times_X U \ar^-{\pr_1}[r] \ar^{\gh{0}}[d] &
			U \ar^{\gh{0}}[d] \\
		\wnus{n}(V) \ar^-{\wnus{n}(h)}[r] &
			\wnus{n}(U\times_X U) \ar^-{\pr_1}[r] &
			\wnus{n}(U).
		}
		\]
	By $\text{(\ref{lem-mgfl-part1.5})}_{m-1}$, 
	the left-hand square is cartesian, since $U\times_X U\in\AlgSp_{m-1}$.
	Further, the perimeter is cartesian; this is because $U$ and $V$ are disjoint
	unions of affine schemes, and so we can apply 
	$\text{(\ref{lem-mgfl-part1.5})}_{-1}$ on each component.
	Therefore the induced map
		\begin{equation} \label{map:internal-sdkds}
			U\times_X U\longmap U\times_{\wnus{n}(U)}\wnus{n}(U\times_XU)
		\end{equation}
	becomes an isomorphism when we apply the functor 
	$\vbl\times_{\wnus{n}(U\times_X U)}\wnus{n}(V)$. But the map 
	$\wnus{n}(V)\to\wnus{n}(U\times_XU)$ is an \'etale cover, by 
	$\text{(\ref{lem-mgfl-part1})}_{m-1}$. So this implies (\ref{map:internal-sdkds}) 
	is an isomorphism.
	
	$\text{(\ref{lem-mgfl-part2})}_m$:	
	Let $g\:U\to X$ be an $(m-1)$-representable	\'etale cover, where $U\in\AlgSp_{-1}$.
	Then $\wnus{n}(g)$ is an \'etale cover, by $\text{(\ref{lem-mgfl-part1})}_{m}$. 
	Therefore it is enough to show that (\ref{map:main-geometric-finite-length-1})
	becomes a closed immersion defined by a square-zero ideal after base change
	from $\wnus{n}(X)$ to $\wnus{n}(U)$---indeed, this is an \'etale-local
	property. 
	But by $\text{(\ref{lem-mgfl-part1.5})}_{m}$, this map can be identified with
		\[
		\gh{0}\times\id_{S_0}\:U\times_S S_0 \longlabelmap{} \wnus{n}(U)\times_S S_0,
		\]
	which has the required property by $\text{(\ref{lem-mgfl-part2})}_{-1}$.

	$\text{(\ref{lem-mgfl-part3})}_m$:	
	Let $u'\:U'\to X'$ and $u\:U\to X$ be $(m-1)$-representable
	\'etale covers, where $U',U\in\AlgSp_{-1}$,
	such that the map $f\:X'\to X$ lifts to a map $h\:U'\to U$,	necessarily \'etale.
	Then we have a commutative diagram
		\[
		\xymatrix@C=35pt{
		\wnus{n}(U') \ar@{->}^{\wnus{n}(u')}[r]\ar_{\wnus{n}(h)}[d] 
				& \wnus{n}(X') \ar^{\wnus{n}(f)}[d] \\
		\wnus{n}(U') \ar@{->}^{\wnus{n}(u)}[r] 
				& \wnus{n}(X).
		}
		\]
	By $\text{(\ref{lem-mgfl-part1})}_m$, all the spaces in this diagram are algebraic,
	and the horizontal maps are \'etale covers.
	By $\text{(\ref{lem-mgfl-part3})}_{-1}$, the map $\wnus{n}(h)$ is \'etale.
	Therefore $\wnus{n}(f)$ is \'etale.

	Let us now show that~(\ref{map:main-lemma-2}) is an isomorphism.
	The map 
		\[
		\pr_2\:\wnus{n}(X')\times_{\wnus{n}(X)} \wnus{n}(Y)\longmap\wnus{n}(Y)
		\]
	is \'etale, because it is a base change of $\wnus{n}(f)$.
	The map	
		\[
		\wnus{n}(\pr_2)\:\wnus{n}(X'\times_X Y)\longmap\wnus{n}(Y)
		\]
	is also \'etale. Indeed, by the above, we only
	need to verify $X'\times_X Y\in\AlgSp_m$. This holds because
	$\AlgSp_m$ is stable under pull back, by~\cite{Toen-Vezzosi:HAGII}, Corollary 1.3.3.5.
	
	Therefore, we can view~(\ref{map:main-lemma-2}), which we will denote $t$,
	as a map of \'etale algebraic spaces over $\wnus{n}(Y)$.
	So, to show $t$ is an isomorphism, it is enough by~\ref{lem:isom-criterion-for-etale-maps}
	below to show $\red{(t\times_S S')}$ and $\red{(t\times_S S_0)}$ are isomorphisms.
	This can be done as in the proof of $\text{(\ref{lem-mgfl-part1.5})}$: for $t\times_S S'$, 
	use~(\ref{eq:localization-of-S-commutes-with-wnus}) 
	to reduce the question to one about ghost
	components; for $\red{(t\times_S S_0)}$, use $\text{(\ref{lem-mgfl-part2})}_m$.
\end{proof}

\begin{lemma}\label{lem:monomorphism-criterion-for-etale-spaces}
	Consider a commutative diagram of algebraic spaces 
		\[
		\xymatrix{
		X \ar^{f}[rr] \ar^{g}[dr] & & Y\ar[dl] \\
		& Z,
		}
		\]
	where $g$ is \'etale. Then the following hold:
	\begin{enumerate}
		\item $f$ is a monomorphism if and only if $\red{f}$ is.
		\item Let $Z_0$ be a closed algebraic subspace of $Z$, and let $Z'$ be its complement. 
			Then $f$ is a monomorphism if and only if $f\times_Z Z_0$ and $f\times_Z Z'$ are.
	\end{enumerate}
\end{lemma}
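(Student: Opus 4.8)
The plan is to deduce both parts from a single observation about the relative diagonal. Because $g$ is étale, it is in particular unramified, so the diagonal $\Delta_{X/Z}\colon X\to X\times_Z X$ is an open immersion; consequently the relative diagonal $\Delta_f\colon X\to X\times_Y X$ is an open immersion as well. Indeed, $\Delta_f$ is a base change of $\Delta_{X/Z}$ along the natural map $\iota\colon X\times_Y X\to X\times_Z X$: the square with top edge $\Delta_f$, bottom edge $\Delta_{X/Z}$, right edge $\iota$, and left edge the identity is cartesian, since a $T$-point of $X\times_Z X$ lying on the image of $\Delta_{X/Z}$ automatically has its two $X$-components equal and hence lies in $X\times_Y X$. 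Granting this, $f$ is a monomorphism if and only if $\Delta_f$ is an isomorphism, and since $\Delta_f$ is an open immersion this holds if and only if $\Delta_f$ is surjective. Both parts are then statements about surjectivity of an open immersion.

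For (a), I would first check that $\red g\colon \red X\to\red Z$ is again unramified, so that the paragraph above applies verbatim to $\red f$ and shows $\Delta_{\red f}$ is an open immersion too. This holds because $\red X\to\red Z$ factors as the closed immersion $\red X\hookrightarrow X\times_Z\red Z$ (a nilpotent thickening, as $X\times_Z\red Z$ has the same underlying space as $X$) followed by $X\times_Z\red Z\to\red Z$, which is a base change of $g$; both factors are unramified, hence so is the composite. Now $f$ is a monomorphism iff $\Delta_f$ is surjective, and $\red f$ is a monomorphism iff $\Delta_{\red f}$ is surjective. Surjectivity depends only on underlying topological spaces, and under the canonical identification $\red{(X\times_Y X)}=\red{(\red X\times_{\red Y}\red X)}$ (which is étale-local and reduces to the fact that $A\otimes_B A\to A_{\mathrm{red}}\otimes_{B_{\mathrm{red}}}A_{\mathrm{red}}$ is surjective with nilpotent kernel) the reduced maps $\red{\Delta_f}$ and $\red{\Delta_{\red f}}$ both coincide with the reduced diagonal of $\red X$. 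Hence $\Delta_f$ and $\Delta_{\red f}$ have the same image, so one is surjective iff the other is, which gives (a).

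For (b), the forward implication is immediate, since monomorphisms are stable under base change. For the converse, assume $f\times_Z Z_0$ and $f\times_Z Z'$ are monomorphisms. Base change commutes with the formation of relative diagonals, so $\Delta_f\times_Z Z_0=\Delta_{f\times_Z Z_0}$ and $\Delta_f\times_Z Z'=\Delta_{f\times_Z Z'}$, and these are isomorphisms by hypothesis, in particular surjective. Since $Z_0$ and its open complement $Z'$ cover $Z$ on underlying spaces, the two base changes of $\Delta_f$ cover $X\times_Y X$, and surjectivity over each piece forces $\Delta_f$ itself to be surjective. As $\Delta_f$ is an open immersion by the first paragraph, it is therefore an isomorphism, i.e.\ $f$ is a monomorphism.

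The only non-formal point, and the step I expect to require the most care, is the reduction bookkeeping in (a): verifying that $\red g$ is unramified and that $\red{(X\times_Y X)}$ and $\red{(\red X\times_{\red Y}\red X)}$ agree, so that surjectivity of the diagonal genuinely transfers between $f$ and $\red f$. Everything else is formal manipulation of cartesian squares and open immersions.
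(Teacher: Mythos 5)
Your proof is correct in its overall architecture, and it takes a genuinely different route from the paper. The paper argues pointwise: given $a,b\:T\to X$ with $f\circ a=f\circ b$, it passes to $\red{a},\red{b}$ (part (a)) or to the equalizer $\bar{T}$ of $a,b$ (part (b)), shows in each case that the maps agree on a closed subscheme of $T$ defined by a nil ideal, and then invokes uniqueness of lifts along nilpotent immersions for the \'etale map $g$ (EGA IV 18.1.3). You instead globalize: since $g$ is unramified, $\Delta_{X/Z}$ is an open immersion, $\Delta_f$ is a base change of it (your cartesian square is correct), and so both parts reduce to the purely topological question of whether an open immersion is surjective. This buys a uniform treatment of (a) and (b) without test objects, and it isolates exactly what is used about $g$ (unramifiedness, not full \'etaleness); the cost is that you need the locally-of-finite-presentation half of \dit unramified'', whereas the paper's lifting argument uses only the formal (uniqueness) half.

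That cost is where your one genuine misstep lies, in part (a). You factor $\red{g}$ as $\red{X}\hookrightarrow X\times_Z\red{Z}\to\red{Z}$ and assert both factors are unramified, so that the first paragraph applies \dit verbatim'' to $\red{f}$. But the closed immersion $\red{X}\hookrightarrow X\times_Z\red{Z}$ need not be locally of finite presentation (the nilradical need not be finitely generated), so it is only \emph{formally} unramified; and formal unramifiedness alone does not make relative diagonals open immersions. For instance, $\Spec\bar{\bF}_p\to\Spec\bF_p$ is formally unramified, yet its diagonal corresponds to a single non-open point of the profinite space $\Spec(\bar{\bF}_p\tn_{\bF_p}\bar{\bF}_p)$. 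The repair is immediate and makes your detour through $\red{g}$ unnecessary: because $\red{X}\to X$ is a monomorphism, the square with top edge $\Delta_{\red{f}}\:\red{X}\to\red{X}\times_{\red{Y}}\red{X}$, bottom edge $\Delta_f\:X\to X\times_Y X$, and the canonical vertical maps is itself cartesian (a $T$-point of the fiber product is $x\:T\to X$ together with $x_1,x_2\:T\to\red{X}$ both lifting $x$, forcing $x_1=x_2$), so $\Delta_{\red{f}}$ is directly a base change of $\Delta_f$ and hence an open immersion. Equivalently, one can note that the class of maps whose relative diagonal is an open immersion is stable under composition and base change, and closed immersions lie in it because they are monomorphisms. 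With that substitution, your reduction bookkeeping in (a)---the identification $\red{(X\times_Y X)}=\red{(\red{X}\times_{\red{Y}}\red{X})}$ via surjections with nil kernel, checked \'etale-locally---and your covering argument in (b) are both sound, and the proof goes through.
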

\begin{proof}
	The only-if parts of both statements follow immediately from the fact that both closed and
	open immersions are monomorphisms.
	
	(a): It is enough to show that
	for any affine scheme $T$ and any maps $a,b\:T\to X$ such that $f\circ a=f \circ b$,
	we have $a=b$. Then we have $\red{f}\circ \red{a}=\red{f}\circ\red{b}$. Since $\red{f}$
	is assumed to be a monomorphism, we have $\red{a}=\red{b}$. Since $X$ is \'etale over $Z$,
	we have $a=b$ (EGA IV 18.1.3~\cite{EGA-no.32}).
	
	(b): Again, let $T$ be an affine scheme with maps $a,b\:T\to X$ 
	such that $f\circ a=f \circ b$.
	Let $\bar{T}$ denote the equalizer of $a$ and $b$. It is an algebraic subspace of $T$.
	By the assumptions on $f$, we have $\bar{T}\times_Z Z_0=T\times_Z Z_0$ and
	$\bar{T}\times_Z Z'=T\times_Z Z'$. 
	Therefore $\bar{T}$ is a closed subscheme of $T$
	defined by a nil ideal. As above, since $X$ is \'etale over $Z$,
	there is at most one extension of the $Z$-morphism $\bar{T}\to X$ to $T$.
	Therefore $a=b$.
\end{proof}

\begin{lemma}\label{lem:isom-criterion-for-etale-maps}
	Let $f\:X\to Y$ be an \'etale map of algebraic spaces, and let $Y_0$ be a closed algebraic
	subspace of $Y$ with complement $Y'$. Then $f$ is an isomorphism if and only if 
	$\red{(f\times_Y Y_0)}$	and $\red{(f\times_Y Y')}$ are.
\end{lemma}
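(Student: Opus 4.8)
The ``only if'' direction is immediate, since base change preserves isomorphisms and the formation of reductions is functorial. The plan for the ``if'' direction is to show that, under the hypothesis that $\red{(f\times_Y Y_0)}$ and $\red{(f\times_Y Y')}$ are isomorphisms, the étale map $f$ is at once a monomorphism and surjective. An étale monomorphism is an open immersion (EGA IV 17.9.1 in the scheme case, and the same holds for algebraic spaces), and a surjective open immersion is an isomorphism; so establishing those two properties finishes the proof.

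For the monomorphism claim I would invoke the preceding lemma~\ref{lem:monomorphism-criterion-for-etale-spaces}. Since isomorphisms are in particular monomorphisms, the hypothesis gives that $\red{(f\times_Y Y_0)}$ and $\red{(f\times_Y Y')}$ are monomorphisms. Each base change $f\times_Y Y_0$ and $f\times_Y Y'$ is étale, being a base change of the étale map $f$, so part~(a) of that lemma (applied to each, viewed as a map to its own target) shows that $f\times_Y Y_0$ and $f\times_Y Y'$ are themselves monomorphisms. I would then apply part~(b) with $Z=Y$, the identity $Y\to Z$, and $g=f$ (which is étale by hypothesis): this yields that $f$ is a monomorphism.

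For surjectivity I would argue on underlying topological spaces. Since $Y_0$ is closed with open complement $Y'$, the subspaces $f^{-1}(Y_0)=X\times_Y Y_0$ and $f^{-1}(Y')=X\times_Y Y'$ cover $X$, and $f$ is surjective precisely when $f\times_Y Y_0$ and $f\times_Y Y'$ are surjective onto $Y_0$ and $Y'$ respectively. Surjectivity is unchanged by passing to reductions, as these do not alter underlying topological spaces, and the assumed isomorphisms $\red{(f\times_Y Y_0)}$ and $\red{(f\times_Y Y')}$ are in particular surjective. Hence both strata are hit, so $f$ is surjective, and combined with the previous paragraph this gives that $f$ is an isomorphism.

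The only real obstacle is to assemble the two standard facts correctly for algebraic spaces rather than schemes: that an étale monomorphism is an open immersion, and that surjectivity may be checked on the closed/open stratification of $Y$ after reduction. Both are routine once lemma~\ref{lem:monomorphism-criterion-for-etale-spaces} is in hand, so the substantive work of the argument has already been isolated there. (One can also bypass open immersions entirely: once $f$ is known to be a surjective étale monomorphism, faithfully flat descent along $f$ itself, using $X\times_Y X\iso X$, shows directly that $f$ is an isomorphism.)
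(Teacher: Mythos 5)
Your proof is correct and takes essentially the same route as the paper: both deduce that $f$ is a monomorphism from lemma~\ref{lem:monomorphism-criterion-for-etale-spaces} and extract surjectivity from the hypothesis that the reduced maps over $Y_0$ and $Y'$ are isomorphisms (reduction being topologically invisible). The only difference is the finish --- you invoke the fact that an \'etale monomorphism is an open immersion, while the paper instead shows $f$ is an epimorphism of sheaves via an affine \'etale covering argument and uses that mono plus epi gives iso in the sheaf category, which is exactly the descent-style alternative you mention in your closing parenthesis.
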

\begin{proof}
	The only-if statement is clear. Now consider the converse. It follows from 
	\ref{lem:monomorphism-criterion-for-etale-spaces} that $f$ is a monomorphism, and so it is 
	enough to show that $f$ is an epimorphism. To do this, it is enough to show that
	any \'etale map $V\to Y$, with $V$ affine, lifts to a map $V\to X$. Thus, by changing base
	to $V$ and relabeling $Y=V$, 
	we may assume $Y$ is affine. (The property of being an isomorphism after applying
	$\red{(\vbl)}$ is stable under base change.) Now let $(U_i)_{i\in I}$ be an \'etale cover
	of $X$, where each $U_i$ is an affine scheme. Then each composition $U_i\to X\to Y$ 
	is an \'etale morphism of affine schemes, and the union of images of these maps covers $Y$.
	Therefore the induced map $\coprod_i U_i\to Y$ is an epimorphism, and hence so is $f$.
\end{proof}

\section{$\wnus{n}$ and algebraic spaces}
\label{sec:wnus-and-algebraic-spaces}

The purpose of this section is to give a number of useful
consequences of the inductive lemma in the previous section.
We continue with the notation of~\ref{subsec:supramaximal-def-global-base-S}.

\begin{theorem}\label{thm:W-main-geometric-finite-length(1)}
	Let $X$ be an algebraic space over $S$. Then $\wnus{n} (X)$ is an algebraic space. 
\end{theorem}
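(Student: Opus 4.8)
The plan is to deduce the theorem formally from the inductive lemma of the previous section, \ref{lem:main-geometric-finite-length}, which already carries out all the substantive work but treats only the case where $\ptst$ consists of a single ideal. So the only thing left to do is reduce the case of a general family $\ptst$ to the single-ideal case, and this is a purely formal matter of invoking the iteration isomorphism~(\ref{eq:geom-witt-iterate-upper-s}).

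First I would dispose of the single-ideal case. Suppose $\ptst=\{\m\}$. By the definition of algebraic space in~\ref{subsec:algebraic-spaces}, every algebraic space $X$ is $m$-geometric for some $m$ (indeed for all $m\geq 1$), so $X\in\AlgSp_m$. Then~\ref{lem:main-geometric-finite-length}(\ref{lem-mgfl-part1}) asserts directly that $\wnus{n}(X)$ is an algebraic space, and we are done in this case.

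For a general family $\ptst$, I would first reduce to $\ptst$ finite: since $\wnus{S,\ptst,n}=\wnus{S,\ptst',n}$ whenever $\ptst'$ is the support of $n$ (the observation in the subsection on restriction of $\ptst$), we may replace $\ptst$ by this support and so assume $\ptst$ is finite. Now I would induct on the cardinality of $\ptst$. When $\ptst$ is empty, $n=0$ and $\wnus{n}$ is the identity functor, so there is nothing to prove. For the inductive step, choose an ideal $\m\in\ptst$, write $\ptst'=\ptst-\{\m\}$, and let $n'$ and $n''$ denote the restrictions of $n$ to $\ptst'$ and to $\{\m\}$. The iteration isomorphism~(\ref{eq:geom-witt-iterate-upper-s}), applied to the partition $\ptst=\ptst'\smsmcoprod\{\m\}$, gives
$$
\wnus{S,\{\m\},n''}\big(\wnus{S,\ptst',n'}(X)\big) \longisomap \wnus{S,\ptst,n}(X).
$$
By the inductive hypothesis, $\wnus{S,\ptst',n'}(X)$ is an algebraic space; applying the single-ideal case already established, to this algebraic space and the ideal $\m$, shows that the left-hand side is an algebraic space, and hence so is $\wnus{S,\ptst,n}(X)$.

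The point worth stressing is that there is no real obstacle remaining at this stage: all of the genuine difficulty---constructing the \'etale equivalence-relation presentation of $\wnus{n}(X)$ and verifying that~(\ref{map:main-lemma-1}) is an isomorphism---is packaged inside~\ref{lem:main-geometric-finite-length}, whose proof runs by a delicate double induction on the geometricity $m$. The present argument is bookkeeping on top of it, and the only subtlety is to make sure the reduction to a single prime proceeds one ideal at a time through the iteration isomorphism, rather than attempting to treat all the primes of $\ptst$ simultaneously.
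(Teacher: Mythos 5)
Your proposal is correct and follows exactly the paper's route: the paper's proof is the one-line reduction ``By~(\ref{eq:geom-witt-iterate-upper-s}), we may assume $\ptst$ consists of one ideal, in which case we can apply~\ref{lem:main-geometric-finite-length}(\ref{lem-mgfl-part1}).'' You have merely made explicit the bookkeeping the paper leaves implicit---reducing to $\ptst$ finite via the support of $n$ and peeling off one ideal at a time by induction---which is a faithful, slightly more detailed rendering of the same argument.
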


\begin{proof}
	By~(\ref{eq:geom-witt-iterate-upper-s}), we may assume $\ptst$ consists of
	one ideal, in which case 
	we can apply~\ref{lem:main-geometric-finite-length}(\ref{lem-mgfl-part1}).
\end{proof}

\begin{theorem}	\label{thm:W-main-geometric-finite-length(2)}
	Let $f\: X'\to X$ be an \'etale map of algebraic spaces over $S$.  Then
	the following hold.
	\begin{enumerate}
		\item \label{part:W-main-geometric-finite-length(2)-a}
			The induced map 
			$\wnus{n}(f)\:\wnus{n}(X')\to\wnus{n}(X)$ is \'etale.					
		\item \label{part:W-main-geometric-finite-length(2)-b}
			If $f$ is surjective, then so is $\wnus{n}(f)$.
		\item \label{part:W-main-geometric-finite-length(2)-c}
			For any algebraic space $Y$ over $X$, the map $(\wnus{n}(\pr_1),\wnus{n}(\pr_2))$
				\[
				\wnus{n}(X'\times_X Y) \longmap 
					\wnus{n}(X')\times_{\wnus{n}(X)}\wnus{n}(Y)
				\]
			is an isomorphism.
	\end{enumerate}
\end{theorem}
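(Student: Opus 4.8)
The plan is to reduce all three assertions to the case where $\ptst$ consists of a single ideal—where they are already contained in the inductive lemma—and then to supply by hand the one genuinely new ingredient, surjectivity.

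First I would induct on the cardinality of $\ptst$, which we may assume equals the support of $n$ and is therefore finite (the empty case is trivial, since then $\wnus{n}=\id$). Choosing an ideal $\m\in\ptst$ and writing $\ptst=\{\m\}\smsmcoprod\ptst''$, the iteration isomorphism~(\ref{eq:geom-witt-iterate-upper-s}) identifies $\wnus{\ptst,n}$ with $G\circ F$, where $F=\wnus{\{\m\},n'}$ and $G=\wnus{\ptst'',n''}$. For part~(\ref{part:W-main-geometric-finite-length(2)-a}), the map $F(f)$ is \'etale by the single-prime case of~\ref{lem:main-geometric-finite-length}(\ref{lem-mgfl-part3}), and then $G(F(f))$ is \'etale by the inductive hypothesis. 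For part~(\ref{part:W-main-geometric-finite-length(2)-c}), I would first apply the single-prime base-change isomorphism to get $F(X'\times_X Y)\cong F(X')\times_{F(X)}F(Y)$, then apply $G$ and invoke the inductive hypothesis for~(\ref{part:W-main-geometric-finite-length(2)-c}); this is legitimate precisely because $F(f)$ is \'etale, so $G$'s base-change property applies to the pair $\big(F(f),F(Y)\big)$. For part~(\ref{part:W-main-geometric-finite-length(2)-b}), the map $F(f)$ is \'etale and (by the single-prime argument below) surjective, so $G(F(f))$ is surjective by the inductive hypothesis for~(\ref{part:W-main-geometric-finite-length(2)-b}).

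It then remains to treat the single-prime case, $\ptst=\{\m\}$. Parts~(\ref{part:W-main-geometric-finite-length(2)-a}) and~(\ref{part:W-main-geometric-finite-length(2)-c}) are exactly the content of~\ref{lem:main-geometric-finite-length}(\ref{lem-mgfl-part3}) (with $\wnus{n}(X)$ algebraic by~(\ref{lem-mgfl-part1})), so nothing further is needed there. The work is entirely in part~(\ref{part:W-main-geometric-finite-length(2)-b}), and I would check surjectivity of $\wnus{n}(f)$ separately over the closed subscheme $S_0=\Spec\sO_S/\m$ and its open complement $S'=S-S_0$, which together cover $S$. Over $S'$ the ideal $\m$ becomes the unit ideal, so by~(\ref{eq:localization-of-S-commutes-with-wnus}) and the isomorphy of the ghost map $\gh{\leq n}$ in that case, the map $\wnus{n}(f)\times_S S'$ is identified with $\coprod_{[0,n]}(f\times_S S')$, which is surjective because $f$ is. Over $S_0$, the key is~\ref{lem:main-geometric-finite-length}(\ref{lem-mgfl-part2}): the ghost section $\gh{0}$ exhibits $X\times_S S_0$ and $\wnus{n}(X)\times_S S_0$ as having the same underlying reduced space, and likewise for the target, compatibly with $f$ and $\wnus{n}(f)$ by naturality of $\gh{0}$. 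Hence $\red{(\wnus{n}(f)\times_S S_0)}=\red{(f\times_S S_0)}$, which is surjective; and since surjectivity of a map of algebraic spaces depends only on its reduction, $\wnus{n}(f)\times_S S_0$ is surjective as well.

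The main obstacle is part~(\ref{part:W-main-geometric-finite-length(2)-b}): unlike the \'etaleness and base-change statements, surjectivity is not recorded directly in the inductive lemma and must be extracted separately. The delicate point is obtaining a usable description of $\wnus{n}(f)$ over $S_0$; I expect the square-zero thickening statement~(\ref{lem-mgfl-part2}) together with the naturality of the ghost embedding to do exactly this, collapsing surjectivity over $S_0$ to surjectivity of $f$ itself. Everything else is bookkeeping with the iteration isomorphism and the localization formula~(\ref{eq:localization-of-S-commutes-with-wnus}).
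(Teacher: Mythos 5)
Your proposal is correct, and for parts (\ref{part:W-main-geometric-finite-length(2)-a}) and (\ref{part:W-main-geometric-finite-length(2)-c}) it coincides with the paper: reduce to a single ideal via~(\ref{eq:geom-witt-iterate-upper-s}) (the paper states this reduction in one line; your induction on $\card{\ptst}$, with the observation that the base-change step for (c) needs $F(f)$ \'etale before the inductive hypothesis applies, is exactly what that line suppresses) and then quote \ref{lem:main-geometric-finite-length}(\ref{lem-mgfl-part3}). Where you genuinely diverge is part (\ref{part:W-main-geometric-finite-length(2)-b}). The paper's proof is shorter and uses a different piece of the inductive lemma: it passes to an \'etale cover of $X'$ by a disjoint union of affine schemes, notes that $f$ is $1$-representable, and invokes \ref{lem:main-geometric-finite-length}(\ref{lem-mgfl-part1}), which presents $\wnus{n}(X)$ as the quotient of $\wnus{n}(U)$ by the \'etale equivalence relation $\wnus{n}(U\times_X U)$; the quotient map is then automatically an \'etale surjection. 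You instead check surjectivity fiberwise, over $S'=S-S_0$ via the localization formula~(\ref{eq:localization-of-S-commutes-with-wnus}) and the ghost isomorphism, and over $S_0$ via the square-zero thickening \ref{lem:main-geometric-finite-length}(\ref{lem-mgfl-part2}) together with naturality of $\gh{0}$ and the fact that surjectivity only depends on reductions. This is sound: lemma part (\ref{lem-mgfl-part2}) applies to arbitrary algebraic spaces since every algebraic space lies in $\AlgSp_m$ for $m\geq 1$, and your closed/open dichotomy is precisely the technique the paper itself deploys in \ref{lem:isom-criterion-for-etale-maps} and \ref{cor:geometric-W-etale-base-change}. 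What each approach buys: the paper's argument is a one-line corollary of the quotient presentation and needs no fiber analysis, while yours avoids the representability bookkeeping and the reduction to $\AlgSp_{-1}$ entirely, at the cost of being somewhat longer; both are correct.
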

\begin{proof}
	By~(\ref{eq:geom-witt-iterate-upper-s}), we may assume $\ptst$ consists of one
	ideal. Then parts (\ref{part:W-main-geometric-finite-length(2)-a}) and
	(\ref{part:W-main-geometric-finite-length(2)-c}) follow
	from~\ref{lem:main-geometric-finite-length}(\ref{lem-mgfl-part3}).
	For part (\ref{part:W-main-geometric-finite-length(2)-b}), it is 
	enough, by passing to an \'etale cover of $X'$, to assume $X'\in\AlgSp_{-1}$. Then we can 
	apply~\ref{lem:main-geometric-finite-length}(\ref{lem-mgfl-part1}), because $f$ is
	$1$-representable.
\end{proof}

\begin{corollary}\label{cor:etale-chart-presentation-of-wnus}
	Let $(U_i)_{i\in I}$ be an \'etale cover of an algebraic space $X$ over $S$.
	Then $\bigl(\wnus{n}(U_i)\bigr)_{i\in I}$ is an \'etale cover of $\wnus{n}(X)$, and
	for each pair $(i,j)\in I^2$, the map
		\[
		\wnus{n}(U_i\times_X U_j) \longmap \wnus{n}(U_i)\times_{\wnus{n}(X)} \wnus{n}(U_j)
		\]
	given by $\bigl(\wnus{n}(\pr_1),\wnus{n}(\pr_2)\bigr)$ is an isomorphism.
\end{corollary}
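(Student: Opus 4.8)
The plan is to deduce both assertions directly from Theorem~\ref{thm:W-main-geometric-finite-length(2)}, which does all the real work; the corollary is essentially just a repackaging of its three parts together with the formal fact that $\wnus{n}$ preserves coproducts. So first I would record that elementary but essential observation: because $\wnus{n}$ is a left adjoint (see~\ref{subsec:def-of-wnus-and-wnls}), it preserves all colimits, and in particular $\wnus{n}(\coprod_{i\in I} U_i)=\coprod_{i\in I}\wnus{n}(U_i)$ as spaces over $\wnus{n}(X)$.

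For the claim that $\big(\wnus{n}(U_i)\big)_{i\in I}$ is an \'etale cover of $\wnus{n}(X)$, I would argue in two steps. Each structure map $U_i\to X$ is \'etale, so each $\wnus{n}(U_i)\to\wnus{n}(X)$ is \'etale by part~(\ref{part:W-main-geometric-finite-length(2)-a}). For the covering property, set $U=\coprod_{i\in I}U_i$; the induced map $U\to X$ is \'etale and surjective, so by parts~(\ref{part:W-main-geometric-finite-length(2)-a}) and~(\ref{part:W-main-geometric-finite-length(2)-b}) the map $\wnus{n}(U)\to\wnus{n}(X)$ is \'etale and surjective. Combining this with the coproduct identity above, $\coprod_i\wnus{n}(U_i)\to\wnus{n}(X)$ is a surjective \'etale map, which is exactly the assertion that $\big(\wnus{n}(U_i)\big)_{i\in I}$ is an \'etale cover.

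For the fiber-product claim, I would simply specialize part~(\ref{part:W-main-geometric-finite-length(2)-c}): taking the \'etale map $f$ there to be $U_i\to X$ and taking the algebraic space over $X$ to be $Y=U_j$, that part asserts precisely that $(\wnus{n}(\pr_1),\wnus{n}(\pr_2))\colon \wnus{n}(U_i\times_X U_j)\to \wnus{n}(U_i)\times_{\wnus{n}(X)}\wnus{n}(U_j)$ is an isomorphism.

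Since each assertion is an immediate instance of the preceding theorem, there is no genuine obstacle here; the only point requiring a moment's care is the bookkeeping that $\wnus{n}$ preserves the coproduct defining $U$, so that it is the family $\big(\wnus{n}(U_i)\big)_{i\in I}$ itself---and not merely the single map out of $\wnus{n}(\coprod_i U_i)$---that covers $\wnus{n}(X)$.
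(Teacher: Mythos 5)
Your proposal is correct and is essentially identical to the paper's own proof: the paper likewise notes that $\wnus{n}$, being a left adjoint, preserves disjoint unions, then applies Theorem~\ref{thm:W-main-geometric-finite-length(2)}(\ref{part:W-main-geometric-finite-length(2)-b}) to $\coprod_i U_i\to X$ for the covering claim and part~(\ref{part:W-main-geometric-finite-length(2)-c}) to $U_i\times_X U_j$ for the fiber-product claim. Your write-up just spells out the bookkeeping that the paper leaves implicit.
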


In other words, $\wnus{n}(X)$ can be constructed by charts in the \'etale topology.

\begin{proof}
	Because $\wnus{n}$ is a left adjoint, it preserves disjoint unions.
	Then apply
	\ref{thm:W-main-geometric-finite-length(1)}(\ref{part:W-main-geometric-finite-length(2)-b})
	to the induced map $\coprod_i U_i\to X$ and 
	\ref{thm:W-main-geometric-finite-length(1)}(\ref{part:W-main-geometric-finite-length(2)-c}) 
	to $U_i\times_X U_j$.
\end{proof}

\begin{corollary}\label{cor:geometric-W-etale-base-change}
	Let $f\:X\to Y$ be an \'etale map of algebraic spaces.  Then the following
	diagrams are cartesian, where the horizontal maps are the ones defined 
	in~\ref{subsec:natural-maps}: 
	\begin{enumerate}
		\item for $i\in\nset$,
			\[
			\xymatrix{
			\wnus{n}(X) \ar^-{\psi_i}[r] \ar[d]
				& \wnus{n+i}(X) \ar[d]	\\
			\wnus{n}(Y) \ar^-{\psi_i}[r]	
				& \wnus{n+i}(Y); 				
			}		
			\]			
		\item for $i=\nset$,
			\[
			\xymatrix{
			\wnus{n}(X) \ar[d]\ar^-{\inclmap{n,i}}[r]
				& \wnus{n+i}(X) \ar[d] \\
			\wnus{n}(Y) \ar^-{\inclmap{n,i}}[r]
				& \wnus{n+i}(Y); 
			}		
			\]
		\item for $i\in[0,n]$,
			\[
			\xymatrix{
			X \ar^-{\gh{i}}[r] \ar[d]
				& \wnus{n}(X) \ar[d] \\
			Y \ar^-{\gh{i}}[r]
				& \wnus{n}(Y);
			}
			\]
		\item when $\ptst=\{\m\}$ and $i\in\bN$,
				\[
				\xymatrix{
				X \times_S S_{n} \ar[d] \ar^-{\rgh{i}}[r]
					& \wnus{n}(X) \ar[d] \\
				Y \times_S S_{n} \ar^-{\rgh{i}}[r]
					& \wnus{n}(Y),
				}
				\]
			where $S_{n}=\Spec\sO_S/\m^{n+1}$.
	\end{enumerate}	
\end{corollary}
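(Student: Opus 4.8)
The plan is to treat all four squares uniformly. Each is the naturality square of a natural transformation $\tau\colon\Phi\Rightarrow\Psi$ evaluated at the \'etale map $f$, where the functors $\Phi,\Psi$ range over $\id$, $S_n\times_S\vbl$, $\wnus{n}$ and $\wnus{n+i}$. All of these send \'etale maps to \'etale maps---by \ref{thm:W-main-geometric-finite-length(2)}(\ref{part:W-main-geometric-finite-length(2)-a}) for the Witt functors and trivially for $\id$ and $S_n\times_S\vbl$. Hence both $\Phi(f)\colon\Phi(X)\to\Phi(Y)$ and the projection $\Phi(Y)\times_{\Psi(Y)}\Psi(X)\to\Phi(Y)$ (a base change of the \'etale map $\Psi(f)$) are \'etale, so the comparison map $c\colon\Phi(X)\to\Phi(Y)\times_{\Psi(Y)}\Psi(X)$ is a morphism of \'etale $\Phi(Y)$-spaces and is therefore \'etale; the square is cartesian exactly when $c$ is an isomorphism. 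Using \ref{eq:geom-witt-iterate-upper-s} and the compatibility of the maps of \ref{subsec:natural-maps} with iteration, I would first reduce to the case where $\ptst$ consists of a single ideal $\m$, so that $S_0=\Spec\sO_S/\m$ is available.

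Next I would apply \ref{lem:isom-criterion-for-etale-maps} to the \'etale map $c$, taking the closed algebraic subspace of $\Phi(Y)$ to be its fibre over $S_0$ and the complement to be its fibre over $S'=S-S_0$. This reduces the problem to checking that $\red{(c\times_S S')}$ and $\red{(c\times_S S_0)}$ are isomorphisms. Over $S'$, using \ref{eq:localization-of-S-commutes-with-wnus} I would localize so that $\m$ becomes the unit ideal; then $\gh{\leq n}$ is an isomorphism and $\wnus{n}(\vbl)$ becomes the coproduct functor $\coprod_{[0,n]}\vbl$, under which $\wnus{n}(f)$ becomes $\coprod_{[0,n]}f$ and each of the four transformations becomes an index-theoretic map between finite coproducts of copies of $X$ and of $Y$ (a summand inclusion for $\gh{i}$, an injection of index sets for $\inclmap{n,i}$ and $\psi_i$, and the empty map for the reduced ghost, since $S_n\times_S X$ is empty over $S'$). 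For such maps the naturality square is cartesian by direct inspection, since $f$ base-changes compatibly on each summand.

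The substance lies over $S_0$. Here I would invoke \ref{lem:main-geometric-finite-length}(\ref{lem-mgfl-part2}): the zeroth ghost map $\gh{0}\times\id_{S_0}$ is a closed immersion with square-zero ideal, hence identifies $\red{(\wnus{n}(X)\times_S S_0)}$ with $\red{(X\times_S S_0)}$, and likewise for $Y$, carrying $\wnus{n}(f)$ to the \'etale map $\red{(f\times_S S_0)}$. Using the affine description of the ghost components modulo $\m$ (via \ref{pro:W-nilpotent-at-p}), under these identifications each transformation $\tau$ becomes, after reduction, either the identity---for $\inclmap{n,i}$, because $\gh{0}$ is compatible with the projection---or the $q_\m^{\,i}$-power relative Frobenius---for $\psi_i$, for $\gh{i}$ with $i>0$, and for the reduced ghost. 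In every case the reduced square over $S_0$ is thus a Frobenius naturality square for the \'etale morphism $\red{(f\times_S S_0)}$ of $\bF_{q_\m}$-schemes.

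The main obstacle, then, is the single clean fact that for an \'etale morphism of $\bF_{q_\m}$-schemes the $q_\m^{\,i}$-power Frobenius square is cartesian, equivalently that the relative Frobenius of an \'etale map is an isomorphism; granting this, $\red{(c\times_S S_0)}$ is an isomorphism in each case, and \ref{lem:isom-criterion-for-etale-maps} finishes the argument. I expect the most delicate part to be the bookkeeping that matches each of the four natural transformations with its reduction modulo $\m$ under the $\gh{0}$-identifications, whereas the reductions to the single-ideal situation and to the combinatorics over $S'$ are routine.
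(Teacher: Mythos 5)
Your proposal is correct and follows essentially the same route as the paper's proof: reduce to a single ideal via~(\ref{eq:geom-witt-iterate-upper-s}), observe that the comparison map is a map of \'etale algebraic spaces by~\ref{thm:W-main-geometric-finite-length(2)}, apply~\ref{lem:isom-criterion-for-etale-maps} to split the check between $S'$ (where ghost components trivialize everything) and $S_0$ (where, after the square-zero identification of~\ref{lem:main-geometric-finite-length}(\ref{lem-mgfl-part2}), each square becomes a Frobenius naturality square for an \'etale map of $\bF_{q_\m}$-schemes). The one clean fact you isolate---that this Frobenius square is cartesian---is exactly what the paper invokes, citing SGA 5 XV \S 1, Proposition 2(c).
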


\begin{proof}
	By~(\ref{eq:geom-witt-iterate-upper-s}), we may assume $\ptst$ consists of one 
	ideal $\m$.  All four parts are proved by the same method. 
	Let us give the details for (a) and leave the rest to the reader.

	We want to show that the induced map
		\[
		g\:\wnus{n}(X) \longmap \wnus{n}(Y)\times_{\wnus{n+i}(Y)}\wnus{n+i}(X)
		\]
	is an isomorphism. By~\ref{thm:W-main-geometric-finite-length(2)}, this is a map
	of \'etale algebraic spaces over $\wnus{n}(Y)$.
	Therefore, to show it is an isomorphism, it is enough 
	by~\ref{lem:isom-criterion-for-etale-maps} to show that $\red{(g\times_S S_0)}$
	and $\red{(g\times_S S')}$ are isomorphisms, where $S_0=\Spec \sO_S/\m$ and $S'=S-S_0$.

	It is easy check that $g\times_S S'$ is an isomorphism.
	Write $X_0=X\times_S S_0$, $Y_0=Y\times_S S_0$, and let $F$ denote the $q$-th power
	Frobenius map.
	Then the map $\red{(g\times_S S_0)}$, can be identified with $\red{h}$, where 
	$h\:X_0\to Y_0\times_{F^i,Y_0} X_0$ is the map induced by the diagram
		\[
		\xymatrix{
			X_0 \ar^{F^i}[r]\ar[d] & X_0 \ar[d] \\
			Y_0 \ar^{F^i}[r] & Y_0.
		}
		\]
	This diagram is cartesian (SGA 5 XV \S 1, Proposition 2(c)\cite{SGA5}), 
	and so $h$ and $\red{h}$ are isomorphisms.
\end{proof}

\begin{corollary}\label{cor:geometric-W-preserves-finite-etale-limits}
	Let $X$ be an algebraic space over $S$.
	\begin{enumerate}
		\item Let 
				\[
				\displayfork{U}{Y}{Z}
				\]
			be an equalizer diagram of algebraic spaces over $X$.
			If $Z$ is \'etale over $X$, then the induced diagram
				\[
				\displayfork{\wnus{n}(U)}{\wnus{n}(Y)}{\wnus{n}(Z)}
				\]
			is also an equalizer diagram.
		\item Let $(Y_i)_{i\in I}$ be a finite diagram of
			\'etale algebraic $X$-spaces.  Then the following natural map
			is an isomorphism:
				\[
				\wnus{n}(\limm_{i\in I} Y_i) \longisomap \limm_{i\in I} \wnus{n}(Y_i).
				\]
			Here the limits are taken in the category of $X$-spaces.
	\end{enumerate}
\end{corollary}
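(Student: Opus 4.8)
The plan is to deduce both parts from Theorem~\ref{thm:W-main-geometric-finite-length(2)}, whose part~(c) says that $\wnus{n}$ converts a fiber product taken along an \'etale map into the corresponding fiber product of the $\wnus{n}$-images. The crux is part~(a): the point is to exhibit the equalizer as exactly such an \'etale fiber product, after which part~(b) follows formally.

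For part~(a), write $a,b\:Y\to Z$ for the two structure maps over $X$ and let $\Delta_Z\:Z\to Z\times_X Z$ be the diagonal. On $T$-points one checks that the equalizer $U$ is the pullback of $\Delta_Z$ along $(a,b)\:Y\to Z\times_X Z$. Because $Z\to X$ is \'etale, hence unramified, the diagonal $\Delta_Z$ is an open immersion, in particular \'etale. Applying Theorem~\ref{thm:W-main-geometric-finite-length(2)}(c) to the \'etale map $\Delta_Z$ and the $(Z\times_X Z)$-space $Y$ gives
$$
\wnus{n}(U) \isomap \wnus{n}(Z)\times_{\wnus{n}(Z\times_X Z)}\wnus{n}(Y).
$$
A second application of~(c), now to $Z\to X$, identifies $\wnus{n}(Z\times_X Z)$ with $\wnus{n}(Z)\times_{\wnus{n}(X)}\wnus{n}(Z)$; and since that isomorphism is built from $(\wnus{n}(\pr_1),\wnus{n}(\pr_2))$ and $\wnus{n}$ is a functor, the map $\wnus{n}(\Delta_Z)$ is carried to the diagonal of $\wnus{n}(Z)$ over $\wnus{n}(X)$. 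Hence $\wnus{n}(U)$ is the pullback of this diagonal along $(\wnus{n}(a),\wnus{n}(b))$, which is precisely the equalizer of $\wnus{n}(a)$ and $\wnus{n}(b)$.

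For part~(b), I would invoke the standard presentation of a finite limit as an equalizer of two maps between finite products,
$$
\limm_{i\in I} Y_i = \mathrm{eq}\Big( \textstyle\prod_{i} Y_i \rightrightarrows \prod_{(u\: i\to j)} Y_j \Big),
$$
where both products are taken over $X$ and are finite since $I$ is a finite category. Iterating Theorem~\ref{thm:W-main-geometric-finite-length(2)}(c) shows that $\wnus{n}$ sends a finite product of \'etale $X$-spaces to the product of their $\wnus{n}$-images, the empty product being the terminal object $X$ for which the claim is trivial. Every object appearing here is \'etale over $X$---in particular the right-hand product, which is the target to which part~(a) is applied---so part~(a) shows that $\wnus{n}$ preserves the displayed equalizer. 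Stringing these natural identifications together gives the asserted isomorphism.

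The main obstacle is the bookkeeping in part~(a): recognizing the equalizer as a pullback of the \emph{\'etale} diagonal $\Delta_Z$, and then matching the two invocations of Theorem~\ref{thm:W-main-geometric-finite-length(2)}(c) so that $\wnus{n}(\Delta_Z)$ is identified with the diagonal of $\wnus{n}(Z)$. Once~(a) is established, part~(b) is purely formal, the only verification being that each space produced by the product--equalizer decomposition stays \'etale over $X$ so that the earlier results remain applicable.
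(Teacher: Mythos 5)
Your proposal is correct and takes essentially the same route as the paper: part (a) realizes the equalizer as $Y\times_{Z\times_X Z}Z$ along the \'etale diagonal $\Delta_Z$ and applies Theorem~\ref{thm:W-main-geometric-finite-length(2)}(\ref{part:W-main-geometric-finite-length(2)-c}) twice, and part (b) reduces finite limits to finite products over $X$ and equalizers, exactly as in the paper. The only difference is that you spell out details the paper leaves implicit, such as the identification of $\wnus{n}(\Delta_Z)$ with the diagonal of $\wnus{n}(Z)$ over $\wnus{n}(X)$ and the check that the products in (b) remain \'etale over $X$.
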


\begin{proof}
	(a): Since the structure map $Z\to X$ is \'etale,
	so is the diagonal map $Z\to Z\times_X Z$. And since $U$ is
	$Y\times_{Z\times_X Z}Z$, we have 
	by~\ref{thm:W-main-geometric-finite-length(2)}(\ref{part:W-main-geometric-finite-length(2)-c})
	\begin{align*}
		\wnus{n}(U) &= \wnus{n}(Y) \times_{\wnus{n}(Z\times_X Z)} \wnus{n}(Z) \\
					&= \wnus{n}(Y) \times_{(\wnus{n}(Z)\times_{\wnus{n}(X)} \wnus{n}(Z))} 
						\wnus{n}(Z)
	\end{align*}
	Thus $\wnus{n}(U)$ is the equalizer of the two induced maps 
	$\wnus{n}(Y)\rightrightarrows \wnus{n}(Z)$.	

	(b): To show a functor preserves finite limits, it is sufficient
	to show it preserves finite products
	and equalizers of pairs of arrows. The first follows 
	from~\ref{thm:W-main-geometric-finite-length(2)}(c), and the second from part (a) above. 
\end{proof}

\begin{corollary}\label{cor:W-preserves-open-immersions}
	Let $j\:U\to X$ be an open immersion of algebraic spaces.
	Then the map $\wnus{n}(j)\:\wnus{n}(U)\to\wnus{n}(X)$ is an open immersion.
	If $X$ is a scheme, then so is $\wnus{n}(X)$.
\end{corollary}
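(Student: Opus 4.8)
The plan is to prove that $\wnus{n}(j)$ is an open immersion by showing it is simultaneously étale and a monomorphism, and then to invoke the standard fact that an étale monomorphism of algebraic spaces is an open immersion. The étale part is immediate: an open immersion is in particular étale, so $\wnus{n}(j)$ is étale by \ref{thm:W-main-geometric-finite-length(2)}(\ref{part:W-main-geometric-finite-length(2)-a}).

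For the monomorphism part, I would use the criterion that a map is a monomorphism exactly when its diagonal is an isomorphism. Because $j$ is a monomorphism, its diagonal $\Delta_j\:U\to U\times_X U$ is an isomorphism. Since $j$ is étale, \ref{thm:W-main-geometric-finite-length(2)}(\ref{part:W-main-geometric-finite-length(2)-c}) supplies a canonical isomorphism $\wnus{n}(U\times_X U)\isomap\wnus{n}(U)\times_{\wnus{n}(X)}\wnus{n}(U)$ via $(\wnus{n}(\pr_1),\wnus{n}(\pr_2))$; composing this with $\wnus{n}(\Delta_j)$ and using $\pr_i\circ\Delta_j=\id$ shows that $\wnus{n}(\Delta_j)$ is carried precisely to the diagonal of $\wnus{n}(j)$. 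As $\Delta_j$ is an isomorphism, so is $\wnus{n}(\Delta_j)$, and hence the diagonal of $\wnus{n}(j)$ is an isomorphism; therefore $\wnus{n}(j)$ is a monomorphism, and together with the étale property, an open immersion.

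For the last assertion, suppose $X$ is a scheme and pick an affine open cover $(U_i)_{i\in I}$. Each inclusion $U_i\to X$ is an open immersion, so by the part just proved each $\wnus{n}(U_i)\to\wnus{n}(X)$ is an open immersion, and each $\wnus{n}(U_i)$ is affine by \ref{pro:W_n-of-affine-is-affine}. By \ref{cor:etale-chart-presentation-of-wnus} the family $(\wnus{n}(U_i))_{i\in I}$ covers $\wnus{n}(X)$; since every member is an open immersion, this is an open cover of the algebraic space $\wnus{n}(X)$ by affine schemes, and hence $\wnus{n}(X)$ is a scheme. I do not expect any step to present a serious obstacle: the only points requiring care are verifying that $\wnus{n}(\Delta_j)$ is genuinely the diagonal of $\wnus{n}(j)$ under the identification of \ref{thm:W-main-geometric-finite-length(2)}(\ref{part:W-main-geometric-finite-length(2)-c}), and the two standard background facts that an étale monomorphism is an open immersion and that an algebraic space with an affine open cover is a scheme.
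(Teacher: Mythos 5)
Your proposal is correct and follows essentially the same route as the paper: identify open immersions with \'etale monomorphisms, get \'etaleness from \ref{thm:W-main-geometric-finite-length(2)}(\ref{part:W-main-geometric-finite-length(2)-a}), use \ref{thm:W-main-geometric-finite-length(2)}(\ref{part:W-main-geometric-finite-length(2)-c}) to identify the diagonal of $\wnus{n}(j)$ with $\wnus{n}(\Delta_j)$, and then cover $\wnus{n}(X)$ by the $\wnus{n}(U_i)$ via \ref{cor:etale-chart-presentation-of-wnus} for the scheme statement. Your only deviation is making the cover affine explicitly (so that \ref{pro:W_n-of-affine-is-affine} applies), which is a harmless sharpening of the paper's argument.
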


\begin{proof}
	An open immersion is the same as an \'etale monomorphism.
	By~\ref{thm:W-main-geometric-finite-length(2)}, $\wnus{n}(j)$ is \'etale, and so
	we only need to show it is a monomorphism or, equivalently, that its diagonal map
	is an isomorphism. By
	\ref{thm:W-main-geometric-finite-length(2)}(\ref{part:W-main-geometric-finite-length(2)-c}),
	the diagonal map of $\wnus{n}(j)$ agrees with $\wnus{n}(\Delta_j)$, where $\Delta_j$
	is the diagonal map $U\to U\times_X U$ of $j$. Because $j$ is a monomorphism, $\Delta_j$
	is an isomorphism, and hence so is $\wnus{n}(\Delta_j)$.
	Therefore the diagonal map of $\wnus{n}(j)$ is an isomorphism,
	and so $\wnus{n}(j)$ is a monomorphism.

	Now suppose $X$ is a scheme. Let $(U_i)_{i\in I}$ be an open cover of $X$.
	By~\ref{cor:etale-chart-presentation-of-wnus}, $\wnus{n}(X)$ is an algebraic space covered by 
	the $\wnus{n}(U_i)$, and by the above, each map $\wnus{n}(U_i)\to\wnus{n}(X)$ is an open 
	immersion. Therefore $X$ is a scheme.
\end{proof}

\begin{corollary}\label{cor:gh-is-surj-and-integral-geometric}
	Let $X$ be an algebraic space over $S$.
	\begin{enumerate}
		\item The map $\gh{\leq n}\:\coprod_{[0,n]}X \to \wnus{n}(X)$ is surjective and integral,
		and the kernel $I$ of the induced map 
			\[
			\sO_{\wnus{n}(X)}\to \gh{\leq n*}\bigl(\sO_{\coprod_{[0,n]}X}\bigr)
			\] 
		satisfies $I^{2^N}=0$, where $N=\sum_{\m}n_{\m}$.
		\item The map $\gh{0}\:X\to\wnus{n}(X)$ is a closed immersion.
	\end{enumerate}
\end{corollary}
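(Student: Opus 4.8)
The plan is to deduce both parts from the affine case treated in Lemma~\ref{lem:gh-integral-and-surj-multi-prime}, exploiting that each property in question — being integral, surjective, and having $I^{2^N}=0$ in part~(a), and being a closed immersion in part~(b) — is étale-local on the target of the relevant map into $\wnus{n}(X)$. The reduction is powered by the étale-chart presentation of $\wnus{n}(X)$ from Corollary~\ref{cor:etale-chart-presentation-of-wnus} together with the ghost-map base change of Corollary~\ref{cor:geometric-W-etale-base-change}(c). Concretely, I would choose an étale cover $(U_i)_{i\in I}$ of $X$ with each $U_i$ an affine object of $\affrel_S$, so that by Corollary~\ref{cor:etale-chart-presentation-of-wnus} the family $\big(\wnus{n}(U_i)\big)_{i\in I}$ is an étale cover of $\wnus{n}(X)$. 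The essential compatibility is that $\gh{\leq n}$ pulls back correctly along these chart maps: applying Corollary~\ref{cor:geometric-W-etale-base-change}(c) to the étale map $U_i\to X$ for each ghost index $k\in[0,n]$ and taking the disjoint union over $k$, the square
$$
\xymatrix{
\coprod_{[0,n]} U_i \ar^-{\gh{\leq n}}[r]\ar[d] & \wnus{n}(U_i) \ar[d] \\
\coprod_{[0,n]} X \ar^-{\gh{\leq n}}[r] & \wnus{n}(X)
}
$$
is cartesian; that is, the base change of $\gh{\leq n}$ for $X$ along $\wnus{n}(U_i)\to\wnus{n}(X)$ is $\gh{\leq n}$ for $U_i$.

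For part~(a), since each $U_i$ maps into some affine open $\Spec R$ of $S$ and integrality together with nilpotence of the structure ideal are étale- (indeed fppf-) local on the target, it suffices to analyse $\gh{\leq n}$ for $U_i=\Spec A$. This is $\Spec$ of the ring homomorphism $\gh{\leq n}\:W_{R,n}(A)\to A^{[0,n]}$, which by Lemma~\ref{lem:gh-integral-and-surj-multi-prime} is integral with kernel $I$ satisfying $I^{2^N}=0$. Nilpotence of the structure ideal sheaf descends because its formation commutes with the flat base change $\wnus{n}(U_i)\to\wnus{n}(X)$ (the chart maps are étale, hence flat, and jointly surjective). Surjectivity is not stated in the lemma but follows from it: the factorization $W_{R,n}(A)\twoheadrightarrow W_{R,n}(A)/I\hookrightarrow A^{[0,n]}$ exhibits an injective integral extension, so $\Spec A^{[0,n]}\to\Spec\big(W_{R,n}(A)/I\big)$ is surjective by lying-over, while $\Spec\big(W_{R,n}(A)/I\big)=\Spec W_{R,n}(A)$ because $I$ is nilpotent.

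For part~(b), the map $\gh{0}$ is the restriction of $\gh{\leq n}$ to the $0$th summand, so taking only $k=0$ in the square above shows that its pullback along $\wnus{n}(U_i)\to\wnus{n}(X)$ is $\gh{0}$ for $U_i$. As being a closed immersion is étale-local on the target, it is enough to observe that $\gh{0}$ for $U_i=\Spec A$ is $\Spec$ of the ghost-component homomorphism $W_{R,n}(A)\to A$, which is surjective (it is the truncation $W_{R,n}(A)\twoheadrightarrow W_{R,0}(A)=A$), hence a closed immersion.

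The real content here is already packaged into Corollaries~\ref{cor:etale-chart-presentation-of-wnus} and~\ref{cor:geometric-W-etale-base-change}(c), so the main obstacle is not conceptual but careful bookkeeping: one must verify the étale-local descent of the three target-local properties, in particular that the structure-ideal sheaf $I$ and its vanishing to order $2^N$ are correctly computed on the charts (using flatness and joint surjectivity of the chart maps), and one must separate off surjectivity, which Lemma~\ref{lem:gh-integral-and-surj-multi-prime} delivers only implicitly through integrality and nilpotence of the kernel.
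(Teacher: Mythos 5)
Your proposal is correct and takes essentially the same approach as the paper: both arguments étale-localize the target-local properties using the chart presentation of $\wnus{n}(X)$ and the cartesian ghost squares of \ref{cor:geometric-W-etale-base-change}, reduce to the affine case, and invoke \ref{lem:gh-integral-and-surj-multi-prime}. The only cosmetic differences are that you spell out the lying-over-plus-nilpotent-kernel argument for surjectivity in (a), which the paper leaves implicit in its citation of the lemma, and in (b) you identify $\gh{0}\:W_n(A)\to A$ with the truncation projection onto $W_0(A)=A$ rather than deducing its surjectivity from the Teichm\"uller section as the paper does.
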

\begin{proof}
	All the properties in question are \'etale-local on $\wnus{n}(X)$, and hence $S$.
	Therefore, we may assume that $S$ is affine,
	by~(\ref{eq:localization-of-S-commutes-with-wnus}), and that $X$ is affine,
	by~\ref{cor:geometric-W-etale-base-change}(b).	In this case, (a) was proved 
	in~\ref{lem:gh-integral-and-surj-multi-prime}, and (b) follows from the surjectivity
	of the map $\gh{0}\:W_n(A)\to A$, which follows from the existence of the Teichm\"uller
	section (1.21), say.
\end{proof}

\begin{corollary}\label{cor:wnus-is-faithful}
	The functor $\wnus{n}\:\AlgSp\to\AlgSp$ is faithful.
\end{corollary}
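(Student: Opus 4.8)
The plan is to deduce faithfulness from the naturality of the ghost map $\gh{0}$ together with the fact, proved in~\ref{cor:gh-is-surj-and-integral-geometric}(b), that $\gh{0}\:Y\to\wnus{n}(Y)$ is a closed immersion. The point is that $\wnus{n}$ comes equipped with a natural transformation from the identity functor, and this transformation is objectwise a monomorphism, which is enough to force faithfulness formally.

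First I would recall that $\gh{0}$, as introduced in~\ref{subsec:natural-maps}, is a natural transformation from the identity functor on algebraic spaces over $S$ to $\wnus{n}$. Concretely, for any map $f\:X\to Y$ the ghost map satisfies the naturality relation
	$$
	\wnus{n}(f)\circ\gh{0} = \gh{0}\circ f.
	$$

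Now suppose $f,g\:X\to Y$ are two maps of algebraic spaces over $S$ with $\wnus{n}(f)=\wnus{n}(g)$. Applying the naturality relation to each map and using the hypothesis, I would obtain
	$$
	\gh{0}\circ f = \wnus{n}(f)\circ\gh{0} = \wnus{n}(g)\circ\gh{0} = \gh{0}\circ g.
	$$
Since $\gh{0}\:Y\to\wnus{n}(Y)$ is a closed immersion by~\ref{cor:gh-is-surj-and-integral-geometric}(b), it is in particular a monomorphism; hence $f=g$, which is exactly the statement that $\wnus{n}$ is faithful.

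I expect no serious obstacle: the corollary is a purely formal consequence of the preceding results. The only two inputs to verify are the naturality of $\gh{0}$, which holds by its construction in~\ref{subsec:natural-maps}, and the monomorphism property, which is immediate from $\gh{0}$ being a closed immersion. In particular, the argument uses neither the inductive lemma nor any hypothesis on $\ptst$ beyond what is already built into~\ref{cor:gh-is-surj-and-integral-geometric}.
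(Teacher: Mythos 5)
Your proof is correct. The ghost map $\gh{0}$ is indeed a natural transformation from the identity functor to $\wnus{n}$ (it is induced from the affine ghost maps by colimit extension, exactly as in~\ref{subsec:natural-maps}), each component $\gh{0}\:Y\to\wnus{n}(Y)$ is a closed immersion and hence a monomorphism by~\ref{cor:gh-is-surj-and-integral-geometric}(b), and your naturality chase $\gh{0}\circ f=\wnus{n}(f)\circ\gh{0}=\wnus{n}(g)\circ\gh{0}=\gh{0}\circ g$ then cancels to give $f=g$. The paper reaches the same conclusion from the same key input but packages it through the adjunction: it factors $\gh{0}$ as the composite $X\xright{\eps}\wnls{n}\wnus{n}(X)\xright{\cgh{0}}\wnus{n}(X)$, where $\eps$ is the unit of the adjunction $\wnus{n}\dashv\wnls{n}$, deduces that $\eps$ is a monomorphism (a first factor of a monomorphism is one), and then invokes the standard fact that a left adjoint is faithful if and only if all unit components are monomorphisms. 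Your route is marginally more elementary and more general: it never uses that $\wnus{n}$ is a left adjoint, only that it receives a natural transformation from a faithful functor (here the identity) with monic components. The paper's version buys the small by-product that the unit $\eps$ is objectwise a monomorphism, which is of some independent interest; in both arguments the only nontrivial geometric input is that $\gh{0}$ is a closed immersion.
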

\begin{proof}
	The map $\gh{0}$ is easily seen to be equal to the composition
		\[
		X \longlabelmap{\eps} \wnls{n}\wnus{n} X \longlabelmap{\cgh{0}} \wnus{n} X,
		\]
	where $\eps$ is the unit of the evident adjunction, and 
	$\cgh{0}$ is as in~(\ref{map:wnls-change-of-n-projection-map}).
	Therefore by~\ref{cor:gh-is-surj-and-integral-geometric}(b), the map $\eps$ is a monomorphism.
	Equivalently, $\wnus{n}$ is faithful.
\end{proof}

\subsection{} \emph{$\wnus{n}$ is generally not full.}
For example, if we consider the usual $p$-typical Witt vectors over $\bZ$
of length $n$, and if $A$ and $B$ are $\bZ[1/p]$-algebras, then we have
\[
	\Hom_{W_n(\bZ)}(W_n(A),W_n(B)) = \Hom_{\bZ^{[0,n]}}(A^{[0,n]},B^{[0,n]})
		= \Hom(A,B)^{[0,n]},
\]
which is usually not the same as $\Hom(A,B)$.  To be sure, the entire point of the theory
is in applying $W$ to rings where $p$ is not invertible.

\begin{corollary}\label{cor:plethysm-base-change}
	Let $f\:X\to Y$ be an \'etale map of algebraic spaces over $S$.  Then the diagram
		\begin{equation} \label{diag:plethysm-base-change}
		\xymatrix{
			\wnus{m}\wnus{n}(X) \ar^-{\mu_X}[r]\ar_{\wnus{m}\wnus{n}(f)}[d]			
				& \wnus{m+n}(X)\ar^{\wnus{m+n}(f)}[d] \\
			\wnus{m}\wnus{n}(Y) \ar^-{\mu_Y}[r]					& \wnus{m+n}(Y)
		}
		\end{equation}
	is cartesian, where $\mu_X$ and $\mu_Y$ are the plethysm maps 
	of~(\ref{map:geometric-plethysm}).
\end{corollary}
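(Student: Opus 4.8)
The plan is to show that the comparison map
$$
g\:\wnus{m}\wnus{n}(X) \longmap \wnus{m}\wnus{n}(Y)\times_{\wnus{m+n}(Y)}\wnus{m+n}(X)
$$
induced by the square is an isomorphism. First I would reduce to the case where $\ptst$ consists of a single ideal $\m$, using the iterate isomorphism~(\ref{eq:geom-witt-iterate-upper-s}) together with the compatibility of the plethysm map~(\ref{map:geometric-plethysm}) with the iterate; this mirrors the reduction in~\ref{cor:geometric-W-etale-base-change}. Since the statement is also local on $S$ by~(\ref{eq:localization-of-S-commutes-with-wnus}), I may assume $S$ is affine. Applying~\ref{thm:W-main-geometric-finite-length(2)}(\ref{part:W-main-geometric-finite-length(2)-a}) to $f$, to $\wnus{n}(f)$, and to $\wnus{m+n}(f)$ shows that all three vertical maps in the diagram are \'etale; hence $g$ is a map of algebraic spaces \'etale over $\wnus{m}\wnus{n}(Y)$. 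By~\ref{lem:isom-criterion-for-etale-maps} it therefore suffices to check that $\red{(g\times_S S_0)}$ and $\red{(g\times_S S')}$ are isomorphisms, where $S_0=\Spec\sO_S/\m$ and $S'=S-S_0$.

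Over $S'$ the ideal $\m$ is the unit ideal, so the total ghost maps $\gh{\leq k}$ are isomorphisms (the remark following~(\ref{eq:total-co-ghost-map-geometric})). Hence $\wnus{k}$ becomes $\coprod_{[0,k]}(\vbl)$ over $S'$, and both $\wnus{m}\wnus{n}(X)\times_S S'$ and $\wnus{m+n}(X)\times_S S'$ become disjoint unions of copies of $X\times_S S'$ indexed by $[0,m]\times[0,n]$ and $[0,m+n]$ respectively. By naturality in $X$, the map $\mu_X\times_S S'$ is induced by a single function of index sets $\phi\:[0,m]\times[0,n]\to[0,m+n]$, and $\mu_Y\times_S S'$ is induced by the same $\phi$. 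The resulting square of disjoint unions, with vertical maps $\coprod f$ and horizontal maps given by $\phi$, is cartesian; so $g\times_S S'$ is already an isomorphism.

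Over $S_0$ I would use~\ref{lem:main-geometric-finite-length}(\ref{lem-mgfl-part2}): each zeroth ghost map $\gh{0}$ is a closed immersion with square-zero ideal after $\times_S S_0$. Applying this repeatedly identifies
$$
\red{\big(\wnus{m}\wnus{n}(X)\times_S S_0\big)}=\red{(X\times_S S_0)}=\red{\big(\wnus{m+n}(X)\times_S S_0\big)}
$$
via the maps $\gh{0}\nc\gh{0}$ and $\gh{0}$, and similarly for $Y$. Since the plethysm map is compatible with the zeroth ghost component, i.e.\ $\mu_X\nc\gh{0}\nc\gh{0}=\gh{0}$, the maps $\red{(\mu_X\times_S S_0)}$ and $\red{(\mu_Y\times_S S_0)}$ become identities. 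Because the two spaces being compared are \'etale over $\wnus{m}\wnus{n}(Y)$, their reductions are obtained by base change to $\red{(Y\times_S S_0)}$ (for $Z$ \'etale over $B$ one has $\red{Z}=Z\times_B\red{B}$), and the reduced square collapses to the manifestly cartesian square with identity horizontal maps over $\red{(Y\times_S S_0)}$. Hence $\red{(g\times_S S_0)}$ is an isomorphism.

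I expect the main obstacle to be the analysis over $S_0$: one must carefully pass to reduced spaces through the square-zero thickenings of~\ref{lem:main-geometric-finite-length}(\ref{lem-mgfl-part2}), invoke the compatibility of the plethysm map with the zeroth ghost, and use that reduction commutes with the \'etale base changes involved, so that the reduced fiber product really is the fiber product of the reduced corners. The reduction to a single prime is routine but also relies on the plethysm map respecting the iterate decomposition~(\ref{eq:geom-witt-iterate-upper-s}), a compatibility that should be recorded explicitly before it is used.
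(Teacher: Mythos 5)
Your proposal is correct and follows essentially the same route as the paper's proof: reduce to a single prime via~(\ref{eq:geom-witt-iterate-upper-s}), note the comparison map $g$ is a map of spaces \'etale over $\wnus{m}\wnus{n}(Y)$ so that~\ref{lem:isom-criterion-for-etale-maps} applies, verify cartesianness over $S'$ by the explicit coproduct description of the plethysm in ghost coordinates, and over $S_0$ use the square-zero thickenings of~\ref{lem:main-geometric-finite-length}(\ref{lem-mgfl-part2}) together with $\mu_T\nc\gh{0}\nc\gh{0}=\gh{0}$ to see that the reduced plethysm maps are isomorphisms, whence the reduced square is cartesian. The only cosmetic differences are your (harmless, unneeded) reduction to affine $S$ and your phrasing of the step that reduction commutes with the relevant fiber product, which the paper handles via its auxiliary \'etale comparison map $c$.
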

\begin{proof}
	We use the usual method, as in \ref{cor:geometric-W-etale-base-change}.
	Let us assume that $\ptst$ consists of one ideal $\m$.
	This is sufficient by~\ref{thm:W-main-geometric-finite-length(2)},
	(\ref{eq:geom-witt-iterate-upper-s}), and a short argument we
	we leave to the reader.
	
	By~\ref{thm:W-main-geometric-finite-length(2)}, the map
		\begin{equation} \label{eq:local-4838}
		\wnus{m}\wnus{n}(X) \longlabelmap{g} \wnus{m+n}(X) \times_{\wnus{m+n}(Y)} 
				\wnus{m}\wnus{n}(Y)
		\end{equation}
	is \'etale, and so we only need to show $g\times_S S'$ and $\red{(g\times_S S_0)}$
	are isomorphisms, where $S_0=\Spec \sO_S/\m$ and $S'=S-S_0$.
	
	Consider $g\times_S S'$ first. By~(\ref{eq:localization-of-S-commutes-with-wnus}), we may
	assume $\m$ is the unit ideal. Then diagram~(\ref{diag:plethysm-base-change}) can be identified
	with the diagram
		\[
		\entrymodifiers={+!!<0pt,\fontdimen22\textfont2>}
		\def\objectstyle{\displaystyle}
		\xymatrix{
			\coprod_{[0,m]}\coprod_{[0,n]} X \ar^-{\mu_X}[r]\ar[d]			
				& \coprod_{[0,m+n]} X\ar[d] \\
			\coprod_{[0,m]}\coprod_{[0,n]} Y \ar^-{\mu_Y}[r]				
				& \coprod_{[0,m+n]} Y,
		}
		\]
	where each map $\mu$ sends component $(i,j)\in [0,m]\times[0,n]$ 
	identically to component $i+j\in[0,m+n]$.
	Since this diagram is cartesian, $g\times_S S'$ is an isomorphism.

	Now consider $\red{(g\times_S S_0)}$.
	Write $(\vbl)'$ for the functor $T\mapsto \red{(S_0\times_S T)}$;
	thus we want to show $g'$ is an isomorphism.
	Consider the commutative diagram	
		\[
		\xymatrix{
		\wnus{m}\wnus{n}(X)' \ar^-{g'}[r]\ar^b[dr]			
		& \bigl(\wnus{m+n}(X) \times_{\wnus{m+n}(Y)} \wnus{m}\wnus{n}(Y)\bigr)'
				\ar^{c}[d] \\
			& \wnus{m+n}(X)' \times_{\wnus{m+n}(Y)'} 
				\wnus{m}\wnus{n}(Y)',
		}
		\]
	where $c$ is the evident map induced by the universal property of products. 
	Since the functor $(\vbl)'$ sends \'etale maps to \'etale maps, $c$ is \'etale;
	also $\red{c}$ is an isomorphism, and so $c$ is an isomorphism.
 	Therefore it is enough to show that $b$ is isomorphism---in other words, 
	that diagram~(\ref{diag:plethysm-base-change}) becomes cartesian after
	applying $(\vbl)'$. 

	To do this, it is enough to show that for $T=X,Y$ (or any algebraic 
	space over $S$), the map $\mu'_T\:\wnus{m}\wnus{n}(T)' \to \wnus{m+n}(T)'$
	is an isomorphism. Consider the commutative diagram
		\[
		\xymatrix{
				& T \ar^{\gh{0}}[dr]\ar_{\gh{0}}[dl] \\
		\wnus{n}(T) \ar_-{\gh{0}}[r]
				& \wnus{m}\wnus{n}(T) \ar_-{\mu_T}[r]
				& \wnus{m+n}(T). \\
		}
		\]
	If we apply $S_0\times\vbl$ to this diagram, all maps labeled $\gh{0}$
	become closed immersions defined by square-zero ideals, 
	by~\ref{lem:main-geometric-finite-length}(\ref{lem-mgfl-part2}).
	Thus they all become isomorphisms after applying $(\vbl)'$, and therefore
	so does $\mu_T$.
\end{proof}

\section{Preservation of geometric properties by $\wnus{n}$}
\label{sec:geometric-properties-of-W-upper-star}

We continue with the notation of~\ref{subsec:supramaximal-def-global-base-S}.

\spacesubsec{Sheaf-theoretic properties}

\begin{proposition}\label{pro:W-preserves-qcom}
	Let $X$ be a quasi-compact object of $\Space_S$. Then $\wnus{n}(X)$ is quasi-compact.
\end{proposition}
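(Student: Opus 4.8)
The plan is to mimic the proof of the analogous statement for $\wnls{n}$ in \ref{pro:wnls-preserves-qcom-qsep}(a). In fact the argument is slightly simpler here, because $\wnus{n}$, being a left adjoint, automatically preserves both coproducts and epimorphisms, so no separate preservation results are needed.

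First I would use the quasi-compactness of $X$ to extract a finite covering family. The large family of all maps from objects of $\affrel_S$ to $X$ is a covering family of $X$, so by quasi-compactness it admits a finite subcover $(U_i)_{i\in I}$ with each $U_i\in\affrel_S$ and $I$ finite. Since $I$ is finite and each $U_i$ is affine, the disjoint union $U=\coprod_{i\in I}U_i$ is again affine, and the induced map $U\to X$ is an epimorphism in $\Space_S$.

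Next I would apply $\wnus{n}$. Because $\wnus{n}$ is a left adjoint, it preserves colimits, hence both coproducts and epimorphisms; thus $\wnus{n}(U)=\coprod_{i\in I}\wnus{n}(U_i)$ and the map $\wnus{n}(U)\to\wnus{n}(X)$ is an epimorphism. By \ref{pro:W_n-of-affine-is-affine} each $\wnus{n}(U_i)$ is affine, and since $I$ is finite the disjoint union $\wnus{n}(U)$ is affine, hence quasi-compact.

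Finally, $\wnus{n}(X)$ receives an epimorphism from the quasi-compact space $\wnus{n}(U)$, and an epimorphic image of a quasi-compact space is quasi-compact (SGA 4 VI 1.3~\cite{SGA4.2}); therefore $\wnus{n}(X)$ is quasi-compact. There is no substantial obstacle in this argument: the only point worth flagging is that the finiteness of the cover---which is exactly what quasi-compactness of $X$ supplies---is what guarantees that $\wnus{n}(U)$ is affine, and hence quasi-compact. Everything else is formal, following from the left-adjoint properties of $\wnus{n}$ together with \ref{pro:W_n-of-affine-is-affine}.
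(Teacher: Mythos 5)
Your proof is correct and takes essentially the same route as the paper's: extract a finite affine cover $U=\coprod_{i\in I}U_i\to X$, note $\wnus{n}(U)$ is affine by \ref{pro:W_n-of-affine-is-affine}, and conclude that $\wnus{n}(X)$, being covered by this quasi-compact space, is quasi-compact. You merely spell out the steps the paper leaves implicit, namely that $\wnus{n}$, as a left adjoint, preserves coproducts and epimorphisms, and the SGA~4 VI 1.3 citation for quasi-compactness of an epimorphic image.
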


\begin{proof}
Since $X$ is quasi-compact, it has a finite cover $(U_i)_{i\in I}$ by affine schemes.
Therefore $\wnus{n}(\coprod_i U_i)$ is affine, and since
$\wnus{n}(X)$ is covered by this space, it must be quasi-compact.
\end{proof}

\spacesubsec{General localization}

\begin{proposition}\label{pro:obj-general-permanence-properties}
	Let $P$ be an \'etale-local property of algebraic spaces $X$ over $S$.
	For $\wnus{n}$ to preserve property $P$, it is sufficient that it do so when 
	$\ptst$ consists of one principal ideal and both $X$ and $S$ are affine schemes.
\end{proposition}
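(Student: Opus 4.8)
The plan is to build the general statement out of the restricted one by three successive reductions—on $\ptst$, on $S$, and on $X$—each powered by the \'etale-locality of $P$ together with one of the commutation isomorphisms from \ref{subsec:change-of-S} and \ref{cor:etale-chart-presentation-of-wnus}. Throughout, $\wnus{n}(X)$ is an algebraic space by \ref{thm:W-main-geometric-finite-length(1)}, so it is meaningful to ask whether it has $P$, and at each stage the key point will be that any space I produce from $X$ is \'etale over $X$ and therefore inherits $P$.

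First I would reduce to the case where $\ptst$ consists of a single ideal $\m$. Writing a partition $\ptst = \ptst' \smsmcoprod \ptst''$ with $\ptst' = \{\m\}$, the iteration isomorphism (\ref{eq:geom-witt-iterate-upper-s}) gives $\wnus{S,\ptst,n}(X) \iso \wnus{S,\ptst'',n''}(\wnus{S,\ptst',n'}(X))$. Granting the single-ideal case over an arbitrary base (which the remaining two steps establish), $\wnus{S,\ptst',n'}(X)$ satisfies $P$, and an induction on the cardinality of $\ptst$ applied to $\ptst''$ then shows $\wnus{S,\ptst,n}(X)$ does too. So from now on $\ptst = \{\m\}$.

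Next I would reduce to $S = \Spec R$ affine with $\m$ principal. Taking an affine open cover $(S_j)$ of $S$ and applying $S_j \times_S \vbl$, formula (\ref{eq:localization-of-S-commutes-with-wnus}) identifies $S_j \times_S \wnus{S,n}(X)$ with $\wnus{S_j,n}(S_j \times_S X)$; since these form an open, hence \'etale, cover of $\wnus{S,n}(X)$ and $P$ is \'etale-local, it suffices to treat each of them, and here $S_j \times_S X \to X$ is an open immersion so $S_j \times_S X$ still satisfies $P$. This lets me assume $S$ affine. A single supramaximal ideal becomes principal after a further localization—away from its point it is the unit ideal, while near its point $\sO_{S,\m}$ being a discrete valuation ring makes $\m$ locally generated by a uniformizer—so a second application of (\ref{eq:localization-of-S-commutes-with-wnus}) to such a cover reduces to $\m$ principal.

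Finally I would reduce to $X$ affine, which is the only step carrying real content. Choose an \'etale cover $(U_i)_{i \in I}$ of $X$ by affine schemes $U_i \in \aff_S$. By \ref{cor:etale-chart-presentation-of-wnus}, the family $\big(\wnus{n}(U_i)\big)_{i \in I}$ is an \'etale cover of $\wnus{n}(X)$, so by \'etale-locality $\wnus{n}(X)$ has $P$ as soon as every $\wnus{n}(U_i)$ does. But each $U_i$ is \'etale over $X$, hence satisfies $P$, and is an affine scheme over the affine base $S$ with $\m$ principal—precisely the situation we are permitted to assume. The main obstacle is concentrated entirely in this last step and is already resolved by \ref{cor:etale-chart-presentation-of-wnus}, which says $\wnus{n}$ carries \'etale covers to \'etale covers; the other two reductions are routine bookkeeping with the localization formulas, the only thing to watch being that each newly produced space is \'etale over its predecessor and so inherits $P$.
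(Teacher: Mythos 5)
Your proposal is correct and follows essentially the same route as the paper: reduce to a single ideal via the iteration isomorphism~(\ref{eq:geom-witt-iterate-upper-s}), then cover $X$ by affines and conclude from the fact that $\wnus{n}$ carries \'etale covers to \'etale covers together with the \'etale-locality of $P$. The only cosmetic difference is that you localize $S$ in a separate step via~(\ref{eq:localization-of-S-commutes-with-wnus}), whereas the paper chooses the affine \'etale cover $(U_i)$ so that each $U_i$ already lies over an affine open $S_i$ on which $\m$ is principal and invokes the base-independence identity~(\ref{eq:wnus-is-independent-of-S}) directly.
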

\begin{proof}
	When $\ptst$ is empty, $\wnus{n}$ is the identity functor.
	Therefore by~(\ref{eq:geom-witt-iterate-upper-s}), it is enough
	to consider the case where $\ptst$ consists of one ideal $\m$.
	
	Let $X$ be an algebraic space satisfying property $P$, and let
	$(U_i)_{i\in I}$ be an \'etale cover of $X$ such that
	each $U_i$ is affine and lies over an affine open subscheme $S_i$ of $S$ over which 
	$\m$ is principal. Because $P$ is \'etale local, each $U_i$ satisfies $P$; and since we have
	$\wnus{S,n}(U_i)=\wnus{S_i,n}(U_i)$, by (\ref{eq:wnus-is-independent-of-S}),
	so does each $\wnus{n}(U_i)$.
	But the spaces $\wnus{n}(U_i)$ form an affine \'etale cover of $\wnus{n}(X)$,
	by~\ref{thm:W-main-geometric-finite-length(2)}.
	Therefore $\wnus{n}(X)$ satisfies $P$.
\end{proof}

\begin{proposition}\label{pro:maps-local-permanence}
	Let	$P$ be a property of maps $f\:X\to Y$ of algebraic spaces which is \'etale-local on the 
	target. For $\wnus{n}$ to preserve property $P$, it is sufficient that it do so
	when $\ptst$ consists of one principal ideal, $S$ is affine, and $Y$ is affine. 
	
	If property $P$ is also \'etale-local on the source, then 
	we may further restrict to the case where $X$ is affine.
\end{proposition}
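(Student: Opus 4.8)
The plan is to run a dévissage in three stages, closely modeled on the proof of the object-version \ref{pro:obj-general-permanence-properties}, reducing the general assertion to the restricted hypothesis by repeatedly invoking étale-locality together with the compatibilities of $\wnus{n}$ recorded in \ref{thm:W-main-geometric-finite-length(2)} and \ref{cor:etale-chart-presentation-of-wnus}. Throughout, let $f\:X\to Y$ be a map of algebraic spaces over $S$ satisfying $P$; the goal is to show $\wnus{n}(f)$ satisfies $P$. First I would reduce to the case where $\ptst$ consists of a single ideal $\m$. By the iteration isomorphism~(\ref{eq:geom-witt-iterate-upper-s}), $\wnus{n}$ is, up to canonical isomorphism, a finite composite of single-prime functors (recall $\ptst$ may be taken finite). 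Since a composite of two functors each of which preserves $P$ again preserves $P$ --- if $\wnus{\ptst',n'}(f)$ has $P$, then so does $\wnus{\ptst'',n''}(\wnus{\ptst',n'}(f))$ --- it suffices to treat each single-prime factor, so from now on I assume $\ptst=\{\m\}$.

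Next comes the target-local reduction. I would choose an étale cover $(V_j)_{j\in J}$ of $Y$ in which each $V_j$ is affine and factors through an affine open subscheme $S_j\sub S$ over which $\m$ is principal. By \ref{thm:W-main-geometric-finite-length(2)}(a)--(b) (equivalently \ref{cor:etale-chart-presentation-of-wnus}), the family $\big(\wnus{n}(V_j)\big)_{j}$ is an étale cover of $\wnus{n}(Y)$. The base change of $\wnus{n}(f)$ along $\wnus{n}(V_j)\to\wnus{n}(Y)$ is, by \ref{thm:W-main-geometric-finite-length(2)}(c) applied to the étale map $V_j\to Y$ and the $Y$-space $X$, naturally identified with $\wnus{n}(f_{V_j})$, where $f_{V_j}\:V_j\times_Y X\to V_j$ is the étale base change of $f$. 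Since $V_j\times_Y X$ and $V_j$ both lie over $S_j$, formula~(\ref{eq:wnus-is-independent-of-S}) lets me compute this as $\wnus{S_j,n}(f_{V_j})$, a map over the affine base $S_j$ with affine target $V_j$ and $\m$ principal. Because $P$ is étale-local on the target, $f_{V_j}$ has $P$; hence the restricted hypothesis gives that $\wnus{S_j,n}(f_{V_j})$ has $P$. As every member of the cover $\big(\wnus{n}(V_j)\big)_j$ thus pulls $\wnus{n}(f)$ back to a map with $P$, étale-locality on the target forces $\wnus{n}(f)$ to have $P$.

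Finally, if $P$ is also étale-local on the source I would drop the affineness of $X$. After the reductions above I may assume $S$ and $Y$ are affine and $\m$ is principal. Cover $X$ by affine schemes $(U_i)_{i\in I}$ via étale maps $g_i\:U_i\to X$; by \ref{cor:etale-chart-presentation-of-wnus} the family $\big(\wnus{n}(U_i)\big)_i$ is an étale cover of $\wnus{n}(X)$, and each composite $\wnus{n}(U_i)\to\wnus{n}(X)\to\wnus{n}(Y)$ equals $\wnus{n}(f\circ g_i)$ by functoriality. Since $f\circ g_i\:U_i\to Y$ has $P$ (source-locality) and has affine source and target, the restricted hypothesis gives that $\wnus{n}(f\circ g_i)$ has $P$; source-locality then forces $\wnus{n}(f)$ to have $P$.

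The one step requiring care --- and the main potential obstacle --- is the identification in the target-local reduction of the base change of $\wnus{n}(f)$ with $\wnus{S_j,n}$ of the base-changed map: this is precisely where one must combine the étale base-change isomorphism \ref{thm:W-main-geometric-finite-length(2)}(c) with the independence-of-$S$ isomorphism~(\ref{eq:wnus-is-independent-of-S}) and check their compatibility, so that the cover genuinely reduces matters to the affine-base, affine-target situation assumed in the hypothesis. Once this identification is in place, the remaining manipulations are formal consequences of the definitions of \'etale-local on the target and on the source.
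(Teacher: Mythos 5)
Your proof is correct and follows essentially the same route as the paper's: reduction to a single ideal via~(\ref{eq:geom-witt-iterate-upper-s}), then a target-local dévissage using an affine étale cover of $Y$ lying over affine opens of $S$ where $\m$ is principal, with the base change identified via \ref{thm:W-main-geometric-finite-length(2)}(b)--(c) and~(\ref{eq:wnus-is-independent-of-S}), and finally the source-local step with an affine étale cover of $X$. The compatibility check you flag as the delicate point is indeed the crux, and the paper handles it exactly as you propose.
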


\begin{proof}
	Let $f\:X\to Y$ be a map satisfying $P$.
	As in the proof of \ref{pro:obj-general-permanence-properties}, it is enough
	to consider the case where $\ptst$ consists of one ideal $\m$.
	
	Let us show the first statement. 
	Let $(V_j)_{j\in J}$ be an \'etale cover of $Y$ such that
	each $V_j$ is affine and lies over an affine open subscheme
	subscheme $S_j$ of $S$ on which $\m$ is principal.
	Then $\bigl(\wnus{n} (V_j)\bigr)_{j\in J}$ is an \'etale cover of $\wnus{n}(Y)$,
	by~\ref{thm:W-main-geometric-finite-length(2)}(\ref{part:W-main-geometric-finite-length(2)-b}).
	Therefore $\wnus{n}(f)$ satisfies $P$ if its base change to each $\wnus{n}(V_j)$ does.  
	By~\ref{thm:W-main-geometric-finite-length(2)}(\ref{part:W-main-geometric-finite-length(2)-c}), 
	this base change can be identified with
		\[
		\wnus{n}(f_{V_j})\:\wnus{n}(V_j\times_Y X)\longmap\wnus{n}(V_j).
		\]
	Since $f_{V_j}$ satisfies $P$, so does $\wnus{S',n}(f_{V_j})$, by the assumptions
	of the proposition. By~(\ref{eq:wnus-is-independent-of-S}), we have 
	$\wnus{n}(f_{V_j})=\wnus{S',n}(f_{V_j})$, and so 
	$\wnus{n}(f_{V_j})$ also satisfies $P$.
		
	Now suppose property $P$ is also \'etale-local on the source.
	By what we just proved, we may assume $Y$ and $S$ are affine. 
	Let $(U_i)_{i\in I}$ be an \'etale cover of $X$, with each $U_i$ affine.
	Then each composition $U_i\to X\to Y$ satisfies $P$.
	Therefore so does each composition $\wnus{n}(U_i)\to\wnus{n}(X)\to\wnus{n}(Y)$.
	But again 
	by~\ref{thm:W-main-geometric-finite-length(2)}(\ref{part:W-main-geometric-finite-length(2)-b}),
	the spaces $\wnus{n}(U_i)$ form an \'etale cover of $\wnus{n}(X)$.
	Since $P$ is local on the source, $\wnus{n}(f)$ satisfies $P$.
\end{proof}

\spacesubsec{Affine properties of maps}

\begin{proposition}\label{pro:W-preserves-affine-local-properties}
	The following (affine) 
	properties of maps of algebraic spaces are preserved by $\wnus{n}$:
		\begin{enumerate}
			\item \label{pro:W-preserves-affine-local-properties:affine}
				affine,
			\item a closed immersion,
			\item integral,
			\item finite \'etale.
		\end{enumerate}
\end{proposition}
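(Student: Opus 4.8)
The plan is to dispatch all four properties through a single reduction. Each of \emph{affine}, \emph{closed immersion}, \emph{integral}, and \emph{finite \'etale} is an affine property and is \'etale-local on the target, so by~\ref{pro:maps-local-permanence} it suffices to treat the case where $\ptst$ is a single principal ideal $\m$, $S=\Spec R$ is affine, and $Y=\Spec A$ is affine. As each of the four properties forces $f$ to be affine, the source is then $X=\Spec B$ for an $A$-algebra $B$, and by~(\ref{eq:wnus-of-affine-is-W}) the morphism $\wnus{n}(f)$ is the map of affine schemes $\Spec W_{R,n}(B)\to\Spec W_{R,n}(A)$ attached to the ring homomorphism $W_{R,n}(A)\to W_{R,n}(B)$. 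Property~(\ref{pro:W-preserves-affine-local-properties:affine}) is then immediate. For a closed immersion the structure map is a surjection $A\surjmap B$; since a ring homomorphism acts on $W_{R,n}$ coordinatewise in the Witt components and hence carries surjections to surjections, the induced map $W_{R,n}(A)\to W_{R,n}(B)$ is again surjective, so $\wnus{n}(f)$ is a closed immersion.

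The substantive case is~(c), integrality, which I would handle with the ghost map and the nilpotence bound of~\ref{lem:gh-integral-and-surj-multi-prime}. Naturality of $\gh{\leq n}$ gives the commutative square
\[
\xymatrix{
W_{R,n}(A) \ar^-{W_{R,n}(f)}[r]\ar_{\gh{\leq n}}[d] & W_{R,n}(B) \ar^{\gh{\leq n}}[d] \\
A^{[0,n]} \ar^-{f^{[0,n]}}[r] & B^{[0,n]}.
}
\]
If $A\to B$ is integral then so is $A^{[0,n]}\to B^{[0,n]}$, and since the right-hand vertical map is integral by~\ref{lem:gh-integral-and-surj-multi-prime}, the composite $W_{R,n}(A)\to B^{[0,n]}$ is integral. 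Hence for $b\in W_{R,n}(B)$ the ghost image $\gh{\leq n}(b)$ satisfies a monic polynomial relation with coefficients in the image of $W_{R,n}(A)$. Lifting those coefficients along $W_{R,n}(f)$ and forming the corresponding monic expression $z\in W_{R,n}(B)$ in $b$, we get $\gh{\leq n}(z)=0$, so $z$ lies in the kernel $I$ of $\gh{\leq n}$ and $z^{2^N}=0$ with $N=\sum_{\m}n_{\m}$. Expanding $z^{2^N}=0$ yields a monic equation for $b$ over the image of $W_{R,n}(A)$, so $W_{R,n}(A)\to W_{R,n}(B)$ is integral. This passage from a ghost-level integral relation to an honest one, via the nilpotence of $I$, is the one genuinely nontrivial step; the rest is formal.

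Finally, part~(d) requires no further reduction: a morphism is finite \'etale exactly when it is both integral and \'etale, because an integral morphism that is locally of finite type---in particular an \'etale one---is finite. So if $f$ is finite \'etale, then $\wnus{n}(f)$ is \'etale by~\ref{thm:W-main-geometric-finite-length(2)}(\ref{part:W-main-geometric-finite-length(2)-a}) and integral by part~(c), hence finite \'etale. The whole argument thus rests on the ghost-map computation of~(c); the surjectivity input for~(b) and the characterization of finite \'etale maps used in~(d) are routine.
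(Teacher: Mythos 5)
Your proof is correct and follows essentially the same route as the paper's: reduction via \ref{pro:maps-local-permanence} to the affine, single-principal-ideal case, surjectivity of $W_n$ on rings for (b), and for (c) the integrality of the ghost map combined with the nilpotence of its kernel (the paper leaves the monic-lifting step you spell out implicit, and its written proof stops at (c), so your integral-plus-\'etale characterization for (d) supplies exactly the intended missing argument). One trivial slip in (c): the composite $W_{R,n}(A)\to B^{[0,n]}$ is integral because the \emph{left}-hand vertical map $\gh{\leq n}\:W_{R,n}(A)\to A^{[0,n]}$ is integral (the lemma applies equally to $A$) together with integrality of $A^{[0,n]}\to B^{[0,n]}$, not because of the right-hand vertical map as you wrote.
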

\begin{proof}
	Let $f\:X\to Y$ be a map satisfying one of these properties.
	In particular, $f$ is affine.
	Because the properties are local on the target, it is enough,
	by~\ref{pro:maps-local-permanence}, to assume that both $Y$ and $S$ are affine
	and that $\ptst$ consists of one ideal $\m$.
	But because $f$ is affine, $X$ must also be an affine scheme.
	In other words, it is enough to show $\wnus{n}$ preserves
	these properties for maps of affine schemes.  For (a), there is nothing to prove, and
	(b) is true by~6.5.
	
	Let us prove (c). Write $S=\Spec R$.
	Let $A$ be an $R$-algebra, and let $B$ be an integral $A$-algebra.
	Consider the induced diagram
		\[
		\entrymodifiers={+!!<0pt,\fontdimen22\textfont2>}
		\def\objectstyle{\displaystyle}		
		\xymatrix{
		W_n(B) \ar^-{\gh{\leq n}}[r] & B^{[0,n]} \\
		W_n(A) \ar^-{\gh{\leq n}}[r]\ar[u] & A^{[0,n]}. \ar[u]
		}
		\]
	Since $B$ is integral over $A$, we know that $B^{[0,n]}$ is integral over $A^{[0,n]}$.
	By~8.2, 
	$A^{[0,n]}$ is integral over $W_n(A)$, and hence so is
	$B^{[0,n]}$, and hence so is the image of the ghost map $\gh{\leq n}$.
	But the kernel of $\gh{\leq n}$ is nilpotent; so $W_n(B)$ is integral
	over $W_n(A)$.
\end{proof}

\spacesubsec{Absolute properties and properties relative to $S$}

\begin{proposition}\label{pro:W-preserves-etale-local-properties}
	The following (\'etale local) properties of algebraic spaces over $S$
	are preserved by $\wnus{n}$:
		\begin{enumerate}
			\item locally of finite type over $S$,
			\item flat over $S$,
			\item flat over $S$ and reduced,
			\item of Krull dimension $d$.
		\end{enumerate}
\end{proposition}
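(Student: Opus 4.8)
The plan is to separate part (d), which follows from a global statement already at hand, from parts (a)--(c), which I would reduce to commutative algebra over an affine base.

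For (d) I would use the global ghost map of \ref{cor:gh-is-surj-and-integral-geometric}(a). The morphism $\gh{\leq n}\:\coprod_{[0,n]}X\to\wnus{n}(X)$ is integral and surjective, and its source is a disjoint union of $n+1$ copies of $X$, hence of Krull dimension $\dim X$. Since an integral surjective morphism of algebraic spaces preserves Krull dimension (checked on an \'etale chart, where it reduces to the equality of dimensions under an integral surjective ring map, the nilpotent kernel from \ref{cor:gh-is-surj-and-integral-geometric}(a) being irrelevant to dimension), we obtain $\dim\wnus{n}(X)=\dim X$. No affine reduction is needed, and this sidesteps the delicate point of whether ``of Krull dimension $d$'' is literally \'etale-local.

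For (a), (b), (c), each property is \'etale-local on algebraic spaces over $S$, so by \ref{pro:obj-general-permanence-properties} it suffices to treat the case where $\ptst$ is a single principal ideal $\m=(\pi)$ and $S=\Spec R$, $X=\Spec A$ are affine. Then $\wnus{n}(X)=\Spec W_{R,n}(A)$ by~(\ref{eq:wnus-of-affine-is-W}), and I must show, as algebra statements over $R$: (a) if $A$ is a finitely generated $R$-algebra then so is $W_{R,n}(A)$; (b) if $A$ is flat over $R$ then so is $W_{R,n}(A)$; (c) if moreover $A$ is reduced then so is $W_{R,n}(A)$. Statement (c) is the cleanest: when $A$ is flat over $R$ it is $\pi$-torsion-free at $\m$, so the total ghost map $\gh{\leq n}\:W_{R,n}(A)\to A^{[0,n]}$ is injective by \ref{pro:ghost-map-inj} (away from $\m$ it is already an isomorphism). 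As $A^{[0,n]}$ is the \emph{product} ring $A\times\cdots\times A$, it is reduced whenever $A$ is, and a subring of a reduced ring is reduced; together with (b) this gives (c).

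For (b) the ghost map is useless, since integral maps do not detect flatness of modules; instead I would use the Verschiebung filtration, which realizes $W_{R,n}(A)$ as an $R$-module as a successive extension of $n+1$ copies of $A$, each embedded $R$-linearly by a power of $V$, so that an iterated extension of flat modules is flat. For (a), using that $W_{R,n}$ preserves surjections of rings (\ref{pro:W-preserves-surj-of-rings}), I would reduce to $A=R[x_1,\dots,x_m]$, where the Teichm\"uller lifts $[x_j]$ together with the finitely many elements $V^i$ of suitably bounded Teichm\"uller monomials generate $W_{R,n}(A)$ as an $R$-algebra. The substantive content, and the main obstacle, lies precisely in (a) and (b): unlike (c) and (d) these are not formal consequences of the ghost map but depend on the internal $R$-module structure of $W_{R,n}(A)$ given by the Verschiebung filtration (for flatness) and by the finiteness of the Verschiebung generators over a polynomial ring (for finite type, notably with no noetherian hypothesis on $R$). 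I would cite the corresponding affine results of~\cite{Borger:BGWV-I} for these two facts, the remainder being the formal reductions above.
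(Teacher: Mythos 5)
Your proposal is correct, but it departs from the paper's proof at two points, and the comparison is worth recording. For (c) you coincide with the paper exactly (injectivity of the ghost map from \ref{pro:ghost-map-inj} when $A$ is flat, plus reducedness of the product ring $A^{[0,n]}$ and of its subrings), and your treatment of (a), reducing via \ref{pro:W-preserves-surj-of-rings} to a polynomial ring and generating by the Teichm\"uller lifts $[x_j]$ together with the elements $V_\pi^i$ of monomials with suitably bounded exponents, is in outline the same computation the paper performs directly for an arbitrary finite generating set $T$: it shows $W_n(A)$ is module-finite over the Teichm\"uller subalgebra $B$ by induction on $n$, with the boundedness $0\leq a_t<q^n$ justified by the identity~(\ref{eq:teich-mult-on-witt-comp}) — note only that the paper proves this in full here rather than citing Part I, so your deferral for (a) should really point to that displayed argument. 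For (b) the paper does \emph{not} use the Verschiebung filtration: it embeds $W_n(A)$ into $A^{[0,n]}$ by the ghost map, deduces $\m$-flatness from torsion-freeness over the discrete valuation ring $\sO_{S,\m}$, and handles the locus away from $\m$ via $R[1/\pi]\tn_R W_n(A)=(A[1/\pi])^{[0,n]}$; your argument via the exact sequences of \ref{pro:V-exact-sequence}, exhibiting $W_n(A)$ as an iterated $R$-module extension of $n+1$ copies of $A$ (each $A_{(i)}$ being $A$ as an $R$-module, since the ghost components are $R$-algebra maps), is equally valid after your reduction to the principal single-prime case, and is arguably more robust, needing neither injectivity of the ghost map nor the valuation-ring structure of the local base. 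For (d) the paper reduces to the affine case through \ref{pro:obj-general-permanence-properties} and applies EGA 0 16.1.5 to the integral surjective affine ghost map; your global argument via \ref{cor:gh-is-surj-and-integral-geometric}(a) (which is established before this proposition, so there is no circularity) rests on the same dimension fact checked on charts, and you are right that it sidesteps a genuine wrinkle: having Krull dimension exactly $d$ does not pass to arbitrary \'etale maps (a chart can have strictly smaller dimension), so (d) is not literally \'etale-local in the sense demanded by \ref{pro:obj-general-permanence-properties}, and the paper's appeal to that lemma is looser than your route.
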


\begin{proof}
	Because these are \'etale-local properties, by~\ref{pro:obj-general-permanence-properties}
	we can write $S=\Spec R$, $X=\Spec A$, and $\ptst=\{\pi R\}$ with $\pi\in R$.

	(a): 
 	Let $T$ be a finite subset of $A$ generating it
	as an $R$-algebra, and let $B$ denote the sub-$R$-algebra of
	$W_n(A)$ generated by the set $\{[t] : t\in T\}$ of Teichm\"uller lifts (3.9).
	It is enough to show that $W_n(A)$ is finitely generated as a $B$-module.
	By induction, we may assume $W_{n-1}(A)$ is finitely generated. Therefore
	it is enough to show that $V^nW_n(A)$ is finitely generated.
	We will do this by showing that the subset
		\begin{equation} \label{eq:W-absolute-fin-gen-1}
		T = \Bigl\{V_{\pi}^n\Bigl[\prod_{t\in T}t^{a_t}\Bigr] \,\mid\,
		0\leq a_t<q^n \text{ for all } t\in T\Bigr\} \subseteq V^nW_n(A),
		\end{equation}
	where $[x]$ denotes the Teichm\"uller lift of $x$,
	generates $V^n W_n(A)$ as a $B$-module.
	Indeed, for any monomial $\prod_t t^{c_t}$, write $c_t=b_t q^n + a_t$ with $0\leq a_t < q^n$.
	Then we have by~(3.9.1)
		\[
		V_{\pi}^n\Bigl[\prod_t t^{c_t}\Bigr] = \Bigl(\prod_t [t]^{b_t}\Bigr)
			\Bigl(V_{\pi}^n\Bigl[\prod_t t^{a_t}\Bigr]\Bigr).
		\]
	
	(b): Since $A$ is flat over $R$, the ghost
	map $\gh{\leq n}\:W_n(A)\to A^{[0,n]}$ is injective (2.7).
	Since $A^{[0,n]}$ is $\m$-flat (\ref{subsec:definition-of-E-flat}), so is
	$W_n(A)$. But $R[1/\pi]\tn_R W_n(A)$ is also flat, because
	it agrees with $(A[1/\pi])^{[0,n]}$, by 
	(\ref{eq:localization-of-S-commutes-with-wnus}). Therefore $W_n(A)$ is flat over $R$.

	(c): By (b), we only need to show if $A$
	is flat and reduced over $R$, then $W_n(A)$ is reduced.  Since $A$ is
	flat over $R$, the ghost map $\gh{\leq n}\:W_n(A)\to A^{[0,n]}$ is 
	injective~(2.7). And since $A$ is also reduced, so is
	$W_n(A)$. 

	(d): 
	The ghost map $\gh{\leq n}\:W_n(A)\to A^{[0,n]}$ is integral and surjective on spectra;
	so the Krull dimension of $W_n(A)$ agrees with that of $A^{[0,n]}$, which is $d$.
	(See EGA 0, 16.1.5~\cite{EGA-no.20}.)
\end{proof}

\subsection{} {\em Counterexamples with relative finite conditions and noetherianness.}
\label{subsec:W-does-not-preserve-relative-fg}
It is not true that $\wnus{n}$ preserves relative finite generation or presentation
in general.
For example, consider the usual $p$-typical Witt vectors.
Let $A=\bZ[x_1,x_2,\dots]$, and let $B=A[t]$.  It is then a short exercise
to show that $W_1(B)$ is not a finitely generated $W_1(A)$-algebra.

For another, perhaps more extreme example, let $C=A[t]/(t^2)$.  Then $C$ is 
finite free as an $A$-module, but $W_1(C)$ is not finitely generated as a 
$W_1(A)$-algebra.

Noetherianness is also not preserved.  If $k$ is a field of characteristic $p$,
then $W_1(k)$ is a local ring with residue field $k$ and maximal ideal isomorphic
to $k^{1/p}$.  Therefore $W_1(k)$ is noetherian if and only if $k$ has a finite $p$-basis.

\begin{corollary}\label{cor:qcom-and-ftype-over-S}
The following properties of algebraic spaces over $S$ are preserved by $\wnus{n}$:
	\begin{enumerate}
		\item quasi-compact over $S$, 
		\item finite type over $S$.
	\end{enumerate}
\end{corollary}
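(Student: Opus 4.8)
The plan is to treat the two properties together, reducing the finite-type case to the quasi-compact case. Recall that a map is of finite type precisely when it is locally of finite type and quasi-compact. Since \ref{pro:W-preserves-etale-local-properties}(a) already shows that $\wnus{n}$ preserves the property of being locally of finite type over $S$, part (b) will follow immediately from part (a) once the latter is established. So the real content is (a): if the structure map $X\to S$ is quasi-compact, then so is $\wnus{n}(X)\to S$.

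For (a), I would argue by localizing on the base. Quasi-compactness of a morphism is Zariski-local on the target, so it suffices to exhibit an affine open cover $(S_i)_{i\in I}$ of $S$ such that each preimage $S_i\times_S\wnus{n}(X)$ is quasi-compact. Fix such a cover with each $S_i=\Spec R_i$ affine. Because $X\to S$ is quasi-compact and $S_i$ is affine, hence quasi-compact, the fiber product $S_i\times_S X$ is a quasi-compact object of $\Space_{S_i}$. Now the base-change formula~(\ref{eq:localization-of-S-commutes-with-wnus}) identifies
$$
	S_i\times_S\wnus{n}(X) = \wnus{S_i,n}(S_i\times_S X),
$$
so it is enough to see that the right-hand side is quasi-compact. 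But $S_i$ is affine, so \ref{pro:W-preserves-qcom} applies directly over the base $S_i$ and shows that $\wnus{S_i,n}(S_i\times_S X)$ is quasi-compact. Running this over all $i$ gives the quasi-compactness of $\wnus{n}(X)\to S$, proving (a).

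With (a) in hand, (b) is immediate: if $X\to S$ is of finite type, it is both locally of finite type and quasi-compact, so by \ref{pro:W-preserves-etale-local-properties}(a) and part (a) the map $\wnus{n}(X)\to S$ inherits both properties and is therefore of finite type. I do not expect a genuine obstacle here: the whole argument is an assembly of results already available, and the only point requiring a little care is the reduction of (a) to the absolute statement \ref{pro:W-preserves-qcom}, which is exactly what the localization isomorphism~(\ref{eq:localization-of-S-commutes-with-wnus}) provides. The one thing worth double-checking is that $S_i\times_S X$ really is a quasi-compact \emph{object}---this uses that a quasi-compact morphism with quasi-compact target has quasi-compact source---but this is standard and independent of the Witt-vector machinery.
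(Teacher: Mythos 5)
Your proposal is correct and follows essentially the same route as the paper: both reduce (a) to the absolute statement \ref{pro:W-preserves-qcom} by localizing to an affine base via~(\ref{eq:localization-of-S-commutes-with-wnus}), using that a quasi-compact morphism over an affine (hence quasi-compact) base has quasi-compact source and conversely (SGA 4 VI 1.14), and both obtain (b) from (a) together with \ref{pro:W-preserves-etale-local-properties}(a). The only cosmetic difference is that you phrase the reduction as checking quasi-compactness of preimages over a fixed affine cover, while the paper simply assumes $S$ affine from the outset; these are the same argument.
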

\begin{proof}
	Because these properties are \'etale local on $S$, we can assume $S$ is affine,
	by~(\ref{eq:localization-of-S-commutes-with-wnus}).
	
	(a): Since the structure map $X\to S$ is quasi-compact and $S$ is affine, $X$ is quasi-compact.
	Then $\wnus{n}(X)$ is quasi-compact, by~\ref{pro:W-preserves-qcom}.
	Therefore the structure map $\wnus{n}(X)\to S$ is quasi-compact,
	since $S$ is affine. (See SGA 4 VI 1.14~\cite{SGA4.2}, say.)

	(b): By (a) and \ref{pro:W-preserves-etale-local-properties}(a).
\end{proof}

\begin{proposition} \label{pro:wnus-preserves-separatedness}
	The following properties of algebraic spaces over $S$ are preserved by $\wnus{n}$:
		\begin{enumerate}
			\item quasi-separated over $S$,
			\item $0$-geometric over $S$ (see \ref{subsec:algebraic-spaces}),
			\item separated over $S$,
			\item separated.
		\end{enumerate}
\end{proposition}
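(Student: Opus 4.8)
The plan is to treat all four properties through the relative diagonal $\Delta'\colon\wnus{n}(X)\to\wnus{n}(X)\times_S\wnus{n}(X)$ and to reduce each to an affine chart computation. Fix an \'etale cover $(U_i)_{i\in I}$ of $X$ with each $U_i\in\affrel_S$, lying over an affine open $S_i=\Spec R_i\subseteq S$. By \ref{thm:W-main-geometric-finite-length(2)} and \ref{cor:etale-chart-presentation-of-wnus} the family $(\wnus{n}(U_i))_{i\in I}$ is an \'etale cover of $\wnus{n}(X)$, so the products $\wnus{n}(U_i)\times_S\wnus{n}(U_j)$ form an \'etale cover of $\wnus{n}(X)\times_S\wnus{n}(X)$; since $S$ is separated and each $\wnus{n}(U_i)$ is affine (\ref{pro:W_n-of-affine-is-affine}), every such product is affine. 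Each property in question---quasi-compactness, affineness, and being a closed immersion---is \'etale-local on the target, so it suffices to check it after pulling $\Delta'$ back along these charts. The base-change formula for a diagonal identifies this pullback with $\wnus{n}(U_i)\times_{\wnus{n}(X)}\wnus{n}(U_j)$, which by \ref{thm:W-main-geometric-finite-length(2)}(c) is $\wnus{n}(U_i\times_X U_j)$; thus in every case I must analyze the map $\wnus{n}(U_i\times_X U_j)\to\wnus{n}(U_i)\times_S\wnus{n}(U_j)$ of affine schemes.

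Parts (a) and (b) are then immediate. If $X$ is quasi-separated over $S$, then $U_i\times_X U_j$, being a base change of $\Delta_{X/S}$ along the affine $U_i\times_S U_j$, is quasi-compact; hence $\wnus{n}(U_i\times_X U_j)$ is quasi-compact by \ref{pro:W-preserves-qcom}, and a map from a quasi-compact space to an affine one is quasi-compact, giving (a). If $X$ is $0$-geometric over $S$, the same base change shows $U_i\times_X U_j$ is affine, so $\wnus{n}(U_i\times_X U_j)$ is affine by \ref{pro:W_n-of-affine-is-affine} and the chart map is a map of affine schemes, giving (b).

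For part (c) the diagonal $\Delta'$ is always a monomorphism, so it suffices to show each chart map is integral: an integral monomorphism of affine schemes corresponds to an integral ring epimorphism, hence a surjection, hence a closed immersion. When $X$ is separated over $S$, the space $U_i\times_X U_j$ is a closed subscheme of the affine $U_i\times_S U_j$, so I may write $S_i=\Spec R$, $U_i=\Spec A$, $U_j=\Spec A'$, and $U_i\times_X U_j=\Spec C$ with $A\otimes_R A'\twoheadrightarrow C$, working over $R$ via (\ref{eq:wnus-is-independent-of-S}). The chart map factors as $W_n(A)\otimes_R W_n(A')\to W_n(A\otimes_R A')\to W_n(C)$. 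The second arrow is integral because $\wnus{n}$ preserves closed immersions (\ref{pro:W-preserves-affine-local-properties}). The main obstacle is the first arrow: the comparison map $W_n(A)\otimes_R W_n(A')\to W_n(A\otimes_R A')$ is integral. This is precisely the point at which the failure of $\wnus{n}$ to preserve products must be controlled, and I would establish it as in the proof of \ref{pro:W-preserves-etale-local-properties}(a): every element of $W_n(A\otimes_R A')$ lies in $W_n(C_0)$ for a finitely generated subalgebra $C_0$, whose Teichm\"uller generators $[a\otimes 1]$ and $[1\otimes a']$ lie in the image of $W_n(A)\otimes_R W_n(A')$, and the Teichm\"uller--Verschiebung generation together with the integrality of the Verschiebung terms forces $W_n(C_0)$ to be integral over that image.

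For part (d), since $S$ is separated (\ref{subsec:supramaximal-def-global-base-S}) the canonical map $Y\times_S Y\to Y\times Y$ is a closed immersion for every space $Y$, so the absolute diagonal of $Y$ is a closed immersion if and only if its relative diagonal over $S$ is. Thus being separated over $S$ is equivalent to being separated, and (d) follows from (c) applied to $\wnus{n}(X)$.
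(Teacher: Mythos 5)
Your proposal is correct, but it takes a genuinely different route from the paper in the two places where the real work happens. For (a), the paper never touches charts: it runs the diagonal through the ghost square $\coprod_{[0,n]}X\to(\coprod_{[0,n]}X)\times_S(\coprod_{[0,n]}X)$, uses that the ghost map is integral and surjective (\ref{cor:gh-is-surj-and-integral-geometric}) to see $d\circ c$ is quasi-compact, and then descends quasi-compactness along the surjection $c$; your chart argument via \ref{thm:W-main-geometric-finite-length(2)}(c) and \ref{pro:W-preserves-qcom} is a clean alternative. Part (b) is the paper's argument verbatim. The substantive divergence is (c). The paper also reduces to showing the diagonal is an integral monomorphism, but it gets integrality globally from the ghost square ($a$ a closed immersion, $b$ integral, then transferring integrality from $C$ to $B$ across the nilpotent kernel of \ref{cor:gh-is-surj-and-integral-geometric}(a)), and then---because its closed-immersion criterion is ``a \emph{finite} monomorphism is a closed immersion'' (Nakayama)---it must first treat the case $X$ of finite type over $S$ (so that the diagonal is of finite type, via \ref{cor:qcom-and-ftype-over-S}(b)) and then reduce the general case to it using Conrad--Lieblich--Olsson, Theorem 1.2.2, to find an affine map to a separated finite-type space. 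Your route buys the elimination of that entire second step: by invoking the stronger fact that an integral ring epimorphism is surjective (equivalently, a universally closed monomorphism is a closed immersion---this is a real theorem, from the Samuel seminar on ring epimorphisms, also in the Stacks Project, but it is \emph{not} in this paper and must be cited), you never need finite-typeness, hence no CLO embedding. The price is your new comparison claim that $W_n(A)\otimes_R W_n(A')\to W_n(A\otimes_R A')$ is integral. That claim is true, but your Teichm\"uller--Verschiebung sketch imports the proof of \ref{pro:W-preserves-etale-local-properties}(a), which was carried out after reducing to a single principal prime and for a finitely generated algebra; you should either perform that reduction explicitly or, better, argue as in \ref{pro:W-preserves-affine-local-properties}(c): by \ref{lem:gh-integral-and-surj-multi-prime} the ghost map $W_n(A\otimes_R A')\to(A\otimes_R A')^{[0,n]}$ is integral with nilpotent kernel, the target is generated by elements $x\otimes 1$ and $1\otimes y$ integral over the image of $W_n(A)\otimes_R W_n(A')$, and a monic relation holding modulo a nilpotent kernel can be raised to a power to give a genuine one. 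Two further small points you elide but which repair routinely: the two charts $U_i$, $U_j$ live over possibly different affine opens $S_i$, $S_j$, so before writing everything over one ring $R$ you must restrict to $S_i\cap S_j$ (affine since $S$ is separated) using (\ref{eq:localization-of-S-commutes-with-wnus}) and (\ref{eq:wnus-is-independent-of-S}); and in (d) you only need the one implication the paper uses (relative separatedness plus $S$ separated gives absolute separatedness), which your factorization through the closed immersion $Y\times_S Y\to Y\times Y$ does give.
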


\begin{proof}
	(a): Consider the diagram	
		\begin{equation} \label{eq:internal-diag-pink-house}
		\xymatrix{
		\coprod_{[0,n]} X \ar^-a[r]\ar_{c=\gh{\leq n}}[d]		
			& \bigl(\coprod_{[0,n]} X\bigr) \times_S \bigl(\coprod_{[0,n]} X\bigr)
				\ar^{b=\gh{\leq n}\times\gh{\leq n}}[d]\\
		\wnus{n}(X) \ar^-d[r]				
			& \wnus{n}(X) \times_S \wnus{n}(X),
		}
		\end{equation}
	where the horizontal maps are the diagonal maps.
	Because $X$ is quasi-separated, $a$ is quasi-compact, and
	by~\ref{cor:gh-is-surj-and-integral-geometric}, so is $b$.
	Therefore $b\circ a$ is quasi-compact, and hence
	so is $d\circ c$.  Now let $U$ be an affine scheme mapping to
	$\wnus{n}(X)\times_S \wnus{n}(X)$, and let $(V_i)_{i\in I}$ be
	an \'etale cover of the pull back $d^*(U)$.  Since $d\circ c$ is quasi-compact, 
	there is a finite subset $J\subseteq I$ such that $(c^*V_j)_{j\in J}$
	is a cover of $c^*d^*(U)$. In other words,
	the induced map $v\:\coprod_{j\in J}V_j\to d^*(U)$ 
	becomes surjective after base change by the map $c$. Because $c$ is surjective
	(\ref{cor:gh-is-surj-and-integral-geometric}), $v$ must also be.

	(b): 
	Recall that a map is $0$-geometric if and only if its diagonal map is affine  
	(\ref{subsec:algebraic-spaces}).
	This is equivalent to requiring the existence of an \'etale cover
	$(U_i)_{i\in I}$ of $X$, with each $U_i$ affine, such that $U_i\times_X U_j$ is affine.
	Fix such a cover of $X$. By~\ref{cor:etale-chart-presentation-of-wnus}, the family
	$\bigl(\wnus{n}(U_i)\bigr)_{i\in I}$ is an \'etale cover of $\wnus{n}(X)$.
	Therefore it is enough to show that 
	$\wnus{n}(U_i)\times_{\wnus{n}(X)}\wnus{n}(U_j)$ is affine, for all $i,j\in I$. 
	By~\ref{thm:W-main-geometric-finite-length(2)}(\ref{part:W-main-geometric-finite-length(2)-c}),
	this agrees with $\wnus{n}(U_i\times_X U_j)$.
	Because $X$ has affine diagonal, $U_i\times_X U_j$ is affine. Therefore, by
	\ref{pro:W_n-of-affine-is-affine}, so is $\wnus{n}(U_i\times_X U_j)$.
	
	(c):
	Let us first assume that $X$ is of finite type over $S$.
	Consider diagram~(\ref{eq:internal-diag-pink-house}) above.
	To show that $d$ is a closed immersion, it is enough to show that it is a finite monomorphism.
	(It is a general fact the a finite monomorphism of algebraic spaces is a closed immersion.
	To prove it, it is enough to work \'etale locally, which reduces us
	to the affine case, where it follows from Nakayama's lemma.)
	Since $d$ has a retraction, it is a monomorphism. Therefore it suffices to show that
	$d$ is finite.
	On the other hand, by~\ref{cor:qcom-and-ftype-over-S}(b), the structure map
	$\wnus{n}(X)\to S$ is of finite type, and hence so is $d$.
	Therefore it is enough to show that $d$ is integral.
	
	Since $X$ is separated, $a$ is a closed immersion and, in particular, is integral.
	Since $b$ is integral (by \ref{cor:gh-is-surj-and-integral-geometric}),
	$b\circ a$ is integral and, hence, so is
	$d\circ c$.  By part (b), the map $d$ is affine.
	Therefore, by~\ref{cor:gh-is-surj-and-integral-geometric}, 
	the maps $c$ and $d$ can be written
	\'etale-locally on $\wnus{n}(X)\times_S \wnus{n}(X)$ as
		\[
		\Spec C\longlabelmap{c} \Spec B\longlabelmap{d} \Spec A,
		\]
	where the induced ring map $B\to C$ has nilpotent kernel.
	We showed above that $C$ is integral over $A$.
	Therefore $B$ is integral over $A$. This proves $X$ is separated over $S$ when it is
	of finite type.

	Now consider the general case. Let us show that we can assume $X$ is quasi-compact. To
	prove that $d$ is a closed immersion, it is enough to work \'etale locally.
	Therefore, it is enough (by
	\ref{thm:W-main-geometric-finite-length(2)}(\ref{part:W-main-geometric-finite-length(2)-b}))
	to show that for any affine schemes $U,V$ with \'etale maps to $X$,
	the base change
		\[
		\wnus{n}(U)\times_{\wnus{n}(X)}\wnus{n}(V) \longlabelmap{d'}
			\wnus{n}(U)\times_S \wnus{n}(V)
		\]
	of $d$ is a closed immersion.
	By~\ref{thm:W-main-geometric-finite-length(2)}(\ref{part:W-main-geometric-finite-length(2)-c}),
	the source of $d'$ agrees with $\wnus{n}(U\times_X V)$.
	Therefore, $d'$
	does not change if we replace $X$ with the union of the images of $U$ and $V$.
	So in particular, we can assume $X$ is quasi-compact.
		
	Then there exists an affine $S$-map $h\:X\to X_0$, 
	where $X_0$ is some separated algebraic space of finite type over $S$. Indeed, 
	since $X$ is quasi-compact and separated, 
	by Conrad--Lieblich--Olsson~\cite{Conrad-Lieblich-Olsson},
	Theorem 1.2.2, there is an affine map $h'\:X\to X'_0$, where 
	$X'_0$ is a separated algebraic space of finite type over $\bZ$.
	Put $X_0=S\times_{\bZ}X'_0$.
	Then the induced map $h\:X\to X_0$ factors as 
		\[
		X \longmap S\times_{\bZ} X \longmap S\times_{\bZ} X'_0.
		\]
	The first map is a base change of the diagonal map $S\to S\times_{\bZ}S$, which is
	a closed immersion, since $S$ is separated. The second map is a base change of $h'$,
	which is affine. Therefore both maps in the factorization above 
	are affine, and hence so is the composition $h$.
	
	Since $X_0$ is of finite type over $S$, we can apply the argument above to see
	that $\wnus{n}(X_0)$ is separated. But $\wnus{n}(X)$ is affine over $\wnus{n}(X_0)$,
	by~\ref{pro:W-preserves-affine-local-properties}(\ref{pro:W-preserves-affine-local-properties:affine}). 
	Therefore $\wnus{n}(X)$ is also separated.
	
	(d): This follows from (c) because $S$ is assumed to be separated.
\end{proof}

\begin{proposition}\label{pro:W-preserves-target-local-rel-to-S}
	The following properties are preserved by $\wnus{n}$:
		\begin{enumerate}
			\item finite over $S$,
			\item faithfully flat over $S$.
		\end{enumerate}
\end{proposition}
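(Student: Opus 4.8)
The plan is to handle the two parts separately, reducing each to preservation statements and ghost-map facts already established rather than to any direct adjunction argument (recall that the relevant structure map here is $\wnus{n}(X)\to S$, not $\wnus{n}(X)\to\wnus{n}(S)$). Part (b) is the easier one. Flatness over $S$ is preserved by~\ref{pro:W-preserves-etale-local-properties}(b), so $\wnus{n}(X)\to S$ is flat; it remains only to see it is surjective. For this I would use that the ghost section $\gh{0}\:X\to\wnus{n}(X)$ of~\ref{eq:individual-ghost-map-geometric} is a morphism in $\Space_S$, so the structure map of $X$ factors as $X\xright{\gh{0}}\wnus{n}(X)\longmap S$. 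Since $X\to S$ is surjective, its image is contained in the image of $\wnus{n}(X)\to S$, which forces the latter to be surjective as well; hence $\wnus{n}(X)\to S$ is faithfully flat.

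For part (a) I would use the characterization that a map is finite exactly when it is integral and of finite type. The finite-type half is immediate: a finite map $X\to S$ is in particular of finite type, so $\wnus{n}(X)\to S$ is of finite type by~\ref{cor:qcom-and-ftype-over-S}(b). For the integral half, integrality is local on the target, so by~\ref{eq:localization-of-S-commutes-with-wnus} I would reduce to $S=\Spec R$ affine; then $X$ is affine over $S$, hence an affine scheme $\Spec A$ with $A$ finite over $R$, and $\wnus{n}(X)=\Spec W_{R,n}(A)$ by~\ref{pro:W_n-of-affine-is-affine} and~\ref{eq:wnus-of-affine-is-W}. Writing $B=W_{R,n}(A)$, it then suffices to show $B$ is integral over $R$.

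This is where the ghost map carries the argument. The total ghost map~\ref{eq:total-ghost-map-geometric} is a ring homomorphism $\gh{\leq n}\:B\to C$ with $C=A^{[0,n]}$, and by~\ref{cor:gh-is-surj-and-integral-geometric}(a) (that is,~\ref{lem:gh-integral-and-surj-multi-prime}) it is integral with kernel $I$ satisfying $I^{2^N}=0$. Since $[0,n]$ is finite and $A$ is a finite $R$-module, $C$ is finite, hence integral, over $R$; and as $\gh{\leq n}$ is an $R$-algebra map, the subring $B/I\cong\im(\gh{\leq n})\subseteq C$ is therefore integral over $R$. Finally, for any $b\in B$ a monic relation over $R$ for its image in $B/I$ gives some $f(b)\in I$ with $f$ monic, and raising to a power killing $I$ produces a monic relation for $b$ itself over $R$; thus $B$ is integral over $R$. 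Integrality being local on $S$, the map $\wnus{n}(X)\to S$ is integral in general, and together with finite type this yields finiteness.

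The main obstacle is precisely this last transport of integrality in (a): the ghost map is not an isomorphism onto the finite $R$-algebra $A^{[0,n]}$ but only integral and surjective with nilpotent kernel, so the crux is that integrality descends along a surjection modulo nilpotents, which is exactly what the bound $I^{2^N}=0$ supplies. The remaining ingredients---flatness, surjectivity, and finite type---are direct appeals to earlier results.
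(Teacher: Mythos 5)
Your proof is correct, and while part (b) matches the paper, part (a) takes a genuinely different route. For (b) you argue just as the paper does: flatness comes from \ref{pro:W-preserves-etale-local-properties}(b), and surjectivity from factoring the structure map of $X$ through a ghost map (the paper composes $\gh{\leq n}\:\coprod_{[0,n]}X\to\wnus{n}(X)$ with the projection to $S$, you use the single component $\gh{0}$; the mechanism is identical). For (a) the paper localizes harder: after reducing to $S=\Spec R$ and $\ptst=\{\m\}$ it passes to $R=R_{\m}$, a discrete valuation ring, views $W_n(A)$ as sitting inside the finite $R$-module $A^{[0,n]}$, and concludes by noetherianness (a submodule of a finite module over a noetherian ring is finite). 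You instead decompose finite as integral plus finite type: finite type over $S$ is supplied by \ref{cor:qcom-and-ftype-over-S}(b) (ultimately the Teichm\"uller-generation argument behind \ref{pro:W-preserves-etale-local-properties}(a)), and integrality is transported backwards along $\gh{\leq n}$ via \ref{lem:gh-integral-and-surj-multi-prime}: the image of $W_n(A)$ in the integral $R$-algebra $A^{[0,n]}$ is integral over $R$, and your lifting step is sound---if $f(b)\in I$ with $f$ monic and $I^{2^N}=0$, then $f(b)^{2^N}=0$ is a monic relation for $b$ over $R$. The trade-off: the paper's argument is shorter but depends on the noetherian reduction to a DVR and on identifying $W_n(A)$ inside $A^{[0,n]}$ (a point that needs care, since the ghost map is not injective for non-flat $A$); your argument sidesteps both issues, needs no reduction of $\ptst$ to a single ideal, and costs only an appeal to the finite-type preservation machinery, which is already available and non-circular (\ref{cor:qcom-and-ftype-over-S} rests on \ref{pro:W-preserves-qcom} and \ref{pro:W-preserves-etale-local-properties}, not on the present proposition).
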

\begin{proof}
	These are local properties on the target $S$. So,
	by~(\ref{eq:localization-of-S-commutes-with-wnus})
	and~(\ref{eq:geom-witt-iterate-upper-s}), we can write assume
	$S=\Spec R$ and $\ptst=\{\m\}$, for some ring $R$ and ideal $\m$. 
	Away from $\m$, the properties are clearly true.
	Therefore we only need to work locally
	near $\m$, and in particular we can assume $\m$ is not the unit ideal.  
	By~(\ref{eq:localization-of-S-commutes-with-wnus}) again,
	we may further assume $R$ agrees with $R_{\m}$, which is a discrete valuation ring.
	
	Let $X$ be an algebraic space over $S$ having the property in question.
	
	(a): Write $X=\Spec A$.	Then $W_n(A)$ is a subring of $A^{[0,n]}$,
	which is finite over $R$ because $A$ is.
	Since $R$ is a discrete valuation ring, this implies that $W_n(A)$ is finite over $R$.

	(b): The composition
		\[
		\entrymodifiers={+!!<0pt,\fontdimen22\textfont2>}
		\def\objectstyle{\displaystyle}		
		\xymatrix{
			\coprod_{[0,n]}X \ar^-{\gh{\leq n}}[r]
				& \wnus{n}(X) \ar[r]
				& S
		}
		\]
	is surjective because $X\to S$ is. Therefore the map $\wnus{n}(X)\to S$
	is surjective. It is flat by~\ref{pro:W-preserves-etale-local-properties}.
\end{proof}

\begin{corollary}\label{pro:W(S)-properties}
	The structure map $\wnus{n}(S)\to S$ finite and faithfully flat.
\end{corollary}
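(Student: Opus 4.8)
The plan is to deduce the corollary directly from Proposition~\ref{pro:W-preserves-target-local-rel-to-S}, simply by specializing that result to the structure map of $S$ itself. Concretely, I would regard $S$ as an algebraic space over $S$ via the identity map $\id_S\:S\to S$; this is legitimate because $S$ is a separated scheme (see~\ref{subsec:supramaximal-def-global-base-S}), hence an algebraic space. The Witt space $\wnus{n}(S)$ is then precisely $\wnus{n}$ applied to this object.

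The first step is to observe that $\id_S$ trivially enjoys both of the properties appearing in Proposition~\ref{pro:W-preserves-target-local-rel-to-S}: an isomorphism is finite (it is affine, and its structure sheaf is finitely generated over itself) and it is faithfully flat. Thus the identity map $S\to S$ is both finite over $S$ and faithfully flat over $S$.

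The second step is to invoke Proposition~\ref{pro:W-preserves-target-local-rel-to-S}, which asserts that the functor $\wnus{n}$ preserves the property of being finite over $S$ and the property of being faithfully flat over $S$. Applying $\wnus{n}$ to $\id_S$, we conclude that the induced structure map $\wnus{n}(S)\to S$ is finite and faithfully flat, which is exactly the assertion of the corollary.

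There is essentially no obstacle here, since the substantive work has already been carried out in Proposition~\ref{pro:W-preserves-target-local-rel-to-S}. The only point worth a moment's attention is the bookkeeping one: that $\wnus{n}(S)$ really is the value of the functor on the identity structure map, so that the preservation statement applies verbatim.
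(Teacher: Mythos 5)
Your proposal is correct and is exactly the intended argument: the paper states this as an immediate corollary of Proposition~\ref{pro:W-preserves-target-local-rel-to-S}, obtained by applying it to $X=S$ with its identity structure map, which is trivially finite and faithfully flat over $S$. Nothing further is needed.
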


\spacesubsec{Relative properties}

\begin{proposition}\label{pro:W-preserves-target-local-properties}
	The following properties of maps (\'etale-local on the target)
	of algebraic spaces are preserved by $\wnus{n}$.
		\begin{enumerate}
			\item quasi-compact, 
			\item universally closed,
			\item quasi-separated,
			\item separated,
			\item surjective.
		\end{enumerate}
\end{proposition}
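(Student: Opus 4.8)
The plan is to reduce every case to an affine target and then to exploit the total ghost map $\gh{\leq n}$ of \ref{cor:gh-is-surj-and-integral-geometric}, which is surjective and integral. Each of (a)--(e) is \'etale-local on the target, so by \ref{pro:maps-local-permanence} it suffices to treat a map $f\:X\to Y$ with $S=\Spec R$ affine, $Y$ affine, and $\ptst=\{\pi R\}$ a single principal ideal. Here $\wnus{n}(Y)$ is again affine by \ref{pro:W_n-of-affine-is-affine}, and both $Y$ (being quasi-compact) and the structure map $Y\to S$ (being separated) let $X$ inherit the relevant absolute property from $f$.

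For the topological properties (a), (c), (d) I would transport the already-proved absolute statements across this reduction. For (a): since $Y$ is quasi-compact so is $X$; then $\wnus{n}(X)$ is quasi-compact by \ref{pro:W-preserves-qcom}, and a map from a quasi-compact space to the affine space $\wnus{n}(Y)$ is quasi-compact (SGA 4 VI 1.14~\cite{SGA4.2}). For (c) and (d): since $Y\to S$ is separated, the composite $X\to S$ is quasi-separated (resp.\ separated), hence so is $\wnus{n}(X)\to S$ by \ref{pro:wnus-preserves-separatedness}; because $\wnus{n}(Y)$ is affine, hence separated over $S$, the cancellation property of (quasi-)separated maps (``if $g\circ h$ is (quasi-)separated then $h$ is'') shows that $\wnus{n}(f)\:\wnus{n}(X)\to\wnus{n}(Y)$ is (quasi-)separated.

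For surjectivity (e) and universal closedness (b) I would instead use naturality of the ghost map, which gives the factorization
$$
\wnus{n}(f)\circ\gh{\leq n}^X \;=\; \gh{\leq n}^Y\circ\Big(\coprod_{[0,n]} f\Big)\:\coprod_{[0,n]}X\longmap\wnus{n}(Y).
$$
For (e): $\coprod_{[0,n]} f$ is surjective because $f$ is, and $\gh{\leq n}^Y$ is surjective by \ref{cor:gh-is-surj-and-integral-geometric}, so the composite is surjective and hence $\wnus{n}(f)$ is surjective. For (b): $\coprod_{[0,n]} f$ is universally closed because $f$ is, while $\gh{\leq n}^Y$ is integral and hence universally closed, so the composite is universally closed; since moreover $\gh{\leq n}^X$ is surjective (\ref{cor:gh-is-surj-and-integral-geometric}), the elementary cancellation ``if $h$ is surjective and $g\circ h$ is universally closed, then $g$ is universally closed'' (a direct diagram chase on underlying topological spaces, using that surjectivity is stable under base change) forces $\wnus{n}(f)$ to be universally closed.

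The main obstacle is part (b). Because $\wnus{n}$ does not commute with fiber products, one cannot check universal closedness by base changing the target, nor reduce the diagonal as one does for the right adjoint $\wnls{n}$. The ghost-map factorization is precisely what circumvents this, trading the uncontrolled behavior of $\wnus{n}$ under base change for the integrality---hence universal closedness---of $\gh{\leq n}$ together with its surjectivity; verifying the surjective-cancellation lemma for universally closed maps of algebraic spaces is the only genuinely new point, and everything else is bookkeeping around \ref{pro:maps-local-permanence}.
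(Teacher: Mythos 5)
Your proof is correct and follows essentially the same route as the paper's: the same reduction via \ref{pro:maps-local-permanence} to affine $Y$ and a single principal ideal, the same use of \ref{pro:W-preserves-qcom} plus SGA 4 VI 1.14 for (a), the same transport of (quasi-)separatedness over $S$ through \ref{pro:wnus-preserves-separatedness} for (c)--(d), and the same ghost-map factorization $\wnus{n}(f)\circ\gh{\leq n}^X=\gh{\leq n}^Y\circ\coprod_{[0,n]}f$ with integrality and surjectivity of $\gh{\leq n}$ for (b) and (e). The only cosmetic difference is that where you prove the surjective-cancellation lemma for universally closed maps by a direct topological argument, the paper simply cites EGA II 5.4.3(ii) and 5.4.9.
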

\begin{proof}
	Let $f\:X\to Y$ be the map in question.
	By~\ref{pro:maps-local-permanence}, 
	we may write $S=\Spec R$ and $\ptst=\{\m\}$ and we may assume that $Y$ is affine.
	
	(a): Since $f$ is quasi-compact and $Y$ is affine, $X$ is quasi-compact.
	Then $\wnus{n}(X)$ is quasi-compact, by~\ref{pro:W-preserves-qcom}.
	Since $\wnus{n}(Y)$ is affine (\ref{pro:W_n-of-affine-is-affine}), 
	$\wnus{n}(f)$ is quasi-compact. (SGA 4 VI 1.14~\cite{SGA4.2})
		
	(b): 	Consider the following square:
		\begin{equation} \label{diag:internal-heehaw}
			\xymatrix{
			\coprod_{[0,n]} X \ar^-{\gh{X}}[r]\ar_{\smcoprod f}[d]										
				& \wnus{n}(X) \ar^{\wnus{n}(f)}[d] \\
			\coprod_{[0,n]} Y \ar^-{\gh{Y}}[r]				
				& \wnus{n}(Y),
			}
		\end{equation}
	where $\gh{X}$ and $\gh{Y}$ denote the ghost maps $\gh{\leq n}$ for $X$ and $Y$.
	To show $\wnus{n}(f)$ is universally closed, it is enough to show
	that $\gh{X}$ is surjective and $\gh{Y}\circ \coprod f$ is universally closed.
	(See EGA II 5.4.3(ii) and 5.4.9~\cite{EGA-no.8}.)
	
	But we know $\gh{X}$ is surjective by~\ref{cor:gh-is-surj-and-integral-geometric};  
	and $\gh{Y}\circ f$ is universally closed because $f$ is universally
	closed and because $\gh{Y}$ is integral, by~\ref{cor:gh-is-surj-and-integral-geometric},
	and hence universally closed. (See EGA II 6.1.10~\cite{EGA-no.8}.)

	(c)--(d): Because $Y$ is affine, so is $\wnus{n}(Y)$, by~(\ref{eq:wnus-of-affine-is-W}). 
	Therefore being separated or quasi-separated over $\wnus{n}(Y)$ is equivalent to being
	so over $S$. Thus the results then follow from~\ref{pro:wnus-preserves-separatedness},
	
	(e): Consider diagram~(\ref{diag:internal-heehaw}).
	By~\ref{cor:gh-is-surj-and-integral-geometric}, the map $\gh{Y}$ is surjective.
	Since $f$ is too, so is $\coprod f$. Therefore $\gh{Y}\circ\coprod f$ and hence $\wnus{n}(f)$.
\end{proof}

\subsection{} {\em Flatness properties.}
\label{subsec:W-does-not-preserve-flatness-properties}
With the $p$-typical Witt vectors, say, $W_1(\bZ[x])$ is not flat over $W_1(\bZ)$. So if $P$ is a
property of morphisms which is stronger than flatness and which is satisfied by the map
$\bZ\to\bZ[x]$, then it is not generally preserved by $W_n$. Examples: flat, faithfully flat,
smooth, Cohen--Macaulay, and so on.

\begin{proposition}\label{pro:W-preserves-relative-fin-type}
	Let $f\:X\to Y$ be a map of algebraic spaces having one of the following properties:
		\begin{enumerate}
			\item locally of finite type,
			\item of finite type,
			\item finite,
			\item proper.
		\end{enumerate}
	Then $\wnus{n}(f)\:\wnus{n}(X)\to\wnus{n}(Y)$ has the same property, as long as
	$Y$ is locally of finite type over $S$.
\end{proposition}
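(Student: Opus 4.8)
The plan is to derive all four cases from part (a), the locally-of-finite-type case, and then to assemble (b)--(d) from (a) together with the preservation results already established. The sole purpose of the hypothesis that $Y$ is locally of finite type over $S$ is to make the \emph{absolute} (over $S$) statement \ref{pro:W-preserves-etale-local-properties}(a) applicable: if $f$ is locally of finite type and $Y\to S$ is as well, then the composite $X\to S$ is locally of finite type, so $X$ lies in the class to which \ref{pro:W-preserves-etale-local-properties}(a) applies. The relative statement about the map $\wnus{n}(f)$ will then be extracted from the resulting absolute statement by a cancellation argument.

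For part (a), suppose $f$ is locally of finite type. As noted, $X$ is then locally of finite type over $S$, so by \ref{pro:W-preserves-etale-local-properties}(a) the structure map $\wnus{n}(X)\to S$ is locally of finite type. Because $\wnus{n}$ is an endofunctor of $\Space_S$, the morphism $\wnus{n}(f)$ is a morphism over $S$; that is, the structure map of $\wnus{n}(X)$ factors as
\[
\wnus{n}(X)\longlabelmap{\wnus{n}(f)}\wnus{n}(Y)\longmap S,
\]
the second arrow being the structure map of $\wnus{n}(Y)$. It is essential here to use this structure map, and not $\wnus{n}$ applied to $Y\to S$, which would land in $\wnus{n}(S)$; indeed $\wnus{n}(S)\neq S$ in general by \ref{pro:W(S)-properties}. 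The displayed factorization holds simply because $\wnus{n}(f)$ is a morphism of $\Space_S$, hence commutes with the maps to the terminal object $S$. Since the composite $\wnus{n}(X)\to S$ is locally of finite type, the cancellation property for locally-of-finite-type maps (EGA IV 1.4.3(v), which passes to algebraic spaces by reduction to the scheme case on \'etale charts) shows that $\wnus{n}(f)$ is locally of finite type.

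The remaining cases are formal. For (b), a morphism is of finite type iff it is locally of finite type and quasi-compact; the first is preserved by (a) and the second by \ref{pro:W-preserves-target-local-properties}(a). For (c), a morphism is finite iff it is integral and of finite type, and these are preserved by \ref{pro:W-preserves-affine-local-properties}(c) and by (b), respectively. For (d), a morphism is proper iff it is separated, of finite type, and universally closed; these are preserved by \ref{pro:W-preserves-target-local-properties}(d), by (b), and by \ref{pro:W-preserves-target-local-properties}(b), respectively.

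The only genuine content lies in part (a), and the one point needing care is the bookkeeping flagged above: we obtain a statement about the \emph{map} $\wnus{n}(f)$ out of a statement about the structure map to $S$, which is legitimate precisely because $\wnus{n}(f)$ is a morphism over $S$. No direct analysis of $\wnus{n}(f)$ on affine charts is required; everything is bootstrapped from the over-$S$ and target-local preservation results via the factorization of the structure map.
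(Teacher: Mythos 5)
Your proof is correct and follows essentially the same route as the paper: part (a) is obtained from the absolute statement \ref{pro:W-preserves-etale-local-properties}(a) applied to the composite $X\to Y\to S$, followed by the cancellation step (which the paper compresses into the phrase ``in particular, it is locally of finite type over $\wnus{n}(Y)$'' and you correctly make explicit, including the point that one uses the structure map of $\wnus{n}(Y)$ rather than $\wnus{n}$ of $Y\to S$), and (b)--(d) are assembled formally from (a) together with \ref{pro:W-preserves-target-local-properties} and \ref{pro:W-preserves-affine-local-properties}(c), exactly as in the paper. Your decomposition of properness as separated $+$ finite type $+$ universally closed is the intended one; the paper's citation of \ref{pro:W-preserves-target-local-properties}(b),(e) appears to have a slip ((e) is surjectivity where separatedness (d) is what is needed), which your version silently corrects.
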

\begin{proof}
	(a): The composition $X\to Y\to S$ is locally of finite type because the factors are.
	Therefore, $\wnus{n}(X)$ is locally
	of finite type over $S$, by~\ref{pro:W-preserves-etale-local-properties}.
	In particular, it is locally of finite type over $\wnus{n}(Y)$.

	(b): Part (a) above plus \ref{pro:W-preserves-target-local-properties}(a).
	
	(c): Part (b) above plus \ref{pro:W-preserves-affine-local-properties}(c).

	(d): Part (b) above plus \ref{pro:W-preserves-target-local-properties}(b),(e).
\end{proof}

\subsection{} {\em $\wnus{n}$ and properness.}
\label{subsec:W-does-not-preserve-properness}
Some hypotheses on $Y$ are needed in~\ref{pro:W-preserves-relative-fin-type}. For example, let $f$
be the canonical projection $\bP^1_Y \to Y$, where $Y=\Spec \bZ[x_1,x_2,\dots]$. Then $f$ is
proper, but $\wnus{1}(f)$ (with $p$-typical Witt vectors, say) is not,
because it is not of finite type. Indeed, the map
$\wnus{1}(\bA^1_Y)\to\wnus{1}(\bP^1_Y)$ is \'etale~(\ref{thm:W-main-geometric-finite-length(2)}),
and hence of finite type, but the map $\wnus{1}(\bA^1_Y)\to\wnus{1}(Y)$ is
not~(\ref{subsec:W-does-not-preserve-relative-fg}).

\spacesubsec{Depth properties}

\begin{proposition}\label{pro:W-of-local-ring}
	Suppose that $S=\Spec R$ for some ring $R$ 
	and that $\ptst$ consists of a single maximal ideal $\m$ of $R$.
	Let $A$ be a local $R$-algebra whose maximal ideal contains $\m$.
	Then $W_n(A)$ is a local ring with maximal ideal $\gh{0}^{-1}(\m)$, where
	$\gh{0}$ denotes the usual projection map $W_n(A)\to A$.
\end{proposition}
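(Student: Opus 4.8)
The plan is to induct on $n$ using the natural truncation $\pi\:W_n(A)\to W_{n-1}(A)$ (see~\ref{subsec:natural-maps}) together with the standard fact that if $\pi\:B\to B'$ is a surjection of rings whose kernel lies in the Jacobson radical of $B$, and $B'$ is local, then $B$ is local with maximal ideal $\pi^{-1}(\mathfrak{m}_{B'})$. Write $\m_A$ for the maximal ideal of $A$, so that the ideal in the statement is $\gh{0}^{-1}(\m_A)$. The base case $n=0$ is trivial, since $W_0(A)=A$ and $\gh{0}=\id$. For the inductive step it suffices to show that $\ker\pi\subseteq\mathrm{Jac}(W_n(A))$: granting this, $W_n(A)$ is local with maximal ideal $\pi^{-1}(\mathfrak{M}_{n-1})$, where $\mathfrak M_{n-1}$ is the maximal ideal of $W_{n-1}(A)$; and since $\gh{0}$ factors through $\pi$ (the zeroth ghost component is unchanged by truncation) and $\mathfrak M_{n-1}=\gh{0}^{-1}(\m_A)$ by induction, this preimage is exactly $\gh{0}^{-1}(\m_A)$.

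The heart of the argument --- and the only place where the locality of $A$ and the hypothesis $\m\subseteq\m_A$ are used --- is showing $\ker\pi\subseteq\mathrm{Jac}(W_n(A))$. The kernel of the truncation is the top Verschiebung piece $\ker\pi=V^nW_0(A)=\{V^n(a):a\in A\}$ (\cite{Borger:BGWV-I}), and by the projection formula $x\cdot V^n(a)=V^n(F^n(x)a)$ every element of this ideal already has this form, so it is enough to check that $1-V^n(a)$ is a unit for each $a\in A$. Here I would use the basic relation $FV=q$ of~\cite{Borger:BGWV-I} (for the argument, only that $FV$ is multiplication by an element of $\m$), which gives $V^n(a)V^n(c)=q^n V^n(ac)$, so that $(1-V^n(a))(1-V^n(c))=1-V^n\big(a+c-q^n ac\big)$; taking $c=-a(1-q^n a)^{-1}$ makes the right-hand side equal to $1$. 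The inverse $(1-q^n a)^{-1}$ exists in $A$ precisely because $q\in\m\subseteq\m_A$ (as $q$ is a power of the residue characteristic) and $A$ is local, so $1-q^n a\in 1+\m_A\subseteq A^{\times}$. This is the step I expect to be the main obstacle, in that it is where all the hypotheses are spent; everything else is formal.

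Finally one assembles the induction: surjectivity of $\gh{0}$ via the Teichm\"uller section (\ref{subsec:teich-1}) shows $W_n(A)/\gh{0}^{-1}(\m_A)\cong A/\m_A$ is a field, confirming that the ideal produced is indeed $\gh{0}^{-1}(\m_A)$. An alternative, less computational route avoids Verschiebung altogether: by~\ref{lem:gh-integral-and-surj-multi-prime} the total ghost map $\gh{\leq n}\:W_n(A)\to A^{[0,n]}$ is integral with nilpotent kernel, so the maximal ideals of $W_n(A)$ are exactly the contractions of those of $A^{[0,n]}$; since $A$ is local these are the $n+1$ ideals $\mathfrak n_i$ cut out by $\m_A$ in the $i$-th factor, and the congruences $\gh{i}\equiv\gh{0}^{q^i}\pmod{\m}$ among ghost components give $\gh{i}(x)\in\m_A\iff\gh{0}(x)\in\m_A$, so all the $\mathfrak n_i$ contract to the single ideal $\gh{0}^{-1}(\m_A)$.
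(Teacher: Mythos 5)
Your primary argument is correct and takes a genuinely different route from the paper. The paper never touches the Verschiebung filtration: it observes via \ref{cor:gh-integral-and-surj-single-prime} that $\gh{\leq n}\:W_n(A)\to A^{[0,n]}$ is integral with nilpotent kernel, so every maximal ideal $J$ of $W_n(A)$ is the contraction of a maximal ideal of $A^{[0,n]}$ and hence contains $\m$; it then quotes \ref{pro:W-nilpotent-at-p} to see that every element of $I=\gh{0}^{-1}(\m_A)$ is nilpotent modulo $\m W_n(A)$, so that $I$ is the unique maximal ideal containing $\m$, whence $J=I$. Thus your ``alternative, less computational route'' in the last paragraph is not really an alternative to the paper---it is essentially the paper's own proof, with the ghost congruences $\gh{i}\equiv\gh{0}^{q^i}\pmod{\m}$ playing the role that \ref{pro:W-nilpotent-at-p} plays there. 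Your main route, by contrast, is the classical locality argument for truncated $p$-typical Witt rings: the kernel of $W_n(A)\to W_{n-1}(A)$ lies in the Jacobson radical, via the explicit inverse coming from $(1-V^n(a))(1-V^n(c))=1-V^n(a+c-\pi^n ac)$. What your route buys is an explicit, self-contained computation in which the hypotheses are visibly spent (at the invertibility of $1-\pi^n a$); what the paper's route buys is that it needs no Verschiebung at all, hence no choice of generator of $\m$, and it works uniformly with the structure already assembled for \ref{pro:local-rings-of-W} and \ref{pro:teich-of-regular-sequence}. (You also correctly read the statement's $\gh{0}^{-1}(\m)$ as $\gh{0}^{-1}(\m_A)$; the paper's own proof confirms this reading.)

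Two small repairs to your main route. First, the correct operator identity is $FV_\pi=\pi$, not $FV=q$ (these coincide only in the $p$-typical case over $\bZ$); your hedge that only $FV\in\m$ matters is the right instinct, and then $F^nV_\pi^n$ is multiplication by $\pi^n$, which is what your unit computation actually uses. Second, and relatedly, $V_\pi$ and the exact sequence $0\to A_{(n)}\to W_n(A)\to W_{n-1}(A)\to 0$ of \ref{pro:V-exact-sequence} presuppose that $\m$ is principal, which need not hold in $R$. So you should first reduce to $R=R_\m$: since $\m$ is maximal and $\m_A\cap R=\m$, every element of $R\setminus\m$ is a unit in the local ring $A$, so $A$ is canonically an $R_\m$-algebra and $W_{R,n}(A)=W_{R_\m,n}(A)$ by the base-independence isomorphisms of \ref{subsec:change-of-S}; after this reduction $\m=(\pi)$ and your induction goes through verbatim. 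This is the same reduction the paper performs at the start of the proof of \ref{pro:teich-of-regular-sequence}, so it is a one-line fix rather than a defect of the method.
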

\begin{proof}
	Let $I$ denote $\gh{0}^{-1}(\m)$; it is a maximal ideal because
	$\gh{0}$ is surjective (1.21). Let us show that it is the unique maximal 
	ideal.
	
	Let $J$ be a maximal ideal of $W_n(A)$.
	By~8.2, the map
		\[
		\gh{\leq n}:W_n(A) \longmap A^{[0,n]}
		\]  
	is integral and its kernel is nilpotent. Therefore, $J$
	is the pre-image of a maximal ideal of $A^{[0,n]}$.
	But every maximal ideal of $A^{[0,n]}$ contains $\m$, because the maximal ideal of $A$ does.
	Therefore $J$ contains $\m$.
	But $I$ is the only maximal ideal of $W_n(A)$ containing $\m$, because
	by~6.8, every element of $I$ is nilpotent modulo $\m W_n(A)$.
	Therefore $J=I$.
\end{proof}

\begin{proposition}\label{pro:local-rings-of-W}
	Let $S,R,\ptst,\m$ be as in \ref{pro:W-of-local-ring}. Let $A$ be an $R$-algebra,
	and let $\p$ be a prime ideal of $W_n(A)$.  
	\begin{enumerate}
		\item If $\p$ does not contain $\m$,
			then there is a unique integer $i\in[0,n]$ and a unique prime ideal $\q$ of $A$ such 
			that $\gh{i}^{-1}(\q)=\p$. For this $i$ and $\q$, the map 
				\begin{equation} \label{map:local-rings-of-W-map-1}
					A_{\q}\longmap W_n(A)_{\p}
				\end{equation}
			induced by $\gh{i}$ is an isomorphism.
		\item If $\p$ does contain $\m$, then there is a unique prime ideal $\q$ of $A$
			such that $\gh{0}^{-1}(\q)=\p$. For this $\q$, there is a unique map of 	
			$W_n(A)$-algebras
				\begin{equation} \label{map:local-rings-of-W-map-2}
					W_n(A)_{\p}\longmap W_n(A_{\q}),
				\end{equation}
			and this map is an isomorphism.
	\end{enumerate}	
\end{proposition}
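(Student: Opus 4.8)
The plan is to analyze $\Spec W_n(A)$ entirely through the total ghost map $\gh{\leq n}\:W_n(A)\to A^{[0,n]}$, which is integral and surjective with nilpotent kernel by~\ref{cor:gh-integral-and-surj-single-prime}. Thus $\coprod_{i=0}^n\Spec A\to\Spec W_n(A)$ is surjective, and every prime of $W_n(A)$ has the form $\gh{i}^{-1}(\q)$. The two cases of the proposition reflect the two regimes in which this presentation behaves very differently: away from $\m$ the $n+1$ ghost components separate the spectrum into $n+1$ disjoint copies of $\Spec A$, while over $\m$ they all collapse onto a single copy via $\gh{0}$. The first regime is governed by~(\ref{eq:localization-of-S-commutes-with-wnus}) together with the fact that $\gh{\leq n}$ is an isomorphism once $\m$ becomes the unit ideal; the second by~\ref{pro:W-nilpotent-at-p}, which says $\gh{0}$ induces a homeomorphism on special fibers.

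For part (a), I would first note that, since $\gh{i}$ is an $R$-algebra map, $\gh{i}^{-1}(\q)\supseteq\m$ if and only if $\q\supseteq\m A$; hence $\p\not\supseteq\m$ forces $\q\not\supseteq\m A$. Let $\mathfrak r=\p\cap R$; then $\m\not\subseteq\mathfrak r$, so $\m$ is the unit ideal in $R_{\mathfrak r}$. By~(\ref{eq:localization-of-S-commutes-with-wnus}) we get $R_{\mathfrak r}\tn_R W_n(A)=W_{R_{\mathfrak r},n}(A_{\mathfrak r})$, and because $\m$ is now the unit ideal the ghost map identifies this with $\prod_{j=0}^n A_{\mathfrak r}$. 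The prime $\p$ survives in this localization and corresponds to a prime sitting in exactly one factor; this pins down a unique index $i$ and a unique $\q$, and localizing the product decomposition further at $\p$ yields $W_n(A)_{\p}\cong A_{\q}$, with the map induced by $\gh{i}$ (equivalently, by the idempotent cutting out the $i$-th factor). Both uniqueness and the isomorphism drop out of the product decomposition simultaneously.

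For part (b), the homeomorphism of special fibers from~\ref{pro:W-nilpotent-at-p} gives a bijection between primes of $W_n(A)$ containing $\m$ and primes of $A$ containing $\m A$, via $\q\mapsto\gh{0}^{-1}(\q)$; this yields the unique $\q$. To build the map~(\ref{map:local-rings-of-W-map-2}), I would observe that $A_{\q}$ is a local $R$-algebra whose maximal ideal contains $\m$, so $W_n(A_{\q})$ is local with maximal ideal $\gh{0}^{-1}(\q A_{\q})$ by~\ref{pro:W-of-local-ring}; an element $s\in W_n(A)\setminus\p$ then maps to a unit in $W_n(A_{\q})$ precisely because $\gh{0}(s)\notin\q$, so functoriality factors through a canonical $W_n(A)$-algebra map $W_n(A)_{\p}\to W_n(A_{\q})$. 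The crucial point is to recognize $W_n(A_{\q})$ itself as a localization of $W_n(A)$: writing $A_{\q}=\colim_{f\notin\q}A[1/f]$ and using that $W_n$ commutes with filtered colimits (\ref{pro:W-preserves-filtered-colimits}), it suffices to show $W_n(A[1/f])=W_n(A)_{[f]}$. This I would deduce from~\ref{cor:W-preserves-open-immersions}: $\wnus{n}$ carries the open immersion $\Spec A[1/f]\to\Spec A$ to an open immersion of affine schemes, and a ghost-component computation ($\gh{i}([f])=f^{q^i}$) identifies its image with the basic open $D([f])$. Granting this, $W_n(A_{\q})=W_n(A)_S$ for the multiplicative set $S=\{[f]:f\notin\q\}$, and $S\subseteq W_n(A)\setminus\p$. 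The two $W_n(A)$-algebra maps $W_n(A)_{\p}\to W_n(A)_S$ and $W_n(A)_S\to W_n(A)_{\p}$ are then mutually inverse, since each composite is a $W_n(A)$-algebra endomorphism of a localization and localizations are epimorphisms of rings.

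I expect the main obstacle to be exactly the identification $W_n(A[1/f])=W_n(A)_{[f]}$ underlying part (b): preservation of open immersions gives an abstract open immersion, and the real work is in checking that its image is the expected basic open $D([f])$ rather than some smaller open, which is what makes $W_n(A_{\q})$ a localization of $W_n(A)$ at a multiplicative set disjoint from $\p$. Once that is in place, both the existence of the map and its being an isomorphism follow formally from the local-ring structure of~\ref{pro:W-of-local-ring} and the epimorphism property of localizations, so no further computation with Witt addition is needed.
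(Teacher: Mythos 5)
Your argument is correct, and for the heart of part (b) it takes a genuinely different route from the paper. Parts (a) and the prime-classification statements in (b) coincide with the paper's treatment: the paper disposes of (a) in one line by citing that $\gh{\leq n}$ is an isomorphism away from $\m$ (\ref{pro:Zariski-localization-on-R}), gets the unique $\q$ in (b) from the surjectivity-with-nilpotent-kernel of $\id\tn\gh{0}$ modulo $\m$ (\ref{cor:gh-integral-and-surj-single-prime}), and gets existence and uniqueness of the map~(\ref{map:local-rings-of-W-map-2}) from the locality of $W_n(A_{\q})$ (\ref{pro:W-of-local-ring}), exactly as you do. Where you diverge is in proving~(\ref{map:local-rings-of-W-map-2}) is an isomorphism: the paper inducts on $n$ using the Verschiebung exact sequence $0\to A_{(n)}\to W_n(A)\to W_{n-1}(A)\to 0$ of~\ref{pro:V-exact-sequence}, reducing to the map of localizations $\gh{n}(T_{\p})^{-1}A\to T_{\q}^{-1}A$ and concluding from $[T_{\q}]\subseteq T_{\p}$ and $\gh{n}([T_{\q}])=(T_{\q})^{q_{\m}^n}$; you instead exhibit $W_n(A_{\q})$ as the honest localization $S^{-1}W_n(A)$ with $S=\{[f]:f\notin\q\}$, via $W_n(A[1/f])=W_n(A)[1/[f]]$, filtered colimits (\ref{pro:W-preserves-filtered-colimits}), and the ring-epimorphism formality, which then needs no induction on $n$ at all. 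The step you flag as the main obstacle does close: \ref{cor:W-preserves-open-immersions} gives an open immersion $\Spec W_n(A[1/f])\to\Spec W_n(A)$, and since by \ref{cor:gh-integral-and-surj-single-prime} every prime of either ring is of the form $\gh{i}^{-1}(\q')$, naturality of the ghost maps plus $\gh{i}([f])=f^{q^i}$ identifies the image with exactly $D([f])$, whence the ring is the localization. The trade-off is that your route leans on the geometric machinery behind \ref{cor:W-preserves-open-immersions} (the inductive lemma, ultimately van der Kallen's theorem), where the paper's proof is purely affine and elementary; in compensation you obtain a reusable fact of independent interest, namely that $W_n$ carries localizations to localizations at Teichm\"uller lifts. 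Note that you could recover the same elementary character as the paper by proving $W_n(A[1/f])=W_n(A)[1/[f]]$ directly from the coordinatewise action of Teichm\"uller multiplication~(\ref{eq:teich-mult-on-witt-comp}) --- injectivity and surjectivity after multiplying by $[f]^k$ are immediate --- bypassing the open-immersion corollary entirely. Both arguments use the same underlying mechanism, that Teichm\"uller lifts of elements outside $\q$ lie outside $\p$ and ghost to powers of those elements; yours packages it globally as a localization statement, the paper's applies it one graded piece at a time.
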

\begin{proof}
	(a): This holds because the map $\gh{\leq n}$ is an isomorphism away from $\m$, 
	by~6.1.
	
	(b): Any such prime ideal $\q$ contains $\m$. Therefore to show such a prime
	ideal $\q$ exists and is unique, it is enough to show the map
		\[
		\id\tn\gh{0}\:R/\m\tn_R W_n(A) \longmap R/\m\tn_R A
		\]
	induces a bijection on prime ideals. This holds because $\id\tn\gh{0}$
	is surjective with nilpotent kernel, by~8.2. 
	
	Now consider the diagram
		\[
		\xymatrix{
			A \ar[r] & A_{\q} \\
			W_n(A) \ar[r]\ar^{\gh{0}}[u] & W_n(A_{\q}). \ar_{\gh{0}}[u]
		}
		\]
	By~\ref{pro:W-of-local-ring}, the ring $W_n(A_{\q})$ is a local ring. 
	So, to show there is a unique map of $W_n(A)$-algebras as in 
	(\ref{map:local-rings-of-W-map-2}),	it is enough to that $\p$ is the 
	pre-image in $W_n(A)$ of the maximal ideal $\gh{0}^{-1}(\m)$ of $W_n(A_{\q})$. This holds
	by the commutativity of the diagram above.

	Now let us show that (\ref{map:local-rings-of-W-map-2}) is an isomorphism.
	By induction, we may assume that the map
		\[
		W_{n-1}(A)_{\p} \longmap W_{n-1}(A_{\q})
		\]
	is an isomorphism. By~4.4, it is therefore enough to show that the maps
		\[
		A_{(n)} \tn_{W_n(A)} W_n(A)_{\p} \longmap (A_{\q})_{(n)}
		\]
	are isomorphisms.  
	Write $T_{\p}=W_n(A)-\p$ and $T_{\q}=A-\q$.
	Then this map can be identified with the following map of localizations:
		\begin{equation} \label{eq:internal-moose}
			\gh{n}(T_{\p})^{-1} A \longmap T_{\q}^{-1}A.
		\end{equation}
	Thus it is enough to show that $T_{\q}$ becomes invertible in $\gh{n}(T_{\p})^{-1} A$.
	It is therefore enough to show $(T_{\q})^{q_\m^n}\subseteq \gh{n}(T_{\p})$, which
	holds by the following.
	
	We have $[T_{\q}]\subseteq T_{\p}$, where $[\vbl]$ denotes the Teichm\"uller section
	(of 1.21); indeed, if $[x]\in \p = \gh{0}^{-1}(\q)$, then
	$x=\gh{0}([x])\in\q$. Therefore, we have
		\[
		\gh{n}(T_{\p}) \supseteq \gh{n}([T_{\q}]) = (T_{\q})^{q_\m^n}.
		\]
\end{proof}

\subsection{} \emph{Remark.}
In either case of \ref{pro:local-rings-of-W}, the prime ideal $\q$ is the pre-image of $\p$ under
the Teichm\"uller map $t\:A\to W_n(A)$, $t(a)=[a]$.
In fact, the induced function $\text{``$\Spec t$''}\:\Spec
W_n(A)\to \Spec A$ is continuous in the Zariski topology. Even further,
if we view the structure sheaves on these schemes as sheaves of commutative monoids under
multiplication, then the usual Teichm\"uller map gives $\text{``$\Spec t$''}$ the structure of a
map of locally monoided topological spaces. If $R$ is an $\bF_p$-algebra, for some prime number
$p$, then $t$ is a ring map, and $\text{``$\Spec t$''}$ agrees with the scheme map $\Spec t$; but
otherwise $\text{``$\Spec t$''}$ will not generally be a scheme map. These statements can in
fact be promotied to the \'etale topology, but there it is necessary to work with maps of toposes 
instead of maps of topological spaces.

\begin{proposition}\label{pro:teich-of-regular-sequence}
	Let $S,R,\ptst,\m,A$ be as in \ref{pro:W-of-local-ring}.
	Let $a_1,\dots,a_d\in\m_A$ be a regular sequence for $A$.
	Then $[a_1],\dots,[a_d]$ lie in the maximal ideal of $W_n(A)$
	and form a regular sequence for $W_n(A)$.
\end{proposition}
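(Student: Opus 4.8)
The plan is to induct on $n$, using the Verschiebung exact sequence of~\ref{pro:V-exact-sequence} to split the regularity of $[a_1],\dots,[a_d]$ on $W_n(A)$ into regularity statements on the sub- and quotient modules appearing in that sequence. The membership claim is immediate: since $\gh{0}([a_i]) = a_i$ lies in the maximal ideal $\m_A$ of $A$, each $[a_i]$ lies in $\gh{0}^{-1}(\m_A)$, which is the maximal ideal of $W_n(A)$ by~\ref{pro:W-of-local-ring}. So all the content is in the regular-sequence assertion.

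For that assertion I would argue by induction on $n$. When $n=0$ we have $W_0(A)=A$ and $[a_i]=a_i$, so there is nothing to prove. For $n\geq 1$, I would invoke the short exact sequence of $W_n(A)$-modules
\[
0 \longmap A_{(n)} \longlabelmap{V^n} W_n(A) \longmap W_{n-1}(A) \longmap 0
\]
of~\ref{pro:V-exact-sequence}, where $A_{(n)}$ denotes $A$ equipped with the $W_n(A)$-module structure through the ghost map $\gh{n}\:W_n(A)\to A$. On the quotient $W_{n-1}(A)$ the sequence $[a_1],\dots,[a_d]$ is regular by the induction hypothesis. On $A_{(n)}$ each $[a_i]$ acts as multiplication by $\gh{n}([a_i]) = a_i^{q_\m^n}$; since $a_1,\dots,a_d$ is a regular sequence for $A$ and since powers of a regular sequence again form a regular sequence, the elements $a_1^{q_\m^n},\dots,a_d^{q_\m^n}$ are $A$-regular, which is exactly the statement that $[a_1],\dots,[a_d]$ is regular on $A_{(n)}$.

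To finish I would apply the elementary fact that a sequence regular on both the sub- and the quotient module of a short exact sequence is regular on the middle term. This follows by induction on $d$: the snake lemma applied to multiplication by $[a_1]$ shows $[a_1]$ is a nonzerodivisor on $W_n(A)$ and produces a short exact sequence of the quotients $A_{(n)}/[a_1]$, $W_n(A)/[a_1]$, $W_{n-1}(A)/[a_1]$, to which one applies the inductive hypothesis with the shorter sequence $[a_2],\dots,[a_d]$. Properness causes no trouble, since $\gh{0}$ carries $W_n(A)/([a_1],\dots,[a_d])$ onto $A/(a_1,\dots,a_d)\neq 0$.

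The main obstacle is bookkeeping rather than anything conceptual: one must correctly identify the $W_n(A)$-module structure on the kernel $A_{(n)}$ of the projection --- so that $[a_i]$ acts by $a_i^{q_\m^n}$ and \emph{not} by $a_i$ --- and one must apply the two auxiliary facts (powers of a regular sequence are regular, and regularity on the ends of a short exact sequence gives regularity in the middle) in forms that do not assume $A$ is noetherian, since no finiteness hypothesis on $A$ is available here.
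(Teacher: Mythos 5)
Your proposal is correct, and its skeleton coincides with the paper's own proof: the same membership argument via \ref{pro:W-of-local-ring}, the same induction on $n$ through the Verschiebung sequence of \ref{pro:V-exact-sequence}, the same identification of the $W_n(A)$-module structure on $A_{(n)}$ through $\gh{n}$ (so that $[a_i]$ acts by $a_i^{q_\m^n}$), and the same appeal to the fact that powers of a regular sequence are regular. Where you genuinely diverge is the key lemma at the final step: the paper packages the two ends of the short exact sequence into Koszul cohomology, showing $H^{d-1}\big(K_{W_n(A)}(A_{(n)})\big)$ and $H^{d-1}\big(K_{W_n(A)}(W_{n-1}(A))\big)$ vanish and then invoking depth sensitivity of the Koszul complex (Eisenbud, Corollary 17.5 and Theorem 17.6), whose converse direction requires the module to be finitely generated and nonzero over the local ring $W_n(A)$ --- harmless there, since it is applied to $W_n(A)$ itself. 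Your snake-lemma induction on $d$ proves the ``regular on sub and quotient implies regular on the middle'' lemma directly; it is correct as stated, strictly more elementary, and sidesteps the Nakayama-type hypothesis entirely, with properness secured by your observation that $\gh{0}$ (surjective, via the Teichm\"uller section of \ref{subsec:teich-1}) carries $W_n(A)/([a_1],\dots,[a_d])$ onto $A/(a_1,\dots,a_d)\neq 0$ --- a point the paper leaves implicit. Two small pieces of bookkeeping are worth adding. First, the sequence of \ref{pro:V-exact-sequence} is stated with $V_\pi^n$, i.e.\ for $\m$ principal, so as in the paper you should first replace $R$ by $R_\m$; this is legitimate by (\ref{eq:localization-of-S-commutes-with-wnus}) and costless, since $A$, being local with $\m A\subseteq\m_A$, is already an $R_\m$-algebra, so $W_n(A)$ is unchanged. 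Second, your caution about the powers lemma in the non-noetherian setting is well placed but resolves favorably: the statement holds for arbitrary modules over arbitrary rings (Matsumura, Theorem 16.1), so applying it to $A$ over itself requires no finiteness at all, whereas the paper's citation of Eisenbud, Corollary 17.8, is likewise applied only to the cyclic module $A$.
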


\begin{proof}
	By \ref{pro:W-of-local-ring}, the maximal ideal of $W_n(A)$ is $\gh{0}^{-1}(\m_A)$, which
	contains the sequence $[a_1],\dots,[a_d]$. It remains to show that the sequence is regular.
	
	By~(\ref{eq:localization-of-S-commutes-with-wnus}), we can assume $R$ agrees with $R_{\m}$, and
	hence that the ideal $\m$ is generated by an element $\pi$.
	The argument will now go by induction on $n$.  For $n=0$, there is nothing to prove;
	so assume $n\geq 1$.
	For any $W_n(A)$-module $M$, let $K_{W_n(A)}(M)$ denote the Koszul complex of
	$M$ with respect the sequence $[a_1],\dots,[a_d]\in W_n(A)$.
	If this sequence is regular for $M$, then $H^{d-1}(K_{W_n(A)}(M))=0$, and the converse
	holds if $M$ is finitely generated and nonzero.
	(See Eisenbud~\cite{Eisenbud:Commutative-algebra-book}, Corollary 17.5 and Theorem 17.6.)

	In particular, it is enough to show $H^{d-1}(K_{W_n(A)}(W_n(A)))=0$.
	Considering the exact sequence (4.4.1) of $W_n(A)$-modules
		\[
		0 \longmap A_{(n)} \longlabelmap{V_{\pi}^n} W_n(A) \longmap W_{n-1}(A) \longmap 0,
		\]
	we see it is even sufficient to show 
		\[
		H^{d-1}(K_{W_n(A)}(A_{(n)})) = H^{d-1}(K_{W_n(A)}(W_{n-1}(A))) = 0.
		\]
	Observe that we have
		\[
		H^{d-1}\bigl(K_{W_{n}(A)}(W_{n-1}(A))\bigr)= H^{d-1}\bigl(K_{W_{n-1}(A)}(W_{n-1}(A))\bigr)= 0,
		\]
	by induction. Therefore, it is enough to prove
	$H^{d-1}\bigl((K_{W_n(A)}(A_{(n)}))\bigr)=0$, and hence that $[a_1],\dots,[a_d]$ is regular for 
	$A_{(n)}$.
	This is equivalent to the sequence $a_1^{q^n},\dots,a_d^{q^n}\in A$ being regular for 
	$A$---indeed, the product $[a]\cdot x$, where $x\in A_{(n)}$, is by definition
	$\gh{n}([a])x$, which equals $a^{q^n}x$.
	We complete the argument with the general fact that any power of a regular sequence
	for a finitely generated module is
	again regular (\cite{Eisenbud:Commutative-algebra-book}, Corollary 17.8).
\end{proof}

\begin{proposition}\label{pro:W-preserves-depth-bound}
	Let $k$ be an integer. Let $X$ be an algebraic space over $S$ such that both $X$ and 
	$\wnus{n}(X)$ are locally noetherian. Suppose $X$ satisfies one of the following properties:
	\begin{enumerate}
		\item Cohen--Macaulay,
		\item Cohen--Macaulay over $S$,
		\item $\text{S}_k$ (Serre's condition),
		\item $\text{S}_k$ over $S$.
	\end{enumerate} 
	Then $\wnus{n}(X)$ satisfies the same property.
\end{proposition}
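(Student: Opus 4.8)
The plan is to reduce to a ring-theoretic statement and then exploit two structural facts we already have: that the ghost map $\gh{\leq n}\:W_n(A)\to A^{[0,n]}$ is integral and surjective on spectra (so dimension is unchanged), and that Teichm\"uller lifts of regular sequences remain regular. All four conditions are \'etale-local properties of algebraic spaces over $S$ (an \'etale map preserves relative and absolute Cohen--Macaulayness and $S_k$ in both directions), so by \ref{pro:obj-general-permanence-properties} it suffices to treat the case where $\ptst=\{\pi R\}$ is a single principal ideal and $X=\Spec A$, $S=\Spec R$ are affine; here $\wnus{n}(X)=\Spec W_n(A)$ by~(\ref{eq:wnus-of-affine-is-W}), and the noetherian hypotheses make $A$ and $W_n(A)$ noetherian, so depth is available. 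The argument then rests on two observations, valid at every prime: (L1) $\dim W_n(A_\q)=\dim A_\q$, since $\gh{\leq n}$ is integral and surjective (\ref{lem:gh-integral-and-surj-multi-prime}, as exploited in \ref{pro:W-preserves-etale-local-properties}(d)); and (L2) $\depth W_n(A_\q)\geq \depth A_\q$, since a maximal regular sequence in the maximal ideal of $A_\q$ lifts, via Teichm\"uller representatives, to a regular sequence in $W_n(A_\q)$ (\ref{pro:teich-of-regular-sequence}).

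For the absolute cases (a) and (c) I would test the condition prime by prime on $W_n(A)$ using the local-ring dictionary of \ref{pro:local-rings-of-W}. At a prime $\p\not\supseteq\m$ the local ring $W_n(A)_\p$ is a localization $A_\q$ of $A$ (\ref{pro:local-rings-of-W}(a)), so it inherits the Cohen--Macaulay or $S_k$ bound directly from $A$. At a prime $\p\supseteq\m$ we have $W_n(A)_\p\cong W_n(A_\q)$ with $\q\supseteq\m$ (\ref{pro:local-rings-of-W}(b)), and then (L1) and (L2) give $\depth W_n(A)_\p\geq \depth A_\q\geq\min(k,\dim A_\q)=\min(k,\dim W_n(A)_\p)$ in the $S_k$ case, and $\depth W_n(A)_\p\geq\depth A_\q=\dim A_\q=\dim W_n(A)_\p$ in the Cohen--Macaulay case (using $\depth\leq\dim$ to force equality). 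This settles (a) and (c).

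For the relative cases (b) and (d) I would first localize the base, by~(\ref{eq:localization-of-S-commutes-with-wnus}) and~(\ref{eq:wnus-is-independent-of-S}), to assume $R=R_\m$ is a discrete valuation ring with uniformizer $\pi$; then $W_n(A)$ is $R$-flat by \ref{pro:W-preserves-etale-local-properties}(b), so it remains only to check that the fibers are $S_k$ (resp.\ Cohen--Macaulay). Away from $\m$ the ghost map is an isomorphism, so the generic fiber is $(A_\eta)^{[0,n]}$, a finite product of copies of the $S_k$ (resp.\ CM) generic fiber of $X/S$, hence $S_k$ (resp.\ CM). For the closed fiber $W_n(A)/\pi W_n(A)$ I would argue at each prime $\p\supseteq\m$, corresponding to $\q\supseteq\m$ as above: since $\pi$ is a non-zero-divisor on both $A_\q$ and $W_n(A_\q)=W_n(A)_\p$ (by flatness), depth and dimension each drop by exactly one modulo $\pi$, so combining this with (L1) and (L2) yields $\depth\big(W_n(A)/\pi\big)_\p\geq\depth(A/\pi)_{\bar{\q}}$ and $\dim\big(W_n(A)/\pi\big)_\p=\dim(A/\pi)_{\bar{\q}}$. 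Feeding the $S_k$ (resp.\ CM) hypothesis on the closed fiber $A/\pi$ of $X/S$ through these two relations shows the closed fiber of $\wnus{n}(X)$ is $S_k$ (resp.\ CM), completing the relative cases.

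The delicate point, and the reason the relative cases require this direct fiberwise computation rather than a reduction to (a) and (c), is the behaviour of $S_k$ under the flat base $R$. One is tempted to observe that over the discrete valuation ring $R$ the hypotheses already make $A$ absolutely $S_k$, apply (c) to conclude $W_n(A)$ is $S_k$, and then descend to the fibers; but descent of $S_k$ across a base of depth one degrades $S_k$ to $S_{k-1}$, since the depth of a fiber is the depth of the total ring minus one while the cut-off $k$ does not decrease, so this loses a degree. The real content is that the inequality (L2) survives reduction modulo $\pi$ at the level of each fiber local ring, which is exactly what the non-zero-divisor depth formula supplies; the main thing to verify carefully is therefore that (L1) and (L2) genuinely localize and pass to the closed fiber at every prime $\p\supseteq\m$.
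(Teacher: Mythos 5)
Your proposal is correct and takes essentially the same route as the paper's own proof: the same \'etale-local reduction via \ref{pro:obj-general-permanence-properties} to the affine, single-principal-prime case, the same use of \ref{pro:local-rings-of-W} to localize prime by prime, of \ref{pro:teich-of-regular-sequence} for the depth inequality and ghost-map integrality for the dimension equality, and the same non-zero-divisor modulo-$\pi$ computation over a discrete valuation ring for the fibers in the relative cases. The only cosmetic difference is that you verify Cohen--Macaulayness directly via $\depth=\dim$, whereas the paper deduces parts (a)--(b) from (c)--(d).
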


\subsection{} \emph{Remark.}
See EGA IV (5.7.1), (6.8.1) \cite{EGA-no.24} for the definition of Cohen--Macaulay and $\text{S}_k$.
Typically, these concepts are discussed only for noetherian rings, but because $\wnus{n}$ does not
preserve noetherianness, we must assume $\wnus{n}(X)$ is noetherian. I do not know if it is possible
to remove this assumption by extending the concept of depth beyond the noetherian setting. If so,
maybe even the noetherian hypotheses on $X$ could be removed.

Note that, by~\ref{pro:W-preserves-etale-local-properties}(a), the assumptions hold if $S$ is
noetherian and $X$ is locally of finite type over $S$.

\subsection{} \emph{Proof of \ref{pro:W-preserves-depth-bound}.}
	The properties are all \'etale-local (EGA IV (6.4.2) \cite{EGA-no.24}).
	So by~\ref{pro:obj-general-permanence-properties}, 
	we can write $S=\Spec R$, $\ptst=\{\m\}$, $\m=\pi R$, and $X=\Spec A$.

	(a)--(b): These follow from (c) and (d).
	
	(c): At a prime ideal of $W_n(A)$ not containing $\pi$, the local ring agrees with 
	a local ring of $A$ (by \ref{pro:local-rings-of-W}), which satisfies $\text{S}_k$
	by assumption; so there is nothing to prove.
	Now let $\p$ be a prime ideal of $W_n(A)$ containing $\pi$.
	Let $\q$ be the corresponding prime ideal of $A$ given by~\ref{pro:local-rings-of-W}.
	Then we have $W_n(A)_{\p}=W_n(A_{\q})$, 
	so it suffices to assume that $A$ is a local ring with maximal ideal $\q$.
	We can therefore also assume that $R$ is a discrete valuation ring with maximal ideal 
	
	By~\ref{pro:teich-of-regular-sequence} and~\ref{pro:W-preserves-etale-local-properties},
	we have	
		\[
		\depth W_n(A)\geq \depth A, \quad \dim W_n(A)=\dim A.
		\]
	By the definition of $\text{S}_k$, $\depth A$ is at least $k$ or $\dim A$.
	Therefore $\depth W_n(A)$ is at least $k$ or $\dim W_n(A)$. In other words, $W_n(A)$
	also satisfies $\text{S}_k$.
		
	(d): By~\ref{pro:W-preserves-etale-local-properties}, $\wnus{n}(A)$ is flat over $R$.
	Away from $\m$, we have $\wnus{n}(A)=A^{[0,n]}$, the fibers over $S$ of which
	satisfy $\text{S}_k$. So it suffices to consider the fiber over $\m$.
	Therefore, by~\ref{pro:local-rings-of-W}, we can assume that
	$A$ is a local $R$-algebra whose maximal ideal contains $\m$.
	By~(\ref{eq:localization-of-S-commutes-with-wnus}), we can further assume that $R$ is a 
	discrete valuation ring. We need to show that $W_n(A)/\pi W_n(A)$ satisfies $\text{S}_k$.  

	Since $A$ and $W_n(A)$ are flat over $R$, 
	the element $\pi$ is not a zero divisor in $A$ or $W_n(A)$. Therefore we have
		\[
		\depth W_n(A)/\pi W_n(A) = \depth W_n(A) - 1 \geq \depth A - 1 = \depth A/\pi A,
		\]
	by~\ref{pro:teich-of-regular-sequence} (and EGA 0 (16.4.6)(ii)~\cite{EGA-no.20}, say).
	Further, by~\ref{pro:W-preserves-etale-local-properties}, we have
		\[
		\dim W_n(A)/\pi W_n(A) = \dim W_n(A) -1 =\dim A - 1 = \dim A/\pi A.
		\]
	Because $A/\pi A$ satisfies $\text{S}_k$, $\depth A/\pi A$ is at least $k$ or
	$\dim A/\pi A$. Therefore $\depth W_n(A)/\pi W_n(A)$ is at least $k$ or
	$\dim W_n(A)/\pi W_n(A)$. In other words, the fiber $W_n(A)/\pi W_n(A)$ satisfies $\text{S}_k$.
\qed

\subsection{} {\em Gorenstein, regular, normal.}
\label{subsec:W-of-Gorenstein}
Consider the $p$-typical Witt vectors.
Then we have
	\begin{equation} \label{eq:presentation-of-W(R)}
		W_n(\bZ) = \bZ[x_1,\dots,x_{n}]/(x_ix_j-p^ix_j\,|\,1\leq i\leq j\leq n),
	\end{equation}
with the element $x_i$ corresponding to $V_{p}^i(1)$, where $V_{p}$ denotes the
usual Verschiebung operator. 
(See 3.8.)

This presentation gives some easy counterexamples.
The ring $W_1(\bZ)$ agrees with $\bZ[x]/(x^2-px)$, which is not normal.
So $W_n$ does not generally preserve regularity or normality.

The property of being Gorenstein is also not preserved by $W_n$. Indeed, 
we have
	\[
	\bF_p\tn_{\bZ} W_n(\bZ) = \bF_p[x_1,\dots,x_{n}]/(x_ix_j\,|\,1\leq i\leq j\leq n).
	\]
Therefore the socle of $\bF_p\tn_{\bZ} W_n(\bZ)$ (that is, the 
annihilator of its maximal ideal) is the vector space
	\[
	\bF_p x_1 \oplus \cdots \oplus \bF_p x_{n} \quad \text{if } n\geq 1,
	\]
and is $\bF_p$ if $n=0$. Since the sequence $\{p\}$ of length $1$ is a system of
parameters in $W_n(\bZ)$ at the prime ideal $\p$ containing $p$, the ring
$W_n(\bZ)$ is Gorenstein at $\p$ if and only if the
dimension of the socle is $1$. This holds if and only if $n=0,1$. When $n=1$, it is even a
complete intersection, but it is not normal. (A basic
treatment of these concepts is in Kunz's book~\cite{Kunz:IntroCAAG} (VI 3.18), for example.)

\section{Ghost descent and the geometry of Witt spaces}
\label{sec:ghost-descent-and-the-geometry-of-Witt-spaces}

The purpose of this section is to describe the Witt space $\wnus{n}(X)$ of a flat algebraic space 
$X$ as a certain quotient, in the category of algebraic spaces, of the ghost space 
$\coprod_{[0,n]}X$.
We continue with the notation of~\ref{subsec:supramaximal-def-global-base-S}.

\subsection{} {\em Reduced ghost components.}
Suppose $\ptst$ consists of one ideal $\m$, consider the diagram
	\begin{equation} \label{eq:thm:presentation-of-W-of-alg-space}
		\displaylonglabelcofork{S_{n} \times_S X}
			{i_1\circ\rgh{n+1}}{\bar{\imath}_2}
			{\wnus{n}(X) \smcoprod X}
			{\alpha_n}{\wnus{n+1}(X),}
	\end{equation}
where 
	\begin{enumerate}
		\item[] $\rgh{n+1}\:S_n\times_S X \to \wnus{n}(X)$ is as 
			in~(\ref{map:general-reduced-ghost-map}), 
		\item[] $i_1\:\wnus{n}(X)\to\wnus{n}(X) \smcoprod X$ is the inclusion into the first 
			component,
		\item[] $\bar{\imath}_2$ is the closed immersion of 
			$S_n\times_S X$ into the second component, and
		\item[] $\alpha_n$ is $\inclmap{n,1}$ on $\wnus{n}(X)$ and $\gh{n+1}$ on $X$,
			in the notation of~(\ref{map:wnus-change-of-n-inclusion-map}).
	\end{enumerate}
When $X$ is affine, this is the same as the diagram in~(8.1.1).

\begin{proposition}\label{pro:ghost-descent-for-spaces}
	Let $X$ be an algebraic space over $S$.
	Then the map $\alpha_n$ is an effective descent	map for the fibered category of
	algebraic spaces which are both \'etale and affine over their base.
	In this case, descent data is equivalent to gluing data with respect to the diagram
	(\ref{eq:thm:presentation-of-W-of-alg-space}).
\end{proposition}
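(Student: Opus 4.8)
The plan is to prove the statement by reducing to the case where $X$ is affine, where it becomes a Milnor-patching statement, and then propagating the conclusion back to arbitrary $X$ by \'etale descent. First I would observe that the whole diagram~(\ref{eq:thm:presentation-of-W-of-alg-space}) is stable under \'etale base change in $X$. If $(U_i)_{i\in I}$ is an \'etale cover of $X$ by affine schemes, then $\big(\wnus{n+1}(U_i)\big)_{i\in I}$ is an \'etale cover of $\wnus{n+1}(X)$ by~\ref{cor:etale-chart-presentation-of-wnus}, and base-changing each term of~(\ref{eq:thm:presentation-of-W-of-alg-space}) along $\wnus{n+1}(U_i)\to\wnus{n+1}(X)$ recovers the corresponding diagram for $U_i$. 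Indeed, the cartesian squares of~\ref{cor:geometric-W-etale-base-change}, applied to $\inclmap{n,1}$, to $\gh{n+1}$, and to $\rgh{n+1}$ respectively, identify the pullbacks of $\wnus{n}(X)$, of $X$, and of $S_n\times_S X$ with $\wnus{n}(U_i)$, with $U_i$, and with $S_n\times_S U_i$, compatibly with the two parallel maps $i_1\circ\rgh{n+1}$ and $\bar{\imath}_2$ and with $\alpha_n$. By~(\ref{eq:localization-of-S-commutes-with-wnus}) we may likewise assume $S$ is affine.

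Next I would package the two sides of the asserted equivalence as fibered categories over the \'etale site of $X$. Let $F_1$ send an \'etale $X$-space $U$ to the category of algebraic spaces that are \'etale and affine over $\wnus{n+1}(U)$, and let $F_2$ send $U$ to the category of gluing data relative to~(\ref{eq:thm:presentation-of-W-of-alg-space}), namely a triple consisting of an \'etale affine space over $\wnus{n}(U)$, one over $U$, and an isomorphism of their pullbacks over $S_n\times_S U$. Pulling back along $\alpha_n$ and restricting along the two legs defines a comparison functor $F_1\to F_2$, and the proposition is exactly the assertion that this functor is an equivalence. By the base-change compatibility just noted, together with~\ref{cor:etale-chart-presentation-of-wnus}, both $F_1$ and $F_2$ are stacks for the \'etale topology on $X$ (\'etale affine spaces themselves satisfy descent). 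A morphism of stacks that is an equivalence on a generating family is an equivalence, so it suffices to prove $F_1(U)\to F_2(U)$ is an equivalence for $U=\Spec A$ affine.

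In the affine case the diagram~(\ref{eq:thm:presentation-of-W-of-alg-space}) is the one of~(\ref{diag:alpha-equalizer}); dually it presents $W_{n+1}(A)$ as the fibre product of rings $W_n(A)\times_{A/\m^{n+1}A}A$, formed along $\rgh{n+1}\colon W_n(A)\to A/\m^{n+1}A$ and the canonical surjection $A\twoheadrightarrow A/\m^{n+1}A$. Since one of these structure maps is surjective, this is a Milnor square, so by Ferrand's pinching theorem $\wnus{n+1}(\Spec A)$ is the pushout of $\wnus{n}(\Spec A)$ and $\Spec A$ along the closed subscheme $S_n\times_S\Spec A$; Milnor patching then identifies \'etale affine schemes over $W_{n+1}(A)$ with triples consisting of an \'etale affine scheme over $W_n(A)$, one over $A$, and an isomorphism of their restrictions to $A/\m^{n+1}A$. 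This is precisely the effective-descent assertion for $\alpha_n$ together with the identification of descent data with gluing data along~(\ref{eq:thm:presentation-of-W-of-alg-space}). The hard part of the whole argument is this affine patching step: one must check that Ferrand's/Milnor's gluing genuinely applies to the fibered category of \'etale affine schemes (\'etaleness of the glued algebra being verified on the two factors, which is exactly where the affineness restriction on the fibered category is used), and that the descent data for $\alpha_n$ is faithfully and exhaustively recorded by the single overlap term $S_n\times_S X$ rather than by the full kernel pair of $\alpha_n$, the off-diagonal part of which is $S_n\times_S X$ by~\ref{cor:geometric-W-etale-base-change}(d). Granting the affine case, the \'etale-stack argument of the second paragraph finishes the proof for an arbitrary algebraic space $X$.
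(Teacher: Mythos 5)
Your global strategy coincides with the paper's: it too packages both sides as fibered categories over the small \'etale site of $X$ (in fact three of them, keeping descent data and gluing data separate), observes via \ref{thm:W-main-geometric-finite-length(1)} that they are stacks, and reduces to affine $X$ by Giraud's criterion that a morphism of stacks which is locally an equivalence is an equivalence. The genuine gap is in your affine step. You assert that the affine diagram ``dually presents $W_{n+1}(A)$ as the fibre product of rings $W_n(A)\times_{A/\m^{n+1}A}A$,'' making it a Milnor square to which Ferrand's pinching applies. That cartesianness holds only when $A$ is $\m$-flat. For general $A$ the comparison map $\alpha\:W_{n+1}(A)\to W_n(A)\times_{A/\m^{n+1}A}A$ is merely surjective with square-zero kernel (this is the content of \ref{pro:alpha-map}, exploited in \ref{lem:gh-integral-and-surj-multi-prime} to get the bound $I^{2^N}=0$); concretely, in the $p$-typical case with $n=0$ one has $W_1(\bF_p)\cong\bZ/p^2\bZ$ while $\bF_p\times_{\bF_p}\bF_p\cong\bF_p$. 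The proposition carries no flatness hypothesis --- in pointed contrast to \ref{thm:presentation-of-W-of-alg-space}, which assumes $\m$-flatness precisely because the (co)cartesian property fails otherwise --- so your Milnor/Ferrand argument does not cover the stated generality. The paper instead invokes the affine theorem \ref{thm:alpha-gluing=descent} from part I, whose entire point is that the descent/gluing equivalence for \'etale affine algebras survives even when the square is not cartesian.

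The repair is to note that $\Spec\alpha$ is a closed immersion defined by a square-zero ideal, so by invariance of the \'etale site under nilpotent thickenings the \'etale affine algebras over $W_{n+1}(A)$ and over the honest fiber product agree, and only then run the Milnor-patching argument --- but this amounts to reproving \ref{thm:alpha-gluing=descent} rather than citing a standard pinching result, which is presumably why the paper outsources it. A secondary slip: your identification of the off-diagonal part of the kernel pair of $\alpha_n$ with $S_n\times_S X$ ``by \ref{cor:geometric-W-etale-base-change}(d)'' is misdirected, since that corollary concerns base change of $\rgh{i}$ along \'etale maps $X\to Y$; what is actually needed is the ring computation $W_n(A)\otimes_{W_{n+1}(A)}A\cong A/\m^{n+1}A$, together with the collapse of the diagonal components of the kernel pair (using that $W_{n+1}(A)\to W_n(A)$ and $\gh{n+1}$ are surjective, so the corresponding maps of affine schemes are closed immersions). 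These identities do hold for arbitrary $A$, but they require an argument you have not given; the paper avoids the issue by carrying descent data and gluing data as two separate fibered categories and letting \ref{thm:alpha-gluing=descent} compare all three at once in the affine case.
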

\begin{proof}
	Given an \'etale map $U\to X$ with $U$ affine, consider the following three categories:
	the category of affine \'etale algebraic spaces over
	$\wnus{n+1}(U)$, that of affine \'etale algebraic spaces over $\wnus{n}(U)\smcoprod U$
	with descent data with respect to $\alpha_n$, and that of affine \'etale 
	algebraic spaces over $\wnus{n}(U)\smcoprod U$ with gluing 
	data with respect to the diagram (\ref{eq:thm:presentation-of-W-of-alg-space}). As $U$ varies, 
	there are obvious transition functors, and these give rise to
	three fibered categories over the small \'etale topology of $X$.

	There are also evident morphisms between these fibered categories,
	and the statement of the corollary is that, for $U=X$, these morphisms are equivalences.

	By~\ref{thm:W-main-geometric-finite-length(1)}, all these fibered categories satisfy effective 
	descent in the \'etale topology. Thus it is enough (by 
	Giraud~\cite{Giraud:Cohomologie-non-abelienne}, II 1.3.6, say)
	to assume $X$ is affine, in which case the equivalence follows 
	from~8.3.
\end{proof}

\begin{theorem}\label{thm:presentation-of-W-of-alg-space}
	If $X$ is $\m$-flat (\ref{subsec:definition-of-E-flat}),
	then (\ref{eq:thm:presentation-of-W-of-alg-space}) 
	is a coequalizer diagram in the category of algebraic spaces.
\end{theorem}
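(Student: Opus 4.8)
The plan is to verify the coequalizer property étale-locally on $\wnus{n+1}(X)$ and thereby reduce to the affine diagram (\ref{diag:alpha-equalizer}), which is where the $\m$-flatness hypothesis does its work. First I would note that it is enough to prove (\ref{eq:thm:presentation-of-W-of-alg-space}) is a coequalizer in the topos $\Space_S$: by~\ref{thm:W-main-geometric-finite-length(1)} the object $\wnus{n+1}(X)$ is an algebraic space, and since algebraic spaces form a full subcategory of $\Space_S$, a coequalizer computed in $\Space_S$ that happens to be algebraic is automatically the coequalizer in the category of algebraic spaces.

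Next I would choose an étale cover $(U_i)_{i\in I}$ of $X$ by affine schemes. By~\ref{cor:etale-chart-presentation-of-wnus} the family $\big(\wnus{n+1}(U_i)\to\wnus{n+1}(X)\big)_{i\in I}$ is an étale cover, hence an effective epimorphism of sheaves. One first checks that (\ref{eq:thm:presentation-of-W-of-alg-space}) is a fork, i.e.\ $\alpha_n$ coequalizes the two left-hand maps; this is an equality of two maps out of $S_n\times_S X$, so it may be verified after covering the source by the $S_n\times_S U_i$ and invoking the affine case. This lets us regard (\ref{eq:thm:presentation-of-W-of-alg-space}) as a diagram over $\wnus{n+1}(X)$ via $\alpha_n$, and the central point is that its pullback along $\wnus{n+1}(U_i)\to\wnus{n+1}(X)$ is the corresponding diagram for $U_i$. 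Indeed the middle term $\wnus{n}(X)\smcoprod X$ pulls back to $\wnus{n}(U_i)\smcoprod U_i$ by the cartesian squares of~\ref{cor:geometric-W-etale-base-change}(b) for $\inclmap{n,1}$ and (c) for $\gh{n+1}$; the term $S_n\times_S X$ pulls back to $S_n\times_S U_i$ by~\ref{cor:geometric-W-etale-base-change}(d); and these identifications are compatible with the parallel arrows $i_1\circ\rgh{n+1}$ and $\bar{\imath}_2$ and with $\alpha_n$, using in addition that $S_n\times_S(\vbl)$ preserves fibre products and~\ref{thm:W-main-geometric-finite-length(2)}(c). Because colimits in the topos $\Space_S$ are universal, the coequalizer $Q$ of the two left-hand maps has the property that $Q\times_{\wnus{n+1}(X)}\wnus{n+1}(U_i)$ is the coequalizer of the $U_i$-diagram; and since a morphism of sheaves is an isomorphism as soon as it becomes one after pulling back along a cover, it suffices to prove the theorem when $X$ is affine.

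So assume $X=\Spec A$ is affine. Since $U_i\to X$ was étale and $X$ is $\m$-flat, each $U_i$ is again $\m$-flat, so the reduction loses no generality. In this case (\ref{eq:thm:presentation-of-W-of-alg-space}) is exactly the diagram (\ref{diag:alpha-equalizer}), whose being a coequalizer of affine schemes---equivalently an equalizer of the corresponding rings---is precisely where $\m$-flatness of $A$ enters, as flatness is what makes the reduced ghost map injective and the gluing sequence exact (cf.~\ref{thm:alpha-gluing=descent}).

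The step I expect to be the main obstacle is the pullback computation of the second paragraph: one must verify simultaneously that all three vertices of (\ref{eq:thm:presentation-of-W-of-alg-space}) base-change correctly along the chart cover and that the two parallel arrows and $\alpha_n$ match up, so that the pulled-back diagram is genuinely the $U_i$-diagram and not merely isomorphic to it vertex by vertex. Once this compatibility is pinned down, universality of colimits in $\Space_S$ turns the problem mechanically into the affine case.
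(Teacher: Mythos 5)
There is a genuine gap, and it is in your very first move. You propose to prove the stronger statement that (\ref{eq:thm:presentation-of-W-of-alg-space}) is a coequalizer in the topos $\Space_S$ and then descend to the full subcategory of algebraic spaces. But that stronger statement is false, and the paper says so explicitly in the remark immediately following the theorem: for $X=\Spec\bZ_p[\sqrt{p}]$ (which is flat over $\bZ_p$, so the hypothesis of the theorem is satisfied) the map $\alpha_1$ is not even an epimorphism in $\Space_S$. Your step-1 equivalence only runs in the direction you cannot use: a sheaf coequalizer that happens to be algebraic is indeed a coequalizer among algebraic spaces, but the converse fails because algebraic spaces are not closed under sheaf coequalizers. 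Consequently all the topos machinery you invoke afterwards --- universality of colimits, pulling the coequalizer back along the chart cover $\big(\wnus{n+1}(U_i)\big)_i$, the ``isomorphism after pullback along a cover'' criterion --- is machinery for the wrong category: the sheaf coequalizer $Q$ you form is genuinely different from $\wnus{n+1}(X)$, and your reduction lands on the affine instance of the sheaf-level statement, which is again false. The affine input \ref{pro:alpha-map} only gives a coequalizer in the category of \emph{affine schemes} (equivalently an equalizer of rings), i.e.\ the universal property against affine test objects $Y$; it does not give the universal property in $\Space_S$, nor even against general algebraic spaces $Y$.

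That last point is exactly the difficulty the paper's proof is organized around, and it is absent from your outline. The paper reduces to affine $X$ not by descent along a cover of $\wnus{n+1}(X)$ but by writing $X=\colim_i U_i$ over affines \'etale over $X$ and commuting the coequalizer with this colimit (using that $\wnus{n+1}$ preserves colimits); then, for affine $X$, it proves the factorization of a test map $d\:\wnus{n}(X)\smcoprod X\to Y$ through $\alpha_n$ by induction on the geometricity $m$ of $Y$. The key step is lifting $d$ to a map $d'$ into an affine \'etale cover $Y'\to Y$ after refining $X$ by an \'etale cover $X'\to X$; this lift exists by an adjunction trick, because $\wnls{n}(e)\times e$ is an epimorphism of spaces by \ref{pro:wnls-preserves-epis-and-monos}. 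One then applies the inductive hypothesis to $Y'$ and $Y'\times_Y Y'$ (which lie in $\AlgSp_{m-1}$) and chases the resulting factorizations back down. Since $\alpha_n$ is not a sheaf epimorphism, no local-on-$\wnus{n+1}(X)$ argument can substitute for this induction on the target; your flatness remark at the end only certifies the affine-scheme coequalizer, which is the easy input to the proof, not the theorem.
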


\begin{proof}
	For any space $Z$ over $S$, write $Z_n=S_{n} \times_S Z$.

	Let us first reduce to the case where $X$ is affine.
	Write
		\begin{equation} \label{eq:internal-dkd-1}
			X=\colim_{i\in I} U_i,
		\end{equation}
	where $(U_i)_{i\in I}$ is a diagram of affine schemes mapping by \'etale maps to $X$.
	Then $\bigl((U_i)_n\bigr)_{i\in I}$ is a diagram of affine schemes mapping by \'etale
	maps to $X_n$. We also have 
		\begin{equation} \label{eq:internal-dkd-2}
			X_n=\colim_i (U_i)_n,
		\end{equation}
	because the functor 
	$S_n\times_S \vbl\:\Space_S\to\Space_{S_n}$ has a right adjoint, and hence
	preserves colimits.
	In particular, both colimit formulas (\ref{eq:internal-dkd-1})
	and (\ref{eq:internal-dkd-2}) hold in the category of algebraic spaces, as well as in 
	$\Space_S$. Therefore, assuming the theorem in the affine case, we can make the
	following formal computation in the category of algebraic spaces:
	\begin{align*}
		\coeq \bigl[X_n\rightrightarrows \wnus{n}(X) \smcoprod X \bigr] 
			&= \coeq \bigl[\colimm_i(U_i)_n\rightrightarrows 
				\colimm_i\wnus{n}(U_i) \smcoprod \colimm_i U_i \bigr] \\
			&= \coeq \bigl[\colimm_i(U_i)_n\rightrightarrows 
				\colimm_i\left(\wnus{n}(U_i) \smcoprod U_i \right) \bigr] \\
			&= \colimm_i\coeq \bigl[(U_i)_n\rightrightarrows 
				\wnus{n}(U_i) \smcoprod U_i \bigr] \\
			&= \colimm_i\wnus{n+1}(U_i) \\
			&= \wnus{n+1}(\colimm_i U_i) \\
			&= \wnus{n+1}(X).
	\end{align*}
	Hence we can assume $X$ is affine.

	Let $Y$ be an algebraic space, and let
	$d\:\wnus{n-1}(X)\smcoprod X\to Y$ be a map such that the two compositions
	in the diagram
		\[
			\displaylonglabelcofork{S_{n} \times_S X}
				{i_1\circ\rgh{n+1}}{\bar{\imath}_2}
				{\wnus{n}(X) \smcoprod X}
				{d}{Y}
		\]
	agree. We want to show that $d$ factors through $\alpha_n$.  
	Because $\wnus{n}(X) \smcoprod X$ is affine, by~\ref{pro:W_n-of-affine-is-affine}, there is
	a quasi-compact open algebraic subspace containing the image of $d$.
	Since we can replace $Y$ with it, we may assume $Y$ is quasi-compact.
	
	Take $m\geq-1$ such that $Y\in\AlgSp_m$. We will argue by induction on $m$.
	When $m=-1$, the space $Y$ is a quasi-compact disjoint union of affine schemes;
	therefore it is affine.
	The result then follows because (\ref{eq:thm:presentation-of-W-of-alg-space})
	is a coequalizer diagram in the category of affine schemes, by~8.1.
	
	Now suppose $m\geq 0$.
	Let $e\:Y'\to Y$ be an \'etale surjection, where $Y'$ is an affine scheme.
	Then there is a \'etale surjection $g\:X'\to X$, with $X'$ affine, such that $d$ lifts 
	to a map $d'$ as follows:
		\[
		\xymatrix{
		\wnus{n}(X')\smcoprod X \ar@{-->}^-{d'}[r]\ar@{->>}_{\wnus{n}(g)\smcoprod g}[d] 
			& Y' \ar^{e}[d] \\
		\wnus{n}(X)\smcoprod X \ar^-{d}[r] 
			& Y.
		}
		\]
	Indeed, the existence of such a map $d'$
	is equivalent to the existence of a lift $f'$
		\[
		\xymatrix{
		X' \ar@{-->}^-{f'}[r]\ar@{->>}^{g}[d] 
			& \wnls{n}(Y')\times_S Y' \ar^{\wnls{n}(e)\times e}[d] \\
		X  \ar^-{f}[r] 
			& \wnls{n}(Y)\times_S Y,
		}
		\]
	where $f$ is the left adjunct of $d$.
	This exists because $\wnls{n}(e)\times e$ is an epimorphism of spaces, which is true
	by~\ref{pro:wnls-preserves-epis-and-monos}.

	Now let us construct the diagram in figure~\ref{fig:ghost-descent}. The
	rows are diagrams of the form (\ref{eq:thm:presentation-of-W-of-alg-space}); 
	to get $d''$, take the product of $d'$ with itself over $\wnus{n}(X)\smcoprod X$
	and then apply
	\ref{thm:W-main-geometric-finite-length(2)}(\ref{part:W-main-geometric-finite-length(2)-c}). 
	The maps $a,a',a''$ have not been constructed yet.
	
	\begin{figure}[t] 
		\begin{center}
			\[
			\xymatrix@C=70pt@R=40pt@!0{
			X'_n\times_{X_n}X'_n \ar@<0.7ex>[rr]\ar@<-0.7ex>[rr] 
							\ar@<0.7ex>[dd]^-{\pr_2}\ar@<-0.7ex>_-{\pr_1}[dd] &
				&
				\wnus{n}(X'\times_X X')\smcoprod (X'\times_X X') \ar^-{c''}[rr]\ar^{d''}[dr]						
						\ar@<0.7ex>[dd]^(.69){\wnus{n}(\pr_2)\smcoprod\pr_2}
						\ar@<-0.7ex>_(.69){\wnus{n}(\pr_1)\smcoprod\pr_1}[dd] &
				&
				\wnus{n+1}(X'\times_X X') \ar@{-->}^{a''}[dl]	
							\ar@<0.7ex>[dd]^(.69){\wnus{n+1}(\pr_2)}
							\ar@<-0.7ex>_(.69){\wnus{n+1}(\pr_1)}[dd] \\
			& & & Y'\times_Y Y' \ar@<0.7ex>[dd]^(0.37){\pr_2}\ar@<-0.7ex>_(0.37){\pr_1}[dd]\\
			X'_n 	\ar@<0.7ex>[rr]\ar@<-0.7ex>[rr] \ar^{g_n}[dd] &
				&
				\wnus{n}(X')\smcoprod X' 
					\ar_{\wnus{n}(g)\smcoprod g}[dd]\ar^{d'}[dr]\ar'[r][rr]_-{c'} &
				&
				\wnus{n+1}(X') \ar^{\wnus{n+1}(g)}[dd]\ar@{-->}^{a'}[dl] \\
			& & & Y'\ar_(.3){e}[dd] \\
			X_n \ar@<0.7ex>[rr]\ar@<-0.7ex>[rr] &
				&
				\wnus{n}(X)\smcoprod X \ar^{d}[dr]\ar'[r][rr]^-{c} &
				&
				\wnus{n+1}(X) \ar@{-->}^{a}[dl] \\
			& & & Y 
			}
			\]	
			\end{center}
		\caption{}
		\label{fig:ghost-descent}
	\end{figure}

	Since $X'$ and $X$ are affine, so is $X'\times_X X'$; and since $X'$ is \'etale over $X$,
	which is $\m$-flat, $X'$ and $X'\times_X X'$ are also $\m$-flat.
	Also, since $Y\in\AlgSp_m$, we have
		\[
		Y',Y'\times_Y Y'\in\AlgSp_{m-1}.
		\]
	So, by induction, there are unique maps $a'$ and $a''$ such that $d'=a'\circ c'$
	and $d''=a''\circ c''$.
	
	Now let us show $a'\circ\wnus{n+1}(\pr_i)=\pr_i\circ a''$, for $i=1,2$. 
	It is enough to show
		\[
		a'\circ \wnus{n+1}(\pr_i)\circ c'' = \pr_i\circ a''\circ c''.
		\]
	Indeed, by induction the coequalizer universal property holds for the top row.
	Showing this equality is a straightforward diagram chase.

	Therefore we have 
		\[
		e\circ a'\circ\wnus{n+1}(\pr_1) = e\circ \pr_1\circ a'' = 
			e\circ \pr_2\circ a'' = e\circ a'\circ\wnus{n+1}(\pr_2)
		\]
	So, by the universal property of coequalizers applied to the rightmost column,
	there exists a unique map $a$ such that 
		\begin{equation} \label{eq:internal-wjdj}
			a\circ \wnus{n+1}(g) = e\circ a'.
		\end{equation}
	Finally, let us verify the equality $d=a\circ c$.
	Because $\wnus{n}(g)\smcoprod g$ is an epimorphism, it is enough to show
		\[
		d\circ \bigl(\wnus{n}(g)\smcoprod g\bigr) = a\circ c \circ \bigl(\wnus{n}(g)\smcoprod g\bigr).
		\]
	This follows from (\ref{eq:internal-wjdj}) and a diagram chase which is
	again left to the reader.
\end{proof}

\subsection{} \emph{Remark.}
It is typically not true that~(\ref{eq:thm:presentation-of-W-of-alg-space}) 
is a coequalizer diagram in the category $\Space_S$.
For example, if we take $X=\Spec\bZ_p[\sqrt{p}]$ and consider the usual, $p$-typical Witt vectors, 
then $\alpha_1$ is not an epimorphism in $\Space_S$.

\section{The geometry of arithmetic jet spaces}
\label{sec:geometry-of-arithmetic-jet-spaces}

The main purpose of the section is to prove~\ref{thm:blow-up-description-of-w-lower-s}.
We continue with the notation of~\ref{subsec:supramaximal-def-global-base-S}. Let $X$ be an
algebraic space over $S$.

\subsection{} \emph{Single-prime notation.}
\label{subsec:affine-blow-up-notation}
Suppose that $\ptst$ consists of one maximal ideal $\m$.  Let 
	\[
	\wnls{n+1}(X)\longlabelmap{f}\wnls{n}(X)\times_S X
	\]
denote the map $(\projmap{n,1},\cgh{n+1})$ (in the notation of~\ref{subsec:natural-maps}), and let 
$I$ denote the ideal sheaf of $\sO_{\wnls{n}(X)\times_S X}$ defining the closed immersion
	\begin{equation} \label{eq:co-ghost-map}
		(\pr_2,\rcgh{n})\:S_n\times_S \wnls{n}(X) \longmap \wnls{n}(X)\times_S X.
	\end{equation}
Let $\sB$ denote the sub-$\sO_{\wnls{n}(X)\times_S X}$-algebra 
of $\sO_{S'}\tn_{\sO_S}\sO_{\wnls{n}(X)\times_S X}$ generated by the subsheaf
$\m^{-n-1}\tn_{\sO_S} I$. Observe that $\sB$ is $\m$-flat and satisfies 
	\begin{equation} \label{eq:B-congruence-condition}
		\m^{n+1}\sB \supseteq \m^{n+1}(\m^{-n-1}\tn_{\sO_S}I)\sB = I \sB.
	\end{equation}
When necessary, we will write $f_X,I_X,\sB_X$ to be clear.

\begin{proposition}\label{pro:blow-up-description-of-w-lower-s}
	Suppose that $\ptst$ consists of one maximal ideal $\m$.
	Let $T$ be an $\m$-flat algebraic space over $\wnls{n}(X)\times_S X$.
	Then there exists at most one map $\tilde{g}$ 
		\[
		\xymatrix{
		T \ar^{g}[dr] \ar@{-->}^-{\tilde{g}}[r]
			& \wnls{n+1}(X) \ar^{f}[d] \\
			& \wnls{n}(X)\times_S X
		}
		\]
	lifting the structure map $g$. Such a lift
	exists if and only if $I\sO_T\subseteq \m^{n+1}\sO_T$. 
\end{proposition}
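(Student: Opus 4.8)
The plan is to pass through the adjunction $\wnus{n+1}\dashv\wnls{n+1}$ and then read both claims off the coequalizer presentation of $\wnus{n+1}(T)$ furnished by~\ref{thm:presentation-of-W-of-alg-space}. Write $g=(g_1,g_2)$ with $g_1\colon T\to\wnls{n}(X)$ and $g_2\colon T\to X$. A lift $\tilde g\colon T\to\wnls{n+1}(X)$ corresponds under the adjunction to a map $h\colon\wnus{n+1}(T)\to X$, and I would first check that, because $\projmap{n,1}$ and $\cgh{n+1}$ are by their very definitions in~\ref{subsec:natural-maps} the mates of $\inclmap{n,1}$ and $\gh{n+1}$, the condition $f\circ\tilde g=g$ is equivalent to the pair of identities
\[
h\circ\inclmap{n,1}=h_1,\qquad h\circ\gh{n+1}=g_2,
\]
where $h_1\colon\wnus{n}(T)\to X$ is the adjunct of $g_1$. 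This purely adjunction-theoretic bookkeeping, with its index conventions for the (co)ghost and projection maps, is the step I expect to be the most delicate.

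Next I would apply~\ref{thm:presentation-of-W-of-alg-space}, which is available precisely because $T$ is $\m$-flat: the diagram~(\ref{eq:thm:presentation-of-W-of-alg-space}) exhibits $\wnus{n+1}(T)$ as the coequalizer of $S_n\times_S T\rightrightarrows\wnus{n}(T)\smcoprod T$, and its map $\alpha_n$ restricts to exactly $\inclmap{n,1}$ on $\wnus{n}(T)$ and $\gh{n+1}$ on $T$. Hence giving a map $h$ satisfying the two identities above is the same as giving a map $\wnus{n}(T)\smcoprod T\to X$ with components $(h_1,g_2)$ whose two composites with the structure maps $S_n\times_S T\rightrightarrows\wnus{n}(T)\smcoprod T$ agree; and by the universal property such an $h$ is unique when it exists. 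Transporting back through the adjunction gives the uniqueness of $\tilde g$ immediately, since $h_1$ and $g_2$ depend on $g$ alone, and shows that a lift exists if and only if
\[
h_1\circ\rgh{n}=g_2\circ\iota\qquad\text{as maps }S_n\times_S T\to X,
\]
where $\rgh{n}$ is the reduced ghost map of~(\ref{map:general-reduced-ghost-map}) and $\iota\colon S_n\times_S T\hookrightarrow T$ is the canonical closed immersion.

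Finally I would match this last equation with the ideal condition. Combining the definition $\rcgh{n}=\eps\circ\rgh{n}$ from~(\ref{map:general-reduced-co-ghost-map}) with the naturality of the reduced ghost transformation at $g_1\colon T\to\wnls{n}(X)$ yields $h_1\circ\rgh{n}=\rcgh{n}\circ(\id\times g_1)$. The equalizing condition therefore says exactly that the restriction of $g$ to $S_n\times_S T$ factors through the closed immersion $(\pr_2,\rcgh{n})$ of~(\ref{eq:co-ghost-map}), whose defining ideal is $I$. As $S_n\times_S T$ is the closed subspace of $T$ cut out by $\m^{n+1}\sO_T$, factoring through $V(I)$ is equivalent to $I\sO_T\subseteq\m^{n+1}\sO_T$, an equivalence one checks on an affine \'etale chart of $T$, where it is the tautology that the image of $g^{*}(I)$ vanishes in $\sO_T/\m^{n+1}\sO_T$. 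Putting the three steps together delivers both the uniqueness and the stated criterion; the $\m$-flatness hypothesis enters only to make~\ref{thm:presentation-of-W-of-alg-space} applicable, and the coequalizer does all the geometric work, so the one genuine difficulty is the mate calculus of the first step.
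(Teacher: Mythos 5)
Your proposal is correct and follows essentially the same route as the paper's proof: both pass through the adjunction to a map $\wnus{n+1}(T)\to X$, invoke the coequalizer presentation of~\ref{thm:presentation-of-W-of-alg-space} (which is where the $\m$-flatness of $T$ enters), use the naturality of the reduced ghost map together with the definition of $\rcgh{n}$ to rewrite the equalizing condition, and translate factoring through the closed immersion $(\pr_2,\rcgh{n})$ into the containment $I\sO_T\subseteq\m^{n+1}\sO_T$. The only cosmetic difference is that you make the mate calculus for the condition $f\circ\tilde{g}=g$ explicit, where the paper leaves it implicit in its identification of maps $T\to\wnls{n+1}(X)$ with compatible pairs $(a,b)$ before specializing $(a,b)$ to $g$.
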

\begin{proof}
	Giving a map $\tilde{g}\:T\to\wnls{n+1}(X)$ is equivalent to giving
	a map $\wnus{n+1}(T)\to X$. Such maps can be described using the diagram
		\[
		\displaycofork{S_n\times_S T}{\wnus{n}(T)\smcoprod T}{\wnus{n+1}(T),}
		\]
	because it is a coequalizer diagram in the category
	of algebraic spaces, by~\ref{thm:presentation-of-W-of-alg-space}.
	Therefore giving a map $T\to\wnls{n+1}(X)$ is equivalent to
	giving maps $a\:T\to\wnls{n}(X)$ and $b\:T\to X$ such that
	the diagram
		\begin{equation*} \label{diag:proof-of-geom-desc-of-wnls-3}
			\xymatrix@C=35pt{
			S_n\times_S T \ar^-{\pr_2}[r] \ar_{\rgh{n}}[d] 
				& T \ar^{b}[d] \\
			\wnus{n}(T) \ar^-{a'}[r] 
				& X,
			}
		\end{equation*}
	where $a'$ is the left adjunct of $a$, commutes. 
	The commutativity of this diagram is equivalent to that of  
		\begin{equation} \label{diag:proof-of-geom-desc-of-wnls-4}
			\xymatrix@C=40pt{
			S_n\times_S T \ar^-{\pr_2}[r] \ar_{\id\times a}[d] 
				& T \ar_{b}[d] \\
			S_n\times_S \wnls{n}(X) \ar^-{\rcgh{n}}[r] 
				& X.
			}
		\end{equation}
	This is because the following diagram commutes:
		\[
		\xymatrix@C=35pt{
		S_n\times_S T \ar^-{\id\times a}[r]\ar^{\rgh{n}}[d] 
			& S_n\times_S \wnls{n}(X) \ar^{\rgh{n}}[d] \ar^{\rcgh{n}}[ddr] \\
		\wnus{n}(T) \ar^-{\wnus{n}(a)}[r] \ar^{a'}[drr]
			& \wnus{n}\wnls{n}(X) \ar^{\varepsilon}[dr] \\
		&	& X,
		}
		\]
	where $\varepsilon$ is the counit of the evident adjunction.
	(And this diagram commutes by the naturalness of $\rgh{n}$ and the definitions of $a'$
	and $\rcgh{n}$.)
	
	Let us now apply this in the case where we take $(a,b)$ to be $g$.
	Then the map $\tilde{g}$ required by the lemma is unique, and it
	exists if and only if (\ref{diag:proof-of-geom-desc-of-wnls-4}) commutes.
	The commutativity
	of (\ref{diag:proof-of-geom-desc-of-wnls-4}) is equivalent to that of
		\begin{equation} \label{diag:proof-of-geom-desc-of-wnls}
		\xymatrix{
		S_n\times_S T \ar^-{\pr_2}[r] \ar_{\id\times (\pr_1\circ g)}[d] 
			& T \ar_{g}[d] \\
		S_n\times_S \wnls{n}(X) \ar^-{h}[r] 
			& \wnls{n}(X)\times_S X,
		}
		\end{equation}	
	where $h$ denotes the map $(\pr_2,\rcgh{n})$ of~(\ref{eq:co-ghost-map}).
	Because $h$ is a closed immersion,
	the commutativity of (\ref{diag:proof-of-geom-desc-of-wnls}) is
	equivalent to requiring 
	that the ideal $I$ defining $h$
	pull back to the zero ideal on $S_n\times_S T$, which is equivalent to the
	containment $I\sO_T\subseteq \m^{n+1}\sO_T$.
\end{proof}

\begin{theorem}\label{thm:blow-up-description-of-w-lower-s}
	Suppose that $\ptst$ consists of one maximal ideal $\m$ and that
	$\wnls{n+1}(X)$ is $\m$-flat (\ref{subsec:definition-of-E-flat}). 
	Let $\sB$ be as in~\ref{subsec:affine-blow-up-notation}.
	Then the unique $(\wnls{n}(X)\times_S X)$-map
		\[
		\underSpec \sB \longlabelmap{\tilde{g}} \wnls{n+1}(X),
		\]
	of \ref{pro:blow-up-description-of-w-lower-s} is an isomorphism.
\end{theorem}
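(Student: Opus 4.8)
The plan is to identify both $\underSpec\sB$ and $\wnls{n+1}(X)$ as representing the same functor on $\m$-flat algebraic spaces over $Z\dfn\wnls{n}(X)\times_S X$, and then to produce mutually inverse maps by comparing universal properties. The map $\tilde g$ itself exists by~\ref{pro:blow-up-description-of-w-lower-s} applied to $T=\underSpec\sB$: this is an $\m$-flat space over $Z$ (\ref{subsec:affine-blow-up-notation}), and it satisfies $I\sO_{\underSpec\sB}\subseteq\m^{n+1}\sO_{\underSpec\sB}$ by the congruence~(\ref{eq:B-congruence-condition}). So it remains to construct an inverse $\tilde h\:\wnls{n+1}(X)\to\underSpec\sB$ over $Z$ and check that the two composites are identities.

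First I would record the universal property of the affine modification $\underSpec\sB$. Since being an isomorphism is local on $Z$ and $\m$ is locally principal on $S$, I may assume $\m=(\pi)$, so that $\sB=\sum_{k\geq 0}\pi^{-(n+1)k}I^k$ inside $\sO_Z[1/\pi]$. For any $\m$-flat $T$ over $Z$, the element $\pi$ is a non-zero-divisor on $\sO_T$, so $\sO_T\to\sO_T[1/\pi]$ is injective. An $\sO_Z$-algebra map $\sB\to\sO_T$ therefore exists if and only if each generator $\pi^{-(n+1)}s$ ($s\in I$) has image in $\sO_T$, i.e.\ if and only if $I\sO_T\subseteq\pi^{n+1}\sO_T=\m^{n+1}\sO_T$; and by the injectivity it is then unique. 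Thus the structure map of any $\m$-flat $T$ over $Z$ factors through $\underSpec\sB$, necessarily uniquely, precisely when $I\sO_T\subseteq\m^{n+1}\sO_T$.

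Next I would verify that $\wnls{n+1}(X)$ satisfies this same containment. It is $\m$-flat over $S$ by hypothesis and lies over $Z$ by the map $f=(\projmap{n,1},\cgh{n+1})$, so~\ref{pro:blow-up-description-of-w-lower-s} applies with $T=\wnls{n+1}(X)$ and $g=f$. The identity endomorphism of $\wnls{n+1}(X)$ is a lift of $f$, so the ``only if'' direction of that proposition forces $I\sO_{\wnls{n+1}(X)}\subseteq\m^{n+1}\sO_{\wnls{n+1}(X)}$. By the universal property of the previous paragraph this yields a unique $Z$-map $\tilde h\:\wnls{n+1}(X)\to\underSpec\sB$. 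Finally, $\tilde h\circ\tilde g$ is a $Z$-endomorphism of $\underSpec\sB$ and $\tilde g\circ\tilde h$ is a $Z$-endomorphism of $\wnls{n+1}(X)$; by the uniqueness clauses (of the affine-modification property and of~\ref{pro:blow-up-description-of-w-lower-s}, respectively) each must be the identity, so $\tilde g$ is an isomorphism.

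The main obstacle is the first paragraph of actual work---pinning down the universal property of the dilatation $\underSpec\sB$ together with its uniqueness---and in particular making sure that $\m$-flatness of $T$ is exactly what is needed to turn the formal membership $\pi^{-(n+1)}I\subseteq\sO_T[1/\pi]$ into an honest factorization. Once this standard affine-modification statement is in hand, the rest is a formal matching of universal properties through~\ref{pro:blow-up-description-of-w-lower-s}.
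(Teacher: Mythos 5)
Your argument is correct, but at the decisive step it takes a genuinely different route from the paper's. The two proofs agree on the easy halves: $\tilde g$ comes from \ref{pro:blow-up-description-of-w-lower-s} applied to $T=\underSpec\sB$ (using~(\ref{eq:B-congruence-condition}) and the $\m$-flatness of $\sB$), and the containment $I\sO_{\wnls{n+1}(X)}\subseteq\m^{n+1}\sO_{\wnls{n+1}(X)}$ is extracted from the ``only if'' direction of that proposition with the identity as the lift. The difference is how the inverse $\wnls{n+1}(X)\to\underSpec\sB$ is produced. The paper claims the terminal property of $Z_U=\underSpec\sB_U$ only among spaces \emph{affine over} $Y_U=\wnls{n}(U)\times_S U$ (``terminal by the definition of generated''), where everything is ring-level; since $\wnls{n+1}(X)$ is not known in advance to be affine over $\wnls{n}(X)\times_S X$ --- that affineness is in effect a corollary of the theorem, cf.~\ref{cor:co-ghost-map-is-affine} --- the paper compensates with a gluing argument: by~\ref{cor:wnls-good-cover} it chooses a diagram $\mathsf{D}$ of affine \'etale charts $U$ with $\colim_{U\in\mathsf{D}}\wnls{n+1}(U)\isomap\wnls{n+1}(X)$, uses the functoriality $I_U\sO_{Y_V}\subseteq I_V$ to get a compatible family $\wnls{n+1}(U)\to Z_U\to Z_X$, and passes to the colimit. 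You instead upgrade the universal property of the dilatation to \emph{all} $\m$-flat algebraic spaces $T$ over $Z=\wnls{n}(X)\times_S X$ at once, so that $\tilde h$ exists in one stroke and both composites are identities by uniqueness, bypassing the colimit mechanism entirely. This is shorter and more conceptual, but it shifts the weight onto exactly the points your sketch compresses, and you should make them explicit: (i) the relative-Spec adjunction $\Hom_Z(T,\underSpec\sB)\cong\Hom_{\sO_Z\mathrm{-alg}}(\sB,f_*\sO_T)$ must be invoked for an arbitrary algebraic space $T$ over $Z$, not just $T$ affine over $Z$ --- this is true (write $T$ as a colimit of affines; both sides turn it into a limit), but it is the precise place where your proof goes beyond what the paper's ``definition of generated'' remark establishes, and accordingly your algebra map should be written $\sB\to f_*\sO_T$ or $f^*\sB\to\sO_T$ rather than $\sB\to\sO_T$; (ii) $f_*\sO_T$ inherits $\pi$-torsion-freeness from $\sO_T$ (left exactness of $f_*$), which is what lets the sheaf containment $I\sO_T\subseteq\m^{n+1}\sO_T$, a priori only local divisibility on $T$, glue (by uniqueness of the quotients) into global sections witnessing $I\cdot f_*\sO_T\subseteq\pi^{n+1}f_*\sO_T$, so that the map on the generators $\pi^{-(n+1)}s$ is well defined and lands in $f_*\sO_T$. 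Finally, your opening reduction should be phrased as localizing on $S$ (where $\m$ is locally principal) and then on $Z$ by base change via~(\ref{eq:global-base-jet-localization}); this is harmless because $\tilde g$ commutes with such restriction by its uniqueness clause. With these points spelled out, your proof is complete; what each approach buys is clear --- yours isolates the full universal property of the affine modification, while the paper keeps all commutative algebra affine at the cost of re-running the sheaf-theoretic gluing machinery (\ref{cor:wnls-good-cover}, \ref{pro:wnls--commutes-with-filtered-colimits}) it had already built.
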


\subsection{} \emph{Remark.}
In other words, we have
	\begin{equation}
		\wnls{n+1}(X) = \underSpec \sO_{\wnls{n}(X)\times_S X}[\m^{-n-1}\tn_{\sO_S}I],
	\end{equation}
which gives a concrete recursive description of $\wnls{n+1}(X)$ when it is $\m$-flat.
Note that, by~\ref{cor:smooth-implies-wnls-flat}, this flatness condition is satisfied when 
$X$ is $\ptst$-smooth.

\subsection{} \emph{Proof of \ref{thm:blow-up-description-of-w-lower-s}.}
	Fix, for the moment, an \'etale algebraic $X$-space $U$.
	Let $Y_U$ denote $\wnls{n}(U)\times_S U$, and let $Z_U$ denote $\underSpec\sB_U$.
	Let $\cat_U$ denote the full subcategory of algebraic spaces over $Y_U$ consisting
	of objects $T$ which are $\m$-flat and satisfy
		\begin{equation} \label{eq:internal-ckwk}
			I_U\sO_T\subseteq\m^{n+1}\sO_T,
		\end{equation}
	where $I_U$ is the ideal sheaf defined in~\ref{subsec:affine-blow-up-notation};
	let $\cat^{\mathrm{aff}}_U$ denote the full subcategory of $\cat_U$ consisting
	of objects which are affine over $Y_U$. 
	
	First, observe that $\wnls{n+1}(U)$ is the terminal object of $\cat_U$. Indeed,
	by~\ref{pro:blow-up-description-of-w-lower-s}, it is enough to show 
	that $\wnls{n+1}(U)$ is $\m$-flat; this is true because, 
	by~\ref{pro:wnls-preserves-formally-etale-etc}, it is \'etale over $\wnls{n+1}(X)$,
	which is $\m$-flat by assumption.

	Second, observe that $Z_U$ is the terminal object of $\cat^{\mathrm{aff}}_U$: it is an object of 
	$\cat^{\mathrm{aff}}_U$
	by~\ref{eq:B-congruence-condition}, and it is terminal by the definition of \emph{generated}.
	
	Because of these two terminal properties, the theorem is
	equivalent to the statement that there exists a map $\wnls{n+1}(X)\to Z_X$ of $Y_X$-spaces,
	which is what we will prove.
	
	Let $\mathsf{D}$ be a diagram of \'etale algebraic spaces $U$ over $X$ (as above)
	such that each space $U$ in the diagram is an object of $\affrel_S$ and such that the induced 
	map 
		\begin{equation} \label{eq:internal-cwisl}
			\colim_{U\in\mathsf{D}}\wnls{n+1}(U)\longmap\wnls{n+1}(X)
		\end{equation}
	is an isomorphism. The existence of $\mathsf{D}$ follows from~\ref{rem:wnls-good-cover}.
	(One can in fact take $\mathsf{D}$ to consist of all such spaces $U$.)
	Then, for any map $b\:V\to U$ of $\mathsf{D}$, the space $\wnls{n+1}(V)$ is an object 
	of $\cat^{\mathrm{aff}}_U$. Indeed, the induced map $\wnls{n+1}(V)\to Y_U$ is affine, because 
	both the source
	and the target are affine schemes (\ref{eq:wnls-of-affine-is-lambda-circle});
	and (\ref{eq:internal-ckwk}) is satisfied because we have $I_U\sO_{Y_V}\subseteq I_V$.
	
	Therefore, by the terminal property of $Z_U$,
	for any such map $b\:V\to U$, there is a unique map 
	$F(b)\:\wnls{n+1}(V)\to Z_U$ of $Y_U$-spaces. In particular,
	the induced diagram
		\[
		\xymatrix@C=40pt{
		\wnls{n+1}(V) \ar^-{F(\id_V)}[r]\ar_{\wnls{n+1}(b)}[d] & Z_V \ar[d] \\
		\wnls{n+1}(U) \ar^-{F(\id_U)}[r] & Z_U 
		}
		\]
	commutes. In particular, the compositions 
		\[\wnls{n+1}(U)\xright{F(\id_U)} Z_U \longmap Z_X\]
	form
	a compatible family of $Y_X$-maps, as $U$ runs over $\mathsf{D}$. This induces a $Y_X$-map
		\[
		\colim_{U\in\mathsf{D}}\wnls{n+1}(U)\to Z_X.
		\]
	On other hand, (\ref{eq:internal-cwisl}) is an isomorphism of $Y_X$-spaces.
	Thus there exists a map $\wnls{n+1}(X)\to Z_X$ of $Y_X$-spaces,
	which completes the proof.
\qed

\subsection{} {\em Non-smooth counterexample.}
We cannot remove the assumption above that $\wnls{n+1}(X)$ is $\m$-flat.
Indeed, the example in~\ref{subsec:wnls-does-not-preserve-flatness} shows
that the locus of $\wnls{n}(X)$ over the complement of $\Spec\sO_S/\m$ can fail to be
dense in $\wnls{n}(X)$.

\begin{corollary}\label{cor:co-ghost-map-is-affine}
	If $X$ is $\ptst$-smooth, then the co-ghost map
		\[
		\entrymodifiers={+!!<0pt,\fontdimen22\textfont2>}
		\def\objectstyle{\displaystyle}
		\xymatrix@C=25pt{
			\wnls{n}(X)\ar^-{\cgh{\leq n}}[r] &  X^{[0,n]}
		}
		\]
	is affine.  It is an isomorphism away from $\ptst$.  
\end{corollary}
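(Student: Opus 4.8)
The plan is to prove the two assertions separately: I would deduce affineness by induction on $n$ from the recursive description in~\ref{thm:blow-up-description-of-w-lower-s}, and treat the behaviour away from $\ptst$ by base change to the locus where the ideals of $\ptst$ become trivial.

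First I would dispose of the easier claim. Localizing $S$ along the open complement $S'$ of the supports of the ideals $\m_\alpha$, every $\m_\alpha$ restricts to the unit ideal on $S'$; by~(\ref{eq:wnls-is-independent-of-S}) and~(\ref{eq:global-base-jet-localization}) the formation of $\cgh{\leq n}$ commutes with this restriction, and over $S'$ the co-ghost map is induced by isomorphisms of the underlying site maps and hence is itself an isomorphism (the remark following~\ref{pro:ghost-map-inj}). Thus $\cgh{\leq n}$ is an isomorphism away from $\ptst$, and this part requires no smoothness hypothesis.

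For affineness I would first reduce to the case where $\ptst$ consists of a single ideal $\m$. Using the iteration isomorphism~(\ref{eq:geom-witt-iterate-lower-s}), the total co-ghost map for a partition $\ptst=\ptst'\smsmcoprod\ptst''$ factors as a composition of (base changes of) the total co-ghost maps for $\ptst'$ and $\ptst''$; since affine morphisms are closed under composition and base change, it suffices to treat one prime at a time. Here one must check that $\ptst$-smoothness is inherited at each stage: away from $\ptst''$ the functor $\wnls{\ptst'',n''}$ is just a finite product over $S$, so $\wnls{\ptst'',n''}(X)$ remains smooth locally at each $\m\in\ptst'$, the primes of $\ptst'$ and $\ptst''$ being coprime. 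Now fix $\ptst=\{\m\}$ and induct on $n$. The case $n=0$ is trivial, since $\wnls{0}(X)=X$ and $\cgh{\leq 0}$ is the identity. For the inductive step I would exploit the factorization
$$\cgh{\leq n} = (\cgh{\leq n-1} \times_S \id_X) \circ f, \qquad f = (\projmap{n-1,1}, \cgh{n}),$$
which holds because the co-ghost components $\cgh{i}$ are compatible with the projection $\projmap{n-1,1}\:\wnls{n}(X)\to\wnls{n-1}(X)$ for $i\in[0,n-1]$, and $X^{[0,n]}=X^{[0,n-1]}\times_S X$. Since $X$ is $\m$-smooth, $\wnls{n}(X)$ is $\m$-flat by~\ref{cor:smooth-implies-wnls-flat}, so~\ref{thm:blow-up-description-of-w-lower-s} applies and identifies $\wnls{n}(X)$ with $\underSpec\sB$ over $\wnls{n-1}(X)\times_S X$; in particular $f$ is affine. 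The second factor $\cgh{\leq n-1}\times_S\id_X$ is a base change of the affine map $\cgh{\leq n-1}$ supplied by the inductive hypothesis, hence affine, and composing yields that $\cgh{\leq n}$ is affine.

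The main obstacle is the inductive step: pinning down the factorization of $\cgh{\leq n}$ through $f$ and verifying that its middle term $\wnls{n-1}(X)\times_S X$ is exactly the target of the affine map produced by~\ref{thm:blow-up-description-of-w-lower-s}. Once this bookkeeping is in place, the real geometric content---that $f$ is affine---is already contained in~\ref{thm:blow-up-description-of-w-lower-s}, and everything else is formal manipulation with composition and base change of affine morphisms.
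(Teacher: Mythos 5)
Your proposal is correct and follows essentially the same route as the paper: reduce to a single prime via the iteration isomorphism~(\ref{eq:geom-witt-iterate-lower-s}) (with the same coprimality/smoothness-inheritance check), then factor $\cgh{\leq n}$ through the maps $f=(\projmap{n-1,1},\cgh{n})$, each affine by \ref{cor:smooth-implies-wnls-flat} and \ref{thm:blow-up-description-of-w-lower-s}. The only (harmless) divergence is that you prove the \emph{isomorphism away from $\ptst$} claim up front by restricting to the open complement, where the ideals become unit ideals and the site maps are isomorphisms as observed in \ref{subsec:natural-maps}---correctly noting this needs no smoothness---whereas the paper extracts it from the same factorization via \ref{thm:blow-up-description-of-w-lower-s}.
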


It would be interesting to know whether this is true for arbitrary algebraic spaces $X$ over $S$.

\begin{proof}
	If $\ptst$ is empty, then $\cgh{\leq n}$ is an isomorphism. If not, 
	write $\ptst=\ptst'\smcoprod\ptst''$, where $\ptst''$ consists of a single element.
	Let $n'$ and $n''$ denote the projections
	of $n$ onto $\bN^{(\ptst')}$ and $\bN^{(\ptst'')}$.
	Then by~(\ref{eq:geom-witt-iterate-lower-s}), the map $\cgh{\leq n}$ can be
	identified with the composition
		\[
		\entrymodifiers={+!!<0pt,\fontdimen22\textfont2>}
		\def\objectstyle{\displaystyle}
		\xymatrix@C=45pt{
		\wnls{n''}\bigl(\wnls{n'}(X)\bigr) \ar^-{\cgh{\leq n''}}[r] &
			\bigl(\wnls{n'}(X)\bigr)^{[0,n'']} \ar^(0.55){(\cgh{\leq n'})^{[0,n'']}}[r] &
			\bigl(X^{[0,n']}\bigr)^{[0,n'']}.
		}
		\]
	By~\ref{pro:wnls-preserves-formally-etale-etc} and (\ref{eq:global-base-jet-localization}),
	the space $\wnls{n'}(X)$ is	$\ptst$-smooth. Therefore, by induction, the second map above is 
	affine and is an isomorphism away from $\ptst$. Thus to show the first map is 
	affine and is an isomorphism away from $\ptst$, it is enough to prove the corollary itself in
	the case where $\ptst$ consists of a single element.

	In that case, $\cgh{\leq n}$ factors as follows
		\[
		\wnls{n}(X) \longlabelmap{f} 
			\wnls{n-1}(X)\times_S X \longlabelmap{f\times\id_X} 
			\bigl(\wnls{n-2}(X)\times_S X\bigr)\times_S X
			\longmap \cdots \longmap X^{[0,n]}.
		\]
	Since $X$ is $\ptst$-smooth, each $\wnls{i}(X)$ is $\ptst$-smooth and hence $\ptst$-flat.
	Thus, by~\ref{thm:blow-up-description-of-w-lower-s}, each of these maps is affine
	and an isomorphism away from $\ptst$. Therefore so is their composition $\cgh{\leq n}$.
\end{proof}

\bibliography{references}

\begin{thebibliography}{10}

\bibitem{SGA4.1}
{\em Th\'eorie des topos et cohomologie \'etale des sch\'emas. {T}ome 1:
  {T}h\'eorie des topos}.
\newblock Springer-Verlag, Berlin, 1972.
\newblock S\'eminaire de G\'eom\'etrie Alg\'ebrique du Bois-Marie 1963--1964
  (SGA 4), Dirig\'e par M. Artin, A. Grothendieck, et J. L. Verdier. Avec la
  collaboration de N. Bourbaki, P. Deligne et B. Saint-Donat, Lecture Notes in
  Mathematics, Vol. 269.

\bibitem{SGA4.2}
{\em Th\'eorie des topos et cohomologie \'etale des sch\'emas. {T}ome 2}.
\newblock Springer-Verlag, Berlin, 1972.
\newblock S\'eminaire de G\'eom\'etrie Alg\'ebrique du Bois-Marie 1963--1964
  (SGA 4), Dirig\'e par M. Artin, A. Grothendieck et J. L. Verdier. Avec la
  collaboration de N. Bourbaki, P. Deligne et B. Saint-Donat, Lecture Notes in
  Mathematics, Vol. 270.

\bibitem{SGA5}
{\em Cohomologie {$l$}-adique et fonctions {$L$}}.
\newblock Lecture Notes in Mathematics, Vol. 589. Springer-Verlag, Berlin,
  1977.
\newblock S{\'e}minaire de G{\'e}ometrie Alg{\'e}brique du Bois-Marie
  1965--1966 (SGA 5), Edit{\'e} par Luc Illusie.

\bibitem{Bloch:dRW}
Spencer Bloch.
\newblock Algebraic {$K$}-theory and crystalline cohomology.
\newblock {\em Inst. Hautes \'Etudes Sci. Publ. Math.}, (47):187--268 (1978),
  1977.

\bibitem{Borger:BGWV-I}
James Borger.
\newblock Basic geometry of {W}itt vectors, {I}: {T}he affine case.
\newblock arXiv:0801.1691v5.

\bibitem{Borger:LRFOE}
James Borger.
\newblock {$\Lambda$}-rings and the field with one element.
\newblock arXiv:0906.3146v1.

\bibitem{BLR:Neron-Models}
Siegfried Bosch, Werner L{\"u}tkebohmert, and Michel Raynaud.
\newblock {\em N\'eron models}, volume~21 of {\em Ergebnisse der Mathematik und
  ihrer Grenzgebiete (3)}.
\newblock Springer-Verlag, Berlin, 1990.

\bibitem{Buium:Intersections-in-jet-spaces}
A.~Buium.
\newblock Intersections in jet spaces and a conjecture of {S}. {L}ang.
\newblock {\em Ann. of Math. (2)}, 136(3):557--567, 1992.

\bibitem{Buium:p-jets}
Alexandru Buium.
\newblock Geometry of {$p$}-jets.
\newblock {\em Duke Math. J.}, 82(2):349--367, 1996.

\bibitem{Buium:Arithmetic-diff-equ}
Alexandru Buium.
\newblock {\em Arithmetic differential equations}, volume 118 of {\em
  Mathematical Surveys and Monographs}.
\newblock American Mathematical Society, Providence, RI, 2005.

\bibitem{Buium-Poonen:Special-points-II}
Alexandru Buium and Bjorn Poonen.
\newblock Independence of points on elliptic curves arising from special points
  on modular and {S}himura curves. {II}. {L}ocal results.
\newblock {\em Compos. Math.}, 145(3):566--602, 2009.

\bibitem{Buium-Simanca:Arithmetic-Laplacians}
Alexandru Buium and Santiago~R. Simanca.
\newblock Arithmetic {L}aplacians.
\newblock {\em Adv. Math.}, 220(1):246--277, 2009.

\bibitem{Conrad-Lieblich-Olsson}
Brian Conrad, Max Lieblich, and Martin Olsson.
\newblock Nagata compactification for algebraic spaces.
\newblock arXiv:0910.5008.

\bibitem{Drinfeld:symmetric-domains}
V.~G. Drinfeld.
\newblock Coverings of {$p$}-adic symmetric domains.
\newblock {\em Funkcional. Anal. i Prilo\v zen.}, 10(2):29--40, 1976.

\bibitem{Eisenbud:Commutative-algebra-book}
David Eisenbud.
\newblock {\em Commutative algebra}, volume 150 of {\em Graduate Texts in
  Mathematics}.
\newblock Springer-Verlag, New York, 1995.
\newblock With a view toward algebraic geometry.

\bibitem{Ekedahl:dRW-I}
Torsten Ekedahl.
\newblock On the multiplicative properties of the de {R}ham-{W}itt complex.
  {I}.
\newblock {\em Ark. Mat.}, 22(2):185--239, 1984.

\bibitem{Ekedahl:dRW-II}
Torsten Ekedahl.
\newblock On the multiplicative properties of the de {R}ham-{W}itt complex.
  {II}.
\newblock {\em Ark. Mat.}, 23(1):53--102, 1985.

\bibitem{Fontaine:Corps-des-periodes-p-adiques}
Jean-Marc Fontaine.
\newblock Le corps des p\'eriodes {$p$}-adiques.
\newblock {\em Ast\'erisque}, (223):59--111, 1994.
\newblock With an appendix by Pierre Colmez, P{\'e}riodes $p$-adiques
  (Bures-sur-Yvette, 1988).

\bibitem{Giraud:Cohomologie-non-abelienne}
Jean Giraud.
\newblock {\em Cohomologie non ab\'elienne}.
\newblock Springer-Verlag, Berlin, 1971.
\newblock Die Grundlehren der mathematischen Wissenschaften, Band 179.

\bibitem{Greenberg:I}
Marvin~J. Greenberg.
\newblock Schemata over local rings.
\newblock {\em Ann. of Math. (2)}, 73:624--648, 1961.

\bibitem{Greenberg:II}
Marvin~J. Greenberg.
\newblock Schemata over local rings. {II}.
\newblock {\em Ann. of Math. (2)}, 78:256--266, 1963.

\bibitem{Grothendieck:Chern}
Alexander Grothendieck.
\newblock La th\'eorie des classes de {C}hern.
\newblock {\em Bull. Soc. Math. France}, 86:137--154, 1958.

\bibitem{EGA-no.4}
Alexander Grothendieck.
\newblock \'{E}l\'ements de g\'eom\'etrie alg\'ebrique. {I}. {L}e langage des
  sch\'emas.
\newblock {\em Inst. Hautes \'Etudes Sci. Publ. Math.}, (4):228, 1960.

\bibitem{EGA-no.8}
Alexander Grothendieck.
\newblock \'{E}l\'ements de g\'eom\'etrie alg\'ebrique. {I}{I}. \'{E}tude
  globale \'el\'ementaire de quelques classes de morphismes.
\newblock {\em Inst. Hautes \'Etudes Sci. Publ. Math.}, (8):222, 1961.

\bibitem{EGA-no.20}
Alexander Grothendieck.
\newblock \'{E}l\'ements de g\'eom\'etrie alg\'ebrique. {I}{V}. \'{E}tude
  locale des sch\'emas et des morphismes de sch\'emas. {I}.
\newblock {\em Inst. Hautes \'Etudes Sci. Publ. Math.}, (20):259, 1964.

\bibitem{EGA-no.24}
Alexander Grothendieck.
\newblock \'{E}l\'ements de g\'eom\'etrie alg\'ebrique. {IV}. \'{E}tude locale
  des sch\'emas et des morphismes de sch\'emas. {II}.
\newblock {\em Inst. Hautes \'Etudes Sci. Publ. Math.}, (24):231, 1965.

\bibitem{EGA-no.28}
Alexander Grothendieck.
\newblock \'{E}l\'ements de g\'eom\'etrie alg\'ebrique. {IV}. \'{E}tude locale
  des sch\'emas et des morphismes de sch\'emas. {III}.
\newblock {\em Inst. Hautes \'Etudes Sci. Publ. Math.}, (28):255, 1966.

\bibitem{EGA-no.32}
Alexander Grothendieck.
\newblock \'{E}l\'ements de g\'eom\'etrie alg\'ebrique. {IV}. \'{E}tude locale
  des sch\'emas et des morphismes de sch\'emas {IV}.
\newblock {\em Inst. Hautes \'Etudes Sci. Publ. Math.}, (32):361, 1967.

\bibitem{Hazewinkel:book}
Michiel Hazewinkel.
\newblock {\em Formal groups and applications}, volume~78 of {\em Pure and
  Applied Mathematics}.
\newblock Academic Press Inc. [Harcourt Brace Jovanovich Publishers], New York,
  1978.

\bibitem{Illusie:dRW}
Luc Illusie.
\newblock Complexe de de\thinspace {R}ham-{W}itt et cohomologie cristalline.
\newblock {\em Ann. Sci. \'Ecole Norm. Sup. (4)}, 12(4):501--661, 1979.

\bibitem{Illusie:dRW-1016}
Luc Illusie.
\newblock Finiteness, duality, and {K}\"unneth theorems in the cohomology of
  the de {R}ham-{W}itt complex.
\newblock In {\em Algebraic geometry (Tokyo/Kyoto, 1982)}, volume 1016 of {\em
  Lecture Notes in Math.}, pages 20--72. Springer, Berlin, 1983.

\bibitem{Joyal:Witt}
Andr{\'e} Joyal.
\newblock {$\delta$}-anneaux et vecteurs de {W}itt.
\newblock {\em C. R. Math. Rep. Acad. Sci. Canada}, 7(3):177--182, 1985.

\bibitem{Knutson:Alg-spaces}
Donald Knutson.
\newblock {\em Algebraic spaces}.
\newblock Springer-Verlag, Berlin, 1971.
\newblock Lecture Notes in Mathematics, Vol. 203.

\bibitem{Kunz:IntroCAAG}
Ernst Kunz.
\newblock {\em Introduction to commutative algebra and algebraic geometry}.
\newblock Birkh\"auser Boston Inc., Boston, MA, 1985.
\newblock Translated from the German by Michael Ackerman, With a preface by
  David Mumford.

\bibitem{Langer-Zink:dRW}
Andreas Langer and Thomas Zink.
\newblock De {R}ham-{W}itt cohomology for a proper and smooth morphism.
\newblock {\em J. Inst. Math. Jussieu}, 3(2):231--314, 2004.

\bibitem{Lubkin:bounded-Witt}
Saul Lubkin.
\newblock Generalization of {$p$}-adic cohomology: bounded {W}itt vectors. {A}
  canonical lifting of a variety in characteristic {$p\not=0$} back to
  characteristic zero.
\newblock {\em Compositio Math.}, 34(3):225--277, 1977.

\bibitem{Raynaud-Gruson}
Michel Raynaud and Laurent Gruson.
\newblock Crit\`eres de platitude et de projectivit\'e. {T}echniques de
  ``platification'' d'un module.
\newblock {\em Invent. Math.}, 13:1--89, 1971.

\bibitem{Rydh:Noetherian-approximation}
D.~Rydh.
\newblock Noetherian approximation of algebraic spaces and stacks.
\newblock arXiv:0904.0227v1.

\bibitem{Toen-Vaquie:algebrisation}
Bertrand To{\"e}n and Michel Vaqui{\'e}.
\newblock Alg\'ebrisation des vari\'et\'es analytiques complexes et
  cat\'egories d\'eriv\'ees.
\newblock {\em Math. Ann.}, 342(4):789--831, 2008.

\bibitem{Toen-Vezzosi:HAGII}
Bertrand To{\"e}n and Gabriele Vezzosi.
\newblock Homotopical algebraic geometry. {II}. {G}eometric stacks and
  applications.
\newblock {\em Mem. Amer. Math. Soc.}, 193(902):x+224, 2008.

\end{thebibliography}
\bibliographystyle{plain}

\end{document}